\documentclass[leqno,12pt]{amsart}

\usepackage{amsmath,amsthm,amssymb,mathrsfs,mathtools}
\usepackage{graphicx,tikz}

\DeclareRobustCommand{\gobblefive}[5]{}
\newcommand*{\SkipTocEntry}{\addtocontents{toc}{\gobblefive}}
\usepackage{CJKutf8} 
\newcommand{\n}{{\nabla}}
\newcommand{\FF}{{E_\nabla}}
\newcommand{\FFsr}{{E_\nabla^{\sharp_r}}}
\newcommand{\FFs}{{E_\nabla^\sharp}}
\newcommand{\FFtw}{{E^2_\nabla}}
\newcommand{\FFth}{{E^3_\nabla}}

\newcommand{\la}{\langle}
\newcommand{\ra}{\rangle}

\newcommand{\bb}{{\beta^\sharp}}

\newcommand{\bbt}{{\beta^{\sharp^{g_t}}}}

\newcommand{\gi}{{G^{-1}_\beta}}


\usepackage[T1]{fontenc}
\usepackage[english]{babel}
\usepackage[margin=1in]{geometry}
\usepackage[onehalfspacing]{setspace}
\usepackage[colorlinks,pagebackref, hypertexnames=false, bookmarks=false]{hyperref}
\usepackage[alphabetic,backrefs]{amsrefs}
\usepackage{xcolor}
\hypersetup{
    allcolors = blue
}
\newcounter{dummy}
\usepackage{enumitem}
\makeatletter
\newcommand\myitem[1][]{\item[#1]\refstepcounter{dummy}\def\@currentlabel{#1}}
\makeatother
\usepackage{ae,aecompl}
\usepackage{multicol}
\usepackage[normalem]{ulem}
\usepackage{tabularx}
\setcounter{secnumdepth}{3}

\numberwithin{equation}{section}							
\usepackage[nameinlink,noabbrev]{cleveref}
\usepackage{autonum}										

\setcounter{tocdepth}{2}


\let\originalleft\left
\let\originalright\right
\renewcommand{\left}{\mathopen{}\mathclose\bgroup\originalleft}
\renewcommand{\right}{\aftergroup\egroup\originalright}

\makeatletter
\renewcommand*{\eqref}[1]{\hyperref[{#1}]{\textup{\tagform@{\ref*{#1}}}}}		
\makeatother



\newcommand\rl{\rm{l}}





\newcommand{\cA}{\mathcal{A}}

\newcommand{\cF}{\mathcal{F}}

\newcommand{\cM}{\mathcal{M}}



\newcommand{\fg}{{\mathfrak g}}



\newcommand{\Spin}{\rm{Spin}}
\newcommand{\GL}{\mathrm{GL}}


\renewcommand{\det}{\mathop\mathrm{det}\nolimits}

\newcommand{\eps}{\epsilon}

\newcommand{\Ric}{\rm{Ric}}

\newcommand{\id}{\mathrm{id}}

\newcommand{\tr}{\mathop{\mathrm{tr}}\nolimits}


\def\cx{\mathbb{C}}

\def\rl{\mathbb{R}}

\def\Z{\mathbb{Z}}


\def\GL{\mathrm{GL}}

\def\Ric_g{\mathrm{Ric}}

\def\Spin{\mathrm{Spin}}

\def\tr{\mathrm{tr}}

\def\vol{\mathrm{vol}}


\def\<{\mathopen{}\left<}
\def\>{\right>\mathclose{}}
\def\({\mathopen{}\left(}
\def\){\right)\mathclose{}}

\renewcommand{\epsilon}{\varepsilon}

\renewcommand{\i}{{\sqrt{-1}}}



\newtheorem{theorem}{Theorem}[section]

\newtheorem{corollary}[theorem]{Corollary}

\newtheorem{lemma}[theorem]{Lemma}
\newtheorem{proposition}[theorem]{Proposition}
\theoremstyle{definition} \newtheorem{definition}[theorem]{Definition}

\newtheorem{remark}[theorem]{Remark}

\crefname{theorem}{Theorem}{Theorems}						
\creflabelformat{theorem}{#2{#1}#3}							
\crefname{Mtheorem}{Main Theorem}{Main Theorems}			
\creflabelformat{Mtheorem}{#2{#1}#3}						
\crefname{lemma}{Lemma}{Lemmata}							
\creflabelformat{lemma}{#2{#1}#3}							
\crefname{corollary}{Corollary}{Corollaries}				
\creflabelformat{corollary}{#2{#1}#3}						
\crefname{proposition}{Proposition}{Propositions}			
\creflabelformat{proposition}{#2{#1}#3}						
\crefname{ineq}{inequality}{inequalities}					
\creflabelformat{ineq}{#2{\upshape(#1)}#3}					
\crefname{cond}{condition}{conditions}						
\creflabelformat{cond}{#2{\upshape(#1)}#3}					
\crefname{hypoth}{Hypothesis}{Hypotheses}					
\creflabelformat{hypoth}{#2{#1}#3}							
\crefname{def}{Definition}{Definitions}						
\creflabelformat{def}{#2{#1}#3}								
\crefname{appsec}{Appendix}{Appendices}

\crefname{sec}{Section}{Sections}

\begin{document}

\author{Kotaro Kawai}
\address{Beijing Institute of Mathematical Sciences and Applications, 
No. 544, Hefangkou Village, Huaibei Town, Huairou District, Beijing, 101408, 
China}
\address{
Department of Mathematics, Osaka Metropolitan University, 3-3-138, Sugimoto, Sumiyoshi-ku, Osaka, 558-8585, Japan}
\email{\href{mailto:kkawai@bimsa.cn}{kkawai@bimsa.cn}}

\keywords{mirror symmetry, gauge theory, monotonicity formula, 
minimal connections, deformed Donaldson--Thomas connections}
\subjclass[2020]{53C07, 58E15, 53D37}

\title{A monotonicity formula for minimal connections}
\thanks{
This work was supported by JSPS KAKENHI Grant Number JP21K03231 
and 
MEXT Promotion of Distinctive Joint Research Center Program JPMXP0723833165. 
}

\maketitle

\begin{abstract}
For Hermitian connections on a Hermitian complex line bundle 
over a Riemannian manifold $(X,g)$, 
we can define the ``volume", which can be considered to be 
the ``mirror" of the standard volume for submanifolds. 
We call the critical points minimal connections. 

In this paper, 
(1) we prove monotonicity formulas for minimal connections 
with respect to some versions of volume functionals 
under certain conditions on $\dim X$ and the curvature of $g$. 
These formulas would be important in bubbling analysis. 
As a corollary, we obtain the vanishing theorem for minimal connections 
on the odd dimensional Euclidean space. 

(2)
We see that 
the formal ``large radius limit" of the defining equation of minimal connections 
is that of Yang--Mills connections. 
Then the existence theorem of minimal connections is proved 
for a ``sufficiently large" metric. 

(3) 
We can consider deformed Donaldson--Thomas (dDT) connections on $G_2$-manifolds 
as ``mirrors" of calibrated (associative) submanifolds. 
We show that dDT connections are minimal connections, 
just as calibrated submanifolds are minimal submanifolds. 
By the argument specific to dDT connections, 
we obtain the stronger monotonicity formulas and vanishing theorem 
for dDT connections than in (1). 
\end{abstract}

\tableofcontents


\section{Introduction} 
Let $(X,g)$ be an oriented $n$-dimensional Riemannian manifold 
and $L \to X$ be a smooth complex line bundle with a Hermitian metric $h$. 
Throughout this paper, we assume $n \geq 2$. 
Let $\cA_0$ be the space of Hermitian connections of $(L, h)$. 
The curvature 2-form for $\n \in \cA_0$ is identified with a 2-form on $X$, 
which we denote by $\FF \in \Omega^2$. 
Taking the contraction of $\FF$ and $g$, we define 
$\FFs \in \Gamma(X, {\rm End}(TX))$. 
Then the \textbf{volume functional} $V:\cA_0 \to [0,\infty]$ is defined by 
\begin{align} \label{eq:def vol intro}
V (\n) = \int_X v(\n) \vol_g, 
\qquad
v(\n) = v_g (\n) =\sqrt{\det \left(\id_{TX} + \FFs \right)}, 
\end{align}
where $\vol_g$ is the volume form defined by the Riemannian metric $g$. 
Note that $v(\n) \geq 1$ since $\FFs$ is skew-symmetric. 
See Sections \ref{sec:def vol} and \ref{sec:min conn} for details. 
Thus when $X$ is noncompact of infinite volume, it is useful to define 
the \textbf{normalized volume functional} $V^0:\cA_0 \to \rl$ given by 
\begin{align} \label{eq:def nor vol intro}
V^0 (\n) = \int_X (v(\n)-1) \vol_g. 
\end{align}
Note that 
the functional $V$ is considered as the ``mirror" of the volume functional for submanifolds. 
That is, when $X$ is the total space of a torus fiber bundle,
the functional $V$ corresponds to the volume functional for submanifolds via the real Fourier--Mukai transform by the proof of \cite[Lemma 4.3]{kawai2021FM}.
This is also called the Dirac-Born-Infeld (DBI) action in theoretical physics \cite{mmms2000DBI}. 
A similar volume functional is defined in \cite[Definition 3.1]{jacob2017}. 
See \cite[Remark 6.12]{kawai2021mirror} for the relation between these functionals. 

We call critical points of $V$ or $V^0$ \textbf{minimal connections}. 
Minimal connections are considered as ``mirrors" of minimal submanifolds 
in the sense that minimal submanifolds correspond to minimal connections via 
the real Fourier--Mukai transform as shown in Lemma \ref{lem:FM min}.

More generally, we define the volume functional 
$V=V_g: \Omega^2 \to [0,\infty]$ and the normalized volume functional 
$V^0=V^0_g: \Omega^2 \to [0,\infty]$ as 
\begin{align}
V_g (\beta)= \int_X v_g(\beta) \vol_g,\qquad
V^0_g (\beta)= \int_X (v_g(\beta)-1) \vol_g 
\end{align}
by replacing $\FF$ with a 2-form $\beta$ in \eqref{eq:def vol intro} and 
\eqref{eq:def nor vol intro}, respectively. 
By the variation of $V^0_g$ with respect to the metric $g$, 
we can define the \textbf{stress-energy tensor} for a 2-form, which is a symmetric 2-tensor. 
A 2-form is said to satisfy a \textbf{conservation law} if the stress-energy tensor is divergence free. 
We show that the curvature 2-form of a minimal connection satisfies a conservation law 
in Theorem \ref{thm:min imply cons}. 
This can be understood as a consequence of 
Noether's theorem and the diffeomorphism invariance of $V^0_g (\n)$. 
See Remarks \ref{rem:variational SET} and \ref{rem:variational minimal} 
for details.

The stress-energy tensor was first introduced by Baird and Eells in \cite{Baird1980} in the context of harmonic maps, and there are generalizations in various settings. 
In order to consider these generalizations in a uniform way, 
Ara \cite{Ara1999geom} introduced 
the $F$-harmonic map, 
which is a generalization of the harmonic map by a certain $C^2$ function $F: [0,\infty) \to [0,\infty)$, 
and its associated stress-energy tensor. 
In the same spirit, 
Dong and Wei \cite{Dong2011vanishing} introduced the $F$-Yang--Mills connections 
and its associated stress-energy tensor. 
Then they proved some monotonicity formulas and vanishing theorems.
Wei \cite{Wei2022} extended the results in \cites{Dong2011vanishing, wei2021dual} 
to (normalized) exponential Yang--Mills connections.

\SkipTocEntry \subsection*{Motivation}
We envision that the study of minimal connections would be useful 
for the study of deformed Donaldson--Thomas (dDT) connections on $G_2$-manifolds. 
They were introduced by \cite{lee2009geometric} 
as ``mirrors" of calibrated (associative) submanifolds. 
When a $G_2$-manifold is compact without boundary, 
a dDT connection is a global minimizer of $V$ 
as in the case of calibrated submanifolds by \cite[Corollary 5.4]{kawai2021mirror}, 
and hence it is a minimal connection. 
When a $G_2$-manifold is noncompact, 
we show that 
dDT connections are minimal connections as well in Proposition \ref{prop:dDTismin}. 
Thus dDT connections have properties that minimal connections have.

One of the challenging problems in $G_2$-geometry 
is to define an enumerative invariant of $G_2$-manifolds 
by counting associative submanifolds, 
which are considered to be analogous to pseudoholomorphic curves 
in symplectic manifolds, 
or 
$G_2$-instantons, which are considered as higher dimensional analogues of 
ASD connections on 4-manifolds. 
It is expected that 
dDT connections, associative submanifolds and $G_2$-instantons 
will behave similarly
because dDT connections are ``mirrors" of associative submanifolds,  
as stated above, and 
the formal ``large radius limit" of the defining equation of dDT connections 
is that of $G_2$-instantons (\cite{kawai2023notes}). 
We proved in \cites{kawai2020deformation, kawai2021mirror} 
that the moduli space of dDT connections indeed has properties similar to 
those of associative submanifolds and $G_2$-instantons. 
Hence we might hope to define an enumerative invariant 
by counting dDT connections. 
The dDT equation is more nonlinear than the $G_2$-instanton equation, 
but we consider on line bundles 
which we hope would make the analysis of the dDT equation tractable. 
Moreover, note that while there is a mirror symmetric interpretation 
of the dDT equation on line bundles, 
there is no such interpretation in the higher rank case. 
See also \cite[p.3]{kawai2021mirror}.

We will need to consider the compactification of the moduli space to define an enumerative invariant. 
To do so, we should know the behaviour of a sequence of dDT connections. 
Hence we will need to establish the compactness theorem. 
As stated above, dDT connections are minimal connections, 
just as $G_2$-instantons are Yang--Mills connections. 
We can show that minimal connections will behave similarly to 
Yang--Mills connections in the sense that 
the formal ``large radius limit" of the defining equation of minimal connections 
is that of Yang--Mills connections 
(Remark \ref{rem:large radius limit of min conn}). 
In addition, the compactness theorem is already established for 
Yang--Mills connections in 
\cites{uhlenbeck1982removable, price1983monotonicity, nakajima1988compactness, tian2000gauge}.  
Thus to consider the compactness theorem for dDT connections, 
it would be natural to consider the compactness theorem of minimal connections 
following the Yang--Mills case. 
In (higher dimensional) gauge theory, 
the compactness theorem for Yang--Mills connections was obtained from 
(i) Price's monotonicity formula \cite{price1983monotonicity} 
and 
(ii) $\varepsilon$-regularity theorem of Uhlenbeck \cite{uhlenbeck1982removable} 
and Nakajima \cite{nakajima1988compactness}. 
Using these, Tian \cite{tian2000gauge} 
studied the structure of the ``energy concentration set" $S$ in detail 
for ASD, HYM, $G_2, {\rm Spin}(7)$-instantons, 
and related $S$ to calibrated geometry. 
In this paper, we especially study monotonicity formulas for minimal connections.

\SkipTocEntry \subsection*{Main Results}

Minimal connections can be studied more generally in terms of conservation laws, 
so we formulate our results in that light. 
We show monotonicity formulas for some versions of volume functionals.

\begin{theorem} \label{mainthm 1}
Let $(X,g)$ be an oriented $n$-dimensional Riemannian manifold. 
Fix $p \in X$. 
Denote by ${\rm inj}_g (p)$ and $B_{\rho} (p)$
the injectivity radius of $(X,g)$ at $p$ and 
the open geodesic ball of radius $\rho$ centered at $p \in X$, respectively.  

For $k=1, \cdots, 4$, assume $(A_k)$ as described below. 
Then there exist $0<r_p< {\rm inj}_g (p)$ 
and a constant $a=a(n,p,g) \geq 0$ such that 
the following conclusion $(C_k)$ holds 
for any 2-form $\beta \in \Omega^2$ satisfying a conservation law. 

\begin{enumerate}
\item (Theorem \ref{thm:mono} and Corollary \ref{cor:mono sh})
$(A_1)$: No assumptions. 
$(C_1)$:
The function 
$$
(0,r_p] \to \rl, \qquad 
\rho \mapsto \frac{e^{a \rho^2}}{\rho} \int_{B_\rho (p)} 
\tr (G_\beta^{-1}) v_g(\beta)  \vol_g
$$
is non-decreasing, where 
$G_\beta = \id_{TX} - \beta^\sharp \circ \beta^\sharp 
\in \Gamma(X, {\rm End}(TX))$ 
as defined in \eqref{eq:Gn}.

\item (Theorem \ref{thm:mono vol} and Corollary \ref{cor:mono vol odd dim})
$(A_2)$: $X$ is odd dimensional. $(C_2)$: The function  
$$
(0,r_p] \to \rl, \qquad 
\rho \mapsto \frac{e^{a \rho^2}}{\rho} \int_{B_\rho (p)} v_g(\beta)  \vol_g
$$
is non-decreasing.

\item (Theorems \ref{thm:mono nvol 1} and \ref{thm:mono nvol 3}) 
$(A_3)$: $X$ is odd dimensional and one of the following conditions is satisfied. 
\begin{enumerate} 
    \item ${\rm Scal}_p >0$. 
    \item ${\rm Scal}_p = 0$ and $3|R_p|^2-8|{\rm Ric}_p|^2 - 18 (\Delta {\rm Scal})_p >0$, 
    where $\Delta {\rm Scal} = d^* d {\rm Scal}$.  
    \item ${\rm Ric} \geq 0$ on $X$.   
\end{enumerate}
Here, 
we denote by $R$, ${\rm Ric}$, ${\rm Scal}$ 
the curvature tensor, Ricci curvature, scalar curvature, respectively, 
and $T_p$ is the value of a tensor $T$ at $p$. 
$(C_3)$: The function 
$$
(0,r_p] \to \rl, \qquad 
\rho \mapsto \frac{e^{a \rho^2}}{\rho} \int_{B_\rho (p)} (v_g (\beta)-1) \vol_g 
+ 2a \Theta (\rho) 
$$
is non-decreasing for a function $\Theta: [0, \infty) \to [0, \infty)$ given 
by \eqref{eq:Theta monot nvol}.

\item (Theorem \ref{thm:monotnicity 2dim}) 
$(A_4)$: $(X, g)$ is a 2-dimensional real analytic Riemannian manifold 
and the 2-form $\beta$ is real analytic. 
$(C_4)$: The functions 
$$
(0,r_p] \to \rl, \qquad 
\rho \mapsto \frac{e^{a \rho^2}}{\rho} \int_{B_\rho (p)} 
v_g (\beta) \vol_g, 
\qquad
\mbox{and}
\qquad
\rho \mapsto \frac{e^{a \rho^2}}{\rho} \int_{B_\rho (p)} 
(v_g (\beta)-1) \vol_g, 
$$
are non-decreasing. 
\end{enumerate}
\end{theorem}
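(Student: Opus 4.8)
The plan is to run the Price-type argument off the conservation law. Fix $p$, set $r=\dist_g(p,\cdot)$ and $Y=r\,\partial_r=\tfrac12\nabla(r^2)$. For any symmetric $2$-tensor $S$ with $\mathrm{div}\,S=0$, integrating $\mathrm{div}(\iota_Y S)=\langle S,\tfrac12\mathcal L_Y g\rangle$ over $B_\rho(p)$ gives
\[
\rho\int_{\partial B_\rho(p)}S(\partial_r,\partial_r)\,d\sigma=\int_{B_\rho(p)}\big\langle S,\mathrm{Hess}(r^2/2)\big\rangle\,\vol_g ,
\]
and Hessian comparison gives $\mathrm{Hess}(r^2/2)=g+O(r^2)$, the $O(r^2)$ depending only on a curvature bound on $B_{r_p}(p)$; this is the origin of the factor $e^{a\rho^2}$. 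The algebraic input is that, with $v:=v_g(\beta)$, the stress-energy tensor of $V_g$ equals $S_V:=v\,\gi$ and that of $V^0_g$ equals $S^0:=v\,\gi-\id$ (a determinant computation, with $G_\beta$ as in \eqref{eq:Gn}; cf.\ Remark~\ref{rem:variational SET}), so that, as $\nabla g=0$, the conservation law for $\beta$ is exactly $\mathrm{div}(v\,\gi)=0$. Here $\gi$ is symmetric positive definite: if $-\lambda_1^2,\dots,-\lambda_m^2$ (multiplicity two each) are the nonzero eigenvalues of $(\beta^\sharp)^2$, then $G_\beta$ has eigenvalues $1+\lambda_k^2$ (multiplicity two) plus, when $\dim X$ is odd, an extra eigenvalue $1$; thus $v=\prod_k(1+\lambda_k^2)^{1/2}$, $\lambda_{\max}(\gi)\le 1$ with equality iff $\dim X$ is odd, and the integrand of $(C_1)$ is precisely $\tr S_V=\tr(\gi)\,v$.

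For $(C_1)$ I would apply the identity with $S=S_V\ge0$. Positivity gives $S_V(\partial_r,\partial_r)\le\lambda_{\max}(S_V)\le\tr S_V$ on the sphere and $\langle S_V,\mathrm{Hess}(r^2/2)-g\rangle\ge-Cr^2\,\tr S_V$ in the bulk, so the identity reads $\rho I'(\rho)\ge(1-C\rho^2)I(\rho)$ for $I(\rho):=\int_{B_\rho(p)}\tr S_V\,\vol_g>0$; this differential inequality is equivalent to the stated monotonicity once $a\ge C/2$ and $r_p<\mathrm{inj}_g(p)$ is chosen with $1-Cr_p^2\ge0$. For $(C_2)$ the hypothesis enters only through the observation that in odd dimension $\tr(\gi)\ge1$ and $\lambda_{\max}(\gi)=1$, whence $\tr S_V\ge v\ge S_V(\partial_r,\partial_r)$; the same computation, with $v$ in place of $\tr S_V$ (using $\tr S_V\ge v$ in the bulk and $S_V(\partial_r,\partial_r)\le v$ on the sphere), yields the monotonicity of $e^{a\rho^2}\rho^{-1}\int_{B_\rho(p)}v\,\vol_g$.

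For $(C_3)$ the governing tensor is $S^0=v\,\gi-\id$, which is not positive semidefinite --- in low dimension even $\tr S^0=\tr(\gi)\,v-n$ can be negative (e.g.\ for small $\beta$ when $n=3$) --- so both error terms are now genuinely uncontrolled. Here I would retain the curvature term in $\mathrm{Hess}(r^2/2)_{ij}=g_{ij}-\tfrac13R_{ikjl}x^kx^l+O(|x|^3)$: Taylor expanding at $p$ and using that $x^kx^l$ averages to $\tfrac{\rho^2}{n}\delta^{kl}$ over $\partial B_\rho$ converts the identity into a relation among $\int_{B_\rho}(v-1)\vol_g$, $\int_{B_\rho}\tr S^0\,\vol_g$, a term $c\,\rho^2\langle S^0_p,\Ric_p\rangle$, and lower-order remainders which --- after expanding $\vol_g$ to fourth order --- involve $|R_p|^2$, $|\Ric_p|^2$ and $\Delta\mathrm{Scal}$. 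One then verifies that each of $(a)$ $\mathrm{Scal}_p>0$, $(b)$ $\mathrm{Scal}_p=0$ with $3|R_p|^2-8|\Ric_p|^2-18(\Delta\mathrm{Scal})_p>0$, and $(c)$ $\Ric\ge0$ forces the combined curvature contribution to outweigh the possibly-negative algebraic part, the residual remainders being absorbed into the correction $2a\Theta(\rho)$. Extracting this curvature polynomial precisely --- and recognizing $(b)$ as exactly the borderline condition --- is, I expect, the main obstacle in the theorem.

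For $(C_4)$, in dimension two $(\beta^\sharp)^2$ has the single eigenvalue $-\lambda^2$, so $\gi=(1+\lambda^2)^{-1}\id$ and $S_V=v\,\gi=v^{-1}g$ is a pointwise multiple of the metric; since $\mathrm{div}(fg)=df$, the conservation law then forces $v$ to be locally constant (the real-analyticity hypothesis supplying the regularity needed for this step), so both quantities in $(C_4)$ become a constant multiple of $e^{a\rho^2}\rho^{-1}\mathrm{Area}(B_\rho(p))$, and monotonicity follows from $\mathrm{Area}(B_\rho(p))=\pi\rho^2+O(\rho^4)$ and $\mathrm{length}(\partial B_\rho(p))=2\pi\rho+O(\rho^3)$ upon taking $a$ large and $r_p$ small. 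In every case $r_p$ and $a$ are read off from $\mathrm{inj}_g(p)$ and a curvature bound near $p$, hence depend on $(n,p,g)$ and not on $\beta$, as required.
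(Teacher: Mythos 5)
Parts (1), (2) and (4) of your proposal are correct and are, at bottom, the same mechanism as the paper's: your identity $\rho\int_{\partial B_\rho(p)}S(\partial_r,\partial_r)\,d\sigma=\int_{B_\rho(p)}\la S,{\rm Hess}(r^2/2)\ra\vol_g$ for the divergence-free tensor $S_V=v_g(\beta)\,(\gi)^\flat$ is exactly what the paper extracts from Corollary \ref{cor:ibp} with the radial cutoff $\xi$ (Lemmas \ref{lem:est b} and \ref{lem:lap xi}), and the algebraic inputs you invoke — $g(\gi(\partial_r),\partial_r)\le 1$, ${\rm tr}(\gi)\ge 1$ in odd dimensions, and $v_g(\beta)$ locally constant in dimension two (your computation ${\rm div}\,S_{g,\beta}=d(v_g(\beta)^{-1})$ in fact shows this without analyticity, so that hypothesis is only decorative here) — are Lemma \ref{lem:Gn-1 eq}, Lemma \ref{lem:ineq tr} and Lemma \ref{lem:2dim cons}. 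So for those three cases your packaging (divergence theorem against $r\,\partial_r$ instead of cutoff functions and $\Delta_\beta$) is fine and arguably cleaner.

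Case (3), however, is not proved: you explicitly leave the decisive step open (``extracting this curvature polynomial precisely''), and the mechanism you sketch — retaining the curvature term of ${\rm Hess}(r^2/2)$ and pairing it with $S^0_p$ — cannot work as stated, because the hypotheses (a)--(c) are conditions on $g$ alone while coupling terms such as $\la S^0_p,{\rm Ric}_p\ra$ depend on the unknown $\beta$ and carry no sign (even when ${\rm Ric}\ge 0$, since $S^0$ is indefinite). The paper's proof uses two ingredients you are missing. First, the sharpened odd-dimensional inequality ${\rm tr}(G_\beta^{-1})\ge 1+(n-1)/v_g(\beta)$ of Lemma \ref{lem:ineq tr}, i.e.\ ${\rm tr}(G_\beta^{-1})\,v_g(\beta)-n\ge v_g(\beta)-1$, which disposes of the ``possibly-negative algebraic part'' with $\kappa=1$; your weaker bound ${\rm tr}(\gi)\ge 1$ suffices for (2) but not here. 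Second, the curvature hypotheses enter only through volume comparison for small geodesic balls, namely $\vol(B_\tau(p))\le\omega_n\tau^n$ and $n\int_{B_\tau(p)}\vol_g\ge\tau\frac{\partial}{\partial\tau}\int_{B_\tau(p)}\vol_g$, obtained from Gray's expansion (this is exactly where $3|R_p|^2-8|{\rm Ric}_p|^2-18(\Delta{\rm Scal})_p$ appears) and from Bishop--Gromov in case (c) (Lemma \ref{lem:cond assump}); these control the contribution of the $-\id$ part of $S^0$, with the correction term taken to be the fixed, $\beta$-independent function $\Theta(\tau)=\omega_n\int_0^\tau e^{a\zeta^2}\zeta^n\,d\zeta$ as in Theorem \ref{thm:mono nvol 1} and Corollary \ref{cor:mono nvol sh}, rather than something ``absorbing residual remainders''. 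Until you supply these two inputs and carry out the bookkeeping of Theorem \ref{thm:mono nvol 1}, part (3) of the theorem remains unestablished in your argument.
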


The more detailed statements are given in 
Theorems \ref{thm:mono}, \ref{thm:mono vol}, \ref{thm:mono nvol 1}, \ref{thm:mono nvol 3}, 
\ref{thm:monotnicity 2dim} 
and Corollaries \ref{cor:mono sh}, \ref{cor:mono vol odd dim}. 
Theorem \ref{mainthm 1} can be considered 
to be analogous to Price's monotonicity formula for Yang--Mills connections (\cite{price1983monotonicity}) 
and the monotonicity formula for minimal submanifolds (\cite[Propositions 1.12 and 1.15]{colding2011minimal}).
Theorem \ref{mainthm 1} allows estimates of these versions of volume functionals 
on balls to pass down to smaller balls. 
This is an important property because 
the $\varepsilon$-regularity theorem is proved by combining this property 
with the mean value inequality in the Yang--Mills case. 
We hope that Theorem \ref{mainthm 1} is also useful 
to prove the $\varepsilon$-regularity theorem for our case, 
but there are some problems as in Remark \ref{rem:future problem}.\\

The rough idea of the proof is as follows. 
For the proof, 
we use the Laplacian-like operator $\Delta_\beta$ 
for a 2-form $\beta$ defined in \eqref{eq:lapb}, 
which would be the most suitable operator for our study. 
We first show the ``integration by parts" formula for $\Delta_\beta$ 
when a 2-form $\beta$ satisfies a conservation law in Corollary \ref{cor:ibp}. 
This formula can be considered as the ``mirror" of 
the ``integration by parts" formula on submanifolds. 
See Remark \ref{rem:int by parts mirror interp} and Proposition \ref{prop:FM codiff}. 
Using this formula, we compute $\Delta_\beta \xi$ for a suitable cutoff function $\xi$. 
Then we can discuss as in the Yang--Mills case 
(\cite[Theorem 1]{price1983monotonicity} or \cite[Theorem 2.1.1]{tian2000gauge}) 
to compute the difference in 
(radius-normalized) ``modified volume" on geodesic balls of different radii 
and obtain Theorem \ref{mainthm 1} (1). 
The factor $\tr (G_\beta^{-1})$ appears because we use $\Delta_\beta$ 
whose symbol gives a quadratic form with trace $\tr (G_\beta^{-1})$. 
In this Yang--Mills case, the (standard) Hodge Laplacian is used 
and its symbol gives a quadratic form with constant trace, so 
factors like $\tr (G_\beta^{-1})$ do not appear.

Using this proof, we also compute the differences 
for the volume and the normalized volume in Theorems \ref{thm:mono vol} and \ref{thm:mono nvol 1}, respectively. 
When $X$ is odd dimensional, we can show 
an algebraic estimate in Lemma \ref{lem:ineq tr}. 
This together with Theorem \ref{thm:mono vol} implies 
the monotonicity for the volume (Theorem \ref{mainthm 1} (2)). 

The formula in Theorem \ref{thm:mono nvol 1} gives 
some conditions to obtain the monotonicity for the normalized volume 
(Corollary \ref{cor:mono nvol sh}): 
geometric conditions (the first two assumptions of Corollary \ref{cor:mono nvol sh})
and an algebraic condition 
(the third assumption of Corollary \ref{cor:mono nvol sh}). 
We show that 
the geometric conditions are satisfied 
under the curvature assumptions of (a), (b), (c) in Theorem \ref{mainthm 1} (3), 
which are used to control the volume of geodesic balls 
centered at $p$ (Lemma \ref{lem:cond assump}), 
and the algebraic condition is satisfied when $X$ is odd dimensional 
to obtain Theorem \ref{mainthm 1} (3).
The function $\Theta$ could be understood as an error term.

The two dimensional case is rather special. 
In this case, $V^0_g$ agrees with a simpler functional 
(Remark \ref{rem:2dim cons law}), 
and we can show the strong result that 
the 2-form $\beta$ is parallel under the assumption ($A_4$). Using this, we obtain Theorem \ref{mainthm 1} (4). \\

We can take $r_p= \infty$ and $a=0$ when $X=\rl^n$ with the standard flat metric $g_0$. 
Then we obtain the following vanishing theorem. 

\begin{corollary}[Corollary \ref{cor:vanish}] \label{maincor vanishing}
Let $\beta \in \Omega^2(\rl^{2m+1})$ be a 2-form 
satisfying a conservation law. 
Suppose that for some $p \in \rl^{2m+1}$, we have 
$$
\int_{B_r(p)} (v_{g_0} (\beta)-1) \vol_{g_0} = o(r) 
\qquad \mbox{as} \qquad r \to \infty. 
$$
Then $\beta=0$. 

In particular, if $\beta \in \Omega^2(\rl^{2m+1})$ is a 2-form 
satisfying a conservation law and 
$
\int_{\rl^{2m+1}} (v_{g_0} (\beta)-1) \vol_{g_0} < \infty,  
$
then $\beta=0$. 
\end{corollary}
We also have the vanishing theorem in the two dimensional case as noted in Remark \ref{rem:2dim cons law}. \\

Now we go back to setting of connections. 
Let $(X,g)$ be an oriented $n$-dimensional Riemannian manifold 
and $L \to X$ be a smooth complex line bundle with a Hermitian metric $h$. 
We first show the following. 

\begin{theorem}[Theorem \ref{thm:min imply cons}] \label{mainthm min imply cons}

The curvature 2-form of a minimal connection satisfies a conservation law. 
\end{theorem}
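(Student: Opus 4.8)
The plan is to derive the conservation law from Noether's theorem, exploiting the naturality of the volume functional under diffeomorphisms. Regard $V^0$ as a function of the pair $(g,\beta)$, $V^0(g,\beta)=\int_X(v_g(\beta)-1)\,\vol_g$. Since $v_g(\beta)=\sqrt{\det(\id_{TX}+\beta^\sharp)}$ with $\beta^\sharp\in\Gamma(X,\End TX)$ obtained by raising an index of $\beta$ with $g$, and endomorphisms pull back as endomorphisms, one gets $v_{\phi^*g}(\phi^*\beta)=v_g(\beta)\circ\phi$ and $\vol_{\phi^*g}=\phi^*\vol_g$ for every orientation-preserving diffeomorphism $\phi$ of $X$, hence $V^0(\phi^*g,\phi^*\beta)=V^0(g,\beta)$.

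Next I would fix a compactly supported vector field $W$ on $X$ with flow $\{\phi_t\}$ and differentiate the naturality identity at $t=0$ with $\beta=\FF$. By the chain rule,
\[
\frac{\partial V^0}{\partial g}\bigl[\cL_W g\bigr]+\frac{\partial V^0}{\partial\beta}\bigl[\cL_W\FF\bigr]=0 .
\]
Cartan's formula together with the Bianchi identity $d\FF=0$ gives $\cL_W\FF=d(\iota_W\FF)$, which is exactly the first-order change of the curvature produced by the variation $\iota_W\FF\in\Omega^1(X)$ of the connection $\n$; since $\n$ is a minimal connection, i.e.\ a critical point of $V^0$ on $\cA_0$, the second term vanishes, so $\tfrac{\partial V^0}{\partial g}[\cL_W g]=0$. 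By the definition of the stress-energy tensor $S_\FF$ as the symmetric $2$-tensor representing the metric variation of $V^0_g$ (up to a fixed positive constant), this reads $\int_X\langle S_\FF,\cL_W g\rangle\,\vol_g=0$. Writing $\cL_W g=2\,\mathrm{Sym}(\n W^\flat)$ and integrating by parts — legitimate since $W$ is compactly supported and $S_\FF$ is symmetric — yields $\int_X\langle\operatorname{div}S_\FF,W^\flat\rangle\,\vol_g=0$ for all such $W$, and therefore $\operatorname{div}S_\FF=0$; that is, $\FF$ satisfies a conservation law.

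The only delicate point on this route is the assertion that $\cL_W\FF$ is an admissible variation of $\FF$ within the curvatures of Hermitian connections on $L$, which is what lets us invoke criticality of $\n$; this is immediate because $d\FF=0$ forces $\cL_W\FF=d(\iota_W\FF)$ to be exact. An alternative, more hands-on route — probably closer to the computations of the earlier sections — is to first establish the Euler--Lagrange equation of a minimal connection in the form $d^*\gamma=0$, where $\gamma\in\Omega^2(X)$ is the $\beta$-gradient of the density $v_g$ evaluated at $\beta=\FF$, and then to compute $\operatorname{div}S_\FF$ directly in a local orthonormal frame and check that all terms cancel by virtue of this equation together with $d\FF=0$ — exactly as the Yang--Mills stress-energy tensor is divergence free precisely for Yang--Mills connections. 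On that route the main obstacle is purely computational: keeping track of the algebraic identities relating $\gamma$, the endomorphism $\beta^\sharp$, and $G_\beta$ that produce the cancellation.
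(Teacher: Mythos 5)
Your primary route is the Noether/diffeomorphism-invariance argument, which is essentially what the paper records in Remark \ref{rem:variational minimal}; but as a proof of the theorem as stated it has a genuine gap. You differentiate $V^0$ in $(g,\beta)$ and invoke criticality of $\n$, and both steps presuppose $V^0_g(\FF)<\infty$: this is precisely the hypothesis under which the paper establishes Proposition \ref{prop:fistvar set} and Proposition \ref{prop:fistvar} (Lebesgue dominated convergence), and under which its own variational remark runs the computation you describe. The theorem, however, concerns minimal connections in the sense $H(\n)=0$, a pointwise condition that the paper is careful to define even when $V^0(\n)=\infty$, and the intended applications are on noncompact manifolds (e.g.\ the dDT corollaries on $\rl^7$), where finiteness of the normalized volume cannot be assumed. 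When $V^0_g(\FF)=\infty$, the identity $V^0(\phi_t^*g,\phi_t^*\FF)=V^0(g,\FF)$ is an equality of infinities and cannot be differentiated, and ``critical point of $V^0$'' is not literally meaningful, so your chain-rule step fails there. The gap is fixable — since $W$ is compactly supported, its flow is the identity outside a compact set, so the whole argument can be run for the functional restricted to a fixed relatively compact open set containing $\supp W$, where all integrals are finite — but you neither flag the issue nor supply such a localization.

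The route you relegate to an ``alternative'' is in fact the paper's actual proof, and it requires no new computation: by Proposition \ref{prop:fistvar}, $H(\n)=-v(\n)\,(G_\n^{-1})^*\left(\delta_\n \FF\right)$ (equivalently the form \eqref{eq:mc dstar}, $H(\n)=-d^*\left(v(\n)\left(G_\n^{-1}\circ \FFs\right)^\flat\right)$, which is exactly your ``$d^*\gamma=0$'' formulation), so since $v(\n)>0$ and $G_\n^{-1}$ is invertible, minimality is equivalent to the pointwise equation $\delta_\n\FF=0$. Proposition \ref{prop:div set} already expresses $\mathrm{div}\,S_{g,\beta}$ purely in terms of $d\beta$ and $\delta_\beta\beta$, so together with the Bianchi identity $d\FF=0$ one concludes $\mathrm{div}\,S_{g,\FF}=0$ pointwise, with no integrability assumption at all. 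The ``purely computational obstacle'' you anticipate on that route is thus already packaged in Proposition \ref{prop:div set}, and this is the argument that actually covers every minimal connection, including those of infinite normalized volume.
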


As stated at the beginning of the introduction, 
this can be understood as a consequence of 
Noether's theorem and the diffeomorphism invariance of $V^0_g (\n)$ 
(Remarks \ref{rem:variational SET} and \ref{rem:variational minimal}). 
Hence the same results as in Theorem \ref{mainthm 1} hold for minimal connections. 
In particular, the vanishing theorem as in Corollary \ref{maincor vanishing} holds 
for minimal connections (Corollary \ref{cor:vanish min}). 

In Proposition \ref{prop:fistvar}, we can characterize that 
a Hermitian connection $\n$ is minimal if and only if $\delta_\n \FF=0$, 
where $\delta_\n$ is the codifferential-like operator depending on $\n$ 
given in \eqref{eq:del nabla}. 
Using this, we see that 
the formal ``large radius limit" of the defining equation of minimal connections 
is that of Yang--Mills connections in Remark \ref{rem:large radius limit of min conn}. 
Thus it is natural to expect that minimal connections for a ``sufficiently large" metric  
will behave like Yang--Mills connections. 
As an application, we show the following existence theorem. 

\begin{theorem}[Theorem \ref{thm:exist minimal conn}] \label{mainthm existence}
Let $(X,g)$ be a compact oriented $n$-dimensional Riemannian manifold 
and $L \to X$ be a smooth complex line bundle with a Hermitian metric $h$. 
When the metric $g$ is ``sufficiently large", there exists a minimal connection. 
\end{theorem}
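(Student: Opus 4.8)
The plan is to realise the minimal--connection equation, in the regime where the metric is large, as a small perturbation of the linear Maxwell (abelian Yang--Mills) equation on $L$, and to solve it by the implicit function theorem centred at the harmonic connection. For concreteness take $g=g_\lambda:=\lambda^2 g_0$ for a fixed metric $g_0$ and $\lambda\gg1$ (this is the sense of ``sufficiently large''). By Hodge theory there is a Hermitian connection $\nabla_0$ on $(L,h)$ whose curvature $2$-form $\beta_0$ is the $g_0$-harmonic representative of the de Rham class of the curvature of $L$; since $d^*_{g_\lambda}=\lambda^{-2}d^*_{g_0}$ on $1$- and $2$-forms, $\beta_0$ is harmonic for every $g_\lambda$ as well. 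Every Hermitian connection is $\nabla_0+\sqrt{-1}\,a$ for a real $1$-form $a$, with curvature $\beta_0+da$, so by Proposition~\ref{prop:fistvar} it suffices to produce an $a\in\Omega^1$ with $\delta_{\nabla_0+\sqrt{-1}a}(\beta_0+da)=0$, $\delta_\nabla$ being the operator of \eqref{eq:del nabla}.

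\emph{Reduction to an elliptic fixed-point problem.} The operator $\delta_\nabla$ is built from $d^*_g$ and the weights $v_g(\FF)$ and $G_\FF^{-1}$ of \eqref{eq:Gn}, in such a way that $v_g(\FF)G_\FF^{-1}=\id+O(|\FF^\sharp|^2)$ and hence $\delta_\nabla\FF=d^*_g\FF+(\text{terms of higher order in }\FF^\sharp)$; this is exactly why the large--radius limit is the Yang--Mills equation (Remark~\ref{rem:large radius limit of min conn}). For $g_\lambda$ and a fixed $2$-form $\beta$ one has $\beta^{\sharp_{g_\lambda}}=\lambda^{-2}\beta^{\sharp_{g_0}}$, so the weights equal $\id+O(\lambda^{-4})$ in every $C^k$-norm, uniformly on $C^k$-bounded sets of $\beta$; combined with $d^*_{g_\lambda}=\lambda^{-2}d^*_{g_0}$, the rescaled operator
\[
\cM_\lambda(a):=\lambda^{2}\,\delta_{\nabla_0+\sqrt{-1}\,a}(\beta_0+da)
\]
decomposes, on the space of $g_0$-coclosed $1$-forms, as $\cM_\lambda(a)=\Delta_{g_0}a+R_\lambda(a)$, where $R_\lambda$ collects the higher-order-in-$\FF^\sharp$ contributions. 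Using $d^*_{g_0}\beta_0=0$, one finds $\|R_\lambda(0)\|\le C\lambda^{-4}$ in $C^{k-1,\alpha}$, and $R_\lambda$ has Lipschitz constant $\le C\lambda^{-4}$ on the unit ball of $C^{k+1,\alpha}(\Omega^1)$, with $C$ independent of $\lambda$. Work on $\mathbb V:=\{a\in C^{k+1,\alpha}(\Omega^1):d^*_{g_0}a=0,\ a\perp_{L^2}\mathcal H^1(g_0)\}$ (Coulomb gauge; the $\mathcal H^1$-component is the Jacobian ambiguity and leaves $\FF$ unchanged). On $\mathbb V$ the linear part $\Delta_{g_0}=d^*_{g_0}d$ is an isomorphism onto the analogous subspace $\mathbb W\subset C^{k-1,\alpha}(\Omega^1)$ of coclosed $1$-forms orthogonal to $\mathcal H^1$, with bounded inverse $G$ whose norm depends only on $g_0$; and $\cM_\lambda$ maps $\mathbb V$ into $\mathbb W$, because $\delta_\nabla$ applied to a $2$-form is a $d^*_{g_0}$-exact, hence coclosed, $1$-form $L^2$-orthogonal to $\mathcal H^1$.

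\emph{Solving by contraction.} The equation $\cM_\lambda(a)=0$ is equivalent to the fixed-point equation $a=-G\,R_\lambda(a)=:\cT_\lambda(a)$ on $\mathbb V$. Schauder estimates give $\|G\|\le C$, so for $\lambda\ge\lambda_0$ large one has $\|\cT_\lambda(0)\|\le C\lambda^{-4}$ and $\cT_\lambda$ has Lipschitz constant $\le C\lambda^{-4}<\tfrac12$ on the ball $\{\,\|a\|_{C^{k+1,\alpha}}\le2C\lambda^{-4}\,\}$; hence $\cT_\lambda$ maps this ball into itself and is a contraction there, and its unique fixed point $a_\lambda$ is smooth by elliptic regularity. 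Then $\nabla_\lambda:=\nabla_0+\sqrt{-1}\,a_\lambda$ solves the minimal--connection equation $\delta_{\nabla_\lambda}\FF=0$ for $g_\lambda$, i.e. is a minimal connection for $g_\lambda$ by Proposition~\ref{prop:fistvar}.

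\emph{Main obstacle.} The substantive work is the reduction step: verifying, with constants independent of $\lambda$, that after normalisation by $\lambda^2$ the nonlinear remainder $R_\lambda$ (together with one derivative) is genuinely $O(\lambda^{-4})$. This requires careful bookkeeping of how the index raising $\beta\mapsto\beta^{\sharp_{g_\lambda}}$ and the weights $v_{g_\lambda}$, $G_\beta^{-1}$ in \eqref{eq:del nabla} degenerate to the identity as $\lambda\to\infty$, evaluated on $2$-forms $\beta_0+da$ with $a$ in a fixed $C^{k+1,\alpha}$-ball, together with control of the $\nabla$-dependence of $\delta_\nabla$. A more routine but essential point is to take all norms with respect to the fixed metric $g_0$, so that $\|G\|$ is $\lambda$-independent — the geometrically natural $g_\lambda$-norms scale with $\lambda$ and would obscure the estimates. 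Finally, one should check the gauge-theoretic housekeeping: that a Coulomb-gauge solution is an honest minimal connection, and that when $b_1(X)>0$ the finite-dimensional $\mathcal H^1$-ambiguity is irrelevant since it does not change the curvature.
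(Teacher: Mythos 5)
Your overall strategy coincides with the paper's: perturb off the Yang--Mills (harmonic) connection in the large-metric regime, the paper packaging this as a map $\cF(s,\cdot)$ with $s=1/\lambda$ and invoking the implicit function theorem at $\n_0$, while you run a quantitative contraction argument after rescaling by $\lambda^2$. Your scaling bookkeeping ($\beta^{\sharp_{g_\lambda}}=\lambda^{-2}\beta^{\sharp_{g_0}}$, weights $=\id+O(\lambda^{-4})$, $d^{*_{g_\lambda}}=\lambda^{-2}d^{*_{g_0}}$) and the isomorphism $d^{*_{g_0}}d\colon\mathbb V\to\mathbb W$ are fine.

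There is, however, one genuine gap: the assertion that $\cM_\lambda$ maps $\mathbb V$ into $\mathbb W$ ``because $\delta_\nabla$ applied to a $2$-form is a $d^*_{g_0}$-exact $1$-form''. That is not true of $\delta_\n\FF$ itself. What is coexact is the mean curvature: by \eqref{eq:mc dstar}, $H(\n)=-d^{*}\bigl(v(\n)(G_\n^{-1}\circ\FFs)^\flat\bigr)$, and by \eqref{eq:MC} one has $H(\n)=-v(\n)(G_\n^{-1})^*\,\delta_\n\FF$; so $\delta_\n\FF$ differs from a coexact form by the pointwise invertible but non-constant zeroth-order factor $v(\n)^{-1}(G_\n)^{*}$ and in general has nonzero components along closed $1$-forms. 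This matters: your fixed-point equation $a=-G\,R_\lambda(a)$ presupposes $R_\lambda(a)\in\mathbb W$, and as written you only solve the $\mathbb W$-projection of $\cM_\lambda(a)=0$, leaving the exact and harmonic parts of $\delta_\n\FF$ unaccounted for. This is precisely why the paper's map $\cF(s,\n)$ carries the prefactor $(\det\widetilde G^s_\n)^{1/4}\bigl((\widetilde G^s_\n)^{-1}\bigr)^*$: with it the map is a rescaling of $H(\n)$ and genuinely lands in $d^*\Omega^2=\mathbb W$ (using that $d^{*_{g_\lambda}}\Omega^2=d^{*_{g_0}}\Omega^2$). Two repairs are available. (i) Replace $\cM_\lambda(a)$ by $\lambda^{2}\,v(\n)(G_\n^{-1})^*\delta_\n\FF$ (i.e.\ $-\lambda^2 H(\n)$ up to normalisation); its linearisation is still $d^{*_{g_0}}d$ up to $O(\lambda^{-4})$, it is valued in $\mathbb W$, and your contraction goes through. (ii) Keep $\cM_\lambda$, solve the projected equation, and then observe that if the $\mathbb W$-component of $\delta_\n\FF$ vanishes then $\delta_\n\FF$ is closed, while $v(\n)(G_\n^{-1})^*\delta_\n\FF$ is coexact, hence $L^2$-orthogonal to closed forms; pairing the two and using the pointwise positivity of $v(\n)(G_\n^{-1})^*$ forces $\delta_\n\FF=0$. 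With either fix (and a bootstrapping remark for smoothness, which the paper obtains from overdetermined ellipticity of the system $\cF=0$, $d^*a=0$), your argument becomes a correct, essentially parallel alternative to the paper's implicit-function-theorem proof.
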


\vspace{1em}
Next, we consider deformed Donaldson--Thomas (dDT) connections. 
There is also the notion of dDT connections on a manifold with 
a ${\rm Spin}(7)$-structure. 
For distinction, 
we call dDT connections on a manifold with 
a $G_2$- and ${\rm Spin}(7)$-structure 
$G_2$- and ${\rm Spin}(7)$-dDT connections, respectively. 
See Definitions \ref{def:G2dDT}--\ref{def:dHYM} for the definition.

\begin{theorem}[Propositions \ref{prop:dDTismin}, \ref{prop:mono nvol G2dDT} 
and Corollary \ref{cor:mono vol G2dDT}] \label{mainthm dDT}

\begin{enumerate}
    \item 
$G_2$-dDT connections on a manifold with a closed $G_2$-structure $\varphi$ ($d \varphi=0$), 
${\rm Spin}(7)$-dDT connections on a (torsion-free) ${\rm Spin}(7)$-manifold 
and deformed Hermitian Yang--Mills (dHYM) connections 
on a K\"ahler manifold are minimal. 

\item 
Let $X$ be a (torsion-free) $G_2$-manifold 
and $L \to X$ be a smooth complex line bundle with a Hermitian metric $h$. 
Fix $p \in X$. Then there exist $0<r_p< {\rm inj}_g (p)$ 
and a constant $a=a(n,p,g) \geq 0$ such that 
for a $G_2$-dDT connection $\n$ of $(L,h)$, 
$$
(0,r_p] \to \rl, \qquad 
\rho \mapsto \frac{e^{a \rho^2}}{\rho^{5/2}} \int_{B_\rho (p)} v (\n)  \vol_g
$$
and 
$$
(0,r_p] \to \rl, \qquad 
\rho \mapsto \frac{e^{a \rho^2}}{\rho^{13/7}} \int_{B_\rho (p)} (v_g (\beta)-1) \vol_g 
+ 2a \Theta (\rho) 
$$
are non-decreasing for a function $\Theta: [0, \infty) \to [0, \infty)$. 
\end{enumerate}
\end{theorem}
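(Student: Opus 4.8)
\emph{Part (1): $G_2$-, $\Spin(7)$-dDT and dHYM connections are minimal.}
The plan is to reduce everything to the Euler--Lagrange characterisation of Proposition~\ref{prop:fistvar}, that a Hermitian connection $\n$ is minimal if and only if $\delta_\n\FF=0$. First I would recall the three defining equations (Definitions~\ref{def:G2dDT}--\ref{def:dHYM}, after~\cite{lee2009geometric}): each asserts the vanishing of the ``imaginary part'', relative to the ambient calibration form, of a fixed polynomial in $\FF$ --- the model being the deformed Hermitian--Yang--Mills equation on a K\"ahler $n$-fold, with cubic, respectively quartic, analogues in the $G_2$ and $\Spin(7)$ cases. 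The operator $\delta_\n\FF$, written out, is the ordinary codifferential $d^*$ applied to an algebraic modification of $\FF$ (its DBI counterpart). The key step is then a fibrewise computation, performed separately with the $U(n)$-, $G_2$- and $\Spin(7)$-decompositions of $\Lambda^2 T^*X$, showing that each dDT equation is exactly the condition making this modification co-closed by virtue of the Bianchi identity $d\FF=0$ together with the closedness of the ambient structure form --- $d\varphi=0$ for a closed $G_2$-structure, $d\omega=0$ for K\"ahler, torsion-freeness for $\Spin(7)$. Then $\delta_\n\FF=0$, so $\n$ is minimal. The same linear algebra, repackaged as a ``mirror calibration'' inequality $v_g(\n)\,\vol_g\ge\Psi_\n$ --- with $\Psi_\n$ a closed $n$-form whose de Rham class depends only on $c_1(L)$ and the fixed structure, and equality exactly when $\n$ is dDT --- additionally exhibits dDT connections as absolute minimisers of $V$ (of $V^0$ in the noncompact, normalised setting), in parallel with calibrated submanifolds being volume-minimising; but for the stated theorem only criticality is needed. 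This is Proposition~\ref{prop:dDTismin}.

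\emph{Part (2): sharpened monotonicity for $G_2$-dDT connections.}
Once Part (1) is in hand, a $G_2$-dDT connection $\n$ on a torsion-free $G_2$-manifold is minimal, so by Theorem~\ref{thm:min imply cons} its curvature $\FF$ satisfies a conservation law; and since such a manifold is odd-dimensional and Ricci-flat --- case $(A_3)$(c) of Theorem~\ref{mainthm 1} --- the differential identities of Theorems~\ref{thm:mono vol} and~\ref{thm:mono nvol 1} apply verbatim and already give monotonicity of $e^{a\rho^2}\rho^{-1}\int_{B_\rho(p)}v_g(\FF)\,\vol_g$ and of the correspondingly normalised quantity. In those identities the exponent is controlled from below by the pointwise quantity $\tr(G_\FF^{-1})$: the skew endomorphism $\FFs$ has eigenvalues $0,\pm\i\mu_1,\pm\i\mu_2,\pm\i\mu_3$ with $\mu_1,\mu_2,\mu_3\ge0$ on the $7$-dimensional fibre, so $\tr(G_\FF^{-1})=1+2\sum_{j=1}^{3}(1+\mu_j^2)^{-1}$, and the generic odd-dimensional bound used in Lemma~\ref{lem:ineq tr} is merely $\tr(G_\FF^{-1})\ge1$. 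The improvement rests on a pointwise algebraic input special to $G_2$-dDT connections: the dDT equation of Part (1), restricted to the fibre, constrains $\FF$ to a limited family (the ``mirror'' of the associative condition), on which $\tr(G_\FF^{-1})$ --- and the analogous weighted quantity that controls $v_g(\FF)-1$ in Theorem~\ref{thm:mono nvol 1} --- admits a strictly larger lower bound. Re-running the monotonicity computations with these sharper pointwise estimates in place of the generic ones upgrades the exponent $1$ to $5/2$ for $v_g(\FF)$ and to $13/7$ for $v_g(\FF)-1$, the factors $e^{a\rho^2}$ and $2a\Theta(\rho)$ carrying over unchanged; these are Corollary~\ref{cor:mono vol G2dDT} and Proposition~\ref{prop:mono nvol G2dDT}.

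\emph{The main obstacle.}
The monotonicity bookkeeping --- the cutoff computation with $\Delta_\beta$, the ``integration by parts'' of Corollary~\ref{cor:ibp}, and the passage between geodesic balls of different radii --- is routine once the pointwise estimates are available, being exactly the machinery already established in Theorems~\ref{thm:mono vol} and~\ref{thm:mono nvol 1}. The genuine difficulty is entirely fibrewise and algebraic, in two places. For Part (1) it is to carry out the co-closedness identity uniformly across the three geometries: one must control precisely how $\det(\id_{TX}+\FFs)$, the dDT polynomial, and the relevant Hodge-type operator interact under the $U(n)$-, $G_2$- and $\Spin(7)$-decompositions of $\Lambda^2$. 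For Part (2) it is to extract the \emph{sharp} constants: this amounts to minimising $\tr(G_\FF^{-1})$, and its weighted analogue, over exactly the configurations of $\FF$ permitted by the $G_2$-dDT equation, and then verifying that the minimiser yields precisely $5/2$ and $13/7$ and is compatible with the curvature error terms so that the $e^{a\rho^2}$-normalisation still closes. Obtaining the optimal constant --- not merely some improvement --- and confirming the exponents cannot be pushed further by this route is the crux.
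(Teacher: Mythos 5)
Your route to Part (1) has a genuine gap in exactly the case the theorem is stated for. You propose to prove criticality by a fibrewise computation: differentiate the dDT equation and conclude $\delta_\n \FF=0$ from the Bianchi identity ``together with the closedness of the ambient structure form $d\varphi=0$''. But $\delta_\n$ is built from covariant derivatives, and differentiating $-\FF^3/6+\FF\wedge *\varphi=0$ by $D_i$ produces the extra term $\FF\wedge D_i(*\varphi)$; killing it needs $D(*\varphi)=0$, i.e.\ a \emph{torsion-free} $G_2$-structure, which is strictly stronger than $d\varphi=0$ (a closed $G_2$-structure need not have $*\varphi$ closed, let alone parallel). The paper makes this point explicitly: the direct computation you describe appears only as a remark and is stated to require torsion-freeness. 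For merely closed $\varphi$, the paper proves Proposition \ref{prop:dDTismin} by the argument you mention only as an aside: the pointwise ``mirror associator'' inequality $v(\widetilde\n)\geq |1-\la E^2_{\widetilde \n},*\varphi\ra/2|$, with equality exactly at dDT connections, combined with Stokes' theorem and $d\varphi=0$ to show $\int_K E^2_{\widetilde\n}\wedge\varphi$ is unchanged under compactly supported variations; hence a $G_2$-dDT connection locally minimizes $V^0$ among compactly supported variations and is therefore minimal. In the noncompact setting this localization is precisely what replaces your global ``absolute minimiser'' remark. Your treatment of the dHYM case (primitivity of $D_i\FF$ with respect to $\omega_\n$) and of the ${\rm Spin}(7)$ case (where torsion-freeness is assumed in the statement) does match the paper.

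For Part (2) your plan coincides with the paper's: minimality plus Theorem \ref{thm:min imply cons} gives a conservation law, Ricci-flatness of $G_2$-manifolds supplies the volume-comparison hypotheses, and the improved exponents are to come from pointwise bounds $\tr(G_{\FF}^{-1})\geq 5/2$ and $\left(\tr(G_{\FF}^{-1})\,v(\n)-7\right)/(v(\n)-1)\geq 13/7$ fed into Corollaries \ref{cor:mono vol sh} and \ref{cor:mono nvol sh} with $\kappa=5/2$ and $\kappa=13/7$. But the proposal stops exactly where the substance lies: you assert these constants without deriving them, and indeed you flag this as ``the crux''. The paper obtains them from Lemma \ref{lem: simplified_dDT} (up to the $G_2$-action, a pointwise solution of the dDT equation is $c_1e^{23}+c_2e^{45}+c_3e^{67}$ with $c_1+c_2+c_3=c_1c_2c_3$) followed by the explicit two-variable optimization of Lemma \ref{lem:estimate dDT} after eliminating $c_3$. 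Without that classification and computation the exponents $5/2$ and $13/7$ are unsubstantiated, so as written Part (2) is a correct roadmap whose decisive algebraic step is missing.
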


(1) is the ``mirror" version of the fact that 
calibrated submanifolds in a $G_2$-, ${\rm Spin}(7)$-, or 
K\"ahler manifold are minimal submanifolds. 
It is already proved in \cite[Corollaries 4.5, 5.4, 6.5]{kawai2021mirror} 
when a $G_2$-, ${\rm Spin}(7)$-, or a 3,4-dimensional K\"ahler manifold is compact without boundary. 
We show that these connections are also minimal  
when a manifold is noncompact (and of any dimension in the K\"ahler case). 
For more details, see Section \ref{sec:ddt dhym min}.

(2) says that we obtain the stronger monotonicity formula in the $G_2$-dDT case 
than in Theorem \ref{mainthm 1} in the sense that the power of $\rho$ 
in the denominator can be taken larger. 
We obtain this using some algebraic equations that $G_2$-dDT connections must satisfy given 
in \cite{kawai2020deformation}. 
Hence we obtain the following stronger vanishing theorem in the $G_2$-dDT case 
than in Corollary \ref{maincor vanishing}. 

\begin{corollary}[Corollary \ref{cor:vanish G2dDT}] \label{maincor vanish G2dDT}
Let $(L, h) \to \rl^7$ be a (necessarily trivial) smooth complex Hermitian line bundle 
over $\rl^7$ with the standard $G_2$-structure inducing the standard flat metric $g_0$. 
If $\n$ is a $G_2$-dDT connection satisfying 
$$
\int_{B_r(p)} (v (\n)-1) \vol_{g_0} = o(r^{13/7}) 
\qquad \mbox{as} \qquad r \to \infty 
$$
for some $p \in \rl^7$, 
then $\n$ is flat. 

In particular, if $\n$ is a $G_2$-dDT connection and 
$
\int_{\rl^7} (v(\n)-1) \vol_{g_0} < \infty,  
$
then $\n$ is flat. 
\end{corollary}

\begin{remark} \label{rem:future problem}
In Theorems \ref{mainthm 1} and \ref{mainthm dDT}, 
we state that 
a function of the form 
$\frac{e^{a \rho^2}}{\rho^\kappa} \int_{B_\rho (p)} 
\widetilde v_g (\beta)  \vol_g
$
is non-decreasing for some $\kappa \geq 1$. 
We are not sure whether the $\kappa$ is optimal or not. 
That is, it might be possible that we can show the monotonicity for a larger $\kappa$ 
and such a stronger statement might be required for the bubbling analysis. 
We have no idea about these now. 
The reasons why it is difficult to determine which $\kappa$ is optimal 
include the following: 
(a) As stated below Theorem \ref{mainthm 1}, 
the symbol of $\Delta_\beta$ gives a quadratic form with trace $\tr (G_\beta^{-1})$, 
which is nonconstant in general. 
This causes a problem in the choice of $\kappa$ 
so that $\kappa$ cannot be taken very large. 
The symbol of the Hodge Laplacian used in the Yang--Mills case 
gives a quadratic form with constant trace, 
so $\kappa$ can be taken sufficiently large. 
(b) In the Yang--Mills case, $\kappa$ is taken so that the radius-normalized 
Yang--Mills functional is scaling invariant 
(cf. \cite[Remark 3.31]{fadel2019gauge} or \cite[Remark 3.25]{fadel2016blow}).
We cannot find such a canonical scaling invariance for $V_g$ or $V^0_g$. 
(c) In the minimal submanifold case, 
$\kappa$ is the dimension of a submanifold 
(cf. \cite[Propositions 1.12 and 1.15]{colding2011minimal}). 
As stated at the beginning of the introduction, 
$V$ corresponds to the volume functional for submanifolds 
via the real Fourier--Mukai transform. 
However, the information of the dimension of submanifolds is lost 
via the real Fourier--Mukai transform.

Next, we would like to prove the $\varepsilon$-regularity theorem for the compactness theorem, but there are some problems. 
In the Yang--Mills case, 
the mean value inequality for the Hodge Laplacian is used to prove the $\varepsilon$-regularity theorem. 
See \cite{tian2000gauge} or 
\cites{fadel2019gauge, fadel2016blow}. 
We will have to use $\Delta_\beta$ instead of the Hodge Laplacian. 
However, the eigenvalues of the quadratic form given by the symbol of $\Delta_\beta$ 
are not uniformly bounded from below, 
so we will not be able to use the mean value inequality for $\Delta_\beta$ 
as in \cite[Theorem 9.20]{gilbarg2001elliptic} 
to estimate the value of a point by the uniform constant multiple of an integral. 

There is a mean value inequality in the minimal submanifold case. 
The eigenvalues will not be uniformly bounded in this case, but the Lebesgue differentiation theorem 
is used in the proof. See for example \cite[Proposition 1.15 and Corollary 1.16]{colding2011minimal}. 
For the application of the Lebesgue differentiation theorem, 
we will have to show the monotonicity for $\kappa=n= \dim X$, which seems to be difficult to prove.  
Corollary \ref{cor:mono vol sh} gives a condition for this about the volume functional, 
but $\beta=0$ must be satisfied to obtain the monotonicity for $\kappa=n$. 
We will need to solve these problems somehow 
to obtain the $\varepsilon$-regularity theorem. 
\end{remark}

\SkipTocEntry \subsection*{Organization of this paper}
This paper is organized as follows. 
In Section \ref{sec:def vol}, 
we give a precise definition of the volume functional for 2-forms  
and introduce a tensor $G_\beta$ depending on a 2-form $\beta$ that is 
frequently used in this paper. 

In Section \ref{sec:SET cons law}, 
we define the stress-energy tensor for a 2-form by the variation of $V^0_g$ 
with respect to the metric $g$, 
and we explicitly describe the conditions for a 2-form to satisfy a conservation law. 
Then we deduce some important formulas including the ``integration by parts" formula 
for $\Delta_\beta$ when a 2-form $\beta$ satisfies a conservation law. 

In Section \ref{sec:monot formula}, we prove the monotonicity formulas 
for some versions of volume functionals. 
First, we consider the ``modified volume" case, which corresponds 
to Theorem \ref{mainthm 1} (1). 
Then we consider the volume and normalized volume cases 
and give conditions to obtain the monotonicity. 
Then we show that some of the conditions are satisfied in the case of odd and two dimensions 
to obtain Theorem \ref{mainthm 1} (2)--(4) and Corollary \ref{maincor vanishing}. 

In Section \ref{sec:min conn}, we compute the first variation of 
the volume functional for Hermitian connections and give a simpler description than 
in \cite[Proposition 3.2]{kawai2021mirror}. 
Using this, we show Theorem \ref{mainthm min imply cons}. 
Then we prove the existence theorem (Theorem \ref{mainthm existence}) 
and Theorem \ref{mainthm dDT}, Corollary \ref{maincor vanish G2dDT} for dDT connections. 

In Appendix \ref{sec:FM}, 
we give the ``mirror" correspondence between geometric objects 
by the real Fourier--Mukai transform.

\vspace{1em}
\noindent{{\bf Acknowledgements}}: 
The author would like to thank Hikaru Yamamoto for a number of useful discussions 
and Daniel Fadel for helpful discussions on early versions of this paper. 
He also would like to thank anonymous referees for the careful reading of an earlier version of this paper and many useful comments which 
significantly improved the quality of the paper.

\subsection{Notation}\label{sec:cons not}

We summarize the notation used in this paper. 

We omit the summation symbol $\Sigma$ if it does not cause confusion. 
Let $(X,g)$ be an oriented $n$-dimensional Riemannian manifold. 
We assume that $X$ has no boundary throughout this paper. 
Denote by $\Omega^k=\Omega^k(X)$ the space of $k$-forms on $X$. 
For $v \in TX$ and $\alpha \in T^*X$, 
define $v^\flat \in T^*X$ and $\alpha^\sharp \in TX$ by 
$$
v^\flat = g(v, \cdot), \qquad \alpha=g(\alpha^\sharp, \cdot), 
$$
respectively. 
For an endomorphism $K \in \Gamma(X, {\rm End}(TX))$, 
$K^{\flat} \in \Gamma(X, T^*X \otimes T^*X)$ is a 2-tensor
defined by 
\begin{align} \label{eq:Kflat}
K^{\flat}(u_1, u_2)=g(K(u_1), u_2)    
\end{align}
for $u_1, u_2 \in TX$. 
For a 2-tensor $T \in \Gamma(X, T^*X \otimes T^*X)$, 
define an endomorphism 
$T^\sharp \in \Gamma (X, {\rm End}(TX))$ by 
\begin{align} \label{eq:def sharp}
g (T^\sharp (u_1),u_2) = T (u_1,u_2)    
\end{align}
for $u_1, u_2 \in TX$. 
Then we have 
$(K^{\flat})^\sharp=K, (T^\sharp)^\flat=T$, and 
\begin{align}
K^\flat (u, \cdot)=(K(u))^\flat, \qquad
T^\sharp (u)= (T(u,\cdot))^\sharp,    
\end{align}
for any $u \in TX$, endomorphism $K$ and 2-tensor $T$.  
Define the metric for 2-tensors by 
\begin{align} \label{eq:ipdef}
\la \alpha_1 \otimes \alpha_2, \alpha_3 \otimes \alpha_4 \ra
= 
\frac{1}{2} \la \alpha_1, \alpha_3 \ra \la \alpha_2, \alpha_4 \ra, 
\end{align}
where $\alpha_j \in \Omega^1$. 
This definition is compatible with the standard metric 
for 2-forms. That is, we have 
$
\la \alpha_1 \wedge \alpha_2, \alpha_3 \wedge \alpha_4 \ra
= 
\la \alpha_1, \alpha_3 \ra \la \alpha_2, \alpha_4 \ra
- \la \alpha_1, \alpha_4 \ra \la \alpha_2, \alpha_3 \ra, 
$
where $\alpha_1 \wedge \alpha_2 
= \alpha_1 \otimes \alpha_2 - \alpha_2 \otimes \alpha_1$. 
For 2-forms $\beta_1, \beta_2$ 
and symmetric 2-tensors $h_1, h_2$, we have 
\begin{align} \label{eq:tr met}
{\rm tr} \left(\beta_1^{\sharp} \circ \beta_2^\sharp\right)
=-2\left\la \beta_1, \beta_2 \right\ra, \qquad 
{\rm tr} \left(h_1^{\sharp} \circ h_2^\sharp\right)
=2\left\la h_1, h_2 \right\ra, \qquad
{\rm tr} \left(h^{\sharp} \right)
=2\left\la h, g \right\ra. 
\end{align}

We denote by $D$ the Levi-Civita connection with respect to $g$.  
Note that for any vector field $u$, 
2-form $\beta$ and symmetric 2-tensor $h$, 
$D_u \bb=(D_u \beta)^\sharp$ is skew-symmetric and $D_u h$ is symmetric. 

For a local orthonormal frame $\{ e_i \}$ with its dual $\{ e^i=e_i^\flat \}$, set 
\begin{align} \label{eq:def deri}
D_i T:= D_{e_i} T
\end{align}
for any tensor $T$. 

\section{The (normalized) volume functional} \label{sec:def vol}
Let $(X,g)$ be an oriented $n$-dimensional Riemannian manifold. 
Define $V=V_g :\Omega^2 \to [0, \infty]$ by 
\begin{align} \label{eq:def vol func}
V_g (\beta)= \int_X v_g(\beta) \vol_g,\qquad \mbox{where} \quad 
v_g (\beta)= \sqrt{\det (\id_{TX} + \beta^{\sharp_g})}. 
\end{align}
Here, 
$\vol_g$ is the volume form defined by the Riemannian metric $g$ 
and 
$\beta^\sharp = \beta^{\sharp_g} \in \Gamma (X, {\rm End} TX)$ 
is defined as in \eqref{eq:def sharp}.  
Note that $v_g (\beta) \geq 1$ since $\beta^{\sharp_g}$ is skew-symmetric. 
See also the proof of Lemma \ref{lem:ineq tr}. 
We call $V$ the {\bf volume functional}. 
It is because 
$V$ is considered as the ``mirror'' of the standard volume for submanifolds 
via the real Fourier--Mukai transform 
when $\beta$ is the ($-\i$ times) curvature 
of a Hermitian connection of a Hermitian line bundle. 
See the proof of \cite[Lemma 4.3]{kawai2021FM}.

If $X$ is noncompact of infinite volume, 
we always have $V_g(\beta)=\infty$ as $v_g (\beta) \geq 1$ for any $\beta \in \Omega^2$. 
To obtain some information from the volume functional, 
we define the {\bf normalized volume functional} 
$V^0=V^0_g :\Omega^2 \to [0, \infty]$ by 
\begin{align} \label{eq:def nor vol func}
V^0_g (\beta)= \int_X \left( v_g(\beta)-1 \right) \vol_g 
\end{align}
so that it can take finite values (for some $\beta$). 
A similar definition is found in \cite[Definition 3.1]{Wei2022}. 
The functional $V^0_g$ has a simple and useful property as shown in \eqref{eq:norvol 00}.

\begin{remark} \label{rem:det Gb}
It is useful to introduce $G_\beta \in \Gamma (X, {\rm End} TX)$ for $\beta \in \Omega^2$ 
defined by 
\begin{align} \label{eq:Gn}
\begin{split}
G_\beta:=&\id_{TX} - \beta^\sharp \circ \beta^\sharp \\
=& \left(\id_{TX} - \bb \right) \circ \left(\id_{TX} + \bb \right)
= {}^t\! \left(\id_{TX} + \bb \right) \circ \left(\id_{TX} + \bb \right), 
\end{split}
\end{align}
where we denote by ${}^t T$ the transpose of $T \in \Gamma (X, {\rm End} TX)$ 
with respect to $g$. 
We see that $G_\beta$ is symmetric and positive definite. 
Since 
$\bb \circ G_\beta = G_\beta \circ \bb$, 
we have 
\begin{align} \label{eq:bG comm}
\bb \circ G_\beta^{-1} = G_\beta^{-1}\circ \bb, 
\end{align}
which we use frequently. 
We see that $G_\beta^{-1}\circ \bb$ is skew-symmetric by \eqref{eq:bG comm}. 
This identity also implies that 
\begin{align} \label{eq:bGb}
\bb \circ G_\beta^{-1} \circ \bb
= - \id_{TX} + G_\beta^{-1}. 
\end{align}
We see that $v_g (\beta)$ is rewritten as 
\[
v_g (\beta) = \left(\det G_\beta \right)^{1/4}. 
\]
\end{remark}

Since 
$G_\beta=\id_{TX} + {}^t \beta^\sharp \circ \beta^\sharp$, 
eigenvalues of $G_\beta$ are greater than or equal to 1. 
So eigenvalues of $G_\beta^{-1}$ are less than or equal to 1, 
and we have the following. 
\begin{lemma} \label{lem:Gn-1 eq}
For any unit vector $v \in TX$, we have 
$$
g(G_\beta^{-1}(v),v) \leq 1 \qquad \mbox{and} \qquad
\tr \left(G_\beta^{-1} \right) \leq n. 
$$
\end{lemma}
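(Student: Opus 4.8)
The plan is to reduce the statement to elementary linear algebra at each point $x \in X$, working in the Euclidean space $(T_xX, g_x)$. The key observation is that $G_\beta$ is $g$-symmetric and positive definite with all eigenvalues bounded below by $1$; inverting, $G_\beta^{-1}$ is $g$-symmetric and positive definite with all eigenvalues in $(0,1]$, and both asserted inequalities then follow immediately.

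First I would record that $G_\beta^{-1}$ is symmetric with respect to $g$. Indeed, since $\bb = \beta^\sharp$ is skew-symmetric, $\bb\circ\bb$ is $g$-symmetric, so $G_\beta = \id_{TX} - \bb\circ\bb$ is $g$-symmetric, and hence so is its inverse. Next, using the identity $G_\beta = \id_{TX} + {}^t\bb \circ \bb$ from \eqref{eq:Gn}, for any $v \in T_xX$ I compute $g(G_\beta(v), v) = |v|^2 + g({}^t\bb\,\bb(v), v) = |v|^2 + |\bb(v)|^2 \ge |v|^2$. Thus the symmetric bilinear form $g(G_\beta(\cdot),\cdot) - g(\cdot,\cdot)$ is positive semidefinite, which is to say that every eigenvalue of $G_\beta$ is $\ge 1$.

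Consequently every eigenvalue of $G_\beta^{-1}$ lies in $(0,1]$. I would then choose a $g$-orthonormal basis $\{f_i\}$ of $T_xX$ diagonalizing $G_\beta^{-1}$, with eigenvalues $\mu_1, \dots, \mu_n \in (0,1]$. For a unit vector $v = \sum_i c_i f_i$ (so that $\sum_i c_i^2 = 1$) one obtains $g(G_\beta^{-1}(v), v) = \sum_i \mu_i c_i^2 \le \sum_i c_i^2 = 1$, which gives the first inequality; and $\tr(G_\beta^{-1}) = \sum_i \mu_i \le n$, which gives the second.

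There is no real obstacle here: the entire content is the positivity of $G_\beta$ together with the lower bound $1$ on its eigenvalues, which is already essentially noted in the text preceding the lemma. The proof merely makes these observations explicit and extracts the trace bound by summing over an eigenbasis.
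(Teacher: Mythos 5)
Your proof is correct and follows essentially the same route as the paper, which simply observes that $G_\beta=\id_{TX}+{}^t\beta^\sharp\circ\beta^\sharp$ forces all eigenvalues of $G_\beta$ to be at least $1$, hence those of $G_\beta^{-1}$ to lie in $(0,1]$, from which both inequalities follow; you have merely made the symmetry and diagonalization steps explicit.
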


\begin{remark} \label{rem:norvol 00}
By the proof of \cite[Lemma A.2]{kawai2021mirror}, we see that 
$$
v_g (\beta)
=
\left| e^\beta \right|
=
\sqrt{\sum_{k=0}^{[n/2]} \left| \frac{\beta^k}{k!} \right|^2}
= 
\sqrt{1+ \left| \beta \right|^2 
+ 
\left| \frac{\beta^2}{2!} \right|^2
+ 
\left| \frac{\beta^3}{3!} \right|^2
+ 
\cdots
}. 
$$
There are terms containing $\beta^2$ or $\beta^3$, 
so the (normalized) volume functional is not studied in the framework of 
$F$-energy functionals as in \cite[Section 2]{Dong2011vanishing} when $\dim X \geq 4$. 
When $\dim X=2$, 
$V_g (\beta)$ is considered as an $F$-energy functional 
for a certain function $F$ and there are some special properties 
as we see Section \ref{sec:2dim}. 
By this description of $v_g (\beta)$, we easily see that 
\begin{align}\label{eq:norvol 00}
V^0_g (\beta)=0 
\qquad \Longleftrightarrow \qquad 
\beta=0.
\end{align}

\end{remark}

\section{The stress-energy tensor and a conservation law} \label{sec:SET cons law}

In this section, 
we compute the variation of $V^0_g (\beta)$ 
with respect to $g$ for a fixed $\beta \in \Omega^2$, 
and define the stress-energy tensor $S_{g,\beta}$ as its $L^2$ gradient. 
Then we say that 
a 2-form satisfies a conservation law 
if the stress-energy tensor is divergence free. 
These notions are defined in the same way 
in the cases of ($F$-)harmonic maps and ($F$-)Yang--Mills connections 
\cites{Baird1980, Dong2011vanishing, Wei2022}. 
The reason why we study these is due to Noether's theorem 
and the diffeomorphism invariance of $V^0_g (\beta)$. 
That is, when we consider the variation of $V^0_g (\beta)$ with respect to $\beta$, 
the critical points satisfy a conservation law, 
which is explained in Remark \ref{rem:variational SET}.

\begin{proposition} \label{prop:fistvar set}
Suppose that $V^0_g(\beta)< \infty$. 
Fix a compactly supported symmetric 2-tensor $h$ and set $g_t=g+t h$ for $|t| < \eps$. 
Then we have 
\[
\left. \frac{d}{dt} V^0_{g_t} (\beta) \right|_{t=0} 
= \la h, S_{g, \beta} \ra_{L^2}. 
\]
Here, $\la \,\cdot\,, \,\cdot\, \ra_{L^2}$ 
is the $L^2$ inner product with respect to the metric $g$, 
and the symmetric 2-tensor $S_{g, \beta} \in \Gamma(X, S^2 T^*X)$ is given by 
\begin{align} \label{eq:set}
S_{g, \beta}
= -g + v_g (\beta) \cdot (G_\beta^{-1})^\flat. 
\end{align}
\end{proposition}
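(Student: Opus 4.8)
\emph{The plan} is to differentiate $V^0_{g_t}(\beta)=\int_X(v_{g_t}(\beta)-1)\,\vol_{g_t}$ under the integral sign and reduce everything to two pointwise derivatives: that of the volume density and that of $v_{g_t}(\beta)$. Because $h$ is compactly supported, $g_t=g$ off $\supp h$, so the $t$-dependence of the integrand lives on the fixed compact set $\supp h$; together with smoothness of $(t,x)\mapsto v_{g_t}(\beta)(x)$ there and the hypothesis $V^0_g(\beta)<\infty$, this legitimates
\[
\left.\frac{d}{dt}\right|_{t=0}V^0_{g_t}(\beta)
=\int_X\left(\left.\frac{d}{dt}\right|_{t=0}v_{g_t}(\beta)\right)\vol_g
+\int_X\bigl(v_g(\beta)-1\bigr)\left.\frac{d}{dt}\right|_{t=0}\vol_{g_t}.
\]
For the volume density I would use the standard identity $\frac{d}{dt}\big|_{t=0}\vol_{g_t}=\tfrac12\tr(h^\sharp)\,\vol_g=\la h,g\ra\,\vol_g$, the last equality being \eqref{eq:tr met}.

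The core is the derivative of $v_{g_t}(\beta)$. Writing $v_{g_t}(\beta)^2=\det(\id_{TX}+\beta^{\sharp_{g_t}})$ and applying Jacobi's formula, $2\frac{d}{dt}\big|_{t=0}\log v_{g_t}(\beta)=\tr\bigl((\id_{TX}+\bb)^{-1}\dot A\bigr)$, where $\dot A:=\frac{d}{dt}\big|_{t=0}\beta^{\sharp_{g_t}}$. Since $\beta^{\sharp_{g_t}}$ is the fixed 2-form $\beta$ with one index raised by $g_t^{-1}$ and $\frac{d}{dt}\big|_{t=0}g_t^{-1}=-g^{-1}hg^{-1}$, one reads off $\dot A=-h^\sharp\circ\bb$. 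Using cyclicity of the trace and the fact that $\bb$ commutes with $(\id_{TX}+\bb)^{-1}$,
\[
2\,\frac{d}{dt}\Big|_{t=0}\log v_{g_t}(\beta)=-\tr\bigl(h^\sharp\,(\id_{TX}+\bb)^{-1}\bb\bigr).
\]

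Next I would feed in the algebra of Remark \ref{rem:det Gb}: from $G_\beta=(\id_{TX}-\bb)(\id_{TX}+\bb)$ and $\bb\circ\bb=\id_{TX}-G_\beta$ one gets $(\id_{TX}+\bb)^{-1}\bb=\gi(\id_{TX}-\bb)\bb=\gi\bb-\gi+\id_{TX}$. In the resulting trace the term $\tr(h^\sharp\gi\bb)$ vanishes, because $h^\sharp$ is $g$-self-adjoint while $\gi\circ\bb$ is $g$-skew, and the trace of a self-adjoint composed with a skew-adjoint endomorphism is zero. Hence $2\frac{d}{dt}\big|_{t=0}\log v_{g_t}(\beta)=\tr(h^\sharp\gi)-\tr(h^\sharp)$, and since $G_\beta^{-1}$ is $g$-self-adjoint, \eqref{eq:tr met} gives $\tr(h^\sharp\gi)=2\la h,(\gi)^\flat\ra$ and $\tr(h^\sharp)=2\la h,g\ra$, so that
\[
\left.\frac{d}{dt}\right|_{t=0}v_{g_t}(\beta)=v_g(\beta)\bigl(\la h,(\gi)^\flat\ra-\la h,g\ra\bigr).
\]
Substituting this and the volume-density derivative into the first display, the two $\la h,g\ra$ contributions combine into $\la h,-g\ra$, leaving $\int_X\la h,\,-g+v_g(\beta)(\gi)^\flat\,\ra\,\vol_g=\la h,S_{g,\beta}\ra_{L^2}$ with $S_{g,\beta}=-g+v_g(\beta)(\gi)^\flat$, which is the asserted identity.

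I expect the main obstacle to be bookkeeping rather than conceptual: once the identities for $G_\beta$ and $\bb$ from Remark \ref{rem:det Gb} are available the computation is mechanical, and the only points that need care are tracking the index-raising convention so that $\dot A=-h^\sharp\circ\bb$ comes out with the correct sign, and noticing that the $\gi\bb$ contribution is annihilated by the self-adjoint/skew pairing. The lone analytic subtlety — differentiating under the integral when $V^0_g(\beta)$ might a priori fail to be finite — is defused, as noted, by the compact support of $h$.
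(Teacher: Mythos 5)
Your proposal is correct and follows essentially the same route as the paper: differentiate under the integral (justified by the compact support of $h$ together with $V^0_g(\beta)<\infty$), compute $\left.\tfrac{d}{dt}\vol_{g_t}\right|_{t=0}=\la h,g\ra\vol_g$ and $\left.\tfrac{d}{dt}\beta^{\sharp_{g_t}}\right|_{t=0}=-h^\sharp\circ\bb$, and then assemble the result using the algebra of $G_\beta$ from Remark \ref{rem:det Gb}. The only cosmetic difference is that you apply Jacobi's formula to $\det\left(\id_{TX}+\beta^{\sharp_{g_t}}\right)$ and must kill the cross term $\tr\left(h^\sharp\circ\gi\circ\bb\right)$ via the symmetric/skew pairing, whereas the paper differentiates $(\det G_t)^{1/4}$ with $G_t=\id_{TX}-\beta^{\sharp_{g_t}}\circ\beta^{\sharp_{g_t}}$, so that term never appears.
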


\begin{definition}
We call $S_{g, \beta}$ the {\bf stress-energy tensor} associated with $V^0_g$. 
\end{definition}
Note that $S_{g, \beta}$ is also defined when $V^0_g (\beta)=\infty$. 

\begin{proof}
Since $V^0_g(\beta)< \infty$, we have 
$$
\left. \frac{d}{dt} V^0_{g_t} (\beta) \right|_{t=0}
= 
\int_X 
\left. \frac{d}{d t} (v_{g_t} (\beta)-1) \vol_{g_t}
\right|_{t=0} 
$$
by  Lebesgue's dominated convergence theorem, 
where $\vol_{g_t}$ is the volume form defined by $g_t$. 
So we first compute the following. 
\begin{lemma} \label{lem:der volb}
\begin{align}
\left. \frac{d}{d t} \vol_{g_t} \right|_{t=0} 
= \la h, g \ra \vol_g, \qquad 
\left. \frac{d}{d t} \bbt \right|_{t=0} 
= -h^\sharp \circ \bb,  
\end{align}
where $\la \cdot, \cdot \ra$ is the induced metric from $g$ in \eqref{eq:ipdef} 
and $\bbt$ is defined as in \eqref{eq:def sharp} for $g_t$. 
\end{lemma}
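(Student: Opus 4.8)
The plan is to differentiate the two quantities $\vol_{g_t}$ and $\bbt$ directly at $t=0$, using the standard first-variation formulas for metric-dependent objects. First I would recall that if $g_t = g + th$, then the inverse metric satisfies $\left.\frac{d}{dt} g_t^{-1}\right|_{t=0} = -h^{\sharp\sharp}$ (raising both indices of $h$ with $g$), and the volume form satisfies $\left.\frac{d}{dt}\vol_{g_t}\right|_{t=0} = \frac12 (\tr_g h)\,\vol_g$. To reconcile the factor, note that in this paper's conventions $\la h, g\ra = \frac12 \tr_g h$ by the last identity in \eqref{eq:tr met} (since $\tr(h^\sharp) = 2\la h,g\ra$), so $\frac12 \tr_g h \cdot \vol_g = \la h, g\ra \vol_g$, giving the first formula.

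For the second formula, I would write $\bbt$ explicitly in terms of the fixed 2-form $\beta$. By the definition \eqref{eq:def sharp} applied to $g_t$, the endomorphism $\bbt$ is characterized by $g_t(\bbt(u_1), u_2) = \beta(u_1,u_2)$; equivalently, in index notation $(\bbt)^i{}_j = g_t^{ik}\beta_{kj}$. Differentiating at $t=0$ and using $\left.\frac{d}{dt} g_t^{ik}\right|_{t=0} = -h^{ik}$ (indices raised by $g$), we get $\left.\frac{d}{dt}(\bbt)^i{}_j\right|_{t=0} = -h^{ik}\beta_{kj}$. The right-hand side is exactly the composition $-h^\sharp \circ \bb$: indeed $(h^\sharp \circ \bb)^i{}_j = (h^\sharp)^i{}_k (\bb)^k{}_j = g^{i\ell}h_{\ell k} g^{km}\beta_{mj}$, which after relabeling is $h^{ik}\beta_{kj}$ with both indices of $h$ raised. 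This gives the second identity.

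I do not anticipate a genuine obstacle here; the main point of care is bookkeeping with the paper's specific normalization conventions — in particular the factor-of-$\tfrac12$ built into the metric on $2$-tensors \eqref{eq:ipdef} and the relation $\tr(h^\sharp) = 2\la h,g\ra$ — to make sure the variation of the volume form comes out as $\la h,g\ra\vol_g$ rather than $\frac12\tr_g h\,\vol_g$ (which is the same thing, but must be phrased in the paper's language). A clean way to present both computations is to work at a point $p$ in a $g$-orthonormal frame $\{e_i\}$ that is $g_t$-orthogonal, diagonalizing $h$ simultaneously so that $h = \sum_i \lambda_i e^i \otimes e^i$; then $\vol_{g_t} = \prod_i \sqrt{1+t\lambda_i}\,\vol_g$ and $g_t^{-1}$ is diagonal with entries $(1+t\lambda_i)^{-1}$, and both derivatives follow by elementary one-variable calculus, after which one re-expresses the answers invariantly. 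This pointwise-diagonalization argument makes the two formulas essentially immediate and avoids any index juggling.
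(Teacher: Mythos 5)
Your proof is correct and takes essentially the same route as the paper: the volume-form variation is the standard $\tfrac12 \tr_g h\,\vol_g$ computation, identified with $\la h,g\ra \vol_g$ via $\tr(h^\sharp)=2\la h,g\ra$, and the formula for $\left.\tfrac{d}{dt}\bbt\right|_{t=0}$ comes from differentiating the defining relation of $\bbt$ (the paper does this invariantly from $g_t(\bbt(\cdot),\cdot)=\beta$, you in indices via $\left.\tfrac{d}{dt}g_t^{-1}\right|_{t=0}=-h^{\sharp\sharp}$). The only caveat is that your index formula $(\bbt)^i{}_j=g_t^{ik}\beta_{kj}$ is opposite in sign to the paper's convention $g_t(\bbt(u_1),u_2)=\beta(u_1,u_2)$, but since you apply the same convention to $\bb$ on the right-hand side the discrepancy cancels and the stated identity is unaffected.
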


\begin{proof}
Let $\{ e_i(t) \}_{i=1}^n$ be a local orthonormal frame 
with respect to $g_t$. Denote by $\{ e^i(t) \}_{i=1}^n$ its dual. 
Since 
$\vol_{g_t} = e^1(t) \wedge \cdots \wedge e^n(t)$, we compute 
\begin{align}
\left. \frac{d}{d t} \vol_{g_t} \right|_{t=0} 
= 
\left. \left \la \frac{d}{d t} e^i(t), e^i \right \ra \right|_{t=0}\vol_g
=
\frac{1}{2} 
\left. \frac{d}{d t} \left \la e^i(t), e^i(t) \right \ra \right|_{t=0} \vol_g. 
\end{align}
By \eqref{eq:ipdef}, we have 
$$
2 \la g_t, g \ra 
= 
2 \la e^i(t) \otimes e^i(t), e^j \otimes e^j \ra
= 
\la e^i(t), e^j \ra \la e^i(t), e^j \ra 
= \left \la e^i(t), e^i(t) \right \ra, 
$$
which implies the first equation. 
Differentiating 
$g_t( \bbt (\cdot), \cdot)= \beta$, we have 
$$
h (\bb(\cdot), \cdot) 
+ g \left( \left. \frac{d}{d t} \bbt \right|_{t=0} (\cdot), \cdot \right)=0.  
$$
Since 
$h (\bb(\cdot), \cdot) 
= g ((h^\sharp \circ \bb) (\cdot), \cdot)$, 
we obtain the second equation. 
\end{proof}

For simplicity, we set 
$\beta^\sharp = \beta^{\sharp_g}$ and 
$G_t = \id_{TX} - \bbt \circ \bbt$. 
Then $G_0=G_\beta$ and we compute 
\begin{align}
\left. \frac{d}{d t} (v_{g_t} (\beta)-1) \right|_{t=0}
= 
\left. \frac{d}{d t} (\det G_t)^{1/4} \right|_{t=0} 
= 
\frac{(\det G_\beta)^{1/4}}{4} \cdot 
{\rm tr} 
\left( \left. \frac{d}{d t} G_t \right|_{t=0} \circ G_\beta^{-1}
\right). 
\end{align}
By Lemma \ref{lem:der volb}, we have 
\begin{align}
\left. \frac{d}{d t} G_t \right|_{t=0} 
= h^\sharp \circ (\bb)^2 + \bb \circ h^\sharp \circ \bb. 
\end{align}
Then by \eqref{eq:bG comm}, $(\bb)^2=-G_\beta + \id_{TX}$ and \eqref{eq:tr met}, 
it follows that  
\begin{align} \label{eq:diff vgt}
\begin{split}
\left. \frac{d}{d t} (v_{g_t} (\beta)-1) \right|_{t=0}
=&
\frac{v_g(\beta)}{2} \cdot 
{\rm tr} 
\left( h^\sharp \circ (\bb)^2 \circ G_\beta^{-1}
\right)\\
=&
\frac{v_g(\beta)}{2} \cdot 
{\rm tr} \left( - h^\sharp + h^\sharp \circ G_\beta^{-1} \right) 
= 
v_g(\beta) \left \la h, -g + (G_\beta^{-1})^\flat \right \ra. 
\end{split}
\end{align}
Then by Lemma \ref{lem:der volb} and \eqref{eq:diff vgt}, we obtain 
\begin{align}
\left. \frac{d}{d t} V^0_{g_t} (\beta) \right|_{t=0} 
=& 
\int_X 
\left. \frac{d}{d t} (v_{g_t} (\beta)-1) \right|_{t=0} \vol_g 
+ 
(v_g (\beta)-1) \left. \frac{d}{d t} \vol_{g_t} \right|_{t=0} \\
=& 
\int_X 
v_g (\beta) \left \la h, -g + (G_\beta^{-1})^\flat \right \ra \vol_g 
+ 
(v_g (\beta)-1) \la h, g \ra \vol_g \\
=& 
\int_X \left \la h, -g + v_g(\beta) \cdot (G_\beta^{-1})^\flat \right \ra \vol_g  
\end{align}
and the proof is completed. 
\end{proof}

Next, we compute the divergence of $S_{g, \beta}$. 
Recall that the divergence ${\rm div} S$ of a symmetric 2-tensor $S$ 
is defined by 
$$
{\rm div} S = (D_i S) (e_i, \cdot) \in \Omega^1, 
$$
where $\{ e_i \}$ is a local orthonormal frame and 
we use the notation \eqref{eq:def deri}. 
The divergence for higher order symmetric tensors is also defined. 
See \cite[Section 1.59]{besse2008einstein}. 
\begin{definition} \label{def:conservation law}
A 2-form $\beta \in \Omega^2$ is said to satisfy a {\bf conservation law} 
(with respect to $V^0_g$)
if ${\rm div} S_{g, \beta} =0$. 
\end{definition}

\begin{remark} \label{rem:variational SET}
We explain why it is important to study 2-forms satisfying a conservation law 
following \cite[Section 3.3]{hawking1973}. 
See also \cite[Section 1]{Baird1980}. 

It is because when we consider the variation of $V^0_g (\beta)$ 
with respect to $\beta$, 
the critical points satisfy a conservation law. 
Indeed, take any 2-form $\beta \in \Omega^2$ with $V^0_g(\beta)< \infty$ and diffeomorphism $\eta$ of $X$. 
Define $\sharp_{\eta^* g}$ as in \eqref{eq:def sharp} for the metric $\eta^* g$. 
Then 
$$
g \left((\beta^\sharp \circ \eta_*) (\cdot), \eta_* (\cdot) \right) 
= 
\beta (\eta_* (\cdot), \eta_* (\cdot)) 
= 
\eta^* \beta
= 
(\eta^* g) \left((\eta^* \beta)^{\sharp_{\eta^* g}} (\cdot), \cdot \right)
= 
g \left(\left((\eta^* \beta)^{\sharp_{\eta^* g}} \circ \eta_* \right) (\cdot), \eta_* (\cdot) \right). 
$$
Thus we have  
$$
(\eta^* \beta)^{\sharp_{\eta^* g}} = \beta^\sharp,  
$$
which implies that 
$v_g (\beta) = v_{\eta^* g}(\eta^* \beta)$. Hence 
\begin{align} \label{eq:nor vol diff inv}
V^0_g (\beta) = V^0_{\eta^* g} (\eta^* \beta).     
\end{align}
Now take any compactly supported vector field $u$ and denote by $\{ \eta_t \}$ the flow of $u$. 
By \eqref{eq:nor vol diff inv}, it follows that 
\begin{align}
0= \left. \frac{d}{dt} V^0_{\eta_t^* g} (\eta_t^* \beta) \right|_{t=0}
= 
\left. \frac{d}{dt} V^0_{\eta_t^* g} (\beta) \right|_{t=0}
+ 
\left. \frac{d}{dt} V^0_g (\eta_t^* \beta) \right|_{t=0}. 
\end{align}
Since $\left. \frac{d}{dt}\eta_t^* g \right|_{t=0} = L_u g$, 
where $L_u$ is the Lie derivative along $u$, the first term is given by 
$
\la L_u g, S_{g, \beta} \ra_{L^2}
$
by Proposition \ref{prop:fistvar set}. 
When $\beta$ is a critical point for 
the variation of $V^0_g (\beta)$ with respect to $\beta$, 
the second term vanishes. 
Hence 
$$
0
= \left. \frac{d}{dt} V^0_{\eta_t^* g} (\eta_t^* \beta) \right|_{t=0} 
= \la L_u g, S_{g, \beta} \ra_{L^2}, 
$$
which is equivalent to ${\rm div} S_{g, \beta}=0$ by \cite[Lemma 1.60]{besse2008einstein}. 
\end{remark}

For the computation of ${\rm div} S_{g, \beta}$, 
we define a differential operator $\delta_\beta: \Omega^k \to \Omega^{k-1}$ 
for a 2-form $\beta \in \Omega^2$ by 
\begin{align} \label{eq:delb}
\delta_\beta = -i(G_\beta^{-1} (e_i)) D_i, 
\end{align}
where $i(\cdot)$ is the interior product, 
$\{ e_i \}$ is a local orthonormal frame and 
we use the notation \eqref{eq:def deri}. 
Note that $\delta_\beta$ agrees with the standard codifferential $d^*$ 
if $\beta=0$. 
We can define a second order operator $\Delta_\beta:\Omega^k \to \Omega^k$ by
\begin{align} \label{eq:lapb}
\Delta_\beta = d \delta_\beta + \delta_\beta d. 
\end{align}
We easily see that $\Delta_\beta$ is elliptic. \\

\begin{proposition} \label{prop:div set}
For a 2-form $\beta \in \Omega^2$, we have 
\begin{align}
{\rm div} S_{g, \beta} 
= 
v_g (\beta)
\left \{
\left \la i(e_k) d\beta, \left( G_\beta^{-1} \circ \bb \right)^\flat 
\right \ra 
G_\beta^{-1}(e_k) 
- 
(G_\beta^{-1} \circ \bb) 
\left((\delta_\beta \beta)^\sharp \right) 
\right \}^\flat \in \Omega^1, 
\end{align}
where $\{ e_i \}$ is a local orthonormal frame with its dual $\{ e^i \}$. 
\end{proposition}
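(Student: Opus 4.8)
Since $D$ is the Levi--Civita connection, $D_ig=0$, so ${\rm div}(-g)=0$ and, by \eqref{eq:set}, it suffices to compute ${\rm div}\big(v_g(\beta)(G_\beta^{-1})^\flat\big)$. The plan is to do this by a direct computation. Working in a local orthonormal frame $\{e_i\}$ and using that $g$ is parallel, so that $D_i\big((G_\beta^{-1})^\flat\big)(u_1,u_2)=g\big((D_iG_\beta^{-1})(u_1),u_2\big)$, the Leibniz rule gives
\begin{align*}
{\rm div}\,S_{g,\beta} = {\rm div}\big(v_g(\beta)(G_\beta^{-1})^\flat\big) = \big(D_i v_g(\beta)\big)\big(G_\beta^{-1}(e_i)\big)^\flat + v_g(\beta)\big((D_iG_\beta^{-1})(e_i)\big)^\flat,
\end{align*}
with summation over $i$. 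So the task reduces to expanding the two derivative terms.

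For the first term I would use $v_g(\beta)=(\det G_\beta)^{1/4}$ (Remark \ref{rem:det Gb}) together with $D_i\det G_\beta=(\det G_\beta)\,\tr\big(G_\beta^{-1}\circ D_iG_\beta\big)$, and for both terms the identity $D_iG_\beta=-(D_i\bb)\circ\bb-\bb\circ(D_i\bb)$ coming from \eqref{eq:Gn} (recall $D_i\bb=(D_i\beta)^\sharp$ is skew-symmetric), as well as $D_iG_\beta^{-1}=-G_\beta^{-1}\circ(D_iG_\beta)\circ G_\beta^{-1}$. Substituting these and simplifying with the commutation relation $\bb\circ G_\beta^{-1}=G_\beta^{-1}\circ\bb$ from \eqref{eq:bG comm}, the identity \eqref{eq:bGb}, the skew-symmetry of $\bb$ and $D_i\bb$, and the trace identities \eqref{eq:tr met} (which turn the trace arising in $D_iv_g(\beta)$ into the pairing $\la D_i\beta,(G_\beta^{-1}\circ\bb)^\flat\ra$), one should arrive at
\begin{align*}
{\rm div}\,S_{g,\beta} = v_g(\beta)\Big\{ & \la D_i\beta,(G_\beta^{-1}\circ\bb)^\flat\ra\,G_\beta^{-1}(e_i) \\
& + \big(G_\beta^{-1}\circ(D_i\bb)\circ G_\beta^{-1}\circ\bb\big)(e_i) + \big(\bb\circ G_\beta^{-1}\circ(D_i\bb)\circ G_\beta^{-1}\big)(e_i) \Big\}^\flat.
\end{align*}

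The decisive step is to reorganize this bracket into the two terms in the statement. For the first sum I would invoke $d\beta=\sum_i e^i\wedge D_i\beta$, equivalently $i(e_k)d\beta=D_k\beta-\sum_i e^i\wedge i(e_k)D_i\beta$, to rewrite $\sum_k\la D_k\beta,\omega\ra\,G_\beta^{-1}(e_k)$ with $\omega=(G_\beta^{-1}\circ\bb)^\flat$ as $\sum_k\la i(e_k)d\beta,\omega\ra\,G_\beta^{-1}(e_k)$ plus a correction; a short computation (using $\omega(e_i,e_j)=g(G_\beta^{-1}\bb(e_i),e_j)$, skew-symmetry of $D_i\bb$, and $\sum_k g(v,e_k)G_\beta^{-1}(e_k)=G_\beta^{-1}(v)$) identifies the correction as exactly $-\sum_i\big(G_\beta^{-1}\circ(D_i\bb)\circ G_\beta^{-1}\circ\bb\big)(e_i)$, which cancels the second sum. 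What remains is $\sum_k\la i(e_k)d\beta,\omega\ra\,G_\beta^{-1}(e_k)$ together with the third sum $\sum_i\big(\bb\circ G_\beta^{-1}\circ(D_i\bb)\circ G_\beta^{-1}\big)(e_i)$; applying \eqref{eq:bG comm} once more and recalling from \eqref{eq:delb} that $(\delta_\beta\beta)^\sharp=-\sum_i(D_i\bb)(G_\beta^{-1}(e_i))$, the third sum equals $-(G_\beta^{-1}\circ\bb)\big((\delta_\beta\beta)^\sharp\big)$, and the asserted formula follows. I expect this last regrouping — keeping track of which slot each $G_\beta^{-1}$ occupies, and of the signs generated when commuting $\bb$ past $D_i\bb$ and antisymmetrizing the $D_i\beta$'s to produce $d\beta$ — to be the only genuine difficulty; the remaining manipulations are routine, if somewhat lengthy.
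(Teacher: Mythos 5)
Your proposal is correct and follows essentially the same route as the paper: the Leibniz expansion of ${\rm div}\bigl(v_g(\beta)(G_\beta^{-1})^\flat\bigr)$, the logarithmic derivative of $\det G_\beta$ for $D_iv_g(\beta)$, the formula $D_iG_\beta^{-1}=-G_\beta^{-1}\circ(D_iG_\beta)\circ G_\beta^{-1}$ combined with \eqref{eq:bG comm}, and the antisymmetrization step converting the $D_k\beta$-sum into the $i(e_k)d\beta$-sum are exactly the ingredients the paper packages as Lemma \ref{lem:cl diff} together with \eqref{eq:diff ginv}. The only difference is organizational (you expand everything and cancel at the end, while the paper isolates the two derivative identities first), and your claimed cancellation of the correction term against $\sum_i\bigl(G_\beta^{-1}\circ(D_i\bb)\circ G_\beta^{-1}\circ\bb\bigr)(e_i)$ checks out.
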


In particular, 
$\beta$ satisfies a conservation law 
if $\beta$ is closed and $\delta_\beta \beta=0$. 
Note that Definition \ref{def:conservation law} makes sense 
also when $V^0_g (\beta)=\infty$. 
We first prove the following.

\begin{lemma} \label{lem:cl diff}
For any $u \in TX$, we have 
\begin{align}
u \left(v_g (\beta) \right) &= v_g (\beta) 
\left \la \left(G_\beta^{-1} \circ \bb \right)^\flat, D_u \beta \right \ra, \\
\left( (D_i \bb) \circ G_\beta^{-1} \circ \bb \right)(e_i)
&=
\left \la \left(G_\beta^{-1} \circ \bb \right)^\flat, 
i(e_k) d \beta - D_k \beta \right \ra e_k. 
\end{align}
\end{lemma}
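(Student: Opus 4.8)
The plan is to prove the two identities separately; both are pointwise algebraic computations, the first routine and the second requiring some index bookkeeping.

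For the first identity, I would start from the formula $v_g(\beta) = (\det G_\beta)^{1/4}$ recorded in Remark \ref{rem:det Gb}. Differentiating along $u$ and applying Jacobi's formula for the derivative of a determinant gives $u(v_g(\beta)) = \tfrac14 v_g(\beta)\,\tr\!\big(G_\beta^{-1}\circ D_u G_\beta\big)$. Since $G_\beta = \id_{TX} - \bb\circ\bb$, one has $D_u G_\beta = -(D_u\bb)\circ\bb - \bb\circ(D_u\bb)$. Using cyclicity of the trace together with the commutation relation \eqref{eq:bG comm}, the two resulting terms coincide, so $\tr\!\big(G_\beta^{-1}\circ D_u G_\beta\big) = -2\,\tr\!\big((D_u\bb)\circ G_\beta^{-1}\circ\bb\big)$. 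Now $D_u\bb = (D_u\beta)^\sharp$ and $G_\beta^{-1}\circ\bb$ are both skew-symmetric (the latter by Remark \ref{rem:det Gb}), so the first identity of \eqref{eq:tr met} identifies this trace with $4\big\langle D_u\beta,\,(G_\beta^{-1}\circ\bb)^\flat\big\rangle$, and the claimed formula follows.

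For the second identity, I would fix a point and choose a local orthonormal frame $\{e_i\}$ that is covariantly constant there (harmless, since the identity is tensorial). Write $\Phi = G_\beta^{-1}\circ\bb$ with associated $2$-form $\phi = \Phi^\flat$, and set $\phi_{ij} = \phi(e_i,e_j)$, $(D_i\beta)_{jk} = (D_i\beta)(e_j,e_k)$; using $D_i\bb = (D_i\beta)^\sharp$, evaluating the left-hand vector field against $e_k$ gives $\sum_{i,j}\phi_{ij}(D_i\beta)_{jk}$. On the right-hand side I would expand $i(e_k)d\beta$ via the standard relation $(d\beta)(e_k,e_i,e_j) = (D_k\beta)_{ij} - (D_i\beta)_{kj} + (D_j\beta)_{ki}$, and compute $\langle\phi,\,i(e_k)d\beta\rangle$ using the normalization of the metric on $2$-forms coming from \eqref{eq:ipdef}, namely $\langle\phi,\gamma\rangle = \tfrac12\sum_{i,j}\phi_{ij}\gamma_{ij}$. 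The contribution of $(D_k\beta)_{ij}$ is exactly $\langle\phi, D_k\beta\rangle$, which cancels the $-D_k\beta$ term; after relabelling indices and using the antisymmetry of $\phi$ and of each $D_i\beta$, the two remaining terms each equal $\tfrac12\sum_{i,j}\phi_{ij}(D_i\beta)_{jk}$, and summing gives the claim.

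The main obstacle is purely the combinatorial bookkeeping in the second identity: one must keep careful track of the three antisymmetrizations (of $\phi$, of $d\beta$, and of each $D_i\beta$) and of the factor $\tfrac12$ in the $2$-form inner product, in order to see that the two ``off-diagonal'' pieces of $i(e_k)d\beta$ reassemble precisely into $\sum_{i,j}\phi_{ij}(D_i\beta)_{jk}$ with the correct coefficient. Once these normalizations are pinned down, both identities drop out.
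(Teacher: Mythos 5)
Your proposal is correct and follows essentially the same route as the paper: Jacobi's formula together with \eqref{eq:bG comm} and \eqref{eq:tr met} for the first identity, and an orthonormal-frame component computation combining the skew-symmetry of $G_\beta^{-1}\circ\bb$ with the expansion of $d\beta$ in terms of $D\beta$ for the second (you merely expand the right-hand side and match it to the left, whereas the paper antisymmetrizes the left-hand side, which is the same computation read in the other direction). The normal-frame choice is harmless but not even needed, since both sides involve only the components of $\beta$ and $D\beta$.
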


\begin{proof}
By $v_g (\beta)=\left(\det G_\beta \right)^{1/4}$, 
\eqref{eq:Gn}, \eqref{eq:bG comm} and \eqref{eq:tr met}, we compute 
\begin{align}
u \left(v_g(\beta) \right)
=& 
\frac{v_g(\beta)}{4} \cdot {\rm tr} \left(G_\beta^{-1} \circ D_u G_\beta \right) \\
=&
- \frac{v_g(\beta)}{2} \cdot {\rm tr} \left(G_\beta^{-1} \circ \bb \circ D_u \bb \right)
=
v_g(\beta) \left \la \left(G_\beta^{-1} \circ \bb \right)^\flat, D_u \beta \right \ra, 
\end{align}
which implies the first equation. 

Next, we prove the second equation. 
Since $D_i \beta = (D_i \beta) (e_j, e_k) e^j \wedge e^k/2$, we have 
\begin{align} \label{eq:Dbb}
D_i \bb = (D_i \beta) (e_j, e_k) e^j \otimes e_k. 
\end{align}
Then it follows that 
\begin{align}
\left((D_i \bb) \circ G_\beta^{-1} \circ \bb \right)(e_i) 
=&
(D_i \beta) (e_j, e_k) \cdot 
g \left(e_j, \left(G_\beta^{-1} \circ \bb \right)(e_i) \right) \cdot e_k\\
=&
\frac{1}{2} 
\left \{
(D_i \beta) (e_j, e_k) - (D_j \beta) (e_i, e_k) \right \}\cdot 
g \left(e_j, \left(G_\beta^{-1} \circ \bb \right)(e_i) \right) \cdot e_k, 
\end{align}
where we use the fact that $G_\beta^{-1} \circ \bb$ is skew-symmetric. 
Since 
$d \beta = e^\ell \wedge D_\ell \beta$, we have 
\begin{align}
d \beta (e_i, e_j, e_k) = 
(D_i \beta) (e_j, e_k) + (D_j \beta) (e_k, e_i) + (D_k \beta) (e_i, e_j). 
\end{align}
Then we obtain
\begin{align}
\left((D_i \bb) \circ G_\beta^{-1} \circ \bb \right)(e_i)
=&
\frac{1}{2} \left \{ d \beta (e_i, e_j, e_k) - (D_k \beta) (e_i, e_j) \right \} \cdot 
g \left(e_j, \left(G_\beta^{-1} \circ \bb \right)(e_i) \right) \cdot e_k\\
=&
\frac{1}{2} \left \{ i(e_k) d \beta -(D_k \beta) \right \} (e_i, e_j) \cdot 
\left(G_\beta^{-1} \circ \bb \right)^\flat (e_i, e_j) \cdot e_k\\
=& 
\left \la \left(G_\beta^{-1} \circ \bb \right)^\flat, i(e_k) d \beta - D_k \beta \right \ra e_k, 
\end{align}
and the proof of Lemma \ref{lem:cl diff} is completed. 
\end{proof}

\begin{proof}[Proof of Proposition \ref{prop:div set}]
By the definition of the stress-energy tensor $S_{g, \beta}$ in \eqref{eq:set} 
and Lemma \ref{lem:cl diff}, we have 
\begin{align} \label{eq:divS 1}
\begin{split}
{\rm div} S_{g, \beta}
=& 
e_i \left( v_g(\beta) \right) \cdot (G_\beta^{-1})^\flat (e_i, \cdot) 
+ v_g(\beta) \cdot \left(D_i (G_\beta^{-1})^\flat\right) (e_i, \cdot) \\
=& 
v_g (\beta) \left \{ \left \la \left(G_\beta^{-1} \circ \bb \right)^\flat, D_i \beta \right \ra
\cdot G_\beta^{-1} (e_i)
+ (D_i G_\beta^{-1}) (e_i) \right \}^\flat. 
\end{split}
\end{align}

Since 
$
0=D_i \left(G_\beta \circ G_\beta^{-1} \right) 
= \left(D_i G_\beta \right) \circ G_\beta^{-1} + G_\beta \circ \left(D_i G_\beta^{-1} \right), 
$
we have 
\begin{align} \label{eq:diff ginv}
\begin{split}
D_i G_\beta^{-1} 
=& G_\beta^{-1} \circ \left( \left(D_i \bb \right) \circ \bb
+ \bb \circ \left(D_i \bb \right) \right) \circ G_\beta^{-1} \\
=& 
G_\beta^{-1} \circ \left( \left(D_i \bb \right) \circ G_\beta^{-1} \circ \bb \right) 
+ \left( G_\beta^{-1} \circ \bb \right ) \circ \left(D_i \bb \right)  \circ G_\beta^{-1}. 
\end{split}
\end{align}
By the definition of $\delta_\beta$ in \eqref{eq:delb}, 
we have 
\begin{align}
\left( \left(D_i \bb \right) \circ G_\beta^{-1} \right) (e_i) 
= 
\left( i(G_\beta^{-1} (e_i)) D_i \beta \right)^\sharp  
= - (\delta_\beta \beta)^\sharp. 
\end{align}
Then by Lemma \ref{lem:cl diff}, it follows that 
\begin{align} \label{eq:divS 2}
(D_i G_\beta^{-1}) (e_i)
= 
\left \la \left(G_\beta^{-1}\circ \bb \right)^\flat, 
i(e_k) d \beta - D_k \beta \right \ra G_\beta^{-1} (e_k) 
- (G_\beta^{-1} \circ \bb) \left((\delta_\beta \beta)^\sharp \right). 
\end{align}
Then by \eqref{eq:divS 1} and \eqref{eq:divS 2}, the proof is completed. 
\end{proof}

The following useful property holds if $\beta$ satisfies a conservation law. 

\begin{proposition} \label{prop:div formula}
Suppose that $\beta$ satisfies a conservation law. 
Then for any compactly supported 1-form $\alpha$, we have 
$$
\int_X (\delta_\beta \alpha) \cdot v_g(\beta) \cdot \vol_g=0. 
$$
\end{proposition}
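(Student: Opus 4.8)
The plan is to prove the identity by an integration‑by‑parts argument tailored to the twisted codifferential $\delta_\beta$, realizing the integrand as (up to a sign) the divergence of a compactly supported vector field built from $\alpha$, $v_g(\beta)$ and $G_\beta$. Concretely, given the test $1$-form $\alpha$, I would introduce
\[
Z := v_g(\beta)\, G_\beta^{-1}(\alpha^\sharp)\in\Gamma(X,TX),
\]
which has compact support because $\alpha$ does, so that $\int_X {\rm div}\,Z\,\vol_g=0$ by the divergence theorem. The whole statement then reduces to the pointwise identity
\[
{\rm div}\,Z = \langle \alpha, {\rm div}\,S_{g,\beta}\rangle - (\delta_\beta\alpha)\, v_g(\beta),
\]
since under the conservation law ${\rm div}\,S_{g,\beta}=0$ this gives $\int_X (\delta_\beta\alpha)\, v_g(\beta)\,\vol_g = 0$ directly.

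To establish that identity I would compute ${\rm div}\,Z=\sum_i g(D_{e_i}Z,e_i)$ in a local orthonormal frame $\{e_i\}$ by the Leibniz rule, obtaining three terms:
\[
{\rm div}\,Z = \sum_i e_i\!\big(v_g(\beta)\big)\, g\big(G_\beta^{-1}(\alpha^\sharp),e_i\big) + v_g(\beta)\sum_i g\big((D_i G_\beta^{-1})(\alpha^\sharp),e_i\big) + v_g(\beta)\sum_i g\big(G_\beta^{-1}(D_i\alpha^\sharp),e_i\big).
\]
Since $G_\beta$ is $g$-self-adjoint, so are $G_\beta^{-1}$ and $D_i G_\beta^{-1}$, and $D_i\alpha^\sharp=(D_i\alpha)^\sharp$ because $D$ commutes with $\sharp$; moving $G_\beta^{-1}$ and $D_iG_\beta^{-1}$ onto $e_i$ in the first two terms and recognizing $\sum_i g(D_i\alpha^\sharp,G_\beta^{-1}(e_i))=\sum_i (D_i\alpha)(G_\beta^{-1}(e_i))=-\delta_\beta\alpha$ in the third (by the definition \eqref{eq:delb}), this becomes
\[
{\rm div}\,Z = g\Big(\alpha^\sharp,\ \sum_i e_i\!\big(v_g(\beta)\big)G_\beta^{-1}(e_i) + v_g(\beta)(D_i G_\beta^{-1})(e_i)\Big) - (\delta_\beta\alpha)\,v_g(\beta).
\]
It then remains to identify the vector field in the first slot with $({\rm div}\,S_{g,\beta})^\sharp$. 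This is immediate from \eqref{eq:set}: since $S_{g,\beta}=-g+v_g(\beta)(G_\beta^{-1})^\flat$ and ${\rm div}\,g=0$, expanding ${\rm div}\,S_{g,\beta}=(D_i(v_g(\beta)(G_\beta^{-1})^\flat))(e_i,\cdot)$ by Leibniz and using $(G_\beta^{-1})^\flat(e_i,\cdot)=(G_\beta^{-1}(e_i))^\flat$ and $(D_i(G_\beta^{-1})^\flat)(e_i,\cdot)=((D_iG_\beta^{-1})(e_i))^\flat$ gives exactly ${\rm div}\,S_{g,\beta}=\big(\sum_i e_i(v_g(\beta))G_\beta^{-1}(e_i)+v_g(\beta)(D_iG_\beta^{-1})(e_i)\big)^\flat$; alternatively this also follows by combining \eqref{eq:divS 1} with the first identity of Lemma \ref{lem:cl diff}. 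Integrating ${\rm div}\,Z$ over $X$ and invoking the conservation law finishes the proof.

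I do not expect a genuine obstacle here: the argument is a weighted integration by parts, and the only real decisions are the choice of the vector field $Z$ — with the weight $v_g(\beta)$ and the conjugation by $G_\beta^{-1}$ built in so that the $\delta_\beta\alpha$ term appears cleanly — and the careful bookkeeping with the self-adjointness of $G_\beta^{-1}$ and $D_iG_\beta^{-1}$, which is exactly what makes the leftover terms assemble into ${\rm div}\,S_{g,\beta}$ paired against $\alpha$. (One should also note, as the paragraph after Proposition \ref{prop:div set} already records, that ${\rm div}\,S_{g,\beta}$, and hence Definition \ref{def:conservation law}, makes sense whether or not $V^0_g(\beta)<\infty$, so no finiteness hypothesis is needed.)
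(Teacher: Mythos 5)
Your proof is correct and is essentially the paper's argument: the paper applies $-d^*$ to the contraction $S_{g,\beta}(\alpha^\sharp,\cdot)$ and integrates, obtaining exactly your identity (its \eqref{eq:div general}) with the extra, harmless $d^*\alpha$ term coming from the $-g$ part of $S_{g,\beta}$ that you strip off before taking the divergence of $Z=v_g(\beta)\,G_\beta^{-1}(\alpha^\sharp)$. The mechanism — Leibniz rule, self-adjointness of $G_\beta^{-1}$ and $D_iG_\beta^{-1}$, and identification of the leftover terms with $({\rm div}\,S_{g,\beta})(\alpha^\sharp)$ — is the same, so this is only a cosmetic repackaging of the paper's proof.
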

\begin{proof}
For any symmetric 2-tensor $S \in \Gamma(X,S^2 T^*X)$, we have 
\begin{align}
-d^* (S(\alpha^\sharp, \cdot)) 
= i(e_i) D_i (S(\alpha^\sharp, \cdot))
= (D_i S) (\alpha^\sharp, e_i) + S (D_i \alpha^\sharp, e_i)
= ({\rm div} S) (\alpha^\sharp) + S (D_i \alpha^\sharp, e_i). 
\end{align}
If $S=S_{g, \beta}$, we compute 
\begin{align}
S_{g, \beta} (D_i \alpha^\sharp, e_i) 
=& (-g + v_g (\beta) \cdot (G_\beta^{-1})^\flat) (D_i \alpha^\sharp, e_i) \\
=& d^* \alpha + v_g (\beta) \cdot g(G_\beta^{-1}(D_i \alpha^\sharp), e_i)
= d^* \alpha - v_g (\beta) \cdot \delta_\beta \alpha. 
\end{align}
Hence we obtain 
\begin{align} \label{eq:div general}
-d^* (S_{g, \beta} (\alpha^\sharp, \cdot)) 
= 
({\rm div} S_{g, \beta}) (\alpha^\sharp) 
+ d^* \alpha - v_g (\beta) \cdot \delta_\beta \alpha, 
\end{align}
which implies the statement. 
\end{proof}

Proposition \ref{prop:div formula} implies the following 
``integration by parts" formula. 

\begin{corollary} \label{cor:ibp}
Suppose that $\beta$ satisfies a conservation law. 
Then for any functions $f_1, f_2$, one of which is compactly supported, 
we have 
$$
\int_X (\Delta_\beta f_1) \cdot f_2 \cdot v_g (\beta) \vol_g 
= 
\int_X \langle df_1, (G_\beta^{-1})^* df_2 \rangle v_g (\beta) \vol_g 
= 
\int_X f_1 \cdot (\Delta_\beta f_2) \cdot v_g (\beta) \vol_g. 
$$
\end{corollary}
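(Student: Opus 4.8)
The plan is to obtain the formula as an immediate consequence of Proposition~\ref{prop:div formula}, applied to a well-chosen compactly supported $1$-form. First I would record the elementary observation that for a function $f$ one has $\delta_\beta f = 0$ (there are no $(-1)$-forms), so that $\Delta_\beta f = \delta_\beta(df)$; this already rewrites the two outer integrals in the statement purely in terms of $\delta_\beta$. I would also note that $G_\beta^{-1}$ is $g$-self-adjoint, being the inverse of the $g$-self-adjoint positive-definite operator $G_\beta$ of Remark~\ref{rem:det Gb}, so that the middle integrand satisfies $\langle df_1, (G_\beta^{-1})^* df_2\rangle = g(G_\beta^{-1}(df_1^\sharp), df_2^\sharp) = \langle (G_\beta^{-1})^* df_1, df_2\rangle$; in particular it is symmetric in $f_1$ and $f_2$.

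The main computation is a one-line Leibniz expansion. Taking $\alpha = f_2\, df_1$ --- which is compactly supported whichever of $f_1, f_2$ has compact support --- and using the definition $\delta_\beta = -i(G_\beta^{-1}(e_i)) D_i$ from \eqref{eq:delb} together with $D_i(f_2\, df_1) = (e_i f_2)\, df_1 + f_2\, D_i(df_1)$, one finds
\[
\delta_\beta(f_2\, df_1) = f_2\,\Delta_\beta f_1 - \langle df_1, (G_\beta^{-1})^* df_2\rangle,
\]
where the cross term is identified by means of the self-adjointness of $G_\beta^{-1}$ noted above. Integrating this identity against $v_g(\beta)\vol_g$ and invoking Proposition~\ref{prop:div formula} (which forces $\int_X (\delta_\beta \alpha)\, v_g(\beta)\vol_g = 0$) yields the first equality of the corollary. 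The second equality then follows by running the same argument with $\alpha = f_1\, df_2$, or equivalently by exchanging $f_1 \leftrightarrow f_2$ in the first equality and using that the middle integrand is symmetric in them.

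There is no genuine obstacle here: the entire content is carried by Proposition~\ref{prop:div formula}, and what remains is bookkeeping. The only points that require a little care are (i) checking that in each case of the hypothesis the chosen test form $\alpha$ is genuinely compactly supported, so that Proposition~\ref{prop:div formula} applies, and (ii) keeping track of the sign and of the self-adjointness of $G_\beta^{-1}$ in the identification of the cross term. Both are routine given Remark~\ref{rem:det Gb} and the definition \eqref{eq:delb}.
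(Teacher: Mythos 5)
Your proposal is correct and is essentially the paper's own argument: the paper likewise applies Proposition \ref{prop:div formula} to a product $1$-form (it chooses $\alpha = f_1\, df_2$ rather than $f_2\, df_1$, an immaterial swap), performs the same Leibniz expansion of $\delta_\beta$, and concludes via the symmetry of $\langle df_1, (G_\beta^{-1})^* df_2\rangle$ in $f_1, f_2$. Your extra remarks on $\Delta_\beta f = \delta_\beta df$ for functions, the self-adjointness of $G_\beta^{-1}$, and the compact support of the test form are correct bookkeeping that the paper leaves implicit.
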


\begin{proof} 
Set $\alpha=f_1 df_2$ in Proposition \ref{prop:div formula}. 
Then 
\begin{align} \label{eq:ibp proof}
\begin{split}
\delta_\beta (f_1 d f_2)
=& -i(G_\beta^{-1}(e_i)) D_i (f_1 d f_2) \\
=& -i(G_\beta^{-1}(e_i)) \left(e_i(f_1) d f_2 + f_1 D_i d f_2 \right)
= - \la df_1, (G_\beta^{-1})^* df_2 \ra + f_1 \Delta_\beta f_2. 
\end{split}
\end{align}
Since $\la df_1, (G_\beta^{-1})^* df_2 \ra$ is symmetric 
with respect to $f_1$ and $f_2$, the proof is completed. 
\end{proof}

This ``integration by parts" formula holds for functions, 
but will not hold for any differential forms. 

More generally, we can show the following. 
We do not use this result in this paper, but this formula might be useful 
for a future study.

\begin{lemma} \label{lem:ivp gen}
For a 2-form $\beta \in \Omega^2$, define $\delta'_\beta:\Omega^k \to \Omega^{k-1}$ by 
$$
\delta'_\beta=\delta_\beta
- i \left( v_g (\beta)^{-1} ({\rm div} S_{g, \beta})^\sharp \right) 
$$
and define a second order elliptic operator $\Delta'_\beta:\Omega^k \to \Omega^k$ by
$\Delta'_\beta = d \delta'_\beta + \delta'_\beta d$. Then 

\begin{enumerate}
\item 
for any compactly supported 1-form $\alpha$, we have 
$$
\int_X (\delta'_\beta \alpha) \cdot v_g(\beta) \cdot \vol_g=0. 
$$

\item
For any functions $f_1, f_2$, one of which is compactly supported, we have 
$$
\int_X (\Delta'_\beta f_1) \cdot f_2 \cdot v_g (\beta) \vol_g 
= 
\int_X \langle df_1, (G_\beta^{-1})^* df_2 \rangle v_g (\beta) \vol_g 
= 
\int_X f_1 \cdot (\Delta'_\beta f_2) \cdot v_g (\beta) \vol_g. 
$$
\end{enumerate}
\end{lemma}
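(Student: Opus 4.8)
\textbf{Proof plan for Lemma \ref{lem:ivp gen}.}

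The plan is to mimic the proof of Proposition \ref{prop:div formula} and Corollary \ref{cor:ibp}, but this time keeping track of the divergence term rather than assuming it vanishes. The key identity is \eqref{eq:div general}, which was derived for \emph{arbitrary} $\beta$ (the conservation law was only invoked afterwards):
\begin{align*}
-d^* \bigl(S_{g, \beta} (\alpha^\sharp, \cdot)\bigr)
=
({\rm div} S_{g, \beta}) (\alpha^\sharp)
+ d^* \alpha - v_g (\beta) \cdot \delta_\beta \alpha.
\end{align*}
First I would integrate this over $X$ against a compactly supported $1$-form $\alpha$: the left-hand side integrates to zero, and $\int_X d^*\alpha \, \vol_g = 0$ as well, so we are left with
\begin{align*}
\int_X v_g(\beta) \cdot (\delta_\beta \alpha) \, \vol_g
=
\int_X ({\rm div} S_{g,\beta})(\alpha^\sharp) \, \vol_g
=
\int_X \bigl\la {\rm div} S_{g,\beta}, \alpha \bigr\ra \, \vol_g.
\end{align*}
Now observe that $\la {\rm div} S_{g,\beta}, \alpha \ra = g\bigl(({\rm div} S_{g,\beta})^\sharp, \alpha^\sharp\bigr) = i(({\rm div} S_{g,\beta})^\sharp)\alpha$, so rewriting the right-hand side gives $\int_X v_g(\beta) \cdot i\bigl(v_g(\beta)^{-1}({\rm div} S_{g,\beta})^\sharp\bigr)\alpha \, \vol_g$. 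Moving this to the left and using the definition $\delta'_\beta = \delta_\beta - i\bigl(v_g(\beta)^{-1}({\rm div} S_{g,\beta})^\sharp\bigr)$ yields $\int_X (\delta'_\beta \alpha) \cdot v_g(\beta) \, \vol_g = 0$, which is part (1).

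For part (2), I would set $\alpha = f_1 \, df_2$ exactly as in the proof of Corollary \ref{cor:ibp}. The computation there shows $\delta_\beta(f_1 \, df_2) = -\la df_1, (G_\beta^{-1})^* df_2 \ra + f_1 \Delta_\beta f_2$; the extra term contributes $-i\bigl(v_g(\beta)^{-1}({\rm div} S_{g,\beta})^\sharp\bigr)(f_1 \, df_2) = -f_1 \cdot i\bigl(v_g(\beta)^{-1}({\rm div} S_{g,\beta})^\sharp\bigr)df_2$, so that
\begin{align*}
\delta'_\beta(f_1 \, df_2) = -\la df_1, (G_\beta^{-1})^* df_2 \ra + f_1 \Delta'_\beta f_2,
\end{align*}
where I use that $d(df_2) = 0$ so $\Delta'_\beta f_2 = \delta'_\beta d f_2$. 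Since $\la df_1, (G_\beta^{-1})^* df_2 \ra$ is manifestly symmetric in $f_1$ and $f_2$ (the operator $G_\beta^{-1}$ is self-adjoint with respect to $g$), applying part (1) to $\alpha = f_1 \, df_2$ and then interchanging the roles of $f_1$ and $f_2$ gives the chain of equalities in (2). I would also note in passing that $\Delta'_\beta$ is elliptic because it differs from the elliptic operator $\Delta_\beta$ by a first-order term, and that when $\beta$ satisfies a conservation law $\Delta'_\beta = \Delta_\beta$, recovering Corollary \ref{cor:ibp}.

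I do not anticipate a serious obstacle here: the entire content is bookkeeping of the divergence term that was discarded in the proof of Proposition \ref{prop:div formula}. The only point requiring a little care is making sure the algebraic manipulation $\la {\rm div} S_{g,\beta}, \alpha \ra = i(({\rm div} S_{g,\beta})^\sharp)\alpha$ and the factor $v_g(\beta)$ are handled consistently so that the contraction by $v_g(\beta)^{-1}({\rm div} S_{g,\beta})^\sharp$ appears with the correct sign and weight; this is exactly what the definition of $\delta'_\beta$ is engineered to absorb.
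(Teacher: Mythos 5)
Your proposal is correct and follows essentially the same route as the paper: part (1) is the integrated form of the identity \eqref{eq:div general} (which holds for arbitrary $\beta$), with the divergence term rewritten as $v_g(\beta)\cdot i\bigl(v_g(\beta)^{-1}({\rm div}\, S_{g,\beta})^\sharp\bigr)\alpha$, and part (2) sets $\alpha=f_1\,df_2$ and reuses the computation \eqref{eq:ibp proof} together with the symmetry of $\la df_1,(G_\beta^{-1})^* df_2\ra$. No gaps; the sign and weight bookkeeping you flag is handled correctly.
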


\begin{proof}
Note that \eqref{eq:div general} holds for any $\beta \in \Omega^2$ and 
compactly supported 1-form $\alpha$. Then (1) is immediate from \eqref{eq:div general}. 
For (2), set $\alpha=f_1 df_2$ in (1). 
Using \eqref{eq:ibp proof}, we compute 
\begin{align}
\delta'_\beta (f_1 d f_2)
=& - \la df_1, (G_\beta^{-1})^* df_2 \ra + f_1 \delta_\beta d f_2 
- i \left( v_g (\beta)^{-1} ({\rm div} S_{g, \beta})^\sharp \right) (f_1 df_2) \\
=& 
- \la df_1, (G_\beta^{-1})^* df_2 \ra + f_1 \delta'_\beta d f_2,  
\end{align}
which implies (2). 
\end{proof}

The map $\delta'_\beta:\Omega^k \to \Omega^{k-1}$ for $k=1$ 
has a mirror symmetric interpretation. 
See the paragraph below Proposition \ref{prop:FM codiff}. \\

It is natural to expect the Weitzenb\"ock-type formula for $\Delta_\beta$. We obtain the following formula. 
Note that there exists a term containing the first order derivative 
unlike the standard Weitzenb\"ock formula.

\begin{proposition} \label{prop:weit}
For any $k$-form $\alpha \in \Omega^k$, we have 
\begin{align}
\Delta_\beta \alpha = 
\delta_\beta D \alpha 
-e^i \wedge i \left( (D_i G_\beta^{-1}) (e_j) \right) D_j \alpha
- e^i \wedge i \left(G_\beta^{-1} (e_j) \right) R(e_i, e_j) \alpha,  
\end{align}
where 
$R$ is the curvature tensor of $g$, and 
$\delta_\beta D$ is the composition of 
the covariant derivative $D=D^{\Lambda^k}: \Omega^0(\Lambda^k) \to \Omega^1(\Lambda^k)$ 
and 
$\delta_\beta = -i(G_\beta^{-1} (e_i)) D_i = -i(G_\beta^{-1} (e_i)) D_i^{\Lambda^k}
:\Omega^1(\Lambda^k) \to \Omega^0(\Lambda^k)$. 
\end{proposition}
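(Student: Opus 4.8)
The plan is to prove the identity pointwise. Both sides are differential operators applied to $\alpha$, so it suffices to verify the equality at an arbitrary point $p\in X$; there we may use a local orthonormal frame $\{e_i\}$ adapted to $p$ in the sense that $D e_i|_p=0$ (obtained by parallel transport along radial geodesics from $p$), which forces $D_i e^j|_p=0$ and $[e_i,e_j]|_p=0$ as well. All manipulations below are understood at $p$. We also record that $G_\beta^{-1}$ is $g$-symmetric, which follows from $G_\beta=\id_{TX}+{}^t\bb\circ\bb$ as in \eqref{eq:Gn}.

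First I would expand both operators in the frame: $d\alpha=e^i\wedge D_i\alpha$, and by definition $\delta_\beta\alpha=-i(G_\beta^{-1}(e_i))D_i\alpha$. Differentiating $d\alpha$ and using $D_i e^j|_p=0$ gives $D_j(d\alpha)=e^i\wedge D_jD_i\alpha$, so, applying the graded Leibniz rule $i(v)(e^i\wedge\omega)=g(e_i,v)\,\omega-e^i\wedge i(v)\omega$,
\[
\delta_\beta d\alpha=-i(G_\beta^{-1}(e_j))\bigl(e^i\wedge D_jD_i\alpha\bigr)
=-g(e_i,G_\beta^{-1}(e_j))\,D_jD_i\alpha+e^i\wedge i(G_\beta^{-1}(e_j))D_jD_i\alpha.
\]
In the first term, $g(e_i,G_\beta^{-1}(e_j))$ is symmetric in $(i,j)$ while $D_jD_i\alpha-D_iD_j\alpha$ is antisymmetric, so the term is unchanged under $D_jD_i\mapsto D_iD_j$; writing $D\alpha=e^j\otimes D_j\alpha\in\Omega^1(\Lambda^k)$ one recognizes it as exactly $\delta_\beta D\alpha$.

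Next I would compute $d\delta_\beta\alpha=e^i\wedge D_i\bigl(-i(G_\beta^{-1}(e_j))D_j\alpha\bigr)$, using $D_i(G_\beta^{-1}(e_j))=(D_iG_\beta^{-1})(e_j)$ at $p$ and the Leibniz rule for the interior product, to get
\[
d\delta_\beta\alpha=-e^i\wedge i\bigl((D_iG_\beta^{-1})(e_j)\bigr)D_j\alpha-e^i\wedge i(G_\beta^{-1}(e_j))D_iD_j\alpha.
\]
Adding $\delta_\beta d\alpha$ and $d\delta_\beta\alpha$, the first ``$e^i\wedge i(\cdots)$'' pieces combine into $e^i\wedge i(G_\beta^{-1}(e_j))(D_jD_i-D_iD_j)\alpha$, and at $p$ one has $D_jD_i\alpha-D_iD_j\alpha=-R(e_i,e_j)\alpha$, where $R$ acts on $\Lambda^kT^*X$ as the curvature of the Levi-Civita connection (the Lie-bracket term drops since $[e_i,e_j]|_p=0$). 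Collecting the three surviving terms gives the stated formula.

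The computation is essentially mechanical and I do not expect a genuine analytic obstacle; the points that need care are (i) justifying the reduction to a pointwise identity in an adapted frame so that $D_i e^j$ and $[e_i,e_j]$ vanish at $p$, (ii) the sign and index bookkeeping in the commutator-to-curvature step, and (iii) the symmetry argument that rewrites $-g(e_i,G_\beta^{-1}(e_j))D_jD_i\alpha$ as the invariant operator $\delta_\beta D\alpha$, for which the $g$-symmetry of $G_\beta^{-1}$ is essential.
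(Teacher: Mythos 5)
Your proof is correct and follows essentially the same route as the paper: expand $d\delta_\beta+\delta_\beta d$ in an orthonormal frame, recognize the symmetric contraction as $\delta_\beta D\alpha$ (using the $g$-symmetry of $G_\beta^{-1}$), turn the commutator of second derivatives into the curvature term, and collect the $D_iG_\beta^{-1}$ first-order term. The only difference is cosmetic: you work at a point in a geodesic normal frame so that $D_ie_j$, $D_ie^j$ and $[e_i,e_j]$ vanish there, whereas the paper keeps a general frame and handles the resulting $D_{D_ie_j}\alpha$ corrections by an operator-algebra identity; both are standard Weitzenb\"ock-type computations and your pointwise reduction is legitimate since every term in the identity is frame-independent.
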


\begin{proof}
We follow the proof of the standard Weitzenb\"ock formula as in 
\cite[Theorem 5.3.1]{konno2013}. 
We compute 
\begin{align}
d \delta_\beta \alpha
=&
e^i \wedge D_i \left( -i \left( G_\beta^{-1} (e_j) \right) D_j \alpha \right)\\
=&
e^i \wedge \left( -i \left( (D_i G_\beta^{-1}) (e_j) \right) D_j \alpha
-i \left( G_\beta^{-1} (D_i e_j) \right) D_j \alpha
-i \left( G_\beta^{-1} (e_j) \right) D_i D_j \alpha
\right). 
\end{align}
Since 
$D_i e_j = g(D_i e_j, e_k) e_k = - g(e_j, D_i e_k) e_k$, we have 
\begin{align} \label{eq:weit 1}
d \delta_\beta \alpha
=
-e^i \wedge i \left( G_\beta^{-1} (e_j) \right) \left(D_i D_j \alpha - D_{D_i e_j} \alpha \right) 
- e^i \wedge i \left( (D_i G_\beta^{-1}) (e_j) \right) D_j \alpha. 
\end{align}
We also compute 
\begin{align} \label{eq:weit 2}
\begin{split}
\delta_\beta d \alpha
=&
-i \left( G_\beta^{-1} (e_i) \right) D_i \left(e^j \wedge D_j \alpha \right)\\
=&
-i \left( G_\beta^{-1} (e_i) \right) \left((D_i e^j) \wedge D_j \alpha + e^j \wedge D_i D_j \alpha \right)\\
=&
-i \left( G_\beta^{-1} (e_i) \right) \left(e^j \wedge 
\left( D_i D_j \alpha - D_{D_i e_j} \alpha \right) \right), 
\end{split}
\end{align}
where we use $D_i e^j = g(D_i e_j, e_k) e^k = - g(e_j, D_i e_k) e^k$. 
By the same computation as in \eqref{eq:weit 2}, we see that 
\begin{align} \label{eq:weit 3}
\delta_\beta D \alpha 
= 
-g \left( G_\beta^{-1} (e_i), e_j \right) \left( D_i D_j \alpha - D_{D_i e_j} \alpha \right). 
\end{align}
Hence, by \eqref{eq:weit 1}, \eqref{eq:weit 2} and \eqref{eq:weit 3}, we obtain 
\begin{align} \label{eq:weit 4}
\Delta_\beta \alpha - \delta_\beta D \alpha
=
- e^i \wedge i \left( (D_i G_\beta^{-1}) (e_j) \right) D_j \alpha
- \Theta \left( D_i D_j \alpha - D_{D_i e_j} \alpha \right), 
\end{align}
where 
\begin{align}
\Theta 
=& (e^i \wedge) \circ i \left( G_\beta^{-1} (e_j) \right) 
+ i \left( G_\beta^{-1} (e_i) \right) \circ (e^j \wedge)
- g \left( G_\beta^{-1} (e_i), e_j \right)\\
=& 
(e^i \wedge) \circ i \left( G_\beta^{-1} (e_j) \right) - (e^j \wedge) \circ i \left( G_\beta^{-1} (e_i) \right). 
\end{align}
Then 
\begin{align}
\Theta \left( D_i D_j \alpha - D_{D_i e_j} \alpha \right)
=&
e^i \wedge i \left( G_\beta^{-1} (e_j) \right) 
\left \{
\left( D_i D_j \alpha - D_{D_i e_j} \alpha \right)
- \left( D_j D_i \alpha - D_{D_j e_i} \alpha \right)
\right \} \\
=&
e^i \wedge i \left( G_\beta^{-1} (e_j) \right) R(e_i, e_j) \alpha. 
\end{align}
This together with \eqref{eq:weit 4} gives the desired formula. 
\end{proof}

\section{The monotonicity formulas} \label{sec:monot formula}

In this section, we prove the monotonicity formulas 
for some versions of volume functionals. 
These formulas can be considered 
to be analogous to Price's monotonicity formula for Yang--Mills connections (\cite{price1983monotonicity}) 
and the monotonicity formula for minimal submanifolds (\cite[Propositions 1.12 and 1.15]{colding2011minimal}).

Section \ref{sec:mono notation} is a summary of the notation 
in Section \ref{sec:monot formula}. 
In Section \ref{sec:modified vol}, 
we compute the difference in 
(radius-normalized) ``modified volume" on geodesic balls of different radii 
in Theorem \ref{thm:mono}, 
where 
the ``integration by parts" formula in Corollary \ref{cor:ibp} plays a role. 
Then we deduce the monotonicity formula (Corollary \ref{cor:mono sh}). 
In Section \ref{sec:vol}, using this proof, we also compute 
the difference in (radius-normalized) volume in Theorem \ref{thm:mono vol}
and consider that under what conditions we can obtain the monotonicity 
in Corollary \ref{cor:mono vol sh}. 
In Section \ref{sec:normalized vol}, similar computations and considerations 
are given for (radius-normalized) normalized volume in Theorem \ref{thm:mono nvol 1} 
and Corollary \ref{cor:mono nvol sh}. 
In Section \ref{sec:odd dim}, we provide examples where these conditions 
for monotonicity are satisfied. 
That is, when a manifold is odd dimensional and satisfies certain curvature conditions, 
we obtain monotonicity formulas 
(Corollary \ref{cor:mono vol odd dim} and Theorem \ref{thm:mono nvol 3}). 
Section \ref{sec:2dim} deals with the case when a manifold is 2-dimensional. 
This case is rather special and we can obtain the monotonicity formulas 
(Theorem \ref{thm:monotnicity 2dim}) directly from Corollary \ref{cor:mono sh}.

\subsection{Notation} \label{sec:mono notation}
We first introduce the notation. 
Let $(X,g)$ be an oriented $n$-dimensional Riemannian manifold 
with the Levi-Civita connection $D$. 
Fix $p \in X$. Denote by ${\rm inj}_g (p)$ the injectivity radius of $(X,g)$ at $p$. 
Take $0 <r_p < {\rm inj}_g (p)$ satisfying the following. 
\begin{itemize}
\item 
There are normal coordinates $(x^1, \cdots, x^n)$ 
in the open geodesic ball $B_{r_p} (p)$ of radius $r_p$ centered at $p \in X$. 
\item
There is $c(p) \geq 0$ such that 
\begin{align} \label{eq:p est}
|g_{i j} - \delta_{i j}| \leq c(p) r^2, 
\qquad
\left| D \frac{\partial}{\partial x^i} \right| \leq c(p) r, 
\end{align}
for any $1 \leq i,j \leq n$, where 
$g_{i j} = g(\partial/\partial x^i, \partial/\partial x^j)$ and $r=\sqrt{\sum_{i=1}^n (x^i)^2}$. 
\end{itemize}

Note that since $g_{i j} (p)=\delta_{i j}$ 
and $\partial g_{i j}/ \partial x^k (p) =0$, 
the Taylor expansions of $g_{i j}$ show that the constants $r_p$ and $c(p)$ can be 
chosen depending only on ${\rm inj}_g (p)$ and the curvature of $g$. 

If $g$ is flat, we can take any $0 <r_p < {\rm inj}_g (p)$ and $c(p) =0$. 
If $X$ is compact, 
then it follows from \cite[p. 16, Theorem 1.3]{hebey2000nonlinear} that we can choose uniform constants
$0 < \delta_0 = \delta_0 (X, g) < {\rm inj}_g (X):= \inf_{p \in X} {\rm inj}_g (p)$ 
and 
$c_0=c_0(X,g) \geq 0$ such that the above condition holds with 
$r_p=\delta_0$ and $c(p)=c_0$ for any point of $p \in X$.

In what follows, we will denote by $O (1)$ a number or a function 
bounded by a constant depending only on $n = \dim X$. 
We use the following function, which is useful for the proof in this section.

For a 2-form $\beta \in \Omega^2$ and a unit vector $v \in TX$, define 
\begin{align} \label{eq:Xi}
\Xi (\beta, v) = \tr (G_\beta^{-1}) - g(G_\beta^{-1}(v), v),  
\end{align}
where $G_\beta$ is defined in \eqref{eq:Gn}. 

\begin{lemma} \label{lem:Xi nonneg}
We have $\Xi (\beta, v) \geq 0$ for any 2-form $\beta \in \Omega^2$ and a unit vector $v \in TX$. 
\end{lemma}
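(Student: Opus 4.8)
The plan is to show that $G_\beta^{-1}$ is a positive-definite self-adjoint endomorphism whose eigenvalues lie in $(0,1]$, and then observe that $\Xi(\beta,v)$ is precisely the sum of all the eigenvalues of $G_\beta^{-1}$ except for the "diagonal part in the direction $v$," which, after diagonalization, is a convex combination of eigenvalues and hence nonnegative.

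First I would recall from Remark \ref{rem:det Gb} that $G_\beta = {}^t(\id_{TX}+\bb)\circ(\id_{TX}+\bb)$ is positive definite and symmetric with respect to $g$; equivalently $G_\beta = \id_{TX} + {}^t\beta^\sharp\circ\beta^\sharp$, so by Lemma \ref{lem:Gn-1 eq} its eigenvalues are $\geq 1$ and therefore the eigenvalues $\lambda_1,\dots,\lambda_n$ of $G_\beta^{-1}$ satisfy $0<\lambda_i\leq 1$. Since $G_\beta^{-1}$ is self-adjoint with respect to $g$, choose a $g$-orthonormal eigenbasis $\{f_1,\dots,f_n\}$ with $G_\beta^{-1}(f_i)=\lambda_i f_i$. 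Writing the unit vector $v = \sum_i c_i f_i$ with $\sum_i c_i^2 = 1$, we get
\begin{align}
\Xi(\beta,v) = \sum_{i=1}^n \lambda_i - \sum_{i=1}^n \lambda_i c_i^2 = \sum_{i=1}^n \lambda_i (1-c_i^2).
\end{align}
Each term is nonnegative because $\lambda_i>0$ and $0\leq c_i^2\leq 1$ (as $\sum_j c_j^2=1$), so $\Xi(\beta,v)\geq 0$.

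Alternatively, and perhaps more in keeping with the coordinate-free style of the paper, I could argue directly: $\tr(G_\beta^{-1}) = \sum_i g(G_\beta^{-1}(e_i),e_i)$ for any $g$-orthonormal frame $\{e_i\}$; choosing the frame so that $e_1 = v$, we get $\Xi(\beta,v) = \sum_{i=2}^n g(G_\beta^{-1}(e_i),e_i)$, and each summand is $\geq 0$ since $G_\beta^{-1}$ is positive definite. This avoids diagonalization entirely. I expect no real obstacle here; the only point requiring a moment's care is confirming that $G_\beta^{-1}$ is positive definite, which is immediate because $G_\beta$ is (its inverse has the reciprocal eigenvalues), and that a $g$-orthonormal frame can be chosen with prescribed first vector $v$ (standard, since $v$ is a unit vector). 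The lemma is genuinely elementary and serves only to license later manipulations of the monotonicity integrand.
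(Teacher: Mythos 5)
Your proof is correct and, in its second ("alternatively") form, is exactly the paper's argument: pick a $g$-orthonormal frame with $e_1=v$, so $\Xi(\beta,v)=\sum_{i=2}^n g(G_\beta^{-1}(e_i),e_i)\geq 0$ by positive definiteness of $G_\beta^{-1}$. The eigenbasis computation you lead with is just a diagonalized restatement of the same fact, so there is nothing further to add.
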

\begin{proof}
Suppose that $v \in T_p X$ for $p \in X$. 
Take an orthonormal frame $\{e_i \}_{i=1}^n$ of $T_p X$ such that $e_1=v$. Then 
$
\Xi (\beta, v) = \sum_{i=2}^n g(G_\beta^{-1}(e_i), e_i). 
$
Since $G_\beta^{-1}$ is positive definite, we see that $\Xi (\beta, v) \geq 0$. 
\end{proof}

\subsection{The modified volume case} \label{sec:modified vol}

We first give the monotonicity formula for the 
(radius-normalized) ``modified volume",  
where the ``modified volume" is defined by 
$$
\Omega^2 \to [0, \infty], \qquad
\beta \mapsto 
\int_X  \tr (G_\beta^{-1}) v_g (\beta) \vol_g. 
$$
In the following, we can prove the formula in a slightly more general form.

\begin{theorem} \label{thm:mono}
Let $(X,g)$ be an oriented $n$-dimensional Riemannian manifold. 
Fix $p \in X$ and use the notation in Section \ref{sec:mono notation}. 
Then there exists
a constant $a=a(n,p,g) \geq 0$ such that for 
a 2-form $\beta \in \Omega^2$ satisfying a conservation law, 
a nonnegative smooth function $f$ on $X$, 
$\kappa \in \rl$ and $0 < \sigma < \rho \leq r_p$, 
we have 
\begin{align}
&\frac{e^{a \rho^2}}{\rho^\kappa} \int_{B_\rho (p)} f \tr (G_\beta^{-1}) v_g (\beta) \vol_g 
-
\frac{e^{a \sigma^2}}{\sigma^\kappa} \int_{B_\sigma (p)} f \tr (G_\beta^{-1}) 
v_g (\beta) \vol_g \\
\geq & 
\int_{B_\rho (p) \backslash B_\sigma (p)} \frac{e^{a r^2}}{r^\kappa} f 
\Xi \left(\beta, \frac{\partial}{\partial r} \right) v_g (\beta) \vol_g \\
&+ 
\int_\sigma^\rho \frac{e^{a \tau^2}}{\tau^{\kappa +1}} 
\left(
\int_{B_\tau (p)} \left( (1-\kappa) f \tr (G_\beta^{-1}) + \frac{\tau^2 - r^2}{2} (-\Delta_\beta f) \right) v_g (\beta) \vol_g 
\right) 
d \tau. 
\end{align}
(The last term is the integration by $\tau \in [\sigma, \rho]$.)

Furthermore, the following holds. 
\begin{itemize}
\item If $(X,g)=(\rl^n, g_0)$, where $g_0$ is the standard flat metric, 
we can take
$a=0$ and the above inequality holds for any $0< \sigma < \rho < \infty$. 
\item
If $X$ is compact, we can choose uniform constants $a \geq 0$ and $\delta_0 >0$ such that the
above holds for any $0 < \sigma < \rho \leq r_p = \delta_0$. 
\end{itemize}
\end{theorem}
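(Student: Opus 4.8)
The plan is to run the classical Price-type argument, but with the Hodge Laplacian replaced by the operator $\Delta_\beta$ and with the ``integration by parts'' formula of Corollary \ref{cor:ibp} playing the role of the usual divergence theorem. First I would fix normal coordinates on $B_{r_p}(p)$ and choose the cutoff-type test function in the $\rho$-variable: for $0<\sigma<\rho\le r_p$ one works with functions $\xi_\rho$ depending only on the geodesic distance $r$, for instance $\xi_\rho(r)=(\rho^2-r^2)_+/2$ (or a smoothing of $\mathbf 1_{B_\rho}$). The key computation is to evaluate $-\Delta_\beta\,\xi_\rho$ using $\Delta_\beta = d\delta_\beta+\delta_\beta d = \delta_\beta d$ on functions, together with the formula $\delta_\beta = -i(G_\beta^{-1}(e_i))D_i$; since $dr$ and the Hessian of $r^2$ are controlled in normal coordinates by \eqref{eq:p est}, one gets
\begin{align}
-\Delta_\beta \xi_\rho = \tfrac12\,\Delta_\beta(r^2) = g\!\left(G_\beta^{-1}(e_i),\, D_i(r\,\partial/\partial r)\right) = \tr(G_\beta^{-1}) - \Xi(\beta,\partial/\partial r) + (\text{error}),
\end{align}
where the error term is $O(1)\,c(p)\,r^2\,\tr(G_\beta^{-1})$ on $B_{r_p}(p)$; this is exactly where the constant $a=a(n,p,g)$ is born (one absorbs the $O(1)c(p)$ into $a$, and $a=0$ when $g$ is flat because then $c(p)=0$).

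Next I would apply Corollary \ref{cor:ibp} with $f_1 = f$ and $f_2 = \xi_\rho$ (after the usual approximation to justify the non-compactly-supported pairing), obtaining
\begin{align}
\int_X (\Delta_\beta f)\,\xi_\rho\, v_g(\beta)\,\vol_g = \int_X f\,(\Delta_\beta \xi_\rho)\, v_g(\beta)\,\vol_g.
\end{align}
Substituting the expression for $-\Delta_\beta\xi_\rho$ computed above, the right-hand side becomes
\begin{align}
\int_{B_\rho(p)} f\Big(\tfrac{\rho^2-r^2}{2}\Big)(-\Delta_\beta f)\,v_g(\beta)\,\vol_g = \int_{B_\rho(p)} f\,\tr(G_\beta^{-1})\,v_g(\beta)\,\vol_g - \int_{B_\rho(p)} f\,\Xi\big(\beta,\partial/\partial r\big)\,v_g(\beta)\,\vol_g + (\text{error}),
\end{align}
up to the curvature error absorbed into the exponential weight. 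Rearranging, this identifies the quantity $I(\rho):=\int_{B_\rho(p)} f\,\tr(G_\beta^{-1})\,v_g(\beta)\,\vol_g$ with an integral against $\tfrac{\rho^2-r^2}{2}$ of something involving $-\Delta_\beta f$ plus the nonnegative $\Xi$-term.

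Then I would differentiate in $\rho$. Using the coarea formula, $\frac{d}{d\rho}\int_{B_\rho}(\tfrac{\rho^2-r^2}{2})\,(\cdots) = \rho\int_{B_\rho}(\cdots)$, so one obtains a first-order ODE inequality of the form $\frac{d}{d\rho}\big(e^{a\rho^2}\rho^{-\kappa}I(\rho)\big) \ge (\text{nonnegative boundary/}\Xi\text{-term}) + e^{a\rho^2}\rho^{-\kappa-1}\!\int_{B_\rho}\!\big((1-\kappa)f\tr(G_\beta^{-1}) + \tfrac{\rho^2-r^2}{2}(-\Delta_\beta f)\big)v_g(\beta)\vol_g$; here the factor $\rho^{-\kappa}$ is precisely what converts the differentiation of $I(\rho)$ into the $(1-\kappa)$-term, and the weight $e^{a\rho^2}$ is chosen so that its logarithmic derivative $2a\rho$ dominates the curvature error $O(1)c(p)r\le O(1)c(p)\rho$. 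Integrating this differential inequality from $\sigma$ to $\rho$ and converting the $\rho$-boundary contributions into the annular integral $\int_{B_\rho\setminus B_\sigma}\tfrac{e^{ar^2}}{r^\kappa}f\,\Xi(\beta,\partial/\partial r)\,v_g(\beta)\,\vol_g$ via Fubini gives exactly the claimed inequality. The flat case is immediate since $c(p)=0$ forces $a=0$ and $r_p$ arbitrary; the compact case follows from \cite[p.~16, Theorem 1.3]{hebey2000nonlinear}, which gives uniform $\delta_0, c_0$, hence a uniform $a$.

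The main obstacle I expect is the careful bookkeeping of the curvature error term: one must verify that the discrepancy between $\tfrac12\Delta_\beta(r^2)$ and $\tr(G_\beta^{-1}) - \Xi(\beta,\partial/\partial r)$ is genuinely of size $O(1)c(p)r^2\tr(G_\beta^{-1})$ uniformly — this uses that $D_i(\partial/\partial x^j)$ is $O(c(p)r)$ and $g_{ij}-\delta_{ij}$ is $O(c(p)r^2)$ from \eqref{eq:p est}, but also that $G_\beta^{-1}$ has operator norm $\le 1$ by Lemma \ref{lem:Gn-1 eq}, so the error is controlled by $\tr(G_\beta^{-1})$ without any assumption on $\beta$ itself — and that this error can be packaged cleanly into $\frac{d}{d\rho}(e^{a\rho^2})$ rather than left as a separate term. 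A secondary technical point is justifying Corollary \ref{cor:ibp} for $\xi_\rho$, which is only Lipschitz (not smooth) at $r=\rho$; this is handled by a routine mollification of $\xi_\rho$ and passing to the limit, or equivalently by first proving the inequality for $\sigma<\rho<r_p$ with smooth radial cutoffs and then taking a limit.
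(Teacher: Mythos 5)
Your overall strategy is the paper's: pair the integration-by-parts formula of Corollary \ref{cor:ibp} against a radial cutoff of the form $(\tau^2-r^2)_+/2$ (suitably smoothed), differentiate in the radius, absorb the curvature error into the weight $e^{a\tau^2}$, and produce the annular $\Xi$-integral by the coarea formula. However, the key computation as you state it is wrong, and the identity you build the argument on is false. With the paper's conventions ($\delta_\beta=-i(G_\beta^{-1}(e_i))D_i$, so $\Delta_\beta$ is the positive Laplacian when $\beta=0$), the bulk value of $\Delta_\beta\xi_\rho$ on $B_\rho(p)$ for $\xi_\rho=(\rho^2-r^2)/2$ is $\tr(G_\beta^{-1})+O(1)c(p)r^2\,\Xi(\beta,\partial/\partial r)$ (Lemma \ref{lem:lap xi} with $\xi'=-r$, $\xi''=-1$), not $\tr(G_\beta^{-1})-\Xi(\beta,\partial/\partial r)$; there is also a sign slip, since $\tfrac12\Delta_\beta(r^2)=-g\bigl(G_\beta^{-1}(e_i),D_i(r\,\partial/\partial r)\bigr)$. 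The combination $\tr(G_\beta^{-1})-\Xi(\beta,\partial/\partial r)=g\bigl(G_\beta^{-1}(\partial/\partial r),\partial/\partial r\bigr)$ is not the bulk term at all: it is the density of the singular part of the distributional $\Delta_\beta\xi_\rho$ concentrated on $\partial B_\rho(p)$ (in the smoothed picture, the $\tau\frac{\partial}{\partial\tau}\phi_{\eps,\tau}$ term), and it is exactly this part that carries the radius-derivative structure of the argument. Consequently your displayed identity $\int_{B_\rho(p)}\tfrac{\rho^2-r^2}{2}(-\Delta_\beta f)\,v_g(\beta)\vol_g=\int_{B_\rho(p)}f\tr(G_\beta^{-1})v_g(\beta)\vol_g-\int_{B_\rho(p)}f\,\Xi\bigl(\beta,\partial/\partial r\bigr)v_g(\beta)\vol_g+(\text{error})$ is false: for $\beta=0$ and $f\equiv1$ on flat $\rl^n$ the left-hand side is $0$ while the right-hand side is $\omega_n\rho^n$.

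The correct outcome of the pairing is the differential identity (the paper's \eqref{eq:int A}): with $I(\rho)=\int_{B_\rho(p)}f\tr(G_\beta^{-1})v_g(\beta)\vol_g$ and $J(\rho)=\int_{B_\rho(p)}f\,\Xi\bigl(\beta,\partial/\partial r\bigr)v_g(\beta)\vol_g$, one has $\rho\frac{\partial}{\partial\rho}\bigl(I(\rho)-J(\rho)\bigr)=I(\rho)+O(1)c(p)\rho^2 J(\rho)-\int_{B_\rho(p)}\tfrac{\rho^2-r^2}{2}(\Delta_\beta f)\,v_g(\beta)\vol_g$. The $+I(\rho)$ on the right is what, after multiplying by $e^{a\rho^2}\rho^{-\kappa}$, produces the $(1-\kappa)$ coefficient in the statement, and the leftover $\frac{\partial}{\partial\rho}J(\rho)$ is what integrates, via coarea, to the annular $\Xi$-term; starting from your static identity neither of these appears (differentiating it would give $\partial_\rho I=\partial_\rho J+\rho\int_{B_\rho(p)}(-\Delta_\beta f)v_g(\beta)\vol_g$, with no $I(\rho)/\rho$ term), so the claimed differential inequality does not follow. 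The remaining ingredients of your outline — mollifying the Lipschitz cutoff, using $\Xi\le\tr(G_\beta^{-1})$ so that the $O(1)c(p)$ error is dominated by $2aI$, and the flat and compact cases — are fine and match the paper, but the central step must be redone keeping the bulk term $\tr(G_\beta^{-1})$ and the boundary (cutoff-derivative) term $g\bigl(G_\beta^{-1}(\partial/\partial r),\partial/\partial r\bigr)$ separate.
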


The following is immediate from Theorem \ref{thm:mono}. 
Note that $(1-\kappa) f \tr (G_\beta^{-1})=0$ when $\kappa=1$ 
and $\tau^2 - r^2 \geq 0$ on $B_\tau (p)$. 
Recall also Lemma \ref{lem:Xi nonneg}. 

\begin{corollary} \label{cor:mono sh}
In addition to assumptions in Theorem \ref{thm:mono}, 
suppose further that a function $f$ satisfies $-\Delta_\beta f \geq 0$. 
Then 
$$
(0,r_p] \to \rl, \qquad 
\rho \mapsto \frac{e^{a \rho^2}}{\rho} \int_{B_\rho (p)} 
f \tr (G_\beta^{-1}) v_g (\beta)  \vol_g
$$
is non-decreasing. 
In particular, setting $f=1$, we see that 
$$
(0,r_p] \to \rl, \qquad 
\rho \mapsto \frac{e^{a \rho^2}}{\rho} \int_{B_\rho (p)} 
\tr (G_\beta^{-1}) v_g(\beta)  \vol_g
$$
is non-decreasing. 
\end{corollary}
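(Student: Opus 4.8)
The plan is to prove the monotonicity formula in Theorem~\ref{thm:mono} by a Pohozaev-type argument adapted to the operator $\Delta_\beta$, following the template of Price's monotonicity formula for Yang--Mills connections. The starting point is the ``integration by parts'' formula of Corollary~\ref{cor:ibp}, which provides the analogue of the first variation identity used in the Yang--Mills and minimal submanifold cases. First I would fix $p$, work in the normal coordinates $(x^1,\dots,x^n)$ on $B_{r_p}(p)$, and set $r=\sqrt{\sum_i (x^i)^2}$, so that $\partial/\partial r$ is the unit radial field. The key test function will be (a smooth modification, or a Lipschitz cutoff followed by approximation, of) $\xi_\tau = (\tau^2 - r^2)_+$, or more precisely I would apply Corollary~\ref{cor:ibp} with $f_1 = f \cdot \chi$ for a radial cutoff $\chi$ supported in $B_\tau(p)$, and then differentiate in $\tau$. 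The point is to compute $\Delta_\beta(r^2)$ and the ``radial derivative'' $\langle dr^2, (G_\beta^{-1})^*(\cdot)\rangle$ using the metric estimates \eqref{eq:p est}: in the flat case $\Delta_\beta(r^2) = -\tr(G_\beta^{-1})\cdot 2$ up to the $\beta$-dependent terms, and the error terms coming from curvature and from $|g_{ij}-\delta_{ij}| \le c(p) r^2$, $|D\partial/\partial x^i| \le c(p) r$ contribute exactly the factor $e^{a\rho^2}$ after the usual Gronwall-type absorption, where $a$ depends on $c(p)$, hence on $n$, $p$, $g$.

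The main computation is to establish, for the modified volume density $\Phi := f\,\tr(G_\beta^{-1})\,v_g(\beta)$, an identity of the schematic form
\begin{align*}
\frac{d}{d\tau}\left( \frac{e^{a\tau^2}}{\tau^\kappa} \int_{B_\tau(p)} \Phi \,\vol_g \right)
= \frac{e^{a\tau^2}}{\tau^{\kappa+1}} \int_{\partial B_\tau(p)} \left( \text{radial term} \right) d\mathcal{H}^{n-1}
+ \frac{e^{a\tau^2}}{\tau^{\kappa+1}}\int_{B_\tau(p)} \left( (1-\kappa)\Phi + \dots \right)\vol_g,
\end{align*}
where the coarea formula converts the surface integral into the bulk term $\int_{B_\rho\setminus B_\sigma} \frac{e^{ar^2}}{r^\kappa} f\, \Xi(\beta, \partial/\partial r) v_g(\beta)$ after one uses Corollary~\ref{cor:ibp} with $f_2$ a function of $r$ to relate $\int_{B_\tau} \langle \text{gradient pairings}\rangle$ to $\int_{B_\tau}(\tau^2-r^2)(-\Delta_\beta f) v_g(\beta)$ and to the boundary term. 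Concretely: apply Corollary~\ref{cor:ibp} to $f_1 = f$, $f_2 = $ (smoothing of $(\tau^2-r^2)_+$), which yields $\int_{B_\tau} f(-\Delta_\beta f_2) v_g(\beta) = \int \langle df, (G_\beta^{-1})^* df_2\rangle v_g(\beta)$; compute $-\Delta_\beta f_2$ and $df_2$ explicitly in normal coordinates using $\Delta_\beta(r^2)$, $dr^2 = 2r\,dr$, and the estimates \eqref{eq:p est} to extract the leading term $\tr(G_\beta^{-1})$ and control the remainder by $a r^2 \tr(G_\beta^{-1})$; then integrate in $\tau$ from $\sigma$ to $\rho$ and rearrange. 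The nonnegativity of $\Xi(\beta, \partial/\partial r)$ (the Lemma just above) is what makes the first bulk term a genuine ``$\ge$'' contribution rather than something one must estimate away.

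The hard part will be the precise bookkeeping of the error terms introduced by the non-flat metric — i.e.\ verifying that all deviations from the Euclidean computation (the difference between $\Delta_\beta(r^2)$ and its flat value, the difference between the Riemannian volume element and the Euclidean one, the non-geodesic behavior of $r$-spheres) are genuinely of order $r^2$ times the main density, so that they can be absorbed into the exponential factor $e^{a\rho^2}$ by choosing $a = a(n,p,g)$ large enough; this is exactly where \eqref{eq:p est} and the remark that $r_p, c(p)$ depend only on $\mathrm{inj}_g(p)$ and the curvature get used, and where the flat case gives $a=0$ with no radius restriction. A secondary technical point is the regularization: $(\tau^2-r^2)_+$ is only Lipschitz, so strictly speaking one applies Corollary~\ref{cor:ibp} to a smooth approximation $\phi_\varepsilon(r)$ that is compactly supported in $B_{r_p}(p)$ (legitimate since $\tau \le r_p$ and one can cut off near $r_p$ harmlessly) and passes to the limit; the $\beta$-dependent terms in $\Delta_\beta$ are benign here because $v_g(\beta)$ and $G_\beta^{-1}$ are bounded on the closure of any $B_\tau(p)$ with $\tau < r_p$. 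Once the differential inequality is in hand, integrating it over $[\sigma,\rho]$ and invoking the coarea formula to identify the boundary contribution gives precisely the stated inequality, and Corollary~\ref{cor:mono sh} follows by taking $\kappa=1$, dropping the manifestly nonnegative $\Xi$-term and using $-\Delta_\beta f \ge 0$ (together with $f \ge 0$, $\tr(G_\beta^{-1})\ge 0$, $v_g(\beta)\ge 1$) to conclude the remaining integrand is nonnegative, so the whole right-hand side is $\ge 0$.
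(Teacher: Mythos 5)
Your proposal is correct and follows essentially the same route as the paper: Theorem \ref{thm:mono} is proved there exactly this way, by applying Corollary \ref{cor:ibp} with a smooth radial approximation $\xi_{\eps,\tau}$ of $(\tau^2-r^2)_+/2$, computing $\Delta_\beta\xi$ in normal coordinates, absorbing the $O(1)c(p)r^2$ error terms from \eqref{eq:p est} into the factor $e^{a\tau^2}$ by choosing $a$ suitably, integrating the resulting differential inequality in $\tau$ over $[\sigma,\rho]$, and identifying the annulus term via the coarea formula, after which the corollary is immediate with $\kappa=1$ just as you conclude. One cosmetic slip: Corollary \ref{cor:ibp} gives $\int_X f\,(\Delta_\beta f_2)\, v_g(\beta)\vol_g=\int_X\langle df,(G_\beta^{-1})^*df_2\rangle\, v_g(\beta)\vol_g$, i.e.\ without the minus sign you wrote, but this does not affect the argument.
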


\begin{proof}
Without loss of generality we can suppose $\rho < r_p$. 
The case $\rho = r_p$ follows by the obvious approximation argument. 

Since $(x^1, \cdots, x^n)$ are normal coordinates and $r=\sqrt{\sum_{i=1}^n (x^i)^2}$, 
we have
\begin{align} \label{eq:rad isom}
g \left(\frac{\partial}{\partial r}, \cdot \right) = dr
\end{align}
by the Gauss lemma (cf. \cite[p. 133, Lemma 12]{petersen2006riem}). 
In particular, we have $g(\partial/\partial r, \partial/\partial r)=1$. 
Let $\{ e_i \}_{i=1}^n$ be an orthonormal local frame of $TX$ around $p$ 
such that $e_1=\partial/\partial r$. 
Since the unit radial vector field $\partial/\partial r$ 
is the velocity of a (radial) geodesic, it follows that
\begin{align} \label{eq:rad para}
D_{\frac{\partial}{\partial r}} \frac{\partial}{\partial r} =0.     
\end{align}
Recall the notation \eqref{eq:def deri}. 
For $\mu, \nu \geq 2$, set 
$$
D_\mu \left( r \frac{\partial}{\partial r} \right)
= \sum_{\nu=2}^n b_{\mu \nu} e_\nu 
\qquad 
\Longleftrightarrow
\qquad 
b_{\mu \nu} = g \left(D_\mu \left( r \frac{\partial}{\partial r} \right), e_\nu \right). 
$$

\begin{lemma} \label{lem:est b}
For any $(w_2, \cdots, w_n) \in \rl^{n-1}$, we have 
$$
\sum_{\mu, \nu=2}^n b_{\mu \nu} w^\mu w^\nu = \left(1+O(1)c(p)r^2 \right) \sum_{\mu=2}^n (w^\mu)^2. 
$$
\end{lemma}
\begin{proof}
Since $r \partial/ \partial r = \sum_{i=1}^n x^i \partial/ \partial x^i$, we have 
$$
b_{\mu \nu} 
= g \left( e_\mu + \sum_{i=1}^n x^i D_\mu \frac{\partial}{\partial x^i}, e_\nu \right)
= \delta_{\mu \nu} + \sum_{i=1}^n x^i g \left( D_\mu \frac{\partial}{\partial x^i}, e_\nu \right). 
$$
Setting $w=\sum_{\mu=2}^n w_\mu e_\mu$, we have 
$$
\sum_{\mu, \nu=2}^n b_{\mu \nu} w^\mu w^\nu = |w|^2 
+ \sum_{i=1}^n x^i g \left( D_w \frac{\partial}{\partial x^i}, w \right). 
$$
Since 
$$
|x^i| \leq r, \qquad 
g \left( D_w \frac{\partial}{\partial x^i}, w \right) 
\leq \left| D_w \frac{\partial}{\partial x^i} \right| |w| 
\leq \sum_{\mu=2}^n |w_\mu| \left| D_\mu \frac{\partial}{\partial x^i} \right| |w| 
\leq (n-1) c(p) r |w|^2, 
$$
where we use \eqref{eq:p est}, we obtain 
$$
\sum_{\mu, \nu=2}^n b_{\mu \nu} w^\mu w^\nu 
\leq \left(1 + n (n-1) c(p) r^2 \right) |w|^2
$$
and the proof is completed. 
\end{proof}

Next, we use the ``integration by parts" formula in Corollary \ref{cor:ibp}. 
Let $\xi: \rl \to \rl$ be a smooth function with support in $(-\infty, r_p)$. 
Then $\xi \circ r$ is a function $B_{r_p} (p) \to \rl$ which is smooth on 
$B_{r_p} (p)- \{ 0 \}$. 
Suppose that it extends smoothly on $B_{r_p} (p)$. 
Further, we consider $\xi = \xi \circ r$ a function on $X$ by the zero extension. 
Then by Corollary  \ref{cor:ibp}, we have 
\begin{align} \label{eq:mon ibp}
\int_X \xi \cdot (\Delta_\beta f) \cdot v_g (\beta) \vol_g 
= 
\int_X f \cdot (\Delta_\beta \xi) \cdot v_g (\beta) \vol_g 
\end{align}
for any nonnegative smooth function $f$. 
We first compute $\Delta_\beta \xi$. 

\begin{lemma} \label{lem:lap xi}
We have 
\begin{align}
\Delta_\beta \xi 
= 
- \xi'' g \left( G_\beta^{-1} \left(\frac{\partial}{\partial r} \right), 
\frac{\partial}{\partial r} \right)
- \frac{\xi'}{r} 
\left( 1+  O(1) c(p) r^2 \right) \Xi \left(\beta, \frac{\partial}{\partial r} \right), 
\end{align}
where $\xi'= d \xi/dr$ and $\xi''=d^2 \xi/dr^2$. 
\end{lemma}

\begin{proof}
Since $d \xi = \xi' dr$, we have 
$$
\Delta_\beta \xi
=
\delta_\beta (\xi' dr) 
= 
- \sum_{i=1}^n i(G_\beta^{-1}(e_i)) D_i (\xi' dr). 
$$
By \eqref{eq:rad isom} and \eqref{eq:rad para}, we see that 
$D_{\frac{\partial}{\partial r}} dr =0$. 
Thus we have 
$$
\Delta_\beta \xi
=
- \xi'' g \left( G_\beta^{-1} \left(\frac{\partial}{\partial r} \right), 
\frac{\partial}{\partial r} \right) 
- \xi' \sum_{\mu=2}^n (D_\mu dr) (G_\beta^{-1} (e_\mu)). 
$$
For $\mu \geq 2$, we have 
$$
D_\mu dr 
= \frac{1}{r} g \left( D_\mu \left(r \frac{\partial}{\partial r} \right), \cdot \right)
= \frac{1}{r} \sum_{\nu=2}^n b_{\mu \nu} g(e_\nu, \cdot) 
$$
Then by Lemma \ref{lem:est b}, we compute 
\begin{align}
\sum_{\mu, \nu=2}^n b_{\mu \nu} g(e_\nu, G_\beta^{-1} (e_\mu))
&= 
\sum_{\mu, \nu=2}^n b_{\mu \nu} g \left( (\id_{TX} + \bb)^{-1} e_\mu, (\id_{TX} + \bb)^{-1} (e_\nu) \right) \\
&=
\sum_{i=1}^n 
\sum_{\mu, \nu=2}^n 
b_{\mu \nu} g \left( (\id_{TX} + \bb)^{-1} e_\mu, e_i \right) g \left((\id_{TX} + \bb)^{-1} (e_\nu), e_i \right) \\
&=
\left(1+O(1)c(p)r^2 \right) \sum_{i=1}^n \sum_{\mu=2}^n g \left( (\id_{TX} + \bb)^{-1} e_\mu, e_i \right)^2 \\
&= 
\left(1+O(1)c(p)r^2 \right) \sum_{\mu=2}^n g(G_\beta^{-1} (e_\mu), e_\mu)\\
&=
\left(1+O(1)c(p)r^2 \right) \Xi \left(\beta, \frac{\partial}{\partial r} \right) 
\end{align}
and the proof of Lemma \ref{lem:lap xi} is completed. 
\end{proof}

Now we choose $\xi$ more specifically. 
Fix $0<\eps<1$ such that 
$(1+\eps) \rho < r_p$. (Recall that $\rho < r_p$.) 
Take a smooth function $\phi_\eps \in C^\infty([0, \infty))$ satisfying 
$\phi_\eps (t)=1$ for $t \in [0,1]$, $\phi_\eps (t)=0$ for $t \geq 1+ \eps$. 
We may also assume that 
\begin{align} \label{eq:cut off der}
\sup_{t \in (0, \infty)}|\phi_\eps'(t)| \leq \frac{2}{\eps}     
\end{align}
as stated in the proof of \cite[Theorem 8.8]{gilbarg2001elliptic}. 
Introduce a new parameter $\tau \in [\sigma, \rho]$ and 
set 
\begin{align}
\phi_{\eps, \tau} (r) 
&= \phi_\eps \left(\frac{r}{\tau} \right)\\
\xi (r)
&= \xi_{\eps,\tau} (r) 
= \int_r^\infty s \phi_{\eps, \tau} (s) ds.     
\end{align}
\begin{center}
\begin{tikzpicture}[scale=1]
\draw[thick, ->] (-2,0)--(2,0) node[right] {$t$};
\draw (-1,1) node[above left]{$\phi_\eps$};
\draw (1,0) node[below]{$1+\eps$};
\draw[dashed](0,1)--(0,0)node[below]{$1$};
\draw[domain=-2:0] plot(\x, 1); 
\draw[domain=0:0.99] plot(\x, {exp (1-1/(1-\x*\x)});
\draw[domain=1:2] plot(\x, 0); 
\end{tikzpicture}
\hspace{1cm}
\begin{tikzpicture}[scale=1]
\draw[thick, ->] (-2,0)--(2,0) node[right] {};
\draw (-1,1) node[above left]{$\phi_{\eps, \tau}$};
\draw (1,0) node[below]{$\tau (1+\eps)$};
\draw[dashed](0,1)--(0,0)node[below]{$\tau$};
\draw[domain=-2:0] plot(\x, 1); 
\draw[domain=0:0.99] plot(\x, {exp (1-1/(1-\x*\x)});
\draw[domain=1:2] plot(\x, 0); 
\end{tikzpicture}
\end{center}
This choice of $\xi$ is the same as in 
\cite[Section 16.1]{gilbarg2001elliptic} 
so that 
$- d \xi=  \phi_{\eps, \tau} (r) r dr$, 
which is the metric dual of the cut-off radial vector field. 
Since $\phi_{\eps, \tau} (s)=1$ when $s$ is sufficiently small, 
we see that 
$\xi (r) = C-r^2/2 = C -\sum_{i=1}^n (x^i)^2/2$ for some constant $C$
when $r$ is sufficiently small. 
Thus $\xi= \xi \circ r$ extends smoothly over $0$.  
Then we have 
$$
\xi_{\eps, \tau}' = -r \phi_{\eps, \tau}, \qquad 
\xi_{\eps, \tau}'' = - \phi_{\eps, \tau} - \frac{r}{\tau} \phi_\eps' \left( \frac{r}{\tau} \right)
= 
- \phi_{\eps, \tau} + \tau \frac{\partial}{\partial \tau} \phi_{\eps, \tau}. 
$$
By Lemma \ref{lem:lap xi}, it follows that 
\[
\begin{aligned}
\label{eq:mon lapxi}
\Delta_\beta \xi_{\eps, \tau} 
=& 
\left(\phi_{\eps, \tau} - \tau \frac{\partial}{\partial \tau} \phi_{\eps, \tau} \right) 
g \left( G_\beta^{-1} \left(\frac{\partial}{\partial r} \right), 
\frac{\partial}{\partial r} \right)
+ 
\phi_{\eps, \tau} \left( 1+  O(1) c(p) r^2 \right) \Xi \left(\beta, \frac{\partial}{\partial r} \right) \\
=& 
\phi_{\eps, \tau} \tr (G_\beta^{-1}) 
- \left( \tau \frac{\partial}{\partial \tau} \phi_{\eps, \tau} \right)
g \left( G_\beta^{-1} \left(\frac{\partial}{\partial r} \right), \frac{\partial}{\partial r} \right)
+ O(1) c(p) r^2 \phi_{\eps, \tau} \Xi \left(\beta, \frac{\partial}{\partial r} \right). 
\end{aligned}
\]

We also note the following. 
\begin{lemma}
\begin{align}
\int_X 
O(1) c(p) f r^2 \phi_{\eps, \tau} \Xi \left(\beta, \frac{\partial}{\partial r} \right) v_g(\beta) \vol_g
=
O(1) c(p) \tau^2 \int_X f \phi_{\eps, \tau} \Xi \left(\beta, \frac{\partial}{\partial r} \right) v_g(\beta) \vol_g, 
\label{eq:int O(1)} 
\\
\tau \frac{\partial}{\partial \tau} \int_X f \phi_{\eps, \tau} 
g \left( G_\beta^{-1} \left(\frac{\partial}{\partial r} \right), \frac{\partial}{\partial r} \right) v_g(\beta) \vol_g 
= 
\int_X f \left( \tau \frac{\partial}{\partial \tau} \phi_{\eps, \tau} \right) 
g \left( G_\beta^{-1} \left(\frac{\partial}{\partial r} \right), \frac{\partial}{\partial r} \right) v_g(\beta) \vol_g. 
\label{eq:change diff int}
\end{align}
\end{lemma}

Recall that $O (1)$ is a number or a function 
bounded by a constant depending only on $n = \dim X$.
The two $O(1)$s in \eqref{eq:int O(1)} are different functions 
or numbers, but they both have this property, 
so we use the same symbols by an abuse of notation.

\begin{proof}
Since $\phi_{\eps, \tau} (r) \neq 0$ precisely when $r \leq (1+\eps) \tau (< 2\tau)$, 
we have $r^2 \phi_{\eps, \tau} \leq 4 \tau^2 \phi_{\eps, \tau}$. 
Then we see \eqref{eq:int O(1)}. 

Also, by the fact that $\phi_{\eps, \tau}' (r) \neq 0$ precisely when 
$r \leq (1+\eps) \tau < (1+\eps) \rho < r_p$ 
and \eqref{eq:cut off der}, we see that 
$$
\left| \tau \frac{\partial}{\partial \tau} \phi_{\eps, \tau} \right|
=
\left|  r \phi_\eps' \left( \frac{r}{\tau} \right) \right|
\leq \frac{2 r_p}{\eps}. 
$$
Thus 
$\tau \frac{\partial}{\partial \tau} \phi_{\eps, \tau}$ is uniformly bounded with respect to $\tau$ 
and has support in $B_{(1+\eps) \rho} (p)$. 
By Lemma \ref{lem:Gn-1 eq}, we see that 
$g \left(G_\beta^{-1} \left(\frac{\partial}{\partial r} \right), \frac{\partial}{\partial r} \right) \leq 1$. 
Then we obtain \eqref{eq:change diff int} by Lebesgue's dominated convergence theorem. 
\end{proof}

Now set 
\begin{align} \label{eq:def IJQ}
\begin{split}
I_{\eps, \tau} &=\int_X f \phi_{\eps, \tau} \tr \left(G_\beta^{-1} \right) v_g(\beta) \vol_g, 
\qquad
J_{\eps, \tau}=\int_X f \phi_{\eps, \tau} 
\Xi \left(\beta, \frac{\partial}{\partial r} \right) v_g(\beta) \vol_g, \\
Q_{\eps, \tau} &=
\int_X \xi_{\eps, \tau} (\Delta_\beta f)  v_g(\beta) \vol_g. 
\end{split}
\end{align}
Substituting \eqref{eq:mon lapxi} into \eqref{eq:mon ibp} 
and using \eqref{eq:int O(1)}, \eqref{eq:change diff int}, 
$
g \left(G_\beta^{-1} \left(\frac{\partial}{\partial r} \right), \frac{\partial}{\partial r} \right)
=
\tr \left(G_\beta^{-1} \right) 
- \Xi \left(\beta, \frac{\partial}{\partial r} \right)$, 
we obtain 
\begin{align} \label{eq:int A}
\tau \frac{\partial}{\partial \tau}  
\left( I_{\eps, \tau} - J_{\eps, \tau} \right) 
= I_{\eps, \tau} 
+ O(1) c(p) \tau^2 J_{\eps, \tau}
- 
Q_{\eps, \tau}. 
\end{align}
Then \eqref{eq:int A} implies that  
\begin{equation} \label{eq:diff eI}
\begin{split}
\frac{\partial}{\partial \tau} 
\left( \frac{e^{a \tau^2}}{\tau^\kappa} I_{\eps, \tau} \right) 
=&
\frac{e^{a \tau^2}}{\tau^{\kappa -1}} 
\left( 2a I_{\eps, \tau} 
-\frac{\kappa}{\tau^2} I_{\eps, \tau} 
+ \frac{1}{\tau} \frac{\partial}{\partial \tau} 
I_{\eps, \tau} \right) \\
=&
\frac{e^{a \tau^2}}{\tau^{\kappa -1}} 
\left \{ 2a I_{\eps, \tau} 
-\frac{\kappa}{\tau^2} I_{\eps, \tau} 
+ \frac{1}{\tau^2} 
\left( 
\tau \frac{\partial}{\partial \tau} J_{\eps, \tau} 
+ I_{\eps, \tau} 
+ O(1) c(p) \tau^2 J_{\eps, \tau} 
-  
Q_{\eps, \tau} 
\right)
\right \}
\\
=&
\frac{e^{a \tau^2}}{\tau^{\kappa -1}} 
\left \{ 2a I_{\eps, \tau} +O(1)c(p) J_{\eps, \tau} 
+ \frac{1-\kappa}{\tau^2} I_{\eps, \tau}
+ \frac{1}{\tau} \frac{\partial}{\partial \tau} J_{\eps, \tau} 
- \frac{1}{\tau^2} Q_{\eps, \tau} 
\right \}. 
\end{split}
\end{equation}
Since $\tr \left(G_\beta^{-1} \right) \geq \Xi \left(\beta, \frac{\partial}{\partial r} \right)$, 
we have $I_{\eps, \tau} \geq J_{\eps, \tau}$.  
Choosing a non-negative number $a$ such that $2a \geq - O(1) c(p)$, we obtain 
\begin{align} \label{eq:mon ineq}
\frac{\partial}{\partial \tau} 
\left( \frac{e^{a \tau^2}}{\tau^\kappa} I_{\eps, \tau} \right)
\geq
\frac{e^{a \tau^2}}{\tau^{\kappa -1}} 
\left \{ \frac{1-\kappa}{\tau^2} I_{\eps, \tau}
+ \frac{1}{\tau} \frac{\partial}{\partial \tau} J_{\eps, \tau} 
- \frac{1}{\tau^2} Q_{\eps, \tau} 
\right \}.  
\end{align}
Then the result follows by integrating over $[\sigma, \rho]$ and 
letting $\eps$ tend to zero. 
Indeed, since 
$$
\lim_{\eps \to 0} \phi_{\eps, \tau} (r) 
= 
\lim_{\eps \to 0} \phi_{\eps} \left( \frac{r}{\tau} \right)
= 
\begin{cases}
1 & (r \leq \tau)\\
0 & (r > \tau) 
\end{cases}, 
\qquad 
\lim_{\eps \to 0} \xi_{\eps, \tau} (r) 
= 
\begin{cases}
\int_r^\tau s ds = \frac{\tau^2-r^2}{2} & (r \leq \tau)\\
0 & (r > \tau) 
\end{cases},  
$$
we have 
$$
\lim_{\eps \to 0} I_{\eps, \tau} = I_{\tau}, \qquad
\lim_{\eps \to 0} J_{\eps, \tau} = J_{\tau}, \qquad
\lim_{\eps \to 0} Q_{\eps, \tau} = Q_{\tau}, 
$$
where 
\begin{align}
I_{\tau}&=\int_{B_\tau (p)} f \tr \left(G_\beta^{-1} \right) v_g(\beta) \vol_g, 
\qquad
J_{\tau}=\int_{B_\tau (p)} f 
\Xi \left(\beta, \frac{\partial}{\partial r} \right) v_g(\beta) \vol_g, \\
Q_\tau &= 
\int_{B_\tau (p)} \frac{\tau^2-r^2}{2} (\Delta_\beta f)  v_g(\beta) \vol_g. 
\end{align}
Hence 
\begin{align}
\lim_{\eps \to 0} \int _{\sigma}^{\rho} 
\frac{\partial}{\partial \tau} 
\left( \frac{e^{a \tau^2}}{\tau^\kappa} I_{\eps, \tau} \right) d \tau 
&= 
\frac{e^{a \rho^2}}{\rho^\kappa} I_{\rho}
- 
\frac{e^{a \sigma^2}}{\sigma^\kappa} I_\sigma,  
\\
\lim_{\eps \to 0} \int _{\sigma}^{\rho} 
\frac{e^{a \tau^2}}{\tau^{\kappa +1}} 
\left \{ (1-\kappa) I_{\eps, \tau} 
- Q_{\eps, \tau} 
\right \} d \tau 
&=
\int_\sigma^\rho \frac{e^{a \tau^2}}{\tau^{\kappa +1}} 
\left \{
(1-\kappa) I_\tau - Q_\tau 
\right \}
d \tau.  
\end{align}
We also have 
\begin{align}
\int_{\sigma}^{\rho} 
\left( \frac{e^{a \tau^2}}{\tau^\kappa} \frac{\partial}{\partial \tau} J_{\eps, \tau} \right)
d \tau
&=
\left [ \frac{e^{a \tau^2}}{\tau^\kappa} J_{\eps, \tau} 
\right]_{\sigma}^{\rho}
- 
\int _{\sigma}^{\rho} 
\frac{\partial}{\partial \tau} \left(\frac{e^{a \tau^2}}{\tau^\kappa} \right) J_{\eps, \tau} 
d \tau \\
&\xrightarrow[\varepsilon \rightarrow 0]{}
\left [ \frac{e^{a \tau^2}}{\tau^\kappa} J_{\tau} 
\right]_{\sigma}^{\rho}
- 
\int _{\sigma}^{\rho} 
\frac{\partial}{\partial \tau} \left(\frac{e^{a \tau^2}}{\tau^\kappa} \right) J_{\tau} 
d \tau
= 
\int_{\sigma}^{\rho} 
\left( \frac{e^{a \tau^2}}{\tau^\kappa} \frac{\partial}{\partial \tau} J_{\tau} \right)
d \tau. 
\end{align}
By the coarea formula (cf.  \cite[Exercise III.12 (c) or (d)]{chavel2006}) 
for $r:B_\tau (p) - \{ 0 \} \to (0,\tau)$, 
we have 
$$
J_\tau 
= 
\int_0^\tau 
\left( \int_{\partial B_\upsilon (p)} f 
\Xi \left(\beta, \frac{\partial}{\partial r} \right) v_g(\beta) \vol_{g_\upsilon} \right) 
d \upsilon, 
$$
where $\vol_{g_\upsilon}$ is the induced volume form on the boundary $\partial B_\upsilon (p)$. Hence 
$$
\frac{\partial}{\partial \tau} J_{\tau} 
= 
\int_{\partial B_\tau (p)} f 
\Xi \left(\beta, \frac{\partial}{\partial r} \right) v_g(\beta) \vol_{g_\tau}. 
$$
Then by the coarea formula again, we obtain 
\begin{align}
\int_\sigma^\rho \left( \frac{e^{a \tau^2}}{\tau^\kappa} 
\frac{\partial}{\partial \tau} J_{\tau} \right) d \tau 
=
\int_{B_\rho (p) \backslash B_\sigma (p)} \frac{e^{a r^2}}{r^\kappa} f  
\Xi \left(\beta, \frac{\partial}{\partial r} \right) v_g(\beta) \vol_g. 
\end{align}
The last two statements follow from the discussion in Section \ref{sec:mono notation}. 
\end{proof}

\subsection{The volume case} \label{sec:vol}
Next, we compute the difference in 
(radius-normalized) volume defined in \eqref{eq:def vol func} 
on geodesic balls of different radii, 
and consider that under what conditions we can obtain the monotonicity.

\begin{theorem} \label{thm:mono vol}
Let $(X,g)$ be an oriented $n$-dimensional Riemannian manifold. 
Fix $p \in X$ and use the notation in Section \ref{sec:mono notation}. 
Then there exists a constant $a=a(n,p,g) \geq 0$ such that for 
a 2-form $\beta \in \Omega^2$ satisfying a conservation law, 
a nonnegative smooth function $f$ on $X$, 
$\kappa \in \rl$ and $0 < \sigma < \rho \leq r_p$,  
we have 
\begin{align}
&\frac{e^{a \rho^2}}{\rho^\kappa} \int_{B_\rho (p)} f v_g (\beta) \vol_g 
-
\frac{e^{a \sigma^2}}{\sigma^\kappa} \int_{B_\sigma (p)} f v_g (\beta) \vol_g \\
\geq & 
\int_{B_\rho (p) \backslash B_\sigma (p)} \frac{e^{a r^2}}{r^\kappa} f 
\left(1- g \left( G_\beta^{-1} \left(\frac{\partial}{\partial r} \right), 
\frac{\partial}{\partial r} \right) \right) v_g (\beta) \vol_g \\
&+ 
\int_\sigma^\rho \frac{e^{a \tau^2}}{\tau^{\kappa +1}} 
\left(
\int_{B_\tau (p)} \left( \left(\tr (G_\beta^{-1})-\kappa \right) f 
+ \frac{\tau^2 - r^2}{2} (-\Delta_\beta f) \right) v_g (\beta) \vol_g 
\right) 
d \tau. 
\end{align}
Furthermore, the last two statements of Theorem \ref{thm:mono} 
about $a$ and $\delta_0$ also hold. 
\end{theorem}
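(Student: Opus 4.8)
The plan is to recycle the computation in the proof of Theorem~\ref{thm:mono} and simply reorganize it around the unweighted volume $\int_{B_\tau(p)} f v_g(\beta)\vol_g$ in place of the $\tr(G_\beta^{-1})$-weighted one. I would keep all the notation from that proof: fix $0<\eps<1$ with $(1+\eps)\rho<r_p$, take the radial cutoffs $\phi_{\eps,\tau}$, $\xi_{\eps,\tau}$ and the quantities $I_{\eps,\tau}$, $J_{\eps,\tau}$, $Q_{\eps,\tau}$ of \eqref{eq:def IJQ}, and set
$$
K_{\eps,\tau}:=I_{\eps,\tau}-J_{\eps,\tau}=\int_X f\,\phi_{\eps,\tau}\,g\!\left(G_\beta^{-1}\!\left(\tfrac{\partial}{\partial r}\right),\tfrac{\partial}{\partial r}\right)v_g(\beta)\,\vol_g,
\qquad
\widetilde I_{\eps,\tau}:=\int_X f\,\phi_{\eps,\tau}\,v_g(\beta)\,\vol_g,
$$
the second equality following from $\Xi(\beta,\tfrac{\partial}{\partial r})=\tr(G_\beta^{-1})-g(G_\beta^{-1}(\tfrac{\partial}{\partial r}),\tfrac{\partial}{\partial r})$, cf.~\eqref{eq:Xi}. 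The only input I would import from the proof of Theorem~\ref{thm:mono} is the identity \eqref{eq:int A}, which, rewritten with $I-J=K$, reads $\tau\,\tfrac{\partial}{\partial\tau}K_{\eps,\tau}=I_{\eps,\tau}+O(1)c(p)\tau^2 J_{\eps,\tau}-Q_{\eps,\tau}$.

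Then I would differentiate $e^{a\tau^2}\tau^{-\kappa}\widetilde I_{\eps,\tau}$ exactly as in \eqref{eq:diff eI}, splitting $\tfrac{\partial}{\partial\tau}\widetilde I_{\eps,\tau}=\tfrac{\partial}{\partial\tau}\bigl(\widetilde I_{\eps,\tau}-K_{\eps,\tau}\bigr)+\tfrac{\partial}{\partial\tau}K_{\eps,\tau}$ and substituting the identity above for the second term. This gives
\begin{align*}
\frac{\partial}{\partial\tau}\!\left(\frac{e^{a\tau^2}}{\tau^\kappa}\widetilde I_{\eps,\tau}\right)
=\;&\frac{e^{a\tau^2}}{\tau^\kappa}\,\frac{\partial}{\partial\tau}\bigl(\widetilde I_{\eps,\tau}-K_{\eps,\tau}\bigr)
+\frac{e^{a\tau^2}}{\tau^{\kappa+1}}\bigl(I_{\eps,\tau}-\kappa\widetilde I_{\eps,\tau}-Q_{\eps,\tau}\bigr)\\
&+\frac{e^{a\tau^2}}{\tau^{\kappa-1}}\bigl(2a\widetilde I_{\eps,\tau}+O(1)c(p)J_{\eps,\tau}\bigr).
\end{align*}
Since $0\le\Xi(\beta,\tfrac{\partial}{\partial r})\le\tr(G_\beta^{-1})\le n$ by Lemma~\ref{lem:Gn-1 eq} and $f\ge 0$, we have $0\le J_{\eps,\tau}\le n\,\widetilde I_{\eps,\tau}$, so a choice of $a=a(n,p,g)\ge 0$ with $2a\ge -n\cdot O(1)c(p)$ makes the last line nonnegative and turns the identity into the asserted differential inequality. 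Integrating over $[\sigma,\rho]$ and letting $\eps\to 0$ by dominated convergence — with $\phi_{\eps,\tau}\to\mathbf{1}_{[0,\tau]}$ and $\xi_{\eps,\tau}\to\tfrac{\tau^2-r^2}{2}\mathbf{1}_{[0,\tau]}$, as in the proof of Theorem~\ref{thm:mono} — produces all the stated terms; the term $\int_\sigma^\rho e^{a\tau^2}\tau^{-\kappa}\,\tfrac{\partial}{\partial\tau}(\widetilde I_\tau-K_\tau)\,d\tau$ becomes the boundary integral $\int_{B_\rho(p)\setminus B_\sigma(p)}e^{ar^2}r^{-\kappa}f\bigl(1-g(G_\beta^{-1}(\tfrac{\partial}{\partial r}),\tfrac{\partial}{\partial r})\bigr)v_g(\beta)\,\vol_g$ after an integration by parts in $\tau$ and the coarea formula, verbatim the manipulation applied to $J_\tau$ in the proof of Theorem~\ref{thm:mono}. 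The two bulleted special cases about $a$ and $\delta_0$ follow from the discussion in Section~\ref{sec:mono notation} exactly as in Theorem~\ref{thm:mono}.

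The one genuinely new point — and where I expect the bookkeeping to need care — is that the ``integration by parts'' identity of Corollary~\ref{cor:ibp} naturally controls the $\tr(G_\beta^{-1})$-weighted volume rather than the plain volume. One therefore has to isolate $\widetilde I_\tau$ from $K_\tau$ using only the pointwise bound $g(G_\beta^{-1}(\tfrac{\partial}{\partial r}),\tfrac{\partial}{\partial r})\le 1$ of Lemma~\ref{lem:Gn-1 eq} — which is also what produces the nonnegative boundary integrand $1-g(G_\beta^{-1}(\tfrac{\partial}{\partial r}),\tfrac{\partial}{\partial r})$ — and one has to absorb the curvature error $O(1)c(p)\tau^2 J_{\eps,\tau}$ into the exponential factor through the coarser estimate $J_{\eps,\tau}\le n\,\widetilde I_{\eps,\tau}$ in place of the inequality $J_{\eps,\tau}\le I_{\eps,\tau}$ that sufficed for Theorem~\ref{thm:mono}. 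Beyond this, the argument is a rerun of that proof.
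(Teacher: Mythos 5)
Your proposal is correct and is essentially the paper's own argument: the paper likewise decomposes $A_{\eps,\tau}=\int f\phi_{\eps,\tau}v_g(\beta)\vol_g$ as $(I_{\eps,\tau}-J_{\eps,\tau})$ plus the complementary term $\int f\phi_{\eps,\tau}\bigl(1-g\bigl(G_\beta^{-1}(\tfrac{\partial}{\partial r}),\tfrac{\partial}{\partial r}\bigr)\bigr)v_g(\beta)\vol_g$, substitutes \eqref{eq:int A}, absorbs the curvature error via $J_{\eps,\tau}\leq n A_{\eps,\tau}$ and the choice of $a$, and integrates and lets $\eps\to 0$ exactly as in Theorem \ref{thm:mono}. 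The only difference is notational (your $K$ is $I-J$, whereas the paper's $K$ is its complement), so nothing further is needed.
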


Since $1- g \left( G_\beta^{-1} \left(\frac{\partial}{\partial r} \right), 
\frac{\partial}{\partial r} \right) \geq 0$, 
the following holds from Theorem \ref{thm:mono vol}.

\begin{corollary} \label{cor:mono vol sh}
In addition to assumptions in Theorem \ref{thm:mono vol}, 
suppose further that $\tr (G_\beta^{-1}) \geq \kappa$ 
and a function $f$ satisfies $-\Delta_\beta f \geq 0$. 
Then 
$$
(0,r_p] \to \rl, \qquad 
\rho \mapsto \frac{e^{a \rho^2}}{\rho^\kappa} \int_{B_\rho (p)} 
f v_g (\beta)  \vol_g
$$
is non-decreasing. 
\end{corollary}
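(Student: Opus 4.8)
The plan is to deduce the statement directly from Theorem \ref{thm:mono vol}: under the two additional hypotheses, every term on the right-hand side of the inequality proved there is non-negative, and this forces the left-hand side — which is exactly the difference of the values of the function in question — to be non-negative as well.

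Concretely, fix $0 < \sigma < \rho \leq r_p$ and apply Theorem \ref{thm:mono vol} with the given $f$ and $\kappa$. Its left-hand side is
$$
\frac{e^{a \rho^2}}{\rho^\kappa} \int_{B_\rho (p)} f v_g (\beta) \vol_g - \frac{e^{a \sigma^2}}{\sigma^\kappa} \int_{B_\sigma (p)} f v_g (\beta) \vol_g ,
$$
so it suffices to show that the two terms on its right-hand side are $\geq 0$. For the annulus term, $\partial/\partial r$ is a unit vector field (Gauss's lemma, as recorded in the proof of Theorem \ref{thm:mono}), so Lemma \ref{lem:Gn-1 eq} gives $g(G_\beta^{-1}(\partial/\partial r), \partial/\partial r) \leq 1$; hence $1 - g(G_\beta^{-1}(\partial/\partial r), \partial/\partial r) \geq 0$, and since $f \geq 0$, $v_g(\beta) \geq 1 > 0$, and $e^{a r^2}/r^\kappa > 0$, that whole term is $\geq 0$. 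For the radial-integral term, split the integrand over $B_\tau(p)$ as $(\tr(G_\beta^{-1}) - \kappa) f + \frac{\tau^2 - r^2}{2}(-\Delta_\beta f)$: the first summand is $\geq 0$ because $\tr(G_\beta^{-1}) \geq \kappa$ and $f \geq 0$, and the second is $\geq 0$ on $B_\tau(p)$ because $r < \tau$ there and $-\Delta_\beta f \geq 0$; hence the inner integral is $\geq 0$, and integrating it against the positive weight $e^{a\tau^2}/\tau^{\kappa+1}$ over $[\sigma, \rho]$ keeps it $\geq 0$.

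Therefore the right-hand side of Theorem \ref{thm:mono vol} is $\geq 0$, whence $\frac{e^{a\sigma^2}}{\sigma^\kappa}\int_{B_\sigma(p)} f v_g(\beta)\vol_g \leq \frac{e^{a\rho^2}}{\rho^\kappa}\int_{B_\rho(p)} f v_g(\beta)\vol_g$ for all $0 < \sigma < \rho \leq r_p$; this is precisely the asserted monotonicity on $(0, r_p]$, with the endpoint $\rho = r_p$ handled by the same approximation as in Theorem \ref{thm:mono}. There is no genuine obstacle — the argument is a sign check — and the only points worth flagging are that $v_g(\beta)$ is strictly positive so it never reverses an inequality, that $f$ is assumed non-negative rather than merely smooth, and that the hypothesis $\tr(G_\beta^{-1}) \geq \kappa$ enters exactly once, to make $(\tr(G_\beta^{-1}) - \kappa) f \geq 0$.
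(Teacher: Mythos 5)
Your proposal is correct and matches the paper's own reasoning: the corollary is deduced directly from Theorem \ref{thm:mono vol} by observing that, under the hypotheses $\tr(G_\beta^{-1})\geq\kappa$, $f\geq 0$ and $-\Delta_\beta f\geq 0$, together with $1-g\bigl(G_\beta^{-1}(\partial/\partial r),\partial/\partial r\bigr)\geq 0$ (Lemma \ref{lem:Gn-1 eq}), every term on the right-hand side of the theorem's inequality is non-negative. Your sign check is exactly the paper's (implicit) argument, so nothing further is needed.
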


\begin{proof}[Proof of Theorem \ref{thm:mono vol}]
Use the notation in the proof of Theorem \ref{thm:mono}. 
Set 
$$
A_{\eps, \tau} = \int_X f \phi_{\eps, \tau} v_g (\beta) \vol_g, 
\qquad
K_{\eps, \tau} = \int_X f \phi_{\eps, \tau} 
\left(1- g \left( G_\beta^{-1} \left(\frac{\partial}{\partial r} \right), 
\frac{\partial}{\partial r} \right) \right) v_g (\beta) \vol_g. 
$$
Recall 
$I_{\eps, \tau}, J_{\eps, \tau}$ and $Q_{\eps, \tau}$ defined in \eqref{eq:def IJQ}. 
Then 
$$
A_{\eps, \tau} = I_{\eps, \tau} - J_{\eps, \tau} + K_{\eps, \tau}. 
$$
Then by \eqref{eq:int A}, we have 
\begin{align}
&\frac{\partial}{\partial \tau} 
\left( \frac{e^{a \tau^2}}{\tau^\kappa} A_{\eps, \tau} \right) \\
=&
\frac{e^{a \tau^2}}{\tau^{\kappa -1}} 
\left \{ 2a A_{\eps, \tau} 
-\frac{\kappa}{\tau^2} A_{\eps, \tau} 
+ 
\frac{1}{\tau^2} 
\left( 
I_{\eps, \tau} 
+ O(1) c(p) \tau^2 J_{\eps, \tau}
- 
Q_{\eps, \tau}
\right)
+ \frac{1}{\tau} \frac{\partial}{\partial \tau} K_{\eps, \tau} 
\right \} \\
=& 
\frac{e^{a \tau^2}}{\tau^{\kappa -1}} 
\left \{ 2a A_{\eps, \tau} +O(1)c(p) J_{\eps, \tau} 
+ \frac{1}{\tau} \frac{\partial}{\partial \tau} K_{\eps, \tau} 
+ \frac{1-\kappa}{\tau^2} I_{\eps, \tau}
+\frac{\kappa}{\tau^2} (J_{\eps, \tau} - K_{\eps, \tau})
- \frac{1}{\tau^2} Q_{\eps, \tau} 
\right \} \\
=&
\frac{e^{a \tau^2}}{\tau^{\kappa -1}} 
\left \{ 2a A_{\eps, \tau} +O(1)c(p) J_{\eps, \tau} 
+ \frac{1}{\tau} \frac{\partial}{\partial \tau} K_{\eps, \tau} 
+ \frac{1}{\tau^2} 
\left(
\int_X f \phi_{\eps, \tau} \left(\tr \left(G_\beta^{-1} \right)-\kappa \right) v_g (\beta) \vol_g
- Q_{\eps, \tau} \right) 
\right \}. 
\end{align}
Since  
$\Xi \left(\beta, \frac{\partial}{\partial r} \right) \leq \tr \left(G_\beta^{-1} \right) \leq n$, 
we have 
$J_{\eps, \tau} \leq n A_{\eps, \tau}. $
Thus we may write 
$$
O(1)c(p) J_{\eps, \tau} = O(1)c(p) A_{\eps, \tau}. 
$$
Hence if we choose $a \geq 0$ such that $2a+O(1)c(p) \geq 0$, we obtain 
\begin{align} \label{eq:diff eA}
\frac{\partial}{\partial \tau} 
\left( \frac{e^{a \tau^2}}{\tau^\kappa} A_{\eps, \tau} \right)
\geq 
\frac{e^{a \tau^2}}{\tau^{\kappa -1}} 
\left \{  
\frac{1}{\tau} \frac{\partial}{\partial \tau} K_{\eps, \tau} 
+ \frac{1}{\tau^2} 
\left(
\int_X f \phi_{\eps, \tau} \left(\tr \left(G_\beta^{-1} \right)-\kappa \right) v_g (\beta) \vol_g
- Q_{\eps, \tau} \right) 
\right \}. 
\end{align}
Then the result follows by integrating over $[\sigma, \rho]$ and 
letting $\eps$ tend to zero 
as in the proof of Theorem \ref{thm:mono}. 
\end{proof}


\subsection{The normalized volume case} \label{sec:normalized vol}

We compute the difference in 
(radius-normalized) normalized volume defined in \eqref{eq:def nor vol func} 
on geodesic balls of different radii, 
and consider that under what conditions we can obtain the monotonicity. 
This case will be important by the property \eqref{eq:norvol 00}. 
As a corollary, we obtain the vanishing theorems under some conditions in 
Corollary \ref{cor:vanish} and Remark \ref{rem:2dim cons law}.

\begin{theorem} \label{thm:mono nvol 1}
Let $(X,g)$ be an oriented $n$-dimensional Riemannian manifold. 
Fix $p \in X$ and use the notation in Section \ref{sec:mono notation}. 
Take a constant $a=a(n,p,g) \geq 0$ as in Theorem \ref{thm:mono vol}. 
For a smooth function $\theta :[0, r_p] \to \rl$, define $\Theta :[0, r_p] \to \rl$ by 
$$
\Theta (\tau)=\int_0^\tau \frac{e^{a \zeta^2} \theta (\zeta)}{\zeta^{\kappa -1}} d \zeta. 
$$
Then for a 2-form $\beta \in \Omega^2$ satisfying a conservation law, 
a nonnegative smooth function $f$ on $X$, 
$\kappa \in \rl$ and $0 < \sigma < \rho \leq r_p$, 
we have 
\begin{align}
&\left( 
\frac{e^{a \rho^2}}{\rho} \int_{B_\rho (p)} f (v_g (\beta)-1) \vol_g 
+ 2a \Theta (\rho) 
\right)
-
\left( 
\frac{e^{a \sigma^2}}{\sigma} \int_{B_\sigma (p)} f (v_g (\beta)-1) \vol_g 
+ 2a \Theta (\sigma)
\right)  \\
\geq & 
2a \int_\sigma^\rho \frac{e^{a \tau^2}}{\tau^{\kappa -1}} 
\left( \theta (\tau) 
- \int_{B_\tau (p)} f \vol_g 
\right) d \tau 
+
\int_{B_\rho (p) \backslash B_\sigma (p)} \frac{e^{a r^2}}{r^\kappa} f 
\left(1- g \left( G_\beta^{-1} \left(\frac{\partial}{\partial r} \right), 
\frac{\partial}{\partial r} \right) \right) v_g (\beta) \vol_g \\
&+ 
\int_\sigma^\rho \frac{e^{a \tau^2}}{\tau^{\kappa +1}} 
\left(
n \int_{B_\tau (p)} f \vol_g - \tau \frac{\partial}{\partial \tau} 
\int_{B_\tau (p)} f \vol_g
\right) 
d \tau \\
&+ 
\int_\sigma^\rho \frac{e^{a \tau^2}}{\tau^{\kappa +1}} 
\left( 
\int_{B_\tau (p)} 
f \left \{
- (v_g(\beta)-1) \kappa + \left({\rm tr} (G_\beta^{-1}) v_g (\beta) -n \right)
\right \} \vol_g 
\right) 
d \tau \\
&+ 
\int_\sigma^\rho \frac{e^{a \tau^2}}{\tau^{\kappa +1}} 
\left(
\int_{B_\tau (p)} \left( 
\frac{\tau^2 - r^2}{2} (-\Delta_\beta f) \right) v_g (\beta) \vol_g 
\right) 
d \tau. 
\end{align}
Furthermore, the last two statements of Theorem \ref{thm:mono} 
about $a$ and $\delta_0$ also hold. 
\end{theorem}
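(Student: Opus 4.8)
The plan is to rerun the proof of Theorem~\ref{thm:mono vol} essentially verbatim, tracking one extra --- purely geometric --- error term which gets compensated by the correction $2a\Theta$. As in Theorem~\ref{thm:mono} we may assume $\rho<r_p$, the case $\rho=r_p$ following by the obvious approximation; the constant $a$ is the one already fixed in Theorem~\ref{thm:mono vol} (so $2a+O(1)c(p)\geq 0$), and the closing assertions about $a$, $\delta_0$ and the flat model $(\rl^n,g_0)$ are inherited from Section~\ref{sec:mono notation} exactly as before.

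Keep all the notation from the proofs of Theorems~\ref{thm:mono} and~\ref{thm:mono vol}: the cutoffs $\phi_{\eps,\tau},\xi_{\eps,\tau}$, the quantities $I_{\eps,\tau},J_{\eps,\tau},Q_{\eps,\tau},A_{\eps,\tau},K_{\eps,\tau}$ of~\eqref{eq:def IJQ}, and the exact identity there for $\partial_\tau\bigl(e^{a\tau^2}\tau^{-\kappa}A_{\eps,\tau}\bigr)$. Introduce the geometric quantity $B_{\eps,\tau}=\int_X f\,\phi_{\eps,\tau}\,\vol_g$, so that $\int_X f\,\phi_{\eps,\tau}(v_g(\beta)-1)\,\vol_g=A_{\eps,\tau}-B_{\eps,\tau}\geq 0$ since $v_g(\beta)\geq 1$. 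First I would differentiate $e^{a\tau^2}\tau^{-\kappa}(A_{\eps,\tau}-B_{\eps,\tau})+2a\Theta(\tau)$ in $\tau$: from the identity for $\partial_\tau(e^{a\tau^2}\tau^{-\kappa}A_{\eps,\tau})$ one subtracts $\partial_\tau(e^{a\tau^2}\tau^{-\kappa}B_{\eps,\tau})=e^{a\tau^2}\tau^{-(\kappa-1)}\bigl[(2a-\kappa\tau^{-2})B_{\eps,\tau}+\tau^{-1}\partial_\tau B_{\eps,\tau}\bigr]$ and adds $2a\Theta'(\tau)=2a\,e^{a\tau^2}\theta(\tau)\,\tau^{-(\kappa-1)}$. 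Then $2aA_{\eps,\tau}$ and $-2aB_{\eps,\tau}$ combine to $2a(A_{\eps,\tau}-B_{\eps,\tau})$, and the surviving $+\kappa\tau^{-2}B_{\eps,\tau}$ is absorbed into the $\tr(G_\beta^{-1})-\kappa$ integrand via the algebraic rewriting
\[
\int_X f\,\phi_{\eps,\tau}\bigl(\tr(G_\beta^{-1})-\kappa\bigr)v_g(\beta)\,\vol_g+\kappa B_{\eps,\tau}
=\int_X f\,\phi_{\eps,\tau}\Bigl\{-\kappa\bigl(v_g(\beta)-1\bigr)+\bigl(\tr(G_\beta^{-1})v_g(\beta)-n\bigr)\Bigr\}\vol_g+n\,B_{\eps,\tau};
\]
this produces the fourth integrand of the statement plus a leftover $n\tau^{-2}B_{\eps,\tau}-\tau^{-1}\partial_\tau B_{\eps,\tau}$, which multiplied by $e^{a\tau^2}\tau^{-(\kappa-1)}$ is exactly the $n\int_{B_\tau}f\,\vol_g-\tau\,\partial_\tau\!\int_{B_\tau}f\,\vol_g$ term.

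The only genuinely new point is the $2a$-term. After the above manipulations, the bracket multiplying $e^{a\tau^2}\tau^{-(\kappa-1)}$ contains $2a(A_{\eps,\tau}-B_{\eps,\tau})+2a\theta(\tau)+O(1)c(p)J_{\eps,\tau}$, which I split as $\bigl(2a\theta(\tau)-2aB_{\eps,\tau}\bigr)+\bigl(2aA_{\eps,\tau}+O(1)c(p)J_{\eps,\tau}\bigr)$. By Lemma~\ref{lem:Gn-1 eq}, $\Xi(\beta,\partial/\partial r)\leq\tr(G_\beta^{-1})\leq n$ and $f\geq 0$, hence $0\leq J_{\eps,\tau}\leq nA_{\eps,\tau}$; so with our choice of $a$ the second group is $\geq 0$ and may be discarded, leaving precisely the $2a(\theta(\tau)-B_{\eps,\tau})$ contribution. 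The terms $\tau^{-1}\partial_\tau K_{\eps,\tau}$ and $-\tau^{-2}Q_{\eps,\tau}$ are handled verbatim as in Theorems~\ref{thm:mono} and~\ref{thm:mono vol}. Then one integrates the resulting differential inequality over $[\sigma,\rho]$ and lets $\eps\to0$: dominated convergence (using $g(G_\beta^{-1}(\partial/\partial r),\partial/\partial r)\leq 1$ for $K_{\eps,\tau}$ and $\tr(G_\beta^{-1})\leq n$ elsewhere) gives convergence of $A_{\eps,\tau},B_{\eps,\tau},J_{\eps,\tau},K_{\eps,\tau},Q_{\eps,\tau}$ to the integrals over $B_\tau(p)$, the coarea formula converts $\int\partial_\tau K_{\eps,\tau}$ and $\int\partial_\tau B_{\eps,\tau}$ into boundary integrals over $\partial B_\tau(p)$ and hence (a second application) into integrals over $B_\rho(p)\setminus B_\sigma(p)$, and $2a\Theta(\rho)-2a\Theta(\sigma)=2a\int_\sigma^\rho\Theta'(\tau)\,d\tau$ is unaffected by $\eps$.

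The main obstacle is not an estimate --- everything reduces to Theorem~\ref{thm:mono vol} plus bookkeeping --- but the correct packaging: one must recognise beforehand that the scaling-incompatible geometric error $2aB_{\eps,\tau}$ is exactly what forces the primitive $\Theta$ of $e^{a\tau^2}\theta\,\tau^{-(\kappa-1)}$ into the statement, and one must verify carefully that $\lim_{\eps\to0}\partial_\tau B_{\eps,\tau}=\int_{\partial B_\tau(p)}f\,\vol_{g_\tau}=\partial_\tau\!\int_{B_\tau(p)}f\,\vol_g$, where dominated convergence and the coarea formula interact.
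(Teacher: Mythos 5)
Your proposal is correct and follows essentially the same route as the paper: the paper likewise introduces $L_{\eps,\tau}=\int_X f\phi_{\eps,\tau}\vol_g$ (your $B_{\eps,\tau}$), differentiates $\frac{e^{a\tau^2}}{\tau^\kappa}(A_{\eps,\tau}-L_{\eps,\tau})+2a\Theta(\tau)$ using the inequality \eqref{eq:diff eA} from Theorem \ref{thm:mono vol}, absorbs the leftover $\kappa\tau^{-2}L_{\eps,\tau}$ by exactly your algebraic rewriting into the $-\kappa(v_g(\beta)-1)+(\tr(G_\beta^{-1})v_g(\beta)-n)$ and $nL_{\eps,\tau}-\tau\partial_\tau L_{\eps,\tau}$ terms, and identifies $2a(\theta(\tau)-L_{\eps,\tau})$ with $\Theta'$ before integrating over $[\sigma,\rho]$ and letting $\eps\to0$ as in Theorem \ref{thm:mono}. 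The only cosmetic difference is that you keep the exact identity and discard the nonnegative group $2aA_{\eps,\tau}+O(1)c(p)J_{\eps,\tau}$ at the end, whereas the paper has already discarded it in \eqref{eq:diff eA}.
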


\begin{proof}
Use the notation in the proof of Theorems \ref{thm:mono} and \ref{thm:mono vol}. 
Set 
$$
L_{\eps, \tau}
= 
\int_X f \phi_{\eps, \tau} \vol_g. 
$$

By \eqref{eq:diff eA}, we have 
\begin{align}
& \frac{\partial}{\partial \tau} 
\left( \frac{e^{a \tau^2}}{\tau^\kappa} (A_{\eps, \tau} - L_{\eps, \tau}) + 2a \Theta (\tau) \right) \\
\geq &
\frac{e^{a \tau^2}}{\tau^{\kappa -1}}
\left \{  
\frac{1}{\tau} \frac{\partial}{\partial \tau} K_{\eps, \tau} 
+ \frac{1}{\tau^2} 
\left(
\int_X f \phi_{\eps, \tau} \left(\tr \left(G_\beta^{-1} \right)-\kappa \right) v_g (\beta) \vol_g
- Q_{\eps, \tau} \right) \right. \\
&\left. 
+2a \left(
\theta (\tau) - L_{\eps, \tau}
\right) 
+ \frac{\kappa}{\tau^2} L_{\eps, \tau}
- \frac{1}{\tau} \frac{\partial}{\partial \tau} L_{\eps, \tau}
\right \} \\
=&  
\frac{e^{a \tau^2}}{\tau^{\kappa -1}}
\left \{  
2a \left(
\theta (\tau) - L_{\eps, \tau}
\right) 
+ \frac{1}{\tau} \frac{\partial}{\partial \tau} K_{\eps, \tau} 
+ \frac{1}{\tau^2} \left( n L_{\eps, \tau} 
- \tau \frac{\partial}{\partial \tau} L_{\eps, \tau} \right) \right. \\
&\left. 
+ 
\frac{1}{\tau^2} 
\int_X f \phi_{\eps, \tau} 
\left(
- (v_g(\beta)-1) \kappa + 
\left({\rm tr}(G_\beta^{-1}) v_g (\beta) -n \right)
\right) \vol_g
- \frac{1}{\tau^2} Q_{\eps, \tau} \right \}. \\
\end{align}
Then the result follows by integrating over $[\sigma, \rho]$ and 
let $\eps$ tend to zero 
as in the proof of Theorem \ref{thm:mono}. 
\end{proof}

By Theorem \ref{thm:mono nvol 1}, we immediately see the following. 

\begin{corollary} \label{cor:mono nvol sh}
In addition to the assumptions in Theorem \ref{thm:mono nvol 1}, 
suppose further the following. 
\begin{itemize}
\item
There exists $0 < r'_p \leq r_p$ such that for any $\tau \in [0, r'_p]$ 
$$
n \int_{B_\tau (p)} f \vol_g 
\geq 
\tau \frac{\partial}{\partial \tau} \int_{B_\tau (p)} f \vol_g. 
$$

\item
For any $\tau \in [0, r'_p]$ 
$$
\theta (\tau) \geq \int_{B_\tau (p)} f \vol_g. 
$$

\item
$- (v_g(\beta)-1) \kappa + 
\left({\rm tr}(G_\beta^{-1}) v_g (\beta) -n \right) \geq 0$. 

\item 
$-\Delta_\beta f \geq 0$.  
\end{itemize}

Then 
$$
(0,r'_p] \to \rl, \qquad 
\rho \mapsto \frac{e^{a \rho^2}}{\rho^\kappa} \int_{B_\rho (p)} f (v_g (\beta)-1) \vol_g 
+ 2a \Theta (\rho) 
$$
is non-decreasing. 
\end{corollary}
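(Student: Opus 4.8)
The plan is to read the statement off directly from the inequality in Theorem \ref{thm:mono nvol 1} by checking that, under the four additional hypotheses, every term on its right-hand side is nonnegative. Concretely, I would fix $0 < \sigma < \rho \leq r'_p$ and apply Theorem \ref{thm:mono nvol 1} with the given $f$, $\kappa$, and $\theta$ (hence with the associated $\Theta$), obtaining that
\[
\left( \frac{e^{a \rho^2}}{\rho^\kappa} \int_{B_\rho (p)} f (v_g (\beta)-1) \vol_g + 2a \Theta (\rho) \right) - \left( \frac{e^{a \sigma^2}}{\sigma^\kappa} \int_{B_\sigma (p)} f (v_g (\beta)-1) \vol_g + 2a \Theta (\sigma) \right)
\]
is bounded below by the sum of the five integral expressions displayed there. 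It then suffices to show each of these five terms is $\geq 0$: this forces the displayed difference to be $\geq 0$, which is exactly the assertion that $\rho \mapsto \frac{e^{a \rho^2}}{\rho^\kappa} \int_{B_\rho (p)} f (v_g (\beta)-1) \vol_g + 2a \Theta (\rho)$ is non-decreasing on $(0, r'_p]$.

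For the term-by-term check I would argue as follows. Since $a \geq 0$ and the second bullet gives $\theta(\tau) \geq \int_{B_\tau(p)} f \vol_g$ on $[0, r'_p]$, the term $2a \int_\sigma^\rho \frac{e^{a \tau^2}}{\tau^{\kappa-1}} \bigl(\theta(\tau) - \int_{B_\tau(p)} f \vol_g\bigr)\, d\tau$ is nonnegative. Since $f \geq 0$ and, by Lemma \ref{lem:Gn-1 eq}, $g\bigl(G_\beta^{-1}(\partial/\partial r), \partial/\partial r\bigr) \leq 1$, the term $\int_{B_\rho(p) \backslash B_\sigma(p)} \frac{e^{a r^2}}{r^\kappa} f \bigl(1 - g(G_\beta^{-1}(\partial/\partial r), \partial/\partial r)\bigr) v_g(\beta) \vol_g$ is nonnegative. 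The first bullet gives nonnegativity of $\int_\sigma^\rho \frac{e^{a \tau^2}}{\tau^{\kappa+1}} \bigl(n \int_{B_\tau(p)} f \vol_g - \tau \frac{\partial}{\partial \tau} \int_{B_\tau(p)} f \vol_g\bigr)\, d\tau$. The third bullet together with $f \geq 0$ handles $\int_\sigma^\rho \frac{e^{a \tau^2}}{\tau^{\kappa+1}} \int_{B_\tau(p)} f \bigl\{ -(v_g(\beta)-1)\kappa + (\tr(G_\beta^{-1}) v_g(\beta) - n) \bigr\} \vol_g\, d\tau$. Finally, since $\tau^2 - r^2 \geq 0$ on $B_\tau(p)$, $v_g(\beta) \geq 1 > 0$, and $-\Delta_\beta f \geq 0$ by the fourth bullet, the last term $\int_\sigma^\rho \frac{e^{a \tau^2}}{\tau^{\kappa+1}} \int_{B_\tau(p)} \frac{\tau^2 - r^2}{2} (-\Delta_\beta f) v_g(\beta) \vol_g\, d\tau$ is nonnegative as well.

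Once these five sign checks are in place there is nothing further to do, so this corollary is genuinely immediate from Theorem \ref{thm:mono nvol 1}. The only place that requires a moment's attention is the fourth term, where one must keep the combination $-(v_g(\beta)-1)\kappa + (\tr(G_\beta^{-1}) v_g(\beta) - n)$ intact exactly as it appears in the theorem rather than splitting it, since neither summand need be individually nonnegative; this is why the third hypothesis is imposed on precisely that combination. As a consistency check I would also observe that the hypotheses are compatible with $f \equiv 1$, which is the case subsequently invoked in Theorem \ref{thm:mono nvol 3}.
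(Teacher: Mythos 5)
Your proof is correct and follows exactly the route the paper intends: the paper states this corollary as immediate from Theorem \ref{thm:mono nvol 1}, and your term-by-term sign check (using $a\geq 0$, $f\geq 0$, Lemma \ref{lem:Gn-1 eq}, $v_g(\beta)\geq 1$, and the four bullet hypotheses) is precisely the verification being left implicit. Nothing further is needed.
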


We will give an example that 
the first two assumptions of Corollary \ref{cor:mono nvol sh} 
are satisfied for $f=1$ and $\theta (\tau)=\omega_n \tau^n$,  
where $\omega_n = \frac{2 \pi^{n/2}}{n \Gamma(n/2)}$, 
which is the volume of the unit ball in $\rl^n$. 
In this case, these two assumptions are geometrical 
and can be satisfied under certain curvature conditions.

\begin{lemma} \label{lem:cond assump}
Let $(X,g)$ be an oriented $n$-dimensional Riemannian manifold. 
Denote by $R$, ${\rm Ric}$, ${\rm Scal}$ 
the curvature tensor, Ricci curvature, scalar curvature, respectively.

Fix $p \in X$. 
Denote by $T_p$ the value of a tensor $T$ at $p$.  
Suppose that one of the following conditions is satisfied. 
\begin{enumerate}
    \item ${\rm Scal}_p >0$. 
    \item ${\rm Scal}_p = 0$ and $3|R_p|^2-8|{\rm Ric}_p|^2 - 18 (\Delta {\rm Scal})_p >0$, 
    where $\Delta {\rm Scal} = d^* d {\rm Scal}$. 
    \item ${\rm Ric} \geq 0$ on $X$.   
\end{enumerate}
Then there exists $0 < r'_p \leq r_p$ such that 
for any $\tau \in [0, r'_p]$ 
$$
n \int_{B_\tau (p)} \vol_g 
\geq 
\tau \frac{\partial}{\partial \tau} \int_{B_\tau (p)} \vol_g, 
\qquad 
\mbox{and} \qquad 
\omega_n \tau^n \geq \int_{B_\tau (p)} \vol_g. 
$$
\end{lemma}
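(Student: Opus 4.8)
The plan is to deduce both inequalities from the small--geodesic--ball volume asymptotics together with comparison geometry, treating the three hypotheses separately. Write $V(\tau) = \int_{B_\tau(p)}\vol_g$ for the volume of the geodesic ball and $A(\tau) = \int_{\partial B_\tau(p)}\vol_{g_\tau}$ for the area of the geodesic sphere; within the injectivity radius the coarea formula (as used already in the proof of Theorem~\ref{thm:mono}, cf.\ \cite[Exercise III.12]{chavel2006}) shows that $V$ is smooth on $(0,r_p)$ with $\frac{\partial}{\partial\tau}V(\tau) = A(\tau)$. The two asserted inequalities are then precisely the statements that
\[
\phi(\tau) := n\,V(\tau) - \tau\,A(\tau) \geq 0
\qquad\text{and}\qquad
\psi(\tau) := \omega_n\tau^n - V(\tau) \geq 0
\]
hold on some interval $[0,r'_p]$.

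First I would dispose of case (3). If ${\rm Ric}\geq 0$ on $X$, then Bishop's inequality gives $V(\tau)\leq\omega_n\tau^n$ for all $\tau<{\rm inj}_g(p)$, i.e.\ $\psi\geq 0$, and the Bishop--Gromov inequality (see e.g.\ \cite{petersen2006riem}) says that $\tau\mapsto V(\tau)/(\omega_n\tau^n)$ is non-increasing, which after differentiating gives $\tau A(\tau)\leq n\,V(\tau)$, i.e.\ $\phi\geq 0$; here one may take $r'_p=r_p$.

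For cases (1) and (2) I would invoke Gray's asymptotic expansion of the volume of a small geodesic ball,
\[
V(\tau) = \omega_n\tau^n\left(1 - \frac{{\rm Scal}_p}{6(n+2)}\tau^2 + \frac{-3|R_p|^2 + 8|{\rm Ric}_p|^2 + 5{\rm Scal}_p^2 + 18(\Delta{\rm Scal})_p}{360(n+2)(n+4)}\tau^4 + O(\tau^6)\right)
\]
as $\tau\to 0$, the sign of the $\Delta{\rm Scal}=d^*d\,{\rm Scal}$ term being as shown because $\Delta$ is the positive Laplacian. Writing $V(\tau)=\omega_n\sum_{k\geq 0}c_{2k}\tau^{n+2k}$ with $c_0=1$ and differentiating termwise, one finds
\[
\phi(\tau) = -2\omega_n\sum_{k\geq 1}k\,c_{2k}\,\tau^{n+2k},
\qquad
\psi(\tau) = -\omega_n\sum_{k\geq 1}c_{2k}\,\tau^{n+2k}.
\]
In case (1), $c_2 = -{\rm Scal}_p/(6(n+2)) < 0$, so $\phi(\tau) = -2\omega_n c_2\tau^{n+2}(1+O(\tau^2))$ and $\psi(\tau) = -\omega_n c_2\tau^{n+2}(1+O(\tau^2))$ are both strictly positive for $\tau$ small. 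In case (2), ${\rm Scal}_p=0$ forces $c_2=0$, and the hypothesis $3|R_p|^2 - 8|{\rm Ric}_p|^2 - 18(\Delta{\rm Scal})_p > 0$ is exactly the statement $c_4<0$; hence $\phi(\tau) = -4\omega_n c_4\tau^{n+4}(1+O(\tau^2))$ and $\psi(\tau) = -\omega_n c_4\tau^{n+4}(1+O(\tau^2))$ are again positive for $\tau$ small. In each case one picks $r'_p\leq r_p$ small enough that the remainder does not overwhelm the leading term; taking the minimum of the three choices of $r'_p$ finishes the argument.

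The only genuinely delicate step is the bookkeeping in case (2): one must carry Gray's expansion to fourth order and match the Laplacian sign convention so that the hypothesis becomes $c_4<0$. Once that is verified, positivity of $\phi$ and $\psi$ near $0$ is immediate, their leading nonzero Taylor coefficients being positive multiples of $-c_4$. Cases (1) and (3) are routine, using respectively the second-order term of Gray's formula and the Bishop--Gromov comparison.
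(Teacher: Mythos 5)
Your proposal is correct and follows essentially the same route as the paper: Gray's fourth-order expansion of the geodesic-ball volume for cases (1) and (2), and the Bishop--Gromov comparison for case (3). The only cosmetic differences are that you package the expansion as a termwise-differentiated series (the paper differentiates the finite expansion directly, the derivative expansion being covered by the cited corollaries of Gray) and that you spell out the second inequality $\omega_n\tau^n \geq \int_{B_\tau(p)}\vol_g$ in cases (1)--(2), which the paper leaves implicit.
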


The assumptions (1), (2) and (3) are used to control the volume of geodesic balls 
centered at $p$.

\begin{proof}
For (1) and (2), the statement follows from the Taylor expansion of 
the volume of the geodesic ball with respect to the radius. 
By \cite[Corollaries 3.2 and 3.3]{gray1974geodesicball}, 
the volume of the geodesic ball of radius $\tau$ centered at $p$ is given by 
\begin{align}
\int_{B_\tau (p)} \vol_g
=
\omega_n \tau^n \left \{ 
1 - \frac{{\rm Scal}_p}{6(n+2)} \tau^2 
+ \frac{-3|R_p|^2+8|{\rm Ric}_p|^2 + 5 {\rm Scal}_p^2+ 18 (\Delta {\rm Scal})_p}{360(n+2)(n+4)} \tau^4 
+ O (\tau^6) 
\right \}
\end{align}
and higher order terms are described by derivatives of the curvature tensor. 
Then we have 
\begin{align}
&n \int_{B_\tau (p)} \vol_g 
-
\tau \frac{\partial}{\partial \tau} \int_{B_\tau (p)} \vol_g\\ 
=&
\omega_n \tau^n \left \{ 
\frac{{\rm Scal}_p}{3(n+2)} \tau^2 
+ \frac{3|R_p|^2-8|{\rm Ric}_p|^2 - 5 {\rm Scal}_p^2- 18 (\Delta {\rm Scal})_p}{90(n+2)(n+4)} \tau^4 
+ O (\tau^6) 
\right \}. 
\end{align}
Thus the statement holds when (1) or (2) is satisfied.

For (3), by the volume estimation and the relative volume comparison theorem 
(cf. \cite[p. 269, Lemmas 35 and 36]{petersen2006riem} 
or \cite[Theorems I\hspace{-.1em}I\hspace{-.1em}I.4.4 and 
I\hspace{-.1em}I\hspace{-.1em}I.4.5]{chavel2006}
), we have 
\begin{align}
\omega_n \tau^n \geq \int_{B_\tau (p)} \vol_g,  \quad 
0 \geq 
\frac{\partial}{\partial \tau} \left( \frac{1}{\omega_n \tau^n} \int_{B_\tau (p)} \vol_g \right)
= 
\frac{1}{\omega_n \tau^{n+1}} 
\left(
\tau \frac{\partial}{\partial \tau} \int_{B_\tau (p)} \vol_g 
-n \int_{B_\tau (p)} \vol_g 
\right). 
\end{align}
\end{proof}

\subsection{Odd dimensional case} \label{sec:odd dim}
In this subsection, we provide examples where the conditions for monotonicity 
(Corollaries \ref{cor:mono vol sh} and \ref{cor:mono nvol sh}) are satisfied. 
We assume that a manifold is odd dimensional. 
Then we can show the following algebraic estimate. 
This is a key technical statement in the odd dimensional case.

\begin{lemma} \label{lem:ineq tr}
Set $\dim X=n=2m+1$. 
For any $\beta \in \Omega^2$, we have 
$$
{\rm tr}(G_\beta^{-1}) \geq 1+\frac{2m}{v_g(\beta)}. 
$$
\end{lemma}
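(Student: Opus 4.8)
The plan is to localise the inequality at a point, read off the spectrum of $\bb$, and reduce everything to a one-line application of the AM--GM inequality. Since both sides are pointwise functions of $\bb \in \Gamma(X,\End TX)$, it suffices to fix $p \in X$ and prove the inequality for the skew-symmetric endomorphism $A$ of $T_pX \cong \rl^{2m+1}$ obtained by evaluating $\bb$ at $p$. Because $A$ is skew-symmetric and $n = 2m+1$ is odd, its eigenvalues are $\i\lambda_1, -\i\lambda_1, \dots, \i\lambda_m, -\i\lambda_m, 0$ for some $\lambda_1, \dots, \lambda_m \geq 0$; the forced zero eigenvalue, which is exactly where oddness of $n$ enters, is what produces the ``$1$'' and the ``$2m$'' in the statement. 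Hence $A^2$ has the eigenvalues $-\lambda_j^2$, each with multiplicity two, together with $0$, so by \eqref{eq:Gn} the positive endomorphism $G_\beta = \id_{TX} - A^2$ has the eigenvalues $1+\lambda_j^2$, each with multiplicity two, together with $1$. In particular $G_\beta \geq \id_{TX}$, which re-proves $v_g(\beta) \geq 1$.

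Using $v_g(\beta) = (\det G_\beta)^{1/4}$ from Remark \ref{rem:det Gb} and abbreviating $t_j := 1+\lambda_j^2 \geq 1$, the last sentence yields
\[
\tr\bigl(G_\beta^{-1}\bigr) = 1 + \sum_{j=1}^m \frac{2}{t_j}, \qquad v_g(\beta) = \Bigl(\, \prod_{j=1}^m t_j \Bigr)^{1/2},
\]
so that the assertion becomes the elementary inequality
\[
\sum_{j=1}^m \frac{1}{t_j} \;\geq\; \frac{m}{\bigl(\prod_{j=1}^m t_j\bigr)^{1/2}}, \qquad t_1, \dots, t_m \geq 1 .
\]
To finish I would apply the AM--GM inequality to the $m$ positive numbers $t_1^{-1}, \dots, t_m^{-1}$, obtaining $\tfrac1m \sum_j t_j^{-1} \geq \bigl(\prod_j t_j\bigr)^{-1/m}$, and then use $\prod_j t_j \geq 1$ to pass from the exponent $-1/m$ to $-\tfrac12$, which is exactly the claimed inequality.

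I do not expect a genuine obstacle. Once the spectral decomposition of $A$ is in place the argument is entirely elementary; the only step that calls for any care is the final exponent comparison $\bigl(\prod_j t_j\bigr)^{-1/m} \geq \bigl(\prod_j t_j\bigr)^{-1/2}$, which is precisely where the hypothesis $t_j \geq 1$ — equivalently the bound $v_g(\beta) \geq 1$ — is used. Everything else is bookkeeping with the eigenvalue multiplicities.
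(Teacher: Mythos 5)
Your pointwise spectral reduction is exactly the one the paper uses: skew-symmetry of $\bb$ plus oddness of $n$ gives the normal form $\beta=\sum_{j=1}^m\lambda_j f^{2j-1}\wedge f^{2j}$ at $p$, hence $\tr(G_\beta^{-1})=1+\sum_j 2/t_j$ and $v_g(\beta)=\bigl(\prod_j t_j\bigr)^{1/2}$ with $t_j=1+\lambda_j^2\geq 1$, and the lemma reduces to $\sum_j t_j^{-1}\geq m\bigl(\prod_j t_j\bigr)^{-1/2}$. Where you genuinely differ is in closing this elementary inequality: the paper rewrites $v_g(\beta)^2\bigl(\tr(G_\beta^{-1})-1-2m/v_g(\beta)\bigr)$ as a sum of squares plus $\sum_j(\nu_j-t_j)$ with $\nu_j=\prod_k t_k/t_j$ and bounds the latter by a cyclic matching $\nu_{j+1}\geq t_j$, $\nu_1\geq t_m$, whereas you use AM--GM together with the monotonicity of $P\mapsto P^a$ in $a$ for $P\geq 1$. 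For $m\geq 2$ your argument is complete and, if anything, cleaner than the paper's.

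The one genuine issue is the exponent comparison $\bigl(\prod_j t_j\bigr)^{-1/m}\geq\bigl(\prod_j t_j\bigr)^{-1/2}$: since $\prod_j t_j\geq 1$, this needs $-1/m\geq -1/2$, i.e.\ $m\geq 2$, and it reverses when $m=1$. So as written your proof does not cover $n=3$, and your closing remark that only $t_j\geq 1$ needs care understates this. In fact no argument can rescue $m=1$: for $\beta=\lambda\, e^1\wedge e^2$ on $\rl^3$ with $\lambda\neq 0$ one has $\tr(G_\beta^{-1})=1+2/(1+\lambda^2)<1+2/\sqrt{1+\lambda^2}=1+2m/v_g(\beta)$, so the stated inequality fails for $m=1$ unless $\beta=0$. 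Note that the paper's own matching step has the same hidden restriction (the bound $\nu_1\geq t_m$ is only justified when $m\geq 2$), so your proposal is valid exactly on the range where the lemma itself is valid; you should state the hypothesis $m\geq 2$ explicitly. This does not affect the main application (the $G_2$ case, $m=3$), but it does matter for the general odd-dimensional statements when $n=3$.
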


\begin{proof}
We prove this by the pointwise computation. 
Fix $p \in X$. 
Since $\bb$ is skew-symmetric, there exists an orthonormal basis $\{f^j \}_{j=1}^{2m+1}$ of $T^*_p X$ 
and $\lambda_j \in \rl$ such that 
$$
\beta=\sum_{j=1}^{m} \lambda_j f^{2j-1} \wedge f^{2j} 
$$
at $p$. Setting $\mu_j = 1+\lambda_j^2$, we have 
$$
G_\beta=
\left(
\begin{array}{cc}
\mu_1 & 0 \\
0 & \mu_1 
\end{array}
\right) 
\oplus \cdots \oplus 
\left(
\begin{array}{cc}
\mu_m & 0 \\
0 & \mu_m 
\end{array}
\right)
\oplus 1. 
$$
Since $X$ is odd dimensional, $G_\beta$ always has eigenvalue 1. 
This is important for the estimate. 
Then 
$$
v_g(\beta)^2 = \sqrt{\det G_\beta} = \prod_{j=1}^m \mu_j, 
\qquad 
{\rm tr}(G_\beta^{-1}) = 1 + \sum_{j=1}^m \frac{2}{\mu_j}, 
$$
and 
$$
v_g(\beta)^2 \left \{ {\rm tr}(G_\beta^{-1}) 
- \left( 1+\frac{2m}{v_g(\beta)} \right) \right \}
=
\prod_{j=1}^m \mu_j \sum_{k=1}^m \frac{2}{\mu_k}
- 2m \prod_{j=1}^m \sqrt{\mu_j}
=
2 \sum_{j=1}^m \nu_j - 2m \prod_{j=1}^m \sqrt{\mu_j}, 
$$
where we set $\nu_j=\prod_{k=1}^m \mu_k/\mu_j$. Since 
$$
\sum_{j=1}^m (\sqrt{\nu_j}-\sqrt{\mu_j})^2= 
\sum_{j=1}^m (\nu_j+\mu_j) -2m \prod_{j=1}^m \sqrt{\mu_j}, 
$$
it follows that 
\begin{align}
v_g(\beta)^2 \left \{ {\rm tr}(G_\beta^{-1}) 
- \left( 1+\frac{2m}{v_g(\beta)} \right) \right \}
=
\sum_{j=1}^m (\sqrt{\nu_j}-\sqrt{\mu_j})^2 
+ \sum_{j=1}^m (\nu_j-\mu_j).
\end{align}
Since $\mu_j \geq 1$, we have $\nu_j \geq \mu_k$ for $j \neq k$. Then 
\begin{align}
\sum_{j=1}^m (\nu_j-\mu_j)   
= \sum_{j=1}^{m-1} \nu_{j+1} + \nu_1 - \sum_{j=1}^{m-1} \mu_j - \mu_m
\geq
\sum_{j=1}^{m-1} \mu_j + \mu_m - \sum_{j=1}^{m-1} \mu_j - \mu_m
=0
\end{align}
and the proof of Lemma \ref{lem:ineq tr} is completed. 
\end{proof}

\begin{remark}
When $\dim X=n=2m$, we have 
${\rm tr}(G_\beta^{-1}) v_g(\beta) \geq 2m$ 
for any $\beta \in \Omega^2$ by the proof of Lemma \ref{lem:ineq tr}. 
Since $v_g (\beta)\geq 1$, it follows that 
${\rm tr}(G_\beta^{-1}) v_g (\beta) \geq n$ for any $n$ and $\beta \in \Omega^2$. 
\end{remark}

By Lemma \ref{lem:ineq tr}, 
we have ${\rm tr}(G_\beta^{-1}) \geq 1$ in the odd dimensional case. 
By Corollary \ref{cor:mono vol sh}, we have the following 
for the volume functional. 

\begin{corollary} \label{cor:mono vol odd dim}
Let $(X,g)$ be an oriented odd dimensional Riemannian manifold. 
Fix $p \in X$ and use the notation in Section \ref{sec:mono notation}. 
Then there exists 
a constant $a=a(n,p,g) \geq 0$ such that 
for a 2-form $\beta \in \Omega^2$ satisfying a conservation law and 
a nonnegative smooth function $f$ on $X$ with $-\Delta_\beta f \geq 0$, 
$$
(0,r_p] \to \rl, \qquad 
\rho \mapsto \frac{e^{a \rho^2}}{\rho} \int_{B_\rho (p)} 
f v_g (\beta)  \vol_g
$$
is non-decreasing. 
In particular, setting $f=1$, we see that 
$$
(0,r_p] \to \rl, \qquad 
\rho \mapsto \frac{e^{a \rho^2}}{\rho} \int_{B_\rho (p)} v_g(\beta)  \vol_g
$$
is non-decreasing. 
\end{corollary}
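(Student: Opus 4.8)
The plan is to obtain this as an immediate consequence of Corollary~\ref{cor:mono vol sh} together with the pointwise algebraic estimate of Lemma~\ref{lem:ineq tr}. Recall that Corollary~\ref{cor:mono vol sh} furnishes, for the constant $a = a(n,p,g) \geq 0$ of Theorem~\ref{thm:mono vol}, the monotonicity of $\rho \mapsto \frac{e^{a\rho^2}}{\rho^\kappa}\int_{B_\rho(p)} f\, v_g(\beta)\,\vol_g$ whenever $\beta$ satisfies a conservation law, $f \geq 0$ is smooth with $-\Delta_\beta f \geq 0$, and the pointwise bound $\tr(G_\beta^{-1}) \geq \kappa$ holds. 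So the only thing to check is that in odd dimension one may take $\kappa = 1$, which is exactly the exponent appearing in the statement.

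That admissibility is precisely what Lemma~\ref{lem:ineq tr} provides: writing $n = 2m+1$, it gives $\tr(G_\beta^{-1}) \geq 1 + \tfrac{2m}{v_g(\beta)}$ at every point of $X$, and since $v_g(\beta) \geq 1 > 0$ and $m \geq 0$ this yields $\tr(G_\beta^{-1}) \geq 1$ everywhere. Hence the hypothesis of Corollary~\ref{cor:mono vol sh} is satisfied with $\kappa = 1$, and the first assertion follows verbatim. For the ``in particular'' statement I would simply specialize to $f \equiv 1$: then $df = 0$, and since $\delta_\beta$ annihilates functions we get $\Delta_\beta 1 = \delta_\beta\, d1 = 0$, so the requirement $-\Delta_\beta f \geq 0$ holds trivially, and the second monotone function is recovered.

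I do not expect a genuine obstacle here, as the corollary is a formal deduction from two results already in hand. The real content has been absorbed into Theorem~\ref{thm:mono vol}, whose proof rests on the ``integration by parts'' formula of Corollary~\ref{cor:ibp} applied to the cutoff $\xi = \xi_{\eps,\tau}$ and the resulting differential inequality on geodesic balls, and into Lemma~\ref{lem:ineq tr}, which is proved by diagonalizing the skew-symmetric endomorphism $\bb$ and an AM--GM-type estimate on the eigenvalues $\mu_j = 1 + \lambda_j^2$. The only point requiring a moment of care is to confirm that the exponent $\kappa = 1$ selected here matches the normalization $\rho^\kappa = \rho$ in the displayed functionals, which it does.
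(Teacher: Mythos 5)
Your proposal is correct and follows exactly the route the paper takes: Lemma \ref{lem:ineq tr} gives $\tr(G_\beta^{-1}) \geq 1 + \tfrac{2m}{v_g(\beta)} \geq 1$ in odd dimension, so Corollary \ref{cor:mono vol sh} applies with $\kappa = 1$, and the case $f \equiv 1$ follows since $\Delta_\beta 1 = 0$. No gaps.
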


By Lemma \ref{lem:ineq tr}, we have 
\begin{align}
\left. - (v_g(\beta)-1) \kappa + 
\left({\rm tr}(G_\beta^{-1}) v_g (\beta) -n \right)
\right|_{\kappa=1}
\geq 
- (v_g(\beta)-1) + v_g (\beta)-1 
= 0.     
\end{align}
So the third assumption of Corollary \ref{cor:mono nvol sh} is satisfied 
in the odd dimensional case. 
This together with Lemma \ref{lem:cond assump} 
implies the following result for the normalized volume functional.

\begin{theorem} \label{thm:mono nvol 3} 
Let $(X,g)$ be an oriented Riemannian manifold of dimension $n=2m+1$. 
Fix $p \in X$ and use the notation in Section \ref{sec:mono notation}. 
Suppose further that one of the following conditions is satisfied. 
\begin{enumerate}
    \item ${\rm Scal}_p >0$. 
    \item ${\rm Scal}_p = 0$ and $3|R_p|^2-8|{\rm Ric}_p|^2 - 18 (\Delta {\rm Scal})_p >0$, 
    where $\Delta {\rm Scal} = d^* d {\rm Scal}$.  
    \item ${\rm Ric} \geq 0$ on $X$.   
\end{enumerate}
Take a constant $a=a(n,p,g) \geq 0$ 
as in Theorem \ref{thm:mono vol} and 
define $\Theta: [0, \infty) \to [0, \infty)$ by 
\begin{align} \label{eq:Theta monot nvol}
\Theta (\tau)=\omega_n \int_0^\tau e^{a \zeta^2} \zeta^n d \zeta, 
\end{align}
where $\omega_n = \frac{2 \pi^{n/2}}{n \Gamma(n/2)}$, which is the volume of 
the unit ball in $\rl^n$. 
Then there exists $0 < r'_p \leq r_p$ such that 
for a 2-form $\beta \in \Omega^2$ satisfying a conservation law, 
$$
(0,r'_p] \to \rl, \qquad 
\rho \mapsto \frac{e^{a \rho^2}}{\rho} \int_{B_\rho (p)} (v_g (\beta)-1) \vol_g 
+ 2a \Theta (\rho) 
$$
is non-decreasing. 
\end{theorem}

\begin{remark}
When $a>0$, the function $\Theta$ is explicitly written as 
$$
\Theta (\tau)
=
\frac{\omega_{2m+1} \cdot m!}{2 (-a)^{m+1}}
\left(
1-e^{a \tau^2} \sum_{k=0}^m \frac{(-a \tau^2)^k}{k!}
\right). 
$$
In particular, we have 
\begin{align}
\Theta (\tau)= 
\left\{
\begin{array}{ll}
\frac{1}{a} (e^{a \tau^2}-1) & \mbox{for} \quad n=2m+1=1,\\
\frac{2 \pi}{3 a^2} \{(a \tau^2-1) e^{a \tau^2}+1 \} &  \mbox{for} \quad n=2m+1=3,\\
\frac{4 \pi^2}{15 a^3} \{(a^2 \tau^4-2a \tau^2+2) e^{a \tau^2}-2 \} &  \mbox{for} \quad n=2m+1=5,\\
\frac{8 \pi^3}{105 a^4} \{(a^3 \tau^6-3a^2 \tau^4+6a \tau^2-6) e^{a \tau^2}+6 \} &  \mbox{for} \quad n=2m+1=7.\\
\end{array}
\right.
\end{align}

\end{remark}

When $(X,g)$ is $(\rl^{2m+1}, g_0)$, where $g_0$ is the standard flat metric, 
we can take $a=0$ and $r_p'=\infty$. 
Hence Theorem \ref{thm:mono nvol 3} gives 
the monotonicity formula for the normalized volume in the case of $(\rl^{2m+1}, g_0)$. 
As an application, we can prove the following vanishing theorem.

\begin{corollary} \label{cor:vanish}
Let $\beta \in \Omega^2(\rl^{2m+1})$ be a 2-form 
satisfying a conservation law. 
Suppose that for some $p \in \rl^{2m+1}$, we have 
$$
\int_{B_r(p)} (v_{g_0}(\beta)-1) \vol_{g_0} = o(r) 
\qquad \mbox{as} \qquad r \to \infty. 
$$
Then $\beta=0$. 

In particular, if $\beta \in \Omega^2(\rl^{2m+1})$ is a 2-form 
satisfying a conservation law and 
$
\int_{\rl^{2m+1}} (v_{g_0}(\beta)-1) \vol_{g_0} < \infty,  
$
then $\beta=0$. 
\end{corollary}

\begin{proof}
Suppose for contradiction that $\beta \neq 0$. 
Recalling the property \eqref{eq:norvol 00}, we see that there exists $R_0>0$ such that 
$$
\frac{1}{R_0} \int_{B_{R_0} (p)} (v_{g_0}(\beta)-1) \vol_{g_0} >0. 
$$
On the other hand, Theorem \ref{thm:mono nvol 3} implies that 
$$
\frac{1}{R} \int_{B_{R} (p)} (v_{g_0}(\beta)-1) \vol_{g_0} 
\geq \frac{1}{R_0} \int_{B_{R_0} (p)} (v_{g_0}(\beta)-1) \vol_{g_0} >0 
$$
for any $R \geq R_0$. 
By assumption, the left hand side goes to zero as $R \to \infty$, 
which is a contradiction. 
\end{proof}

\subsection{2-dimensional case} \label{sec:2dim}
In this subsection, we assume that 
$(X,g)$ is an oriented Riemannian manifold of dimension 2. 
This case is rather special and 
we can show the monotonicity formulas for the (radius-normalized) 
volume and normalized volume functional under some conditions 
directly from Corollary \ref{cor:mono sh}. 
The following is a special property in the 2-dimensional case. 

\begin{lemma} \label{lem:2dim cons}
For a 2-form $\beta \in \Omega^2$, we have the following. 
\begin{enumerate}
    \item 
    $\delta_\beta \beta =0$ if and only if $D \beta =0$. 
    \item
When $(X,g)$ and $\beta$ are real analytic, 
$\beta$ satisfies a conservation law if and only if $D \beta =0$. 
\end{enumerate}
\end{lemma}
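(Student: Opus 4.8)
The plan is to exploit that on a surface every tensor manufactured from $\beta$ is a \emph{pointwise scalar} multiple of the metric, which collapses everything to one short computation. First I would work in an oriented local situation and write $\beta=f\,\vol_g$ with $f\in C^\infty(X)$; note that in dimension $2$ the form $\beta$ is automatically closed. Setting $J:=(\vol_g)^\sharp$ one checks $J^2=-\id_{TX}$, hence $\beta^\sharp=fJ$ and $\beta^\sharp\circ\beta^\sharp=-f^2\,\id_{TX}$, so that
\[
G_\beta=(1+f^2)\,\id_{TX},\qquad G_\beta^{-1}=\tfrac{1}{1+f^2}\,\id_{TX},\qquad v_g(\beta)=(\det G_\beta)^{1/4}=\sqrt{1+f^2}.
\]
Also $D\vol_g=0$ gives $D\beta=df\otimes\vol_g$, i.e. $D_i\beta=(e_i f)\vol_g$, so in particular $D\beta=0\iff df=0$.

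For part (1): from the formulas above, $\delta_\beta\beta=-i\bigl(G_\beta^{-1}(e_i)\bigr)D_i\beta=-\tfrac{1}{1+f^2}(e_i f)\,i(e_i)\vol_g$. Since the bundle map $TX\to T^*X$, $v\mapsto i(v)\vol_g$, is an isomorphism on a surface, $\delta_\beta\beta=0$ holds iff $e_i f=0$ for all $i$, i.e. iff $df=0$, i.e. iff $D\beta=0$. This also disposes of the ``if'' direction of part (2) with no use of analyticity: a $2$-form on a surface is closed, so if $D\beta=0$ then $\delta_\beta\beta=0$ by part (1), and Proposition \ref{prop:div set} (the ``in particular'' clause) shows that $\beta$ satisfies a conservation law.

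For the ``only if'' direction of part (2) I would compute the stress--energy tensor directly from \eqref{eq:set}:
\[
S_{g,\beta}=-g+v_g(\beta)\,(G_\beta^{-1})^\flat=-g+\sqrt{1+f^2}\cdot\tfrac{1}{1+f^2}\,g=\bigl((1+f^2)^{-1/2}-1\bigr)g,
\]
so that, using ${\rm div}(hg)=dh$ for a function $h$,
\[
{\rm div}\,S_{g,\beta}=d\bigl((1+f^2)^{-1/2}\bigr)=-(1+f^2)^{-3/2}\,f\,df=-\tfrac12(1+f^2)^{-3/2}\,d(f^2).
\]
Hence $\beta$ satisfies a conservation law iff $d(f^2)=0$, i.e. iff $f^2$ is locally constant. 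The remaining step is to pass from $f^2$ locally constant to $df=0$: on a connected neighbourhood $f^2$ equals a constant $c\ge0$; if $c=0$ then $f\equiv0$ there, while if $c>0$ then the continuous $\{\pm\sqrt c\}$-valued function $f$ must be constant there. Either way $df=0$, i.e. $D\beta=0$; here the real-analyticity of $\beta$ (hence of $f$) is a convenient but not strictly essential way to reach this conclusion.

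I do not expect a serious obstacle — the computation is elementary — but two points deserve care: (a) the implication ``$f^2$ locally constant $\Rightarrow f$ locally constant'', which relies on continuity (or analyticity) of $f$ as above; and (b) keeping the sign conventions for $d^*$, $i(\cdot)$, $\sharp$, $\flat$ and ${\rm div}$ consistent with the rest of the paper. As a cross-check one can instead substitute $d\beta=0$ into Proposition \ref{prop:div set}, obtaining ${\rm div}\,S_{g,\beta}=-v_g(\beta)\bigl((G_\beta^{-1}\circ\beta^\sharp)\bigl((\delta_\beta\beta)^\sharp\bigr)\bigr)^\flat=-\tfrac{v_g(\beta)\,f}{(1+f^2)^2}\bigl(J\bigl((d^*\beta)^\sharp\bigr)\bigr)^\flat$, which vanishes iff $f\,d^*\beta=0$, again equivalent to $d(f^2)=0$.
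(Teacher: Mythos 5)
Your proof is correct, and for part (2) it takes a genuinely different route from the paper. Part (1) is essentially the paper's computation (write $\beta=\lambda\,e^1\wedge e^2$ locally, note $G_\beta^{-1}=\tfrac{1}{1+\lambda^2}\id_{TX}$, and read off $\delta_\beta\beta$). For part (2), the paper substitutes $d\beta=0$ into Proposition \ref{prop:div set}, obtains the pointwise dichotomy ``$\beta_p=0$ or $(D\beta)_p=0$'', and then needs real analyticity and the identity theorem to upgrade this to $D\beta\equiv 0$. You instead compute the stress-energy tensor directly: in dimension two it is conformal, $S_{g,\beta}=\bigl((1+f^2)^{-1/2}-1\bigr)g$, so with the paper's convention ${\rm div}(hg)=dh$ one gets ${\rm div}\,S_{g,\beta}=-\tfrac12(1+f^2)^{-3/2}d(f^2)$, and the conservation law is exactly $d(f^2)=0$; the ``square trick'' plus continuity of $f$ (constant sign on connected sets where $f^2$ is a positive constant) then yields $df=0$ with no analyticity at all. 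This is a real gain: your argument proves the equivalence in (2) for smooth $(X,g)$ and smooth $\beta$, showing the real-analyticity hypothesis (and hence the corresponding hypothesis in Theorem \ref{thm:monotnicity 2dim}) can be dropped, whereas the paper's identity-theorem route needs it. Your cross-check via Proposition \ref{prop:div set} confirms the two computations agree, and your sign conventions for ${\rm div}$ and $\delta_\beta$ match the paper's.
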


\begin{proof}
Take a local orthonormal frame $\{ e_i \}$ with its dual $\{ e^i \}$. 
Set 
$$
\beta = \lambda e^1 \wedge e^2, \qquad 
D_i \beta = a_i e^1 \wedge e^2
$$
for some local functions $\lambda, a_i$. Then we have 
\begin{align}
\bb = \lambda (e^1 \otimes e_2 - e^2 \otimes e_1), \qquad
\gi = \frac{1}{1+\lambda^2} \id_{TX}. 
\end{align}
We compute 
\begin{align}
\delta_\beta \beta 
= -i(G_\beta^{-1} (e_i)) D_i \beta
= \frac{1}{1+\lambda^2} (a_2 e_1-a_1 e_2),  
\end{align}
which implies (1). 

Next, we prove (2). Since $X$ is 2-dimensional, we have $d \beta =0$. 
Then by Proposition \ref{prop:div set}, $\beta$ satisfies a conservation law if and only if 
\begin{align} \label{eq:2dim cons law}
\begin{split}
\bb
\left((\delta_\beta \beta)^\sharp \right) =0
&\quad
\Longleftrightarrow
\quad
\lambda a_1 = \lambda a_2 =0 \\
&\quad
\Longleftrightarrow
\quad
\mbox{for any $p \in X$, } \beta_p=0 \quad \mbox{or} \quad (D \beta)_p=0. 
\end{split}
\end{align}
Set $Z=\{ p \in X \mid \beta_p=0 \}$. 
Since $\beta$ is real analytic, 
$\beta$ is constant zero ($X=Z$) or 
$X-Z$ is open and dense by the identity theorem. 
In the first case, we obviously have $D \beta =0$. 
In the latter case, we have $D \beta =0$ on $X-Z$ by \eqref{eq:2dim cons law}. 
Since $D \beta$ is continuous, we have $D \beta =0$ on $X$, and the proof is completed. 
\end{proof}

\begin{theorem} \label{thm:monotnicity 2dim}
Let $(X,g)$ be an oriented $2$-dimensional Riemannian manifold. 
Fix $p \in X$ and use the notation in Section \ref{sec:mono notation}. 
Then there exists
a constant $a=a(n,p,g) \geq 0$ such that for 
\begin{itemize}
\item 
a 2-form $\beta \in \Omega^2$ satisfying $\delta_\beta \beta =0$, or 
satisfying a conservation law if 
$(X,g)$ and $\beta$ are real analytic, 

\item 
a nonnegative smooth function $f$ on $X$ with $-\Delta_\beta f \geq 0$, 
\end{itemize}
functions 
$$
(0,r_p] \to \rl, \qquad 
\rho \mapsto \frac{e^{a \rho^2}}{\rho} \int_{B_\rho (p)} 
f v_g (\beta) \vol_g, 
\qquad
\mbox{and}
\qquad
\rho \mapsto \frac{e^{a \rho^2}}{\rho} \int_{B_\rho (p)} 
f (v_g (\beta)-1) \vol_g, 
$$
are non-decreasing. 
Furthermore, the last two statements of Theorem \ref{thm:mono} 
about $a$ and $\delta_0$ also hold. 
\end{theorem}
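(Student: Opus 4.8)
The plan is to use Lemma~\ref{lem:2dim cons} to reduce everything to the volume monotonicity already proved for the zero $2$-form. First I would record that $D\beta=0$: if $\delta_\beta\beta=0$ this is Lemma~\ref{lem:2dim cons}(1), and if $(X,g)$ and $\beta$ are real analytic and $\beta$ satisfies a conservation law this is Lemma~\ref{lem:2dim cons}(2). Since $\dim X=2$, on the geodesic ball $B_{r_p}(p)$ we may write $\beta=\lambda\,\vol_g$ for a function $\lambda$; as $D\vol_g=0$, the equation $D\beta=0$ becomes $d\lambda\,\vol_g=0$, so $\lambda$ is constant on the connected ball $B_{r_p}(p)$, say $\lambda\equiv\lambda_0$. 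Exactly as in the proof of Lemma~\ref{lem:2dim cons} one has $\beta^\sharp\circ\beta^\sharp=-\lambda_0^2\,\id_{TX}$ there, hence $G_\beta=(1+\lambda_0^2)\,\id_{TX}$, and therefore
\[
v_g(\beta)\equiv v_0:=\sqrt{1+\lambda_0^2}\ \ge\ 1,
\qquad
G_\beta^{-1}=v_0^{-2}\,\id_{TX}
\qquad\text{on }B_{r_p}(p).
\]

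Next I would read off two consequences on $B_{r_p}(p)$. Since $v_g(\beta)$ is the constant $v_0$, for every $0<\rho\le r_p$,
\[
\int_{B_\rho(p)}f\,v_g(\beta)\,\vol_g=v_0\!\int_{B_\rho(p)}f\,\vol_g,
\qquad
\int_{B_\rho(p)}f\,(v_g(\beta)-1)\,\vol_g=(v_0-1)\!\int_{B_\rho(p)}f\,\vol_g .
\]
Moreover $\delta_\beta=-i(G_\beta^{-1}(e_i))D_i=v_0^{-2}d^*$ on $1$-forms, so on functions $\Delta_\beta=v_0^{-2}d^*d=v_0^{-2}\Delta_0$, where $\Delta_0=d^*d$ is the Hodge Laplacian, i.e.\ the operator $\Delta_\beta$ with $\beta=0$; hence the hypothesis $-\Delta_\beta f\ge 0$ is equivalent to $-\Delta_0 f\ge 0$.

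With these in hand I would simply apply the volume monotonicity to the $2$-form $0$. It trivially satisfies a conservation law ($S_{g,0}=-g+\id_{TX}^\flat=0$), and $v_g(0)=1$, $G_0^{-1}=\id_{TX}$, $\tr(G_0^{-1})=n=2\ge 1$; so Corollary~\ref{cor:mono vol sh} with $\kappa=1$, applied to the nonnegative function $f$ with $-\Delta_0 f\ge 0$, yields a constant $a=a(n,p,g)\ge 0$ for which
\[
(0,r_p]\to\rl,\qquad
\rho\mapsto\frac{e^{a\rho^2}}{\rho}\int_{B_\rho(p)}f\,\vol_g
\]
is non-decreasing. Multiplying this non-decreasing function by the nonnegative constants $v_0\ge1>0$ and $v_0-1\ge 0$ and invoking the two displayed identities produces exactly the two functions in the statement, establishing that they are non-decreasing. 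The supplementary assertions---$a=0$ and $r_p=\infty$ when $(X,g)=(\rl^2,g_0)$, where a parallel $\beta$ is a globally constant $2$-form so the reduction is global, and uniform $a,\delta_0$ when $X$ is compact---are inherited verbatim from the corresponding statements in Theorem~\ref{thm:mono vol}.

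There is no serious analytic obstacle here: once Lemma~\ref{lem:2dim cons} forces $\beta$ to be parallel, the surface case collapses to the classical monotonicity of a weighted integral of a subharmonic function, which Theorem~\ref{thm:mono vol} already supplies. The only mild points requiring care are that $\lambda$ is genuinely constant on $B_{r_p}(p)$ (true since that geodesic ball is connected, regardless of connectedness of $X$) and that the constant $a$ may be chosen independently of $\beta$ (true since the $a$ of Theorem~\ref{thm:mono vol} depends only on $n$, $p$, $g$).
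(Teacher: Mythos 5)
Your proof is correct and rests on the same key step as the paper: Lemma \ref{lem:2dim cons} forces $D\beta=0$, so $v_g(\beta)$ and $G_\beta^{-1}$ are constant on the ball and the statement reduces to a monotonicity already established for a subharmonic-type weight. The only difference is packaging: you factor the constant $v_g(\beta)=v_0$ out and apply Corollary \ref{cor:mono vol sh} to the zero $2$-form (using $\Delta_\beta=v_0^{-2}\Delta_0$, which in fact holds on all of $X$ since $\beta$ is parallel everywhere), whereas the paper keeps $\beta$ and replaces $f$ by $f\chi$ in Corollary \ref{cor:mono sh} with the constants $\chi=1/\tr(G_\beta^{-1})$ and $\chi=\bigl(1-1/v_g(\beta)\bigr)/\tr(G_\beta^{-1})$; the two routes are equivalent.
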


\begin{proof}
Set 
$$
\chi = \frac{1}{{\rm tr} (G_{\beta }^{-1})} 
\qquad \mbox{or} \qquad \frac{1}{{\rm tr} (G_{\beta }^{-1})} \left( 1 - \frac{1}{v_g(\beta)}\right). 
$$
By Lemma \ref{lem:2dim cons}, we have $D \beta=0$. 
Then by \eqref{eq:diff ginv} and Lemma \ref{lem:cl diff}, 
we have $D \gi =0$ and $D v_g(\beta)=0$, 
which implies that $\chi$ is constant. 
By the definition of $\Delta_\beta$ in \eqref{eq:lapb}, we compute 
\begin{align}
\Delta_\beta (f \chi) 
=& \delta_\beta (\chi df + f d \chi) \\
=& 
-i(G_\beta^{-1} (e_i)) 
\left( e_i (\chi) df +\chi D_i df + e_i (f) d \chi + f D_i d \chi \right) \\
=& 
f \Delta_\beta \chi + \chi \Delta_\beta f - 2 \left \la (\gi)^* d f, d \chi \right \ra. 
\end{align}
Since $\chi$ is constant, we have 
$$
- \Delta_\beta (f \chi) = - \chi \Delta_\beta f \geq 0.  
$$
Replacing $f$ with $f \chi$ in Corollary \ref{cor:mono sh}, the proof is completed. 
\end{proof}

\begin{remark} \label{rem:2dim cons law}
Under the assumptions of Theorem \ref{thm:monotnicity 2dim}, 
we can also obtain the vanishing theorem as in Corollary \ref{cor:vanish}. 

When $\dim X=2$, the normalized volume functional agrees with a simpler functional 
defined in \cite[(2.5)]{Dong2011vanishing} 
for $F(t)=\sqrt{1+2t}-1$ by Remark \ref{rem:norvol 00}. 
Then functional for this $F$ is called 
the generalized Yang--Mills--Born--Infeld energy functional with the
plus sign 
when $\beta$ is replaced with the curvature of a connection of a vector bundle 
(\cite[(8.1)]{Dong2011vanishing}).
We can directly check that 
$\beta \in \Omega^2$ satisfies a conservation law in our sense if and only if 
$\beta \in \Omega^2$ satisfies an $F$-conservation law 
in the sense of \cite[Definition 2.1]{Dong2011vanishing} 
by \eqref{eq:2dim cons law} and \cite[Lemma 2.3]{Dong2011vanishing}.

We only use the fact that $D \beta =0$ in the proof of Theorem \ref{thm:monotnicity 2dim}. 
We obtain similar results in any dimensions when $\beta$ satisfies $D \beta =0$. 

As in the proof of Theorem \ref{thm:monotnicity 2dim}, 
we can consider $\Delta_\beta (f \chi)$ or $\Delta_\beta \chi$ in any dimensions. 
We tried to compute using the Weitzenb\"ock-type formula (Proposition \ref{prop:weit}), 
but we could not obtain nice estimates like  $- \Delta_\beta (f \chi) \geq 0$. 
If we can obtain such estimates, we will obtain the monotonicity formulas 
for the volume and the normalized volume without additional assumptions 
as in the 2-dimensional case.
\end{remark}

\section{Minimal connections} \label{sec:min conn}
Use the notation in Section \ref{sec:cons not}. 
Let 
$(X,g)$ be an oriented $n$-dimensional Riemannian manifold 
and $L \to X$ be a smooth complex line bundle with a Hermitian metric $h$. 
Set 
\[
\begin{aligned}
\mathcal{A}_{0}=\{\, \mbox{Hermitian connections of }(L,h) \,\}
= \nabla_0 + \i \Omega^1 \cdot \id_L, 
\end{aligned}
 \]
where $\nabla_0 \in \cA_{0}$ is any fixed connection. 
We regard the curvature 2-form $F_\n$ of $\n$ as a $\i \rl$-valued closed 2-form on $X$. 
For simplicity, set 
\begin{align} \label{eq:nabla Gnabla}
\FF := - \i F_\n \in \Omega^2, \qquad 
G_\n:=G_{\FF}= \id_{TX}-\FFs \circ \FFs. 
\end{align}
As in Section \ref{sec:def vol}, 
define the {\bf volume functional} $V: \cA_0 \rightarrow [0, \infty]$ and the {\bf normalized volume functional} 
$V^0: \cA_0 \rightarrow [0, \infty]$
by 
\[
V(\n) = V(\FF)
=
\int_X v(\n) \vol_g, 
\qquad 
V^0(\n) = V^0(\FF) = \int_X (v(\n)-1) \vol_g, 
\]
where 
$$
v(\n)=
v_g (\FF) 
= (\det G_\n)^{1/4}. 
$$

\subsection{The first variation}
In this subsection, we compute the first variation of $V$ and define the mean curvature. 
This was computed in \cite[Proposition 3.2]{kawai2021mirror}, 
but we give a simpler description here. 
We say that a connection is minimal if its mean curvature vanishes. 
Then we show that $\FF$ satisfies a conservation law for each minimal connection $\n$. 
After that, we see that 
the formal ``large radius limit" of the defining equation of minimal connections 
is that of Yang--Mills connections 
and prove the existence theorem.

\begin{proposition} \label{prop:fistvar}
Use the notation in Section \ref{sec:cons not}. 
Let $\{ \n_t \}_{t \in (-\eps, \eps)} \subset \cA_0$ be a compactly supported variation 
of $\n=\n_0 \in \cA_0$ with $V^0(\n) < \infty$. 
Set 
$$
a= \left. \frac{1}{\i} \frac{d}{dt} \n_t \right|_{t=0} \in \Omega^1_c. 
$$
Then we have 
\[
\left. \frac{d}{dt} V^0(\n_t) \right|_{t=0} 
= - \la a, H(\n) \ra_{L^2}. 
\]
Here, $\la \,\cdot\,, \,\cdot\, \ra_{L^2}$ 
is the $L^2$ inner product with respect to the metric $g$, 
$\Omega^1_c \subset \Omega^1$ is the space of compactly supported 1-forms on $X$, 
and $H(\n) \in \Omega^1$ is given by 
\begin{align} \label{eq:MC}
H(\n) 
= v(\n) \cdot (G_\n^{-1})^* \left((D_i \FFs)(G_\n^{-1} (e_i)) \right)^\flat
= v(\n) \cdot (G_\n^{-1})^* \left( i(G_\n^{-1} (e_i)) D_i \FF \right) 
\end{align}
for a local orthonormal frame $\{ e_i \}$. 
\end{proposition}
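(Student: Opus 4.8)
The plan is to follow the proof of Proposition~\ref{prop:fistvar set}, now differentiating in the connection rather than in the metric. Since $\{\n_t\}$ is compactly supported and $V^0(\n)<\infty$, Lebesgue's dominated convergence theorem gives $\left.\frac{d}{dt}V^0(\n_t)\right|_{t=0}=\int_X\left.\frac{d}{dt}v(\n_t)\right|_{t=0}\vol_g$, the volume form being fixed. Since the curvature of a line bundle connection is affine in the connection $1$-form, $\dot\FF:=\left.\frac{d}{dt}(-\i F_{\n_t})\right|_{t=0}=da$, which is closed, in agreement with the Bianchi identity $d\FF=0$ that I use below. Differentiating $v(\n_t)=(\det G_{\n_t})^{1/4}$ in $t$ exactly as in the proof of the first identity of Lemma~\ref{lem:cl diff} (using $\FFs\circ G_\n^{-1}=G_\n^{-1}\circ\FFs$ from \eqref{eq:bG comm} and the trace identities \eqref{eq:tr met}), now with the variation $\dot\FF=da$ of $\FF$ in place of $D_u\FF$, yields
\[
\left.\frac{d}{dt}v(\n_t)\right|_{t=0}=v(\n)\,\la\gamma,\dot\FF\ra=v(\n)\,\la\gamma,da\ra,
\]
where $\gamma:=(G_\n^{-1}\circ\FFs)^\flat\in\Omega^2$, a genuine $2$-form since $G_\n^{-1}\circ\FFs$ is skew-symmetric by \eqref{eq:bG comm}. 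Thus $\left.\frac{d}{dt}V^0(\n_t)\right|_{t=0}=\int_X v(\n)\,\la\gamma,da\ra\,\vol_g$.

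I then integrate by parts. Since $a$ is compactly supported, $\int_X v(\n)\,\la\gamma,da\ra\,\vol_g=\int_X\la a,\,d^*(v(\n)\gamma)\ra\,\vol_g$ with $d^*$ the ordinary codifferential, and the proposition reduces to the pointwise identity $d^*(v(\n)\gamma)=-H(\n)$. Expand $d^*(v(\n)\gamma)=-e_i(v(\n))\,i(e_i)\gamma-v(\n)\,i(e_i)D_i\gamma$ in a local orthonormal frame. For the first term, $e_i(v(\n))=v(\n)\la\gamma,D_i\FF\ra$ by the first identity of Lemma~\ref{lem:cl diff}, and $i(e_i)\gamma=\left((G_\n^{-1}\circ\FFs)(e_i)\right)^\flat$. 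For the second term, differentiate $\gamma^\sharp=G_\n^{-1}\circ\FFs$ using \eqref{eq:diff ginv} together with $\FFs\circ\FFs=\id_{TX}-G_\n$ (from the definition of $G_\n$); the two occurrences of $G_\n^{-1}\circ(D_i\FFs)$ cancel, leaving
\[
D_i\left(G_\n^{-1}\circ\FFs\right)=G_\n^{-1}\circ(D_i\FFs)\circ G_\n^{-1}+\left(G_\n^{-1}\circ\FFs\right)\circ(D_i\FFs)\circ\left(G_\n^{-1}\circ\FFs\right).
\]
Evaluating on $e_i$ and summing over $i$, the first summand contracts to $v(\n)^{-1}H(\n)^\sharp$ by \eqref{eq:MC}, and the second, after substituting $\left((D_i\FFs)\circ G_\n^{-1}\circ\FFs\right)(e_i)=-\la\gamma,D_k\FF\ra\,e_k$ (the second identity of Lemma~\ref{lem:cl diff} with $d\FF=0$), equals $-\la\gamma,D_k\FF\ra\,(G_\n^{-1}\circ\FFs)(e_k)$. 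Flattening and adding the first-term contribution, the $\la\gamma,D_i\FF\ra$-pieces cancel, so $d^*(v(\n)\gamma)=-H(\n)$ and therefore $\left.\frac{d}{dt}V^0(\n_t)\right|_{t=0}=-\la a,H(\n)\ra_{L^2}$. The two expressions for $H(\n)$ in \eqref{eq:MC} agree because $\beta^\sharp(u)=(i(u)\beta)^\sharp$ for any $2$-form $\beta$ and vector $u$.

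The determinant variation and the manipulation of \eqref{eq:diff ginv} are routine. The one genuinely delicate point is the cancellation in $d^*(v(\n)\gamma)$: the weight contribution $-e_i(v(\n))\,i(e_i)\gamma$ must exactly cancel the cross term produced by $\left(G_\n^{-1}\circ\FFs\right)\circ(D_i\FFs)\circ\left(G_\n^{-1}\circ\FFs\right)$, and this hinges on the Bianchi identity $d\FF=0$; without it, the $d\FF$-term in the second identity of Lemma~\ref{lem:cl diff} would survive. In particular, and in contrast to Proposition~\ref{prop:div formula}, no conservation law is required here --- $d\FF=0$ alone suffices --- which is why the computation produces the Euler--Lagrange equation directly, and also why the resulting formula for $H(\n)$ is cleaner than the one in \cite[Proposition~3.2]{kawai2021mirror}, being packaged entirely through $d^*$ and the single skew endomorphism $G_\n^{-1}\circ\FFs$.
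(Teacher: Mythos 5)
Your proposal is correct and follows essentially the same route as the paper: differentiate $v(\n_t)=(\det G_{\n_t})^{1/4}$ to get $\int_X v(\n)\la (G_\n^{-1}\circ\FFs)^\flat, da\ra\vol_g$, integrate by parts to reduce to the pointwise identity $H(\n)=-d^*\bigl(v(\n)(G_\n^{-1}\circ\FFs)^\flat\bigr)$, and verify it via Lemma \ref{lem:cl diff}, \eqref{eq:diff ginv}, \eqref{eq:bGb} and the Bianchi identity $d\FF=0$, with the same cancellation of the $\la(G_\n^{-1}\circ\FFs)^\flat,D_i\FF\ra$-terms. The only difference is expository ordering, so nothing further is needed.
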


\begin{proof}
Differentiating $\left(\det G_{\n_t} \right)^{1/4}$, we compute 
\[
\begin{aligned}
\left. \frac{d}{dt} V^0(\n_t) \right|_{t=0} 
=& - \frac{1}{4}\int_X {\rm tr} 
\left(G_\n^{-1}\circ  \left( (da)^\sharp \circ \FFs + \FFs \circ (da)^\sharp \right) \right) \left(\det G_\n\right)^{1/4} \  \vol_g, \\
=&
- \frac{1}{2}\int_X {\rm tr} \left(G_\n^{-1}\circ \FFs \circ (da)^\sharp\right) v(\n)  \vol_g, 
\end{aligned}
\]
where we use \eqref{eq:bG comm}. By \eqref{eq:tr met}, we have 
$$
{\rm tr} \left(G_\n^{-1}\circ \FFs \circ (da)^\sharp\right)=-2\left\la da,\left(G_\n^{-1}\circ \FFs \right)^{\flat} \right\ra. 
$$
Hence it follows that 
\begin{align} \label{eq:mc dstar}
H(\n) =- d^{\ast}\left(v(\n) \left(G_\n^{-1}\circ \FFs \right)^{\flat}\right). 
\end{align}
We compute 
\begin{equation} \label{eq:mc 1}
\begin{split}
H(\n) 
=&
i(e_i) D_i \left(v(\n) \left(G_\n^{-1}\circ \FFs \right)^{\flat}\right)\\
=&
e_i \left( v(\n) \right) \cdot i(e_i) \left(G_\n^{-1}\circ \FFs \right)^{\flat}
+ 
v(\n) i(e_i) D_i \left(G_\n^{-1}\circ \FFs \right)^{\flat} \\
=&
v(\n) 
\left \la \left(G_\n^{-1} \circ \FFs \right)^\flat, D_i \FF \right \ra 
\left( \left(G_\n^{-1}\circ \FFs \right) (e_i) \right)^{\flat} 
+
v(\n) 
\left(
\left( D_i (G_\n^{-1} \circ \FFs) \right) (e_i) 
\right)^\flat, 
\end{split}
\end{equation}
where we use Lemma \ref{lem:cl diff}. 
We compute 
$\left( D_i (G_\n^{-1} \circ \FFs) \right) (e_i)$. 

Since 
$
0=D_i \left(G_\n \circ G_\n^{-1} \right) 
= \left(D_i G_\n \right) \circ G_\n^{-1} + G_\n \circ \left(D_i G_\n^{-1} \right), 
$
we have 
\begin{align} \label{eq:DG-1}
D_i G_\n^{-1} = G_\n^{-1} \circ \left( \left(D_i \FFs \right) \circ \FFs
+ \FFs \circ \left(D_i \FFs \right)
\right) \circ G_\n^{-1}. 
\end{align}
Then we compute 
\begin{align}
D_i \left(G_\n^{-1} \circ \FFs \right)
=&
G_\n^{-1} \circ \left( (D_i \FFs) \circ \FFs
+ \FFs \circ (D_i \FFs) \right) \circ G_\n^{-1} \circ \FFs 
+ G_\n^{-1} \circ (D_i \FFs)\\
=&
G_\n^{-1} \circ (D_i \FFs) \circ G_\n^{-1} 
+ G_\n^{-1} \circ \FFs \circ (D_i \FFs) \circ G_\n^{-1} \circ \FFs, 
\end{align}
where we use \eqref{eq:bGb}. 
Then by Lemma \ref{lem:cl diff} and the fact that $d \FF =0$, we obtain 
$$
\left( D_i (G_\n^{-1} \circ \FFs) \right) (e_i)
= 
\left( G_\n^{-1} \circ (D_i \FFs) \circ G_\n^{-1} \right) (e_i) 
- 
\left \la \left(G_\n^{-1} \circ \FFs \right)^\flat, D_i \FF \right \ra 
\left(G_\n^{-1}\circ \FFs \right) (e_i). 
$$
This together with \eqref{eq:mc 1} implies 
Proposition \ref{prop:fistvar}. 
\end{proof}

\begin{definition}
We call $H(\n)$ the {\bf mean curvature} of $\n \in \cA_0$. 
(Note that $H(\n)$ is also defined when $V^0(\n)=\infty$.) 
We call $\n \in \cA_0$ {\bf minimal} if $H (\n)=0$. 
\end{definition}

By the description of the mean curvature in Proposition \ref{prop:fistvar}, 
we immediately obtain \cite[Proposition 3.3]{kawai2021mirror}, where we computed the symbol 
of the linearization of the mean curvature. 
This was important to show 
the short-time existence and uniqueness of the line bundle mean curvature flow 
in \cite[Theorem 3.7]{kawai2021mirror}. 

As in \eqref{eq:delb}, 
define a differential operator $\delta_\n: \Omega^k \to \Omega^{k-1}$ depending on $\n \in \cA_0$ by 
\begin{align} \label{eq:del nabla}
\delta_\n \alpha 
:= \delta_{E_\n} \alpha
= -i(G_\n^{-1} (e_i)) D_i \alpha. 
\end{align}
(Note that $\delta_\n$ agrees with the standard codifferential $d^*$ if $\n$ is flat.) 
Then $\n$ is minimal if and only if $\delta_\n \FF=0$ by Proposition \ref{prop:fistvar}. 
Note that this characterization is similar to 
that of a Yang--Mills connection. 
See also Remark \ref{rem:large radius limit of min conn}. 

\begin{remark} \label{rem:int by parts mirror interp}
When a manifold has the torus fibration structure, minimal submanifolds 
(more precisely, minimal graphs) correspond to minimal connections 
via the real Fourier--Mukai transform (Lemma \ref{lem:FM min}). 
Similarly, 
the codifferential for 1-forms on a minimal submanifold 
corresponds to $\delta_\n$ for 1-forms (Proposition \ref{prop:FM codiff}). 
Thus minimal connections and $\delta_\n$ for 1-forms 
can be considered as the ``mirrors" of 
minimal submanifolds and the codifferential for 1-forms on a minimal submanifold, respectively.  
\end{remark}

As in \eqref{eq:lapb}, we can define a second order elliptic operator 
$\Delta_\n:\Omega^k \to \Omega^k$ by
$$
\Delta_\n :=\Delta_{E_\n} = d \delta_\n + \delta_\n d. 
$$
Since $d \FF=0$ by the Bianchi identity, we obtain the following 
by Propositions \ref{prop:fistvar} and \ref{prop:div set}. 

\begin{corollary} \label{cor:min cons}
If a Hermitian connection $\n \in \cA_0$ satisfies $H (\n)=0$, then $\Delta_\n \FF =0$. 
\end{corollary}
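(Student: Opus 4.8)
The plan is to deduce the statement directly from the characterization of minimality obtained in Proposition \ref{prop:fistvar}, so the argument is purely formal. First I would recall that, by the expression for the mean curvature in \eqref{eq:MC} together with the definition \eqref{eq:del nabla} of $\delta_\n$,
$$
H(\n) = v(\n)\cdot (G_\n^{-1})^*\left( i(G_\n^{-1}(e_i)) D_i \FF \right) = -\,v(\n)\cdot (G_\n^{-1})^* (\delta_\n \FF).
$$
Since $v(\n) > 0$ and $G_\n$ (hence $(G_\n^{-1})^*$) is invertible, this yields the equivalence $H(\n) = 0 \iff \delta_\n \FF = 0$, which is exactly the reformulation noted just before the corollary.

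Next I would invoke the Bianchi identity: since $\FF = -\i F_\n$ is a constant multiple of the curvature $2$-form of $\n$, we have $d\FF = 0$. Expanding $\Delta_\n = d\delta_\n + \delta_\n d$ on $\FF$, the first summand is $d(\delta_\n \FF) = d(0) = 0$ by minimality and the second is $\delta_\n(d\FF) = \delta_\n(0) = 0$ by the Bianchi identity; hence $\Delta_\n \FF = 0$.

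It is worth noting that this is consistent with Proposition \ref{prop:div set}: applied to $\beta = \FF$, for which $d\beta = 0$, that proposition shows ${\rm div}\, S_{g,\FF}$ is a multiple of $(G_\n^{-1}\circ \FFs)\bigl((\delta_\n \FF)^\sharp\bigr)$, so the single vanishing $\delta_\n\FF = 0$ simultaneously produces $\Delta_\n\FF=0$ and the conservation law. There is no genuine obstacle here: the entire content is carried by Proposition \ref{prop:fistvar} and the Bianchi identity, and the proof amounts to a two-line formal computation.
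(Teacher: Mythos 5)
Your proof is correct and is essentially the paper's own argument: the text preceding the corollary records exactly your two ingredients, namely that $H(\n)=0$ is equivalent to $\delta_\n \FF=0$ by Proposition \ref{prop:fistvar} (since $v(\n)>0$ and $(G_\n^{-1})^*$ is invertible), and that $d\FF=0$ by the Bianchi identity, so $\Delta_\n\FF = d\delta_\n\FF+\delta_\n d\FF=0$. Your closing remark relating this to Proposition \ref{prop:div set} is likewise how the paper obtains the subsequent conservation-law statement (Theorem \ref{thm:min imply cons}), so nothing is missing.
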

The converse might hold, but we are not sure if it does.

\begin{theorem} \label{thm:min imply cons}
For each minimal connection $\n$, 
$E_\n$ satisfies a conservation law. 
\end{theorem}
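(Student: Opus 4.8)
The plan is to deduce the conservation law directly from the explicit formula for ${\rm div}\,S_{g,\FF}$ in Proposition~\ref{prop:div set}, applied to the $2$-form $\beta=\FF=E_\n$. The only observation needed to set this up is that the operator $\delta_\beta$ occurring in Proposition~\ref{prop:div set} (defined in \eqref{eq:delb}) coincides, for $\beta=\FF$, with the operator $\delta_\n=\delta_{E_\n}$ introduced in \eqref{eq:del nabla}, because $G_\beta=G_\n$ in that case.

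Two ingredients enter. First, the Bianchi identity gives $d\FF=0$. Second, $\n$ is minimal precisely when $\delta_\n\FF=0$: this is already recorded just after \eqref{eq:del nabla} as a consequence of Proposition~\ref{prop:fistvar}, and it does not require $V^0(\n)<\infty$, since by \eqref{eq:MC} and \eqref{eq:del nabla} we have $H(\n)=-v(\n)\,(G_\n^{-1})^*(\delta_\n\FF)$ with $v(\n)\ge 1$ and $G_\n^{-1}$ positive definite, while both $H(\n)$ and $S_{g,\FF}$ are defined for every $\n\in\cA_0$ anyway. Hence a minimal connection satisfies $d\FF=0$ together with $(\delta_{\FF}\FF)^\sharp=(\delta_\n\FF)^\sharp=0$. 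Substituting $\beta=\FF$ into the formula of Proposition~\ref{prop:div set}, the first term vanishes because $d\FF=0$ and the second because $(\delta_{\FF}\FF)^\sharp=0$, so ${\rm div}\,S_{g,\FF}=0$; this is exactly the statement that $\FF=E_\n$ satisfies a conservation law in the sense of Definition~\ref{def:conservation law} (equivalently, one may quote the remark immediately following Proposition~\ref{prop:div set}).

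I do not anticipate any genuine obstacle: the analytic content has all been placed into Propositions~\ref{prop:fistvar} and~\ref{prop:div set}, and the proof is a short combination of these with the Bianchi identity. As a cross-check, the same conclusion is available through the Noether-type argument outlined in Remark~\ref{rem:variational SET}: since $d\FF=0$, Cartan's formula gives $\eta_t^*\FF=\FF+d\alpha_t$ for a family of compactly supported $1$-forms $\alpha_t$ attached to a compactly supported vector field $u$, so by Proposition~\ref{prop:fistvar} and $H(\n)=0$ the function $t\mapsto V^0_g(\eta_t^*\FF)$ has vanishing derivative at $t=0$; differentiating the diffeomorphism invariance \eqref{eq:nor vol diff inv} and invoking Proposition~\ref{prop:fistvar set} for the remaining term yields $\la L_u g,\,S_{g,\FF}\ra_{L^2}=0$ for every compactly supported $u$, which is again equivalent to ${\rm div}\,S_{g,\FF}=0$.
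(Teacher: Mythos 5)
Your proposal is correct and follows exactly the paper's (implicit) argument: minimality is equivalent to $\delta_\n \FF=0$ by Proposition \ref{prop:fistvar}, the Bianchi identity gives $d\FF=0$, and then the formula of Proposition \ref{prop:div set} (or the remark right after it) gives ${\rm div}\,S_{g,\FF}=0$. Your Noether-type cross-check is likewise the content of Remark \ref{rem:variational minimal}, so there is nothing to add.
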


Hence the results in Sections \ref{sec:SET cons law} and \ref{sec:monot formula} 
hold if we replace $\beta$ with $\FF$ for a minimal connection $\n$. 
In particular, we have the following vanishing theorem 
from Corollary \ref{cor:vanish} and Remark \ref{rem:2dim cons law}.

\begin{corollary} \label{cor:vanish min}
Suppose that $n=2m+1$ or $2$. 
Let $(L, h) \to \rl^n$ be a (necessarily trivial) smooth complex Hermitian line bundle 
over $(\rl^n, g_0)$, where $g_0$ is the standard flat metric. 
If $\n$ is a minimal connection satisfying 
$$
\int_{B_r(p)} (v (\n)-1) \vol_{g_0} = o(r) 
\qquad \mbox{as} \qquad r \to \infty 
$$
for some $p \in \rl^n$, 
then $\n$ is flat. 

In particular, if $\n$ is minimal and 
$
\int_{\rl^n} (v(\n)-1) \vol_{g_0} < \infty,  
$
then $\n$ is flat. \\
\end{corollary}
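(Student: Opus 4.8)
The plan is to deduce Corollary \ref{cor:vanish min} by combining Theorem \ref{thm:min imply cons} with the vanishing results already established for $2$-forms satisfying a conservation law. By Theorem \ref{thm:min imply cons}, if $\n$ is a minimal connection then $\beta := E_\n \in \Omega^2(\rl^n)$ satisfies a conservation law, and by definition $v(\n) = v_{g_0}(E_\n)$, so the hypothesis becomes $\int_{B_r(p)}(v_{g_0}(\beta)-1)\vol_{g_0} = o(r)$ as $r\to\infty$.

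Next I would split into the two cases $n = 2m+1$ and $n = 2$. For $n = 2m+1$, Corollary \ref{cor:vanish} applies directly to $\beta$ and gives $\beta = 0$, i.e.\ $E_\n = 0$, which means $F_\n = 0$, so $\n$ is flat. For $n = 2$, I would invoke Remark \ref{rem:2dim cons law}: there it is noted that the vanishing theorem analogous to Corollary \ref{cor:vanish} holds in the two-dimensional case under the assumptions of Theorem \ref{thm:monotnicity 2dim}, which is precisely the hypothesis that $\beta$ satisfies a conservation law (the real-analyticity subtlety of Lemma \ref{lem:2dim cons} is not needed here since on $\rl^2$ with the flat metric one can also argue via $\delta_\beta\beta = 0$, but in any case the statement in the remark covers it). Hence again $\beta = 0$ and $\n$ is flat.

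For the final sentence of the corollary, if instead $\int_{\rl^n}(v(\n)-1)\vol_{g_0} < \infty$, then the tail $\int_{\rl^n \setminus B_r(p)}(v_{g_0}(\beta)-1)\vol_{g_0} \to 0$ as $r \to \infty$, so in particular $\int_{B_r(p)}(v_{g_0}(\beta)-1)\vol_{g_0}$ is bounded, hence certainly $o(r)$. Applying the first part of the corollary then forces $\n$ to be flat.

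I do not anticipate a genuine obstacle here, since all the analytic content is contained in the monotonicity formulas of Section \ref{sec:monot formula} and in Theorem \ref{thm:min imply cons}; the only point requiring a little care is making sure the two-dimensional case is legitimately covered by Remark \ref{rem:2dim cons law} rather than Corollary \ref{cor:vanish} (which is stated only in odd dimensions), and noting that the finite-total-volume hypothesis is strictly stronger than the $o(r)$ hypothesis so that the ``in particular'' clause is immediate.
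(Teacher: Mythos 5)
Your proposal is correct and follows exactly the route the paper intends: the paper deduces this corollary directly from Theorem \ref{thm:min imply cons} together with Corollary \ref{cor:vanish} (odd dimension) and Remark \ref{rem:2dim cons law} (dimension two), with no further argument given. Your observation that in dimension two one can invoke the hypothesis $\delta_\n \FF=0$ of Theorem \ref{thm:monotnicity 2dim} directly (via Proposition \ref{prop:fistvar}), avoiding any real-analyticity assumption, is a correct and welcome clarification of that step.
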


\begin{remark} \label{rem:variational minimal}
We explain why Theorem \ref{thm:min imply cons} holds from the variational point of view 
as in Remark \ref{rem:variational SET}. 
The difference is that $\FF$ cannot take any value in $\Omega^2$. 
But the pullback of diffeomorphisms commutes with the exterior derivative, so a similar result holds. 

Indeed, let $\n$ be a minimal connection with $V^0_g(\n)< \infty$. 
Take any compactly supported vector field $u$ and denote by $\{ \eta_t \}$ the flow of $u$. 
By the diffeomorphism invariance \eqref{eq:nor vol diff inv}, it follows that 
\begin{align}
0= \left. \frac{d}{dt} V^0_{\eta_t^* g} (\eta_t^* \FF) \right|_{t=0}
= 
\left. \frac{d}{dt} V^0_{\eta_t^* g} (\FF) \right|_{t=0}
+ 
\left. \frac{d}{dt} V^0_g (\eta_t^* \FF) \right|_{t=0}
= 
\la L_u g, S_{g, \FF} \ra_{L^2} 
+ 
\left. \frac{d}{dt} V^0_g (\eta_t^* \FF) \right|_{t=0}. 
\end{align}
Since 
$$
L_u \FF = (i(u) d + d i(u)) \FF = d (i (u) \FF)
$$
and $\n$ is minimal, 
it follows that 
$$
\left. \frac{d}{dt} V^0_g (\eta_t^* \FF) \right|_{t=0} 
= 
\left. \frac{d}{dt} V^0_g (\nabla + t \i i (u) \FF \cdot \id_L) \right|_{t=0} 
= 0. 
$$
Then we obtain ${\rm div} S_{g, \FF}=0$ by \cite[Lemma 1.60]{besse2008einstein}. 
\end{remark}

\begin{remark} \label{rem:large radius limit of min conn}
We can show that the formal ``large radius limit" of the defining equation of minimal connections 
is that of Yang--Mills connections. 
Consider the family of metrics 
$$
\{ g_r:=r^2 g \}_{r>0}.
$$
Denote by $\sharp_r$ the $\sharp$ operator for $g_r$ as defined in Section \ref{sec:cons not}. 
In particular, we have 
$$
\FF 
= g_r \left(\FFsr (\cdot), \cdot \right)
= r^2 g \left(\FFsr (\cdot), \cdot \right)
\qquad
\Longleftrightarrow
\qquad
\FFsr = \frac{1}{r^2} \FFs. 
$$
Set 
$$
G^r_\n = \id_{TX} - \FFsr \circ \FFsr = \id_{TX} - \frac{1}{r^4} \FFs \circ \FFs. 
$$
Since the Levi-Civita connection of $g_r$ agrees with that of $g$, 
the defining equation of minimal connections with respect to $g_r$ is given by 
$$
\delta_\n^r \FF 
:= - \frac{1}{r^2} i \left(\left(G^r_\n\right)^{-1} (e_i) \right) D_i \FF 
=- \frac{1}{r^2} i \left(\left( \id_{TX} - \frac{1}{r^4} \FFs \circ \FFs \right)^{-1} (e_i) \right) D_i \FF 
=0. 
$$
Note that $\{ e_i \}$ is a local orthonormal frame for $g$. 
Thus, formally taking the "large radius limit", 
which means the leading behaviour of $\cF_r(\n)$ as $r \to \infty$, we obtain 
$$
d^* \FF=0. 
$$
This is exactly the defining equation of Yang--Mills connections. 
Thus it is natural to expect that minimal connections for a sufficiently large metric  
will behave like Yang--Mills connections. 
\end{remark}

Using Remark \ref{rem:large radius limit of min conn}, 
we can show the following existence theorem (Theorem \ref{thm:exist minimal conn}). 
We first note the following. 

\begin{lemma} \label{lem:ex YM conn}
Let $(X, g)$ be a compact oriented Riemannian manifold 
and $L \to X$ be a smooth complex line bundle with a Hermitian metric $h$. 
Then there is a unique Yang--Mills connection up to the addition of closed 1-forms. 
\end{lemma}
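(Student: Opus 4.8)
The plan is to reduce the assertion to Hodge theory on the compact Riemannian manifold $(X,g)$. First recall that for a Hermitian connection $\n = \n_0 + \i a \cdot \id_L$ with $a \in \Omega^1$ one has $\FF = E_{\n_0} + da$, so that as $\n$ ranges over $\cA_0$ the curvature form $\FF$ ranges exactly over the affine subspace $E_{\n_0} + d\Omega^1 \subset \Omega^2$, i.e. over all closed $2$-forms representing the fixed de Rham class $[E_{\n_0}]$. On the other hand, $\n$ is Yang--Mills (a critical point of $\n \mapsto \int_X |F_\n|^2 \vol_g$) if and only if $d_\n^* F_\n = 0$, which for a line bundle is simply $d^* \FF = 0$; combined with the Bianchi identity $d\FF = 0$, this says precisely that $\n$ is Yang--Mills if and only if $\FF$ is a harmonic $2$-form.

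For the existence part, I would start from an arbitrary $\n_0 \in \cA_0$ and apply the Hodge decomposition to its curvature: $E_{\n_0} = \gamma + d\alpha + d^*\zeta$ with $\gamma$ harmonic, $\alpha \in \Omega^1$ and $\zeta \in \Omega^3$. Applying $d$ and using $d E_{\n_0} = 0$ gives $d d^* \zeta = 0$, whence $\|d^*\zeta\|_{L^2}^2 = \la \zeta, d d^* \zeta \ra_{L^2} = 0$, so in fact $E_{\n_0} = \gamma + d\alpha$. Then $\n := \n_0 - \i \alpha \cdot \id_L \in \cA_0$ has curvature $\FF = E_{\n_0} - d\alpha = \gamma$, which is harmonic; hence $\n$ is a Yang--Mills connection.

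For uniqueness up to closed $1$-forms, let $\n'$ be any Yang--Mills connection. Then $E_{\n'}$ is harmonic and lies in $[E_{\n_0}] = [\FF]$, so by uniqueness of the harmonic representative in a de Rham cohomology class we get $E_{\n'} = \FF = \gamma$. Writing $\n' = \n + \i b \cdot \id_L$ with $b \in \Omega^1$, this forces $db = E_{\n'} - \FF = 0$, i.e. $b$ is closed; conversely, adding a closed $1$-form to $\n$ leaves the curvature unchanged, so every connection of the form $\n + \i b \cdot \id_L$ with $db = 0$ is again Yang--Mills. Together these give the claim.

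There is no real obstacle here; the only point that needs even a line of argument is the vanishing of the co-exact part in the Hodge decomposition of the closed form $E_{\n_0}$ (the step $d d^* \zeta = 0 \Rightarrow d^* \zeta = 0$ above), and the rest is the standard identification of the space of line-bundle curvatures with a de Rham class together with Hodge's theorem on uniqueness of harmonic representatives. I would simply invoke standard Hodge theory for these facts.
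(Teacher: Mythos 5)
Your proposal is correct and follows essentially the same route as the paper: both arguments identify Yang--Mills connections on a line bundle with connections whose curvature $2$-form is harmonic, produce one via Hodge theory in the fixed class $[\FF]$, and deduce uniqueness up to closed $1$-forms (the paper via $d^*db'=0\Rightarrow db'=0$, you via uniqueness of the harmonic representative — the same Hodge-theoretic fact).
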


\begin{proof}
For any $\n \in \cA_0$, we have $d \FF=0$. 
So it defines a cohomology class 
$[\FF] \in H^2(X, \rl)$, which is known to be equal to $-2 \pi c_1(L)$. 
Then there exists a 1-form $b \in \Omega^1$ such that $\FF + db$ is harmonic 
by Hodge theory. Since 
$$
E_{\n + \i b \cdot \id_L} = \FF + d b, 
$$
$\n + \i b \cdot \id_L$ is a Yang--Mills connection. 

If $\n'=\n +  \i (b + b') \cdot \id_L$ for $b' \in \Omega^1$ 
is also a Yang--Mills connection, we have 
$0= d^* E_{\n'} = d^* d b'$, 
which is equivalent to $d b'=0$. 
\end{proof}

Using this, we can show the following.

\begin{theorem} \label{thm:exist minimal conn}
Let $(X, g)$ be a compact oriented Riemannian manifold 
and $L \to X$ be a smooth complex line bundle with a Hermitian metric $h$. 
Then there exists a minimal connection with respect to $g_r$ for sufficiently large $r>0$. 
\end{theorem}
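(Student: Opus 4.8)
The plan is to obtain the minimal connection for $g_r=r^2 g$ with $r$ large as a small perturbation of the Yang--Mills connection, using the implicit function theorem together with the ``large radius'' expansion of Remark~\ref{rem:large radius limit of min conn}. Put $s=r^{-2}$, let $\n_{YM}\in\cA_0$ be a Yang--Mills connection supplied by Lemma~\ref{lem:ex YM conn}, and let $\FF_h:=E_{\n_{YM}}$, a $g$-harmonic $2$-form. By Hodge theory every connection in $\cA_0$ has curvature $\FF_h+da$ for a unique coexact $1$-form $a$, up to addition of closed $1$-forms, which leave $V^0$ unchanged; so it suffices to find such an $a$ with $\n_{YM}+\i a\cdot\id_L$ minimal for $g_r$. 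I would work with the Banach spaces $\cB^{k,\alpha}:=\{a\in C^{k+2,\alpha}(\Lambda^1):a\text{ coexact}\}$ and $C^{k,\alpha}_{\mathrm{coex}}(\Lambda^1)$.

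The key point is that the minimal-connection operator for $g_r$, suitably rescaled, extends smoothly to $s=0$ with the Yang--Mills operator as its value there. By Proposition~\ref{prop:fistvar}, $\n$ is $g_r$-minimal iff $H_{g_r}(\n)=0$; and since $V^0_{g_r}$ is invariant under $\n\mapsto\n+\i b\cdot\id_L$ for closed $b$, one has $\la H_{g_r}(\n),b\ra_{L^2}=0$ for all closed $b$, so $H_{g_r}(\n)$ is automatically a coexact $1$-form. From \eqref{eq:MC} and Remark~\ref{rem:large radius limit of min conn}, $H_{g_r}(\n)$ is obtained from $\delta_\n^r\FF=-s\,i\bigl((\id_{TX}-s^2\,\FFs\circ\FFs)^{-1}(e_i)\bigr)D_i\FF$ by a zeroth-order invertible operator assembled from $v_{g_r}(\FF)=(\det(\id_{TX}-s^2\,\FFs\circ\FFs))^{1/4}$ and $(G^r_\n)^{-1}=(\id_{TX}-s^2\,\FFs\circ\FFs)^{-1}$; both depend smoothly on $s$ near $0$. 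Hence, with $\FF=\FF_h+da$, the map
$$
\mathcal N\colon\cB^{k,\alpha}\times(-s_0,s_0)\longrightarrow C^{k,\alpha}_{\mathrm{coex}}(\Lambda^1),\qquad \mathcal N(a,s):=\tfrac1s\,H_{g_r}\bigl(\n_{YM}+\i a\cdot\id_L\bigr)\ \ (s\ne 0),
$$
extends smoothly across $s=0$, with $\mathcal N(a,0)=-d^*da$ because $v_{g_r},(G^r_\n)^{-1}\to\id_{TX}$ and $\tfrac1s\delta_\n^r\FF\to -i(e_i)D_i\FF=d^*\FF=d^*da$ as $s\to 0$. So $\mathcal N(0,0)=0$ and $D_a\mathcal N(0,0)=-d^*d$. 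On coexact $1$-forms $d^*d$ equals the Hodge Laplacian $\Delta$, which is elliptic; by standard Hodge theory $\Delta$ respects the splitting $\Omega^1=\cH^1\oplus d\Omega^0\oplus d^*\Omega^2$ and has no nonzero coexact kernel, so it restricts to a Banach isomorphism $\cB^{k,\alpha}\to C^{k,\alpha}_{\mathrm{coex}}(\Lambda^1)$. Thus $D_a\mathcal N(0,0)$ is invertible.

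The implicit function theorem then gives $s_1\in(0,s_0)$ and a $C^1$ curve $s\mapsto a(s)\in\cB^{k,\alpha}$ with $a(0)=0$ and $\mathcal N(a(s),s)=0$; for $0<s<s_1$, i.e.\ $r>s_1^{-1/2}$, the connection $\n_{YM}+\i a(s)\cdot\id_L$ satisfies $H_{g_r}=0$ and is therefore $g_r$-minimal. Smoothness follows from elliptic regularity: by the $g_r$-version of Corollary~\ref{cor:min cons} the curvature $\FF$ solves the elliptic equation $\Delta_\n\FF=0$, so $\FF$ and hence $da$ is smooth, and since $a$ is coexact $\Delta a=d^*da$ is smooth, whence $a\in C^\infty$ by ellipticity of $\Delta$ (alternatively, running the argument for every $k$ and invoking local uniqueness in the implicit function theorem forces $a(s)\in\bigcap_k\cB^{k,\alpha}$). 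I expect the main obstacle to be the bookkeeping in the middle step: checking carefully that $\mathcal N$ is a genuine smooth map of Banach spaces extending across $s=0$ with linearization exactly the Yang--Mills operator, and that the target can consistently be taken to be the coexact $1$-forms --- the latter resting on the variational identity $\la H_{g_r}(\n),b\ra_{L^2}=0$ for all closed $b$. A purely variational alternative --- minimizing $V^0_{g_r}$ and using that $r^{\,4-n}V^0_{g_r}$ converges to the Yang--Mills functional, which has a non-degenerate minimum at $\n_{YM}$ --- is also conceivable, but the implicit function theorem route sidesteps the compactness difficulties created by the merely linear growth of $v_g(\beta)$.
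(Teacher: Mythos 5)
Your proposal is correct and follows essentially the same route as the paper: rescale the mean-curvature operator so that its large-radius limit is the Yang--Mills operator $d^*d$, perturb off a Yang--Mills connection via the implicit function theorem after removing the closed-1-form degeneracy, and then restore smoothness of the solution. The only (harmless) differences are that the paper works on all of $\cA_0$, identifies the kernel $\rl \oplus \i Z^1$ of the linearization and restricts to $\i\, d^*\Omega^2$, and recovers regularity by noting that the equation coupled with the gauge condition $d^* a_s = 0$ is overdetermined elliptic, whereas you impose coexactness of the perturbation from the outset and bootstrap via $\Delta_\n \FF = 0$.
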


\begin{proof}
Define a map $\cF :[0,1] \times \cA_0 \to d^* \Omega^2$ by 
\begin{align}
\cF (s,\n) = - 
\left(\det \widetilde G^s_\n \right)^{1/4} 
\left(\left( \widetilde G^s_\n \right)^{-1} \right)^* 
i \left(\left( \widetilde G^s_\n \right)^{-1} (e_i) \right) D_i \FF, 
\end{align}
where 
$$
\widetilde G^s_\n:= \id_{TX} - s^4 \FFs \circ \FFs. 
$$
Note that $\cF (0, \cdot)^{-1} (0)$ 
is the set of Yang--Mills connections with respect to $g$ 
and 
$\cF (s, \cdot)^{-1} (0)$ for $s \neq 0$  
is the set of minimal connections with respect to $g_{1/s}$. 

The image of $\cF$ is contained in $d^* \Omega^2$ by the following reason. 
If $s=0$, we have $\cF (0,\n)= d^* E_\n \in d^* \Omega^2$. 
For $s \neq 0$, we have 
$$
\cF (s,\n) = - \frac{1}{s^2} H^{1/s} (\n)  
$$
where $H^{1/s} (\n)$ is the mean curvature for $g_{1/s}$ as defined by \eqref{eq:MC}. 
By \eqref{eq:mc dstar}, we have 
$$
\cF (s,\n) = - \frac{1}{s^2} H^{1/s} (\n) \in d^{*_{1/s}} \Omega^2 = d^* \Omega^2, 
$$
where $d^{*_{1/s}}$ is the codifferential with respect to $g_{1/s}$.

We want to apply the implicit function theorem to show the statement. 
Fix a Yang--Mills connection $\n_0 \in \cF (0, \cdot)^{-1} (0)$, 
whose existence is guaranteed by Lemma \ref{lem:ex YM conn}. 
Denote by the linearization $(d \cF)_{(0,\n_0)}: \rl \oplus \i \Omega^1 \to d^* \Omega^2$ 
of $\cF$ at $(0,\n_0)$. Then we have  
$$
(d \cF)_{(0,\n_0)} (0, \i b) = - d^* db. 
$$
By the Hodge decomposition, we have 
$\rl \oplus \i \Omega^1 = \rl \oplus \i (Z^1 \oplus d^* \Omega^2)$, 
where $Z^1$ is the space of closed 1-forms. 
Then we see that 
$(d \cF)_{(0,\n_0)}|_{\i d^* \Omega^2}: \i d^* \Omega^2 \to d^* \Omega^2$ is an isomorphism 
and $\ker (d \cF)_{(0,\n_0)} = \rl \oplus \i Z^1$. 
Hence, we can apply the implicit function theorem (after the Banach completion)
and we see that $\cF (s, \cdot)^{-1} (0) \neq \emptyset$ for sufficiently small $s$.

Finally, we explain how to recover the regularity of elements in
$\cF (s, \cdot)^{-1} (0)$ after the Banach completion. 
Since the curvature is invariant under the addition of closed 1-forms, 
there exists $a_s \in \Omega^1$ such that 
\begin{align} 
\cF (s, \n_0 + \i a_s \cdot \id_L)=0, \qquad d^* a_s=0 \tag{$*_s$}
\end{align}
for sufficiently small $s$. 
In particular, 
$(*_0)$ is given by 
$d^* d a_0 = d^* a_0=0$, which is an overdetermined elliptic equation. 
To be  overdetermined elliptic is an open condition, so we see 
that $(*_s)$ is also overdetermined elliptic for sufficiently small $s$. 
Hence we can find a smooth element in $\cF (s, \cdot)^{-1} (0)$ around $(0,\n_0)$ 
and the proof is completed. 
\end{proof}

In \cites{kawai2020deformation, kawai2021deformationSpin(7)}, 
we considered deformation theory of $G_2, \Spin (7)$-dDT connections, 
whose definitions are given in the next subsection. 
Similarly, we will be able to consider the deformation theory 
of minimal connections. 
As in the case of minimal submanifolds, we expect that 
the space of infinitesimal deformations of minimal connections 
is finite dimensional but infinitesimal deformations are 
obstructed in general.

\subsection{DDT and dHYM connections are minimal} \label{sec:ddt dhym min}

Deformed Donaldson--Thomas connections for a $G_2$-manifold 
($G_2$-dDT connections), 
deformed Donaldson--Thomas connections for a Spin(7)-manifold 
(${\rm Spin}(7)$-dDT connections), 
deformed Hermitian Yang--Mills (dHYM) connections 
are introduced in \cites{LYZ2000, lee2009geometric, kawai2021FM} 
as the ``mirrors" of calibrated submanifolds 
in $G_2$-, ${\rm Spin}(7)$-, and Calabi--Yau manifolds, respectively. 

In this subsection, we show that each of them is minimal, 
just as calibrated submanifolds are minimal submanifolds. 
First, recall the definition of these connections. 
For the definition of $G_2$- or ${\rm Spin} (7)$-structures, 
see for example \cite[Sections 2.2 and 2.3]{kawai2021mirror}.

\begin{definition} \label{def:G2dDT}
Let $X$ be a 7-manifold with a $G_2$-structure $\varphi \in \Omega^3$ 
and $L \to X$ be a smooth complex line bundle with a Hermitian metric $h$. 
A Hermitian connection $\nabla$ of $(L,h)$ satisfying 
\begin{align}\label{eq:def G2dDT}
-\frac{1}{6} \FFth + \FF \wedge * \varphi=0 
\end{align}
is called a deformed Donaldson--Thomas (dDT) connection 
for a manifold with a $G_2$-structure (a {\bf $G_2$-dDT connection}). 
\end{definition}

\begin{definition} \label{def:Spin7dDT}
Let $X$ be an 8-manifold with a ${\rm Spin}(7)$-structure $\Phi \in \Omega^4$ 
and $L \to X$ be a smooth complex line bundle with a Hermitian metric $h$.
Denote by $\Omega^k_\ell \subset \Omega^k$ 
the subspace of the space of $k$-forms corresponding to 
the $\ell$-dimensional irreducible representation of ${\rm Spin}(7)$. 
Let $\pi^k_\ell : \Omega^k \rightarrow \Omega^k_\ell$ be the projection. 
A Hermitian connection $\nabla$ of $(L,h)$ satisfying 
\begin{align}\label{eq:def Spin(7)dDT}
\pi^2_7 \left( \FF -* \left( \frac{\FFth}{6} \right)\right)=0, \qquad
\pi^4_7 (\FFtw)=0 
\end{align}
is called a deformed Donaldson--Thomas connection 
for a manifold with a ${\rm Spin}(7)$-structure (a {\bf ${\rm Spin}(7)$-dDT connection}). 
\end{definition}

\begin{definition} \label{def:dHYM}
Let $X$ be a K\"ahler manifold of (complex) dimension $n$ with a K\"ahler form $\omega$ 
and $L \to X$ be a smooth complex line bundle with a Hermitian metric $h$. 
Denote by $\pi^{2,0}:\Omega^2 \to \Omega^{2,0}$ the projection 
onto the space of $(0,2)$-forms. Fix $\theta \in \rl$. 
A Hermitian connection $\nabla$ of $(L,h)$ satisfying 
\begin{align}\label{eq:def dHYM}
\pi^{2,0} (\FF)=0, \qquad 
\ \mathop{\mathrm{Im}}\left(e^{-\i \theta} (\omega + \i \FF)^n\right)
=0 
\end{align}
is called a {\bf deformed Hermitian Yang--Mills (dHYM) connection} 
with phase $e^{\sqrt{-1}\theta}$. 
\end{definition}

When a (torsion-free) $G_2$- or ${\rm Spin}(7)$-manifold 
is compact without boundary, 
a dDT connection is a global minimizer of $V$ 
by the ``mirror" of associator or Cayley equality, 
and hence it is minimal. 
(More generally, this statement also holds when the $G_2$-structure is only closed.) 
See \cite[Theorems 4.2, 5.1, Corollaries 4.5, 5.4]{kawai2021mirror}. 
Similarly, 
a dHYM connection is a global minimizer of $V$ 
by the ``mirror" of special Lagrangian equality 
(\cite[Theorems 6.1, 6.2, Corollaries 6.5]{kawai2021mirror}) 
and hence it is minimal when a K\"ahler manifold is compact without boundary of 
(complex) dimension 3 or 4. 
We show that these connections are also minimal  
when a manifold is noncompact (and of any dimension in the dHYM case).

\begin{proposition} \label{prop:dDTismin}
$G_2$-dDT connections on a manifold with a closed $G_2$-structure $\varphi$ ($d \varphi=0$), 
${\rm Spin}(7)$-dDT connections on a (torsion-free) ${\rm Spin}(7)$-manifold 
and dHYM connections on a K\"ahler manifold 
are minimal. 
\end{proposition}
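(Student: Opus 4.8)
The plan is to verify the Euler--Lagrange equation \emph{pointwise}, so that no compactness is needed. By Proposition~\ref{prop:fistvar} a Hermitian connection $\n$ is minimal iff $H(\n)=0$, and by \eqref{eq:mc dstar} this means
\[
d^*\bigl(v(\n)\,(G_\n^{-1}\circ\FFs)^\flat\bigr)=0,
\]
equivalently that the form $\mu_\n:=*\bigl(v(\n)\,(G_\n^{-1}\circ\FFs)^\flat\bigr)$ (of degree $\dim X-2$) is closed; note $G_\n^{-1}\circ\FFs$ is skew by Remark~\ref{rem:det Gb}, so $(G_\n^{-1}\circ\FFs)^\flat\in\Omega^2$. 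The strategy is to produce, in each of the three geometries, a \emph{universal} formula for $\mu_\n$ as a polynomial in $\FF$ wedged with the ambient (closed) structure form. Since $d\FF=0$ by the Bianchi identity and the structure form is closed ($d\varphi=0$ for a closed $G_2$-structure, $d\Phi=0$ on a torsion-free ${\rm Spin}(7)$-manifold, $d\omega=0$ on a K\"ahler manifold), differentiating such a formula only ever hits $\FF$ or the structure form and hence vanishes, giving $d\mu_\n=0$ and minimality.

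First I would treat the dHYM case, which is the cleanest and holds in every complex dimension $n$. At a point choose unitary coordinates with $\omega=\sum_j dx^j\wedge dy^j$ and the $(1,1)$-curvature $\FF=\sum_j\lambda_j\,dx^j\wedge dy^j$, $\lambda_j\in\rl$. Then $v(\n)=\prod_k\sqrt{1+\lambda_k^2}$, $G_\n^{-1}\circ\FFs$ acts on the $j$-th coordinate plane by $\tfrac{\lambda_j}{1+\lambda_j^2}$-rotation, and a direct computation gives
\[
v(\n)\,(G_\n^{-1}\circ\FFs)^\flat=\Bigl(\textstyle\prod_k\sqrt{1+\lambda_k^2}\Bigr)\sum_j\tfrac{\lambda_j}{1+\lambda_j^2}\,dx^j\wedge dy^j .
\]
Writing $1+\i\lambda_j=\sqrt{1+\lambda_j^2}\,e^{\i\alpha_j}$, the dHYM equation \eqref{eq:def dHYM} reads $\sum_j\alpha_j\equiv\theta$, so that $e^{-\i\theta}(\omega+\i\FF)^{n-1}$ has on its $j$-th summand the coefficient $(1+\i\lambda_j)^{-1}\prod_k\sqrt{1+\lambda_k^2}$; comparing imaginary parts yields $\mu_\n=c_n\,\Im\!\bigl(e^{-\i\theta}(\omega+\i\FF)^{n-1}\bigr)$ for a dimensional constant $c_n$. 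Because $d\omega=0$, $d\FF=0$ and $\theta$ is constant, $d\,\Im\!\bigl(e^{-\i\theta}(\omega+\i\FF)^{n-1}\bigr)=(n-1)\,\Im\!\bigl(e^{-\i\theta}(d\omega+\i\,d\FF)\wedge(\omega+\i\FF)^{n-2}\bigr)=0$, so $\mu_\n$ is closed. (The pointwise identity is checked first where $\FF$ has simple eigenvalues and extended by continuity, both sides being polynomial in $\FF$.)

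The $G_2$- and ${\rm Spin}(7)$-cases run along the same lines, with the pair $(\omega,\FF)$ replaced by the data of the corresponding structure. Here one puts the $2$-form $\FF$ into a normal form adapted to the $G_2$- (resp.\ ${\rm Spin}(7)$-) structure and invokes the algebraic reformulation of the dDT equations \eqref{eq:def G2dDT}, \eqref{eq:def Spin(7)dDT} obtained in \cites{kawai2020deformation, kawai2021deformationSpin(7)}, which turn them into algebraic relations among the ``eigenvalues'' of $\FFs$; feeding these into the pointwise computation of $v(\n)(G_\n^{-1}\circ\FFs)^\flat$ identifies $\mu_\n$ with a closed $\FF$-polynomial wedged with $\varphi$ (resp.\ $\Phi$). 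As an alternative one could deduce minimality from the mirror picture: by the local version of Lemma~\ref{lem:FM min} a dDT connection corresponds to a calibrated, hence minimal, graph over a torus fibration, so $H(\n)=0$; but the PDE argument above is self-contained and dimension-free.

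The main obstacle is the $G_2$- and ${\rm Spin}(7)$- pointwise identities: unlike the K\"ahler case there is no simultaneous normal form for a generic $2$-form and the structure $3$-form (resp.\ $4$-form), so one genuinely has to go through the case analysis of \cite{kawai2020deformation}. One must also be careful, in the merely closed (not torsion-free) $G_2$-setting, to write $\mu_\n$ using only $\varphi$ and $\FF$ --- never $*\varphi$, whose exterior derivative need not vanish --- even though $*\varphi$ enters the dDT equation; this is consistent because \eqref{eq:def G2dDT} already forces $\FF\wedge d(*\varphi)=\tfrac16\,d(\FFth)=0$.
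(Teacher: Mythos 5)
Your dHYM argument is essentially correct and is a genuinely different route from the paper's: you identify $\mu_\n=*\left(v(\n)(G_\n^{-1}\circ\FFs)^\flat\right)$ with a constant multiple of $\Im\left(e^{-\i\theta}(\omega+\i\FF)^{n-1}\right)$ and conclude $d\mu_\n=0$, whereas the paper treats the K\"ahler case in all dimensions by differentiating the dHYM equations and showing that $D_iE_\n$ is a primitive $(1,1)$-form for the deformed Hermitian structure $(\omega_\n,g_\n,J)$, which yields $\delta_\n\FF=0$ directly. (One small point: \eqref{eq:def dHYM} only fixes the phase modulo $\pi$, so your constant $c_n$ is really $\pm 1/(n-1)!$ with a locally constant sign; since the sign is constant on connected components this does not affect $d\mu_\n=0$.)

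For the $G_2$- and ${\rm Spin}(7)$-cases, however, there is a genuine gap: your whole strategy rests on the pointwise identities expressing $\mu_\n$ as a closed polynomial in $\FF$ wedged with $\varphi$ (resp.\ $\Phi$), and you never establish them --- you yourself flag them as ``the main obstacle'' and defer to a case analysis that you do not carry out, so as written only the dHYM case is proved. The identities are in fact true but are exactly where the work lies; for instance, for a $G_2$-dDT connection one has, up to a locally constant sign (determined by the sign of $1-\tfrac12\la\FFtw,*\varphi\ra$, which is nonvanishing since its absolute value is $v(\n)\geq1$),
\begin{equation}
*\left(v(\n)\,(G_\n^{-1}\circ\FFs)^\flat\right)=\pm\,\FF\wedge\varphi ,
\end{equation}
and verifying this requires the normal form of Lemma \ref{lem: simplified_dDT} ($\FF=c_1e^{23}+c_2e^{45}+c_3e^{67}$ with $c_1+c_2+c_3=c_1c_2c_3$) together with relations like $c_2c_3-1=(c_2^2+1)/(c_1c_2-1)$, which force all componentwise signs to agree; the ${\rm Spin}(7)$ analogue (the natural candidate being $\pm\left(\FF\wedge\Phi-\tfrac16\FFth\right)$) would similarly need the normal form from \cite{kawai2021deformationSpin(7)}. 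Your fallback via Lemma \ref{lem:FM min} is not a valid substitute: that lemma concerns graphs in a flat torus fibration $B\times T^q$ with the standard metric and does not apply to dDT connections on general $G_2$- or ${\rm Spin}(7)$-manifolds. By contrast, the paper avoids the pointwise algebra entirely in these cases: it shows a dDT connection is a local minimizer of $V^0$ under compactly supported variations, using $v(\widetilde\n)\geq\left|1-\tfrac12\la E^2_{\widetilde\n},*\varphi\ra\right|$ (with equality at a dDT connection) together with Stokes' theorem and $d\varphi=0$, so the first variation, hence $H(\n)$, vanishes --- an argument that needs no normal form and works verbatim for merely closed $G_2$-structures.
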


\begin{proof}
The $G_2$ and ${\rm Spin}(7)$ cases are proved by a slight modification of 
the proof in the compact case, just as in the case of calibrated submanifolds. 

Denote by $\varphi$ a $G_2$-structure with $d \varphi=0$. 
Let $g$ be the induced metric from $\varphi$. 
Use the notation in Section \ref{sec:cons not}. 
Let $\n \in \cA_0$ be a $G_2$-dDT connection. 
Since the minimality is a local property, we show that $\n$ is minimal on an open neighborhood $U$ of any point. 
We may assume that $V^0_U(\n):=\int_U (v(\n)-1) \vol_g < \infty$. 
Let $\widetilde \nabla$ be a compactly supported variation of $\n|_U$. That is, 
$\widetilde \nabla = \nabla|_U + \i a \cdot \id_L$ for a 1-form $a \in \Omega^1(U)$ 
such that $a =0$ outside a compact subset $K \subset U$. (So we have $a|_{\partial K} =0$.) 
We show that 
$V^0_U (\widetilde \n) \geq V^0_U (\n)$ if $|a|_{C^1}$ is sufficiently small, 
which implies that $\n$ is minimal. 

Note that 
\begin{align}
V^0_U (\widetilde \n) = \int_{U-K} (v(\n)-1) \vol_g + \int_K (v(\widetilde\n)-1) \vol_g. 
\end{align}
Since 
$|1-\la \FFtw, * \varphi \ra/2| = v(\n) \geq 1$ by \cite[Theorem 5.1]{kawai2021mirror}, 
we may assume that 
$1-\la \FFtw, * \varphi \ra/2 >0$. 
Then we may also assume that $1-\la E^2_{\widetilde \n}, * \varphi \ra/2 >0$
if $|a|_{C^1}$ is sufficiently small.  
By \cite[Theorem 5.1]{kawai2021mirror}, we have 
\begin{align}
\int_K (v(\widetilde\n)-1) \vol_g
\geq 
\int_K \left( \left|1- \frac{1}{2} \la E^2_{\widetilde \n}, * \varphi \ra \right| -1 \right) \vol_g 
= 
- \frac{1}{2} \int_K \la E^2_{\widetilde \n}, * \varphi \ra \vol_g
= 
- \frac{1}{2} \int_K E^2_{\widetilde \n} \wedge \varphi. 
\end{align}
Since $\widetilde \nabla = \nabla + \i a \cdot \id_L$, we have 
$E_{\widetilde \nabla}=\FF + da$. Then 
$$
E^2_{\widetilde \nabla} \wedge \varphi - \FFtw \wedge \varphi
= 
d \left( \left(2a \wedge \FF +a \wedge da \right) \wedge \varphi \right),  
$$
where we use $d \FF =0$ and $d \varphi=0$. 
This implies that 
$$
\int_K E^2_{\widetilde \n} \wedge \varphi 
=
\int_K \FFtw \wedge \varphi
$$
by the Stokes' theorem and $a|_{\partial K} =0$. 
Hence we obtain 
\begin{align}
V^0_U (\widetilde \n) \geq \int_{U-K} (v(\n)-1) \vol_g - \frac{1}{2} \int_K \FFtw \wedge \varphi 
= 
V^0_U (\n), 
\end{align}
which implies that $\n$ is minimal.

The same argument works for ${\rm Spin}(7)$-dDT connections and 
dHYM connections on 3- or 4-dimensional K\"ahler manifolds. \\

To show the K\"ahler case of any dimension, we give a direct computation. 
Denote by $\omega, g, J$ a K\"ahler form, a metric and a complex structure on a K\"ahler manifold. 
Let $\n \in \cA_0$ be a dHYM connection with phase $e^{\sqrt{-1}\theta}$. 
Define a metric $g_\n$ by 
$g_\n=(\id_{TX} +E_\n^\sharp)^* g$. 
Its associated 2-form $\omega_\n$ is given by $\omega_\n=(\id_{TX} +E_\n^\sharp)^* \omega$. 
Then $(\omega_\n, g_\n, J)$ defines a Hermitian structure on $X$. 
Denote by $*_\n$ the Hodge star operator induced by $g_\n$. 

Take a local orthonormal frame $\{ e_i \}$ with its dual $\{ e^i \}$ and 
we use the notation \eqref{eq:def deri}. 
Differentiating \eqref{eq:def dHYM} by $D_i$ and using $D \omega=0$, we have 
$$
\pi^{2,0} (D_i \FF)=0, \qquad 
\ \mathop{\mathrm{Im}}\left(e^{-\i \theta} (\omega + \i \FF)^{n-1} \wedge \sqrt{-1} D_i \FF \right)
=0.  
$$
By \cite[(4.3)]{kawai2020deformation}, this is equivalent to 
$$
\pi^{2,0} (D_i \FF)=0, \qquad 
\omega_\n^{n-1} \wedge D_i \FF =0.  
$$
Thus $D_i \FF$ is a primitive (1,1)-form with respect to the Hermitian structure $(\omega_\n, g_\n, J)$. 
Then by \cite[Proposition 1.2.31]{Huybrechts2005}, we obtain 
$$
*_\n (D_i \FF \wedge \omega_\nabla^{n-2})=-(n-2)! D_i \FF. 
$$
Then we have 
\begin{align}
(n-2)! \delta_\n E_\n 
=& -(n-2)! i(G_\n^{-1}(e_i)) D_i E_\n \\
=& i(G_\n^{-1}(e_i)) *_\n (D_i E_\n \wedge \omega_\nabla^{n-2})
=
*_\n \left( g_\n(G_\n^{-1}(e_i), \cdot) \wedge D_i E_\n \wedge \omega_\nabla^{n-2} \right). 
\end{align}
Since 
$g_\n
=g((\id_{TX} +E_\n^\sharp)(\cdot), (\id_{TX} +E_\n^\sharp)(\cdot))
=g(G_\n(\cdot), \cdot)
$, we have 
$g_\n(G_\n^{-1}(e_i), \cdot) =e^i$. 
Since $e^i \wedge D_i E_\n = d E_\n =0$, we see that $\delta_\n E_\n=0$. 
\end{proof}

\begin{remark}
We can also prove Proposition \ref{prop:dDTismin} by a direct computation 
in the $G_2$ and ${\rm Spin}(7)$ cases as follows. 
However, we need to assume that the $G_2$-structure is torsion-free. 

Denote by $\varphi$ a torsion-free $G_2$-structure on a $G_2$-manifold. 
Let $g$ be the induced metric from $\varphi$. 
Use the notation in Section \ref{sec:cons not}. 
Let $\n \in \cA_0$ be a $G_2$-dDT connection. 
Define a $G_2$-structure $\varphi_\n$ by 
$\varphi_\n=(\id_{TX} +E_\n^\sharp)^* \varphi$. 
Denote by $g_\n$ and $*_\n$ 
the Riemannian metric and the Hodge star operator induced by $\varphi_\n$. 
Differentiating \eqref{eq:def G2dDT} by $D_i$, we have 
$$
\left( - \frac{1}{2} E_\n^2 + * \varphi \right) \wedge D_i E_\n=0, 
$$
where we use $D *\varphi=0$. 
Then by \cite[Theorem C.1]{kawai2020deformation}, we see that 
$*_\n \varphi_\n \wedge D_i E_\n =0$, which is equivalent to 
$$
*_\n (D_i E_\n \wedge \varphi_\n) = - D_i E_\n. 
$$
Then we have 
\begin{align}
\delta_\n E_\n 
= -i(G_\n^{-1}(e_i)) D_i E_\n
= i(G_\n^{-1}(e_i)) *_\n (D_i E_\n \wedge \varphi_\n)
=
- *_\n \left( e^i \wedge D_i E_\n 
\wedge \varphi_\n \right)
=0. \\
\end{align}

The ${\rm Spin}(7)$ case is proved as follows. 
Denote by $\Phi$ a torsion-free ${\rm Spin}(7)$-structure on a ${\rm Spin}(7)$-manifold. 
Let $\n \in \cA_0$ be a ${\rm Spin}(7)$-dDT connection. 
Similarly, define a ${\rm Spin}(7)$-structure $\Phi_\n$ by 
$\Phi_\n=(\id_{TX} +E_\n^\sharp)^* \Phi$. 
Denote by $g_\n$ and $*_\n$ 
the Riemannian metric and the Hodge star operator induced by $\Phi_\n$. 
The ${\rm Spin}(7)$-structure $\Phi_\n$ induces 
the decomposition $\Lambda^k = \oplus \Lambda^k_{\ell, \n}$ 
of the space of $k$-forms. 
Note that 
$\Lambda^k_{\ell, \n} = (\id_{TX} +E_\n^\sharp)^* \Lambda^k_\ell$ 
by \cite[Section 2.4]{kawai2021deformationSpin(7)}. 

Since $D \Phi=0$, the Levi-Civita connection $D$ preserves 
the decomposition $\Lambda^k = \oplus \Lambda^k_\ell$. 
Then differentiating \eqref{eq:def Spin(7)dDT} by $D_i$, we have 
$$
\pi^2_7 \left( D_i E_\n -
* \left( \frac{E_\n^2}{2} \wedge D_i E_\n \right)\right)=0, \qquad
\pi^4_7 (E_\n \wedge D_i E_\n)=0. 
$$
By \cite[Theorem A.8 (2)]{kawai2021deformationSpin(7)}, we see that 
$D_i E_\n \in (\id_{TX} +E_\n^\sharp)^* \Lambda^2_{21} = \Lambda^2_{21, \n}$, 
which is equivalent to 
$$
*_\n (D_i E_\n \wedge \Phi_\n) = - D_i E_\n. 
$$
Then we have 
\begin{align}
\delta_\n E_\n 
= -i(G_\n^{-1}(e_i)) D_i E_\n
= i(G_\n^{-1}(e_i)) *_\n (D_i E_\n \wedge \Phi_\n)
=
*_\n \left(e^i \wedge D_i E_\n 
\wedge \Phi_\n \right)
=0. \\
\end{align}
\end{remark}

\subsection{$G_2$-dDT connections}
By Proposition \ref{prop:dDTismin} and Theorem \ref{thm:min imply cons}, 
results in Section \ref{sec:odd dim} also hold for $G_2$-dDT connections. 
In this subsection, we show that 
better estimates hold for $G_2$-dDT connections.

Set $V=\rl^7$. 
Let $\{ e_i \}_{i=1}^7$ be the standard basis of $V$ 
with its dual $\{ e^i \}_{i=1}^7$. 
Define a 3-form $\varphi \in \Lambda^3 V^*$ by  
\begin{equation} \label{varphi}
\varphi = e^{123} + e^{145} + e^{167} + e^{246} - e^{257} - e^{347} - e^{356},
\end{equation}
where $e^{i_1 \dots i_k}$ is short for $e^{i_1} \wedge \cdots \wedge e^{i_k}$. 
The stabilizer of the standard $\GL (V)$-action on $\Lambda^3 V^*$ at $\varphi$ is known 
to be the exceptional $14$-dimensional simple Lie group $G_2 \subset {\rm GL}(V)$. 
The group $G_2$ acts on $\Lambda^k V^*$ and it has the irreducible decomposition 
$$
\Lambda^k V^* = \bigoplus_\ell \Lambda^k_\ell V^*, 
$$
where $\Lambda^k_\ell V^* \subset \Lambda^k V^*$ 
is the $\ell$-dimensional irreducible subrepresentation of $G_2$. 
For more details, see for example \cite[Section 2.2]{kawai2021mirror}.

\begin{lemma}\label{lem: simplified_dDT}
For a $2$-form $\beta \in \Lambda^2 V^*$, the followings are equivalent. 

\begin{enumerate}
\item 
$\beta$ satisfies $-\beta^3/6+\beta\wedge\ast\varphi = 0$. 

\item 
$\beta$ is, up to the $G_2$-action, of the form
\begin{align}
\beta 
= a i(e_1)\varphi + \left( b_1 e^{23} + b_2 e^{45} + b_3 e^{67} \right) 
= (a+b_1)e^{23} + (a+b_2)e^{45} + (a+b_3)e^{67} 
\end{align} 
for $a, b_1, b_2, b_3 \in \rl$ satisfying $b_1+b_2+b_3=0$ and 
\begin{equation}\label{eq:ptwise}
\left(3-a^2+\frac{1}{2}\sum\limits_{j=1}^3 b_j^2\right)a = b_1b_2b_3.
\end{equation}

\item
$\beta$ is, up to the $G_2$-action, of the form 
\begin{align}
\beta = c_1 e^{23} + c_2 e^{45} + c_3 e^{67} 
\end{align} 
for $c_1, c_2, c_3 \in \rl$ satisfying 
\begin{equation}\label{eq: dDT_3}
c_1+c_2+c_3 = c_1c_2c_3. 
\end{equation}
\end{enumerate}

\end{lemma}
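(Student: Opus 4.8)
The plan is to prove the three conditions equivalent by understanding the $G_2$-orbit structure on $\Lambda^2 V^*$ and then reducing the defining equation to a polynomial identity in eigenvalue-like coordinates. Recall that the $G_2$-representation $\Lambda^2 V^*$ splits as $\Lambda^2_7 V^* \oplus \Lambda^2_{14} V^*$, where $\Lambda^2_7 V^* = \{\, i(v)\varphi : v \in V \,\}$ and $\Lambda^2_{14} V^* \cong \g_2$. A key structural fact I would use is that $G_2$ acts transitively on the unit sphere in $V$, and that the stabilizer of $e_1$ is an $\SU(3)$ acting on the orthogonal complement $\langle e_2,\dots,e_7\rangle \cong \cx^3$ in the standard way; this $\SU(3)$ can be used to simultaneously diagonalize the ``$\Lambda^2_{14}$-part'' in a suitable sense. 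First I would record that any $\beta \in \Lambda^2 V^*$ is, after applying a $G_2$-transformation, of the block-diagonal form $\beta = c_1 e^{23} + c_2 e^{45} + c_3 e^{67}$; this is the normal form for a generic skew form adapted to a $G_2$-structure and follows from the transitivity statements above together with the standard simultaneous block-diagonalization of a skew-symmetric form by an orthogonal matrix, refined to lie in $G_2$ using the $\SU(3)$ freedom.

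Given this normal form, the second step is a direct computation of $-\beta^3/6 + \beta \wedge *\varphi$ on $\beta = c_1 e^{23} + c_2 e^{45} + c_3 e^{67}$, using the explicit formula for $\varphi$ in \eqref{varphi} and the resulting expression for $*\varphi$ (which is $e^{4567} + e^{2367} + e^{2345} + e^{1357} - e^{1346} - e^{1256} - e^{1247}$ for the standard orientation). One computes $\beta^3 = 6 c_1 c_2 c_3 \, e^{234567}$ and $\beta \wedge *\varphi = (c_1 + c_2 + c_3)\, e^{234567}$, so the equation $-\beta^3/6 + \beta \wedge *\varphi = 0$ becomes exactly $c_1 + c_2 + c_3 = c_1 c_2 c_3$, which is \eqref{eq: dDT_3}. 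This establishes the equivalence of (1) and (3). The main point to be careful about is that the defining equation in (1) is $G_2$-invariant (since $\varphi$, hence $*\varphi$ and the wedge product, are preserved by $G_2$), so it suffices to check it on the normal form — I would state this invariance explicitly before reducing.

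The third step is to connect the two parametrizations in (2) and (3). Starting from $\beta = a\, i(e_1)\varphi + (b_1 e^{23} + b_2 e^{45} + b_3 e^{67})$ and using $i(e_1)\varphi = e^{23} + e^{45} + e^{67}$ (read off from \eqref{varphi}), we get $\beta = (a+b_1) e^{23} + (a+b_2) e^{45} + (a+b_3) e^{67}$, so setting $c_j = a + b_j$ gives the form in (3) with $c_1 + c_2 + c_3 = 3a$ (using $b_1+b_2+b_3 = 0$). Conversely, given $c_1,c_2,c_3$ one recovers $a = (c_1+c_2+c_3)/3$ and $b_j = c_j - a$, which automatically satisfy $\sum b_j = 0$; so the two presentations are in bijection, and it remains only to check that \eqref{eq:ptwise} transforms into \eqref{eq: dDT_3} under this substitution. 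Expanding, $\sum b_j^2 = \sum c_j^2 - 3a^2$ and $b_1 b_2 b_3 = (c_1-a)(c_2-a)(c_3-a) = c_1 c_2 c_3 - a\sum c_i c_j + a^2 \sum c_i - a^3$; plugging these into \eqref{eq:ptwise} and simplifying (using $\sum c_i = 3a$ repeatedly) should collapse everything to $c_1 + c_2 + c_3 = c_1 c_2 c_3$. I expect the main obstacle to be purely bookkeeping: carefully fixing the orientation and sign conventions so that $*\varphi$, the coefficient of $\beta \wedge *\varphi$, and the algebraic identity relating \eqref{eq:ptwise} and \eqref{eq: dDT_3} all come out with consistent signs; the representation-theoretic normal-form step, while conceptually the heart of the matter, is standard and can be cited (e.g. from \cite{kawai2021mirror} or \cite{kawai2020deformation}) rather than reproved.
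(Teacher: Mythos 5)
There is a genuine gap, and it sits exactly where the real content of the lemma lies. Your normal-form claim --- that \emph{every} $\beta\in\Lambda^2V^*$ is $G_2$-equivalent to $c_1e^{23}+c_2e^{45}+c_3e^{67}$ --- is false. A dimension count already rules it out: $\dim\Lambda^2V^*=21$, while the union of $G_2$-orbits through such diagonal forms has dimension at most $3+(\dim G_2-\dim T^2)=3+12=15$, since a generic diagonal form is stabilized by the maximal torus. What is true is only that the $\Lambda^2_{14}$-component, being an element of $\fg_2$, can be conjugated into the Cartan subalgebra, i.e.\ $\beta$ can be brought to the shape $i(u)\varphi+(b_1e^{23}+b_2e^{45}+b_3e^{67})$ with $b_1+b_2+b_3=0$; but the $\Lambda^2_7$-component $i(u)\varphi$ then involves an \emph{arbitrary} $u\in V$, not a multiple of $e_1$. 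Your proposed fix via the $\SU(3)$-stabilizer of $e_1$ does not work either: $\SU(3)$ preserves the type decomposition on $\langle e_2,\dots,e_7\rangle\cong\cx^3$ and can only diagonalize forms of type $(1,1)$, whereas a general skew form on $\rl^6$ has a nonzero $(2,0)+(0,2)$ part, so the $\SO(6)$ block-diagonalization cannot be refined to lie in $G_2$. Consequently your argument for $(1)\Rightarrow(3)$ (equivalently $(1)\Rightarrow(2)$) collapses: you must use the dDT equation itself to force $u$ parallel to $e_1$, which is precisely what the paper does. There, after bringing $\beta_{14}$ to Cartan form, \cite[Proposition D.1]{kawai2020deformation} rewrites the equation as $\bigl(3-|u|^2+\tfrac12\sum_j b_j^2\bigr)u^\flat=b_1b_2b_3\,e^1$; in the generic case this forces $u\parallel e_1$ and gives \eqref{eq:ptwise}, while in the degenerate case $3-|u|^2+\tfrac12\sum_j b_j^2=0$ one gets $b_1b_2b_3=0$ and needs a further argument ($i(u)\beta_{14}=0$ from \cite[Lemma C.2]{kawai2020deformation}, plus an explicit $\SU(2)\subset G_2$ fixing $\beta_{14}$) to rotate $u$ onto $e_1$. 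None of this is optional bookkeeping; it is the heart of the implication.

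The remaining parts of your proposal are fine and agree with the paper: the equation in (1) is $G_2$-invariant, the direct computation $-\beta^3/6+\beta\wedge*\varphi=\bigl(-c_1c_2c_3+c_1+c_2+c_3\bigr)e^{234567}$ proves $(3)\Rightarrow(1)$, and the substitution $c_j=a+b_j$ with $\sum_j b_j=0$ converts \eqref{eq:ptwise} into \eqref{eq: dDT_3}, giving $(2)\Leftrightarrow(3)$. So your proof needs to be repaired only in the passage from (1) to the normal form, by replacing the false ``every $2$-form is diagonalizable by $G_2$'' claim with an argument that extracts the alignment of the $\Lambda^2_7$-part from the equation itself.
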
 

\begin{proof}
We first show (1)$\Rightarrow$(2). 
Suppose that a $2$-form $\beta \in\Lambda^2 V^*$ 
satisfies $-\beta^3/6+\beta\wedge\ast\varphi = 0$. 
Decompose $\beta$ as 
$$
\beta=\beta_7 + \beta_{14} = i(u) \varphi + \beta_{14} \in \Lambda^2_7 V^* \oplus \Lambda^2_{14} V^* 
$$
for $u \in V$. 
Recall that every element in $\fg_2 \cong \Lambda^2_{14} V^*$ 
is ${\rm Ad}(G_2)$-conjugate to an element of a Cartan subalgebra. 
Then we may assume that 
$$
\beta_{14}=b_1 e^{23} + b_2 e^{45} + b_3 e^{67}
$$
with $b_j \in \rl$ and $b_1 +b_2 + b_3=0$. 
Then by \cite[Proposition D.1]{kawai2020deformation}, we have 
\begin{align} \label{eq:ptwise0}
\left( 3-|u|^2 + \frac{1}{2} \sum_{j=1}^3 b_j^2 \right) u^\flat = b_1 b_2 b_3 e^1. 
\end{align}

Then we discuss as in the proof of \cite[Lemma B.3]{kawai2020deformation}.
If $3-|u|^2 + \sum_{j=1}^3 b_j^2/2 \neq 0$, 
\eqref{eq:ptwise0} implies that $u$ is a constant multiple of $e_1$ 
and \eqref{eq:ptwise} must be satisfied. 

If $3-|u|^2 + \sum_{j=1}^3 b_j^2/2 = 0$, we have $b_1 b_2 b_3=0$ by \eqref{eq:ptwise0}. 
We may assume that 
$b_1=0$ and set $b = b_2 = - b_3$. 
Then $\beta_{14}=b (e^{45}-e^{67})$.

Suppose that $b \neq 0$. 
By \cite[Lemma C.2]{kawai2020deformation}, 
we have $i(u) \beta_{14}=0$, which implies that 
$u = u^1 e_1 + u^2 e_2 + u^3 e_3$ for $u^j \in \rl$. 
Now, define $j:V=\rl^7 \to \rl^7=V$ by 
$j (x^1,x^2,x^3,x^4,x^5,x^6,x^7) = (x^1,x^2,x^3,x^4,x^5,x^6, -x^7)$ 
and the ${\rm SU}(2)$-action $\rho: {\rm SU}(2) 
\to {\rm GL}(\rl^7) = {\rm GL}(\rl^3 \oplus \cx^2)$ by 
\[
\rho (g) = 
\left(
\begin{array}{cc}
\rho_- (g)  & 0 \\
0 & g \\
\end{array}
\right), 
\]
where $\rho_-:{\rm SU}(2) \to {\rm SO}(3)$ is the double cover. 
It is known that this ${\rm SU}(2)$-action preserves $j^* \varphi$. 
Thus, the ${\rm SU}(2)$-action $j \circ \rho \circ j$ 
preserves $\varphi$. 
Since $\beta_{14}= b (e^{45}-e^{67})$ is invariant 
under this ${\rm SU}(2)$-action, 
we may further assume that $u^2=u^3=0$. 
Then, the proof in this case is done. 

If $b=0$, we have $\beta_{14}=0$. 
Then we may assume that $u=|u| e_1$ and the proof in this case is done. 
These arguments yield (2). \\

Next, we show (2)$\Rightarrow$(3). Set $c_j=a+b_j$. Since 
$$
c_1c_2c_3 - (c_1+c_2+c_3) 
= (a+b_1)(a+b_2)(a+b_3)-3a
= a^3 + (b_1 b_2 + b_1 b_3 + b_2 b_3 -3) a + b_1 b_2 b_3 
$$
and $0=(b_1 + b_2 + b_3)^2 = \sum_{j=1}^3 b_j^2 + 2 (b_1 b_2 + b_1 b_3 + b_2 b_3)$, 
we see that \eqref{eq:ptwise} is equivalent to \eqref{eq: dDT_3}. 

Finally, we show (3)$\Rightarrow$(1). 
Since the equation $-\beta^3/6+\beta\wedge\ast\varphi = 0$ is $G_2$-invariant, 
we only have to show that $-\beta^3/6+\beta\wedge\ast\varphi = 0$ for 
$\beta = c_1 e^{23} + c_2 e^{45} + c_3 e^{67}$ satisfying \eqref{eq: dDT_3}. 
Since 
$\ast \varphi = e^{4567} + e^{2367} + e^{2345} + e^{1357} - e^{1346} - e^{1256} - e^{1247}$, 
we compute 
$$
-\frac{\beta^3}{6}+\beta\wedge\ast\varphi
= 
-c_1 c_2 c_3 e^{234567} + (c_1+c_2+c_3) e^{234567} =0 
$$
and the proof is completed. 
\end{proof}

By Lemma \ref{lem: simplified_dDT}, we obtain the following.

\begin{lemma} \label{lem:estimate dDT}
For any $2$-form $\beta \in \Lambda^2 V^*$ satisfying 
$-\beta^3/6+\beta \wedge\ast\varphi = 0$, we have 
\begin{align}
{\rm tr} (G_\beta^{-1}) \geq \frac{5}{2}
\qquad \mbox{and} \qquad
\frac{{\rm tr} (G_\beta^{-1}) v(\beta)-7}{v(\beta)-1} \geq \frac{13}{7}, 
\end{align}
where $G_\beta$ and $v(\beta)$ are defined as in \eqref{eq:def vol func} and \eqref{eq:Gn}, 
respectively. 
\end{lemma}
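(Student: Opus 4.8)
The plan is to reduce everything to the explicit parametrization in Lemma \ref{lem: simplified_dDT}(3): since $\tr(G_\beta^{-1})$ and $v(\beta)$ are $G_2$-invariant functions of $\beta$, it suffices to prove both inequalities for $\beta = c_1 e^{23} + c_2 e^{45} + c_3 e^{67}$ with $c_1+c_2+c_3 = c_1 c_2 c_3$. For such a $\beta$ one computes directly, as in the proof of Lemma \ref{lem:ineq tr}, that $G_\beta$ is block diagonal with blocks $(1+c_j^2)\,\Id_2$ on the $e^{2j-1}\wedge e^{2j}$ planes and a $1$ in the $e^1$ direction, so writing $\mu_j = 1+c_j^2 \ge 1$ we have
\[
\tr(G_\beta^{-1}) = 1 + \sum_{j=1}^3 \frac{2}{\mu_j}, \qquad
v(\beta) = (\det G_\beta)^{1/4} = \sqrt{\mu_1 \mu_2 \mu_3}.
\]
The problem is thus purely a constrained optimization over $(c_1,c_2,c_3) \in \rl^3$ subject to $c_1+c_2+c_3 = c_1 c_2 c_3$.

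For the first inequality I would argue that the only critical points of $\tr(G_\beta^{-1}) = 1 + \sum_j 2/(1+c_j^2)$ on the constraint surface, together with the behaviour at infinity, force the minimum; the natural candidate is the "balanced" solution $c_1 = c_2 = c_3 = \sqrt{3}$ of $c_1+c_2+c_3 = c_1c_2c_3$, for which $\mu_j = 4$ and $\tr(G_\beta^{-1}) = 1 + 3 \cdot \tfrac{1}{2} = \tfrac{5}{2}$. To make this rigorous without heavy Lagrange-multiplier bookkeeping, I would instead solve the constraint for one variable, say $c_3 = (c_1+c_2)/(c_1 c_2 - 1)$ (the locus $c_1 c_2 = 1$ being handled separately, where it forces $c_1 + c_2 = 0$ and hence $\beta$ degenerate), substitute, and reduce to a two-variable inequality that can be cleared of denominators into a polynomial inequality; one then exhibits it as a sum of squares or reduces further by symmetry to a single-variable convexity statement. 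The cleanest route may be to note $0 \le (c_1 c_2 c_3)^2 = (c_1+c_2+c_3)^2$ and combine with AM--GM-type bounds relating $\sum 1/\mu_j$ to $\prod \mu_j$.

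For the second inequality, set $x = v(\beta) = \sqrt{\mu_1\mu_2\mu_3} \ge 1$ and $y = \tr(G_\beta^{-1})$; the claim is $\tfrac{xy - 7}{x-1} \ge \tfrac{13}{7}$, i.e. $7xy - 49 \ge 13x - 13$, i.e. $7xy - 13x \ge 36$, for all admissible $(\mu_1,\mu_2,\mu_3)$ satisfying the dDT constraint $c_1+c_2+c_3 = c_1c_2c_3$ (and excluding the flat case $\beta = 0$, where $x = 1$, $y = 7$ and the left side of the original quotient is $0/0$; note the dDT constraint is satisfied trivially by $\beta=0$, so the statement is understood for $\beta \ne 0$, equivalently $x > 1$). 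Again at the balanced point $\mu_j = 4$ we get $x = 8$, $y = 5/2$, and $(xy - 7)/(x-1) = (20-7)/7 = 13/7$, so the bound is sharp there. I would prove $7xy \ge 13x + 36$ by the same elimination-of-one-variable strategy, turning it into a polynomial inequality in $(c_1,c_2)$ after substituting $c_3 = (c_1+c_2)/(c_1c_2-1)$ and $\mu_j = 1+c_j^2$.

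The main obstacle is the second inequality: after substitution the expressions $v(\beta)$ and $\tr(G_\beta^{-1})$ become rational functions of $(c_1,c_2)$ of fairly high degree, and clearing denominators produces a polynomial inequality whose sum-of-squares certificate is not obvious by inspection. I expect the bulk of the work to be organizing this calculation — exploiting the $S_3$-symmetry in $(c_1,c_2,c_3)$ to pass to elementary symmetric functions $s_1 = c_1c_2c_3$ (which equals $c_1+c_2+c_3$ by the constraint), $s_2 = \sum c_i c_j$, $s_3 = s_1$, so that effectively only two free parameters remain — and then verifying the reduced two-variable polynomial inequality, possibly by checking it degenerates correctly on the boundary components (one $c_j \to \infty$, or one $c_j = 0$ forcing the other two to be negatives of each other) and has its unique interior minimum at the balanced point. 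If a clean SOS decomposition resists, a fallback is a direct but careful case analysis on the signs and relative sizes of the $c_j$.
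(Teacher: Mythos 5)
Your setup coincides with the paper's: reduce via Lemma \ref{lem: simplified_dDT}(3) to $\beta = c_1e^{23}+c_2e^{45}+c_3e^{67}$ with $c_1+c_2+c_3=c_1c_2c_3$, eliminate $c_3=(c_1+c_2)/(c_1c_2-1)$, and you correctly locate the sharp point $c_1=c_2=c_3=\sqrt{3}$ (where $v=8$, $\tr(G_\beta^{-1})=5/2$, ratio $=13/7$). (Minor slip: the locus $c_1c_2=1$ does not produce a ``degenerate $\beta$''; on the constraint it forces $c_1+c_2=0$, hence $c_1c_2=-c_1^2\le 0$, so that locus is empty.) But the proposal stops short of a proof: both inequalities are left as ``a polynomial inequality whose sum-of-squares certificate is not obvious by inspection,'' and you explicitly flag the second as an unresolved obstacle. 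The decisive ingredient, which the paper supplies and your plan lacks, is the elementary identity
\begin{align}
4(1-c_1c_2)=\frac{1}{2}\bigl\{-(c_1^2+1)(c_2^2+1)+(c_1-c_2)^2+(c_1c_2-3)^2\bigr\},
\end{align}
which, together with $c_3^2+1=(c_1^2+1)(c_2^2+1)/(c_1c_2-1)^2$, yields at once
$\tr(G_\beta^{-1})=\frac{5}{2}+\frac{(c_1-c_2)^2+(c_1c_2-3)^2}{2(c_1^2+1)(c_2^2+1)}\ge\frac{5}{2}$
and, when $c_1c_2>1$, the bound $f(c_1,c_2):=(c_1^2+1)(c_2^2+1)/(c_1c_2-1)\ge 8$.

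For the second inequality your reformulation $7xy-13x\ge 36$ with $x=v(\beta)$, $y=\tr(G_\beta^{-1})$ is correct but leaves the coupling between $x$ and $y$ untreated; the efficient route is a case split on the sign of $c_1c_2-1$, which your sketch misses. Since $v(\beta)=(c_1^2+1)(c_2^2+1)/|c_1c_2-1|$ and $\tr(G_\beta^{-1})\,v(\beta)=3\,v(\beta)+4(1-c_1c_2)/|c_1c_2-1|$, the quotient is identically $3$ when $c_1c_2<1$, while for $c_1c_2>1$ it equals $j(v(\beta))$ with $j(x)=3-8/(x-1)$ increasing, so the claim reduces exactly to $v(\beta)=f(c_1,c_2)\ge 8$, i.e. to the displayed identity. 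Without this observation (or an explicit substitute certificate for your cleared-denominator polynomial), what you have is the right strategy and the right extremal value, but not a proof: the key computation is announced rather than carried out.
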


\begin{proof}
We may assume that $\beta = c_1 e^{23} + c_2 e^{45} + c_3 e^{67}$ satisfying \eqref{eq: dDT_3}. 
By \eqref{eq: dDT_3}, we have 
$(c_1 c_2 -1) c_3 = c_1 + c_2$. If $c_1 c_2 -1=0$, we have $c_1+c_2=0$. 
Then $0=c_1 c_2 -1=-c_1^2 -1<0$, which is impossible. 
So we have $c_1 c_2 -1 \neq 0$ and 
$$
c_3= \frac{c_1+c_2}{c_1 c_2 -1}. 
$$
Then 
$$
c_3^2+1 
= \frac{(c_1 + c_2)^2 + (c_1 c_2 -1)^2}{(c_1 c_2 -1)^2}
= \frac{c_1^2 c_2^2 + c_1^2 + c_2^2 +1}{(c_1 c_2 -1)^2}
= \frac{(c_1^2+1) (c_2^2+1)}{(c_1 c_2 -1)^2}
$$
Hence 
$$
{\rm tr} (G_\beta^{-1}) 
= 1+ \sum_{j=1}^3 \frac{2}{c_j^2+1} 
= 1 + \frac{2 \{ c_1^2+c_2^2+2 +(c_1 c_2-1)^2 \}}{(c_1^2+1) (c_2^2+1)}
= 3+\frac{4(-c_1 c_2 +1)}{(c_1^2+1) (c_2^2+1)}. 
$$
Now we can show that 
\begin{align} \label{eq:estimate dDT 1}
4(-c_1 c_2 +1)
= \frac{1}{2} 
\left\{  
- (c_1^2+1) (c_2^2+1) + (c_1-c_2)^2+(c_1 c_2 -3)^2
\right\}. 
\end{align}
Then it follows that 
$$
{\rm tr} (G_\beta^{-1}) 
= \frac{5}{2} + \frac{(c_1-c_2)^2+(c_1 c_2 -3)^2}{2 (c_1^2+1) (c_2^2+1)} \geq \frac{5}{2}. \\
$$

Next, we show the second inequality. 
We can compute 
\begin{align}
v(\beta) &= \sqrt{(c_1^2+1) (c_2^2+1) (c_3^2+1)} = \frac{(c_1^2+1) (c_2^2+1)}{|c_1 c_2-1|}, \\
{\rm tr} (G_\beta^{-1}) v(\beta)
&= 
\left( 3+\frac{4(-c_1 c_2 +1)}{(c_1^2+1) (c_2^2+1)} \right) 
\cdot 
\frac{(c_1^2+1) (c_2^2+1)}{|c_1 c_2-1|} 
= 
\frac{3 (c_1^2+1) (c_2^2+1)}{|c_1 c_2-1|} + \frac{4 (- c_1 c_2+1)}{|c_1 c_2-1|}.  
\end{align}
Thus when $c_1 c_2-1<0$, we have 
$({\rm tr} (G_\beta^{-1}) v(\beta)-7)/(v(\beta)-1) = 3$. 

Suppose that $c_1 c_2-1>0$. Then 
$$
\frac{{\rm tr} (G_\beta^{-1}) v(\beta)-7}{v(\beta)-1}
=  (j \circ f) (c_1,c_2)
$$
where 
$$
j (x) = \frac{3x-11}{x-1} = 3 - \frac{8}{x-1}, \qquad 
f (c_1,c_2)= \frac{(c_1^2+1) (c_2^2+1)}{c_1 c_2-1}. 
$$
By \eqref{eq:estimate dDT 1}, it follows that 
$$
\frac{1}{f (c_1,c_2)} 
=
\frac{(c_1^2+1) (c_2^2+1) - \{ (c_1-c_2)^2+(c_1 c_2 -3)^2 \} }{8 (c_1^2+1) (c_2^2+1)} 
\leq \frac{1}{8}. 
$$
Thus $f (c_1,c_2) \geq 8$ and 
the minimum is attained at $c_1=c_2=\pm \sqrt{3}$, which satisfies $c_1 c_2 -1 >0$. 
Since $j$ is monotonically increasing, we see that 
$$
\frac{{\rm tr} (G_\beta^{-1}) v(\beta)-7}{v(\beta)-1}
=  (j \circ f) (c_1,c_2)
\geq 
j (8) = \frac{13}{7}. 
$$
\end{proof}

Lemma \ref{lem:estimate dDT} gives pointwise estimates of 
${\rm tr} (G_\nabla^{-1})$ 
and $\frac{{\rm tr} (G_\nabla^{-1}) v(\nabla)-7}{v(\nabla)-1}$ 
for a $G_2$-dDT connection $\nabla$ for a manifold with a $G_2$-structure. 
These are pointwise estimates, so there is no need for conditions 
on a $G_2$-structure like being torsion-free. 
By Proposition \ref{prop:dDTismin}, Theorem \ref{thm:min imply cons}, 
Lemma \ref{lem:estimate dDT} and Corollary \ref{cor:mono vol sh}, 
we have the following for the volume functional. 

\begin{corollary} \label{cor:mono vol G2dDT} 
Let $X$ be a 7-manifold with a closed $G_2$-structure $\varphi$ ($d \varphi =0$) 
and $L \to X$ be a smooth complex line bundle with a Hermitian metric $h$. 
Fix $p \in X$ and use the notation in Section \ref{sec:mono notation}. 
Then there exists 
a constant $a=a(n,p,g) \geq 0$ such that 
for a $G_2$-dDT connection $\n$ of $(L,h)$ 
and a nonnegative smooth function $f$ on $X$ satisfying 
$
-\Delta_\n f \geq 0, 
$
$$
(0,r_p] \to \rl, \qquad 
\rho \mapsto \frac{e^{a \rho^2}}{\rho^{5/2}} \int_{B_\rho (p)} 
f v (\n)  \vol_g
$$
is non-decreasing. 
In particular, setting $f=1$, we see that 
$$
(0,r_p] \to \rl, \qquad 
\rho \mapsto \frac{e^{a \rho^2}}{\rho^{5/2}} \int_{B_\rho (p)} v (\n)  \vol_g
$$
is non-decreasing. 
\end{corollary}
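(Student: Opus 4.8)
The plan is to obtain Corollary~\ref{cor:mono vol G2dDT} by feeding $\beta=\FF$ into the machinery already built for $2$-forms satisfying a conservation law. First I would recall from Proposition~\ref{prop:dDTismin} that a $G_2$-dDT connection $\n$ on a manifold with closed $G_2$-structure is minimal, hence $\delta_\n\FF=0$; Theorem~\ref{thm:min imply cons} then shows that $\FF\in\Omega^2$ satisfies a conservation law in the sense of Definition~\ref{def:conservation law}. Thus all the hypotheses of Theorem~\ref{thm:mono vol} and Corollary~\ref{cor:mono vol sh} hold for $\beta=\FF$, with the same constant $a=a(n,p,g)\geq 0$, and the hypothesis $-\Delta_\n f=-\Delta_\FF f\geq 0$ is exactly the condition imposed in the statement.

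The one point that needs a short argument is the pointwise bound $\tr(G_\n^{-1})\geq 5/2$ on all of $X$, which I would establish via Lemma~\ref{lem:estimate dDT}. Fix $q\in X$. Since the $G_2$-structure is pointwise $\GL(7)$-equivalent to the standard one, choose a linear isomorphism $T_qX\cong\rl^7$ carrying $\varphi_q$ to the standard $3$-form $\varphi$ of~\eqref{varphi}; because the metric induced by a $G_2$-structure is determined algebraically by the structure, this isomorphism is an isometry, so it also intertwines the Hodge stars and the endomorphisms $G$. Under this identification the defining equation $-\tfrac{1}{6}\FFth+\FF\wedge *\varphi=0$ of Definition~\ref{def:G2dDT} becomes, at $q$, precisely the equation of Lemma~\ref{lem:estimate dDT} for $\beta=\FF|_q\in\Lambda^2(\rl^7)^*$, whence $\tr(G_\n^{-1})|_q=\tr(G_{\FF|_q}^{-1})\geq 5/2$. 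As $q$ is arbitrary, $\tr(G_\n^{-1})\geq 5/2$ on $X$.

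Finally I would apply Corollary~\ref{cor:mono vol sh} with $\kappa=5/2$: its hypothesis $\tr(G_\beta^{-1})\geq\kappa$ is the bound just proved, and $-\Delta_\beta f\geq 0$ is assumed, so $\rho\mapsto e^{a\rho^2}\rho^{-5/2}\int_{B_\rho(p)}f\,v(\n)\,\vol_g$ is non-decreasing on $(0,r_p]$. Taking $f\equiv 1$ (for which $-\Delta_\n 1=0$) yields the displayed special case, and the refinements about $a$ and $\delta_0$ in the flat and compact cases are inherited verbatim from Theorem~\ref{thm:mono vol}. I do not anticipate any real obstacle: the only nonformal step is the pointwise reduction to Lemma~\ref{lem:estimate dDT}, i.e.\ checking that the $G_2$-dDT equation is an algebraic constraint on the curvature matching, after the $\GL(7)$-normalisation, the equation in Lemma~\ref{lem: simplified_dDT}; everything else is a direct invocation of earlier results.
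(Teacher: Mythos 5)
Your argument is correct and is exactly the paper's route: the corollary is obtained by combining Proposition \ref{prop:dDTismin}, Theorem \ref{thm:min imply cons}, the pointwise algebraic bound $\tr(G_\beta^{-1})\geq 5/2$ of Lemma \ref{lem:estimate dDT}, and Corollary \ref{cor:mono vol sh} with $\kappa=5/2$. Your explicit pointwise $G_2$-normalisation step is just the (routine) justification the paper leaves implicit when citing Lemma \ref{lem:estimate dDT}.
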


By Lemmas \ref{lem:estimate dDT}, \ref{lem:cond assump} and Corollary \ref{cor:mono nvol sh}, 
we have the following result for 
the normalized volume functional. 
Recall that (torsion-free) $G_2$-manifolds are Ricci-flat. 

\begin{proposition} \label{prop:mono nvol G2dDT}
Let $X$ be a (torsion-free) $G_2$-manifold 
and $L \to X$ be a smooth complex line bundle with a Hermitian metric $h$. 
Fix $p \in X$ and use the notation in Section \ref{sec:mono notation}. 
Take 
a constant $a=a(n,p,g) \geq 0$ as in Corollary \ref{cor:mono vol G2dDT}. 
Define $\Theta: [0, \infty) \to [0, \infty)$ by 
$$
\Theta (\tau)
=
\omega_7 \int_0^\tau \frac{e^{a \zeta^2} \zeta^7}{\zeta^{\frac{13}{7}-1}} d \zeta
=
\frac{16 \pi^3}{105} \int_0^\tau e^{a \zeta^2} \zeta^{\frac{43}{7}} d \zeta. 
$$
Then there exists $0 < r'_p \leq r_p$ such that 
for any $G_2$-dDT connection $\n$ of $(L,h)$ 
$$
(0,r'_p] \to \rl, \qquad 
\rho \mapsto \frac{e^{a \rho^2}}{\rho^{13/7}} \int_{B_\rho (p)} (v (\n)-1) \vol_g 
+ 2a \Theta (\rho) 
$$
is non-decreasing. 
\end{proposition}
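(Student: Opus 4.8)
The plan is to invoke the general monotonicity formula for the normalized volume, namely Corollary \ref{cor:mono nvol sh}, with the choices $f = 1$, $\kappa = 13/7$, and $\theta(\tau) = \omega_7 \tau^7$, so that $\Theta$ is exactly the function defined in the statement. By Proposition \ref{prop:dDTismin} a $G_2$-dDT connection is minimal, and by Theorem \ref{thm:min imply cons} its curvature $E_\n$ therefore satisfies a conservation law, so the hypotheses of Corollary \ref{cor:mono nvol sh} concerning the conservation law are met with $\beta = E_\n$. The task thus reduces to checking, for this particular $\kappa$ and $\theta$, the four bulleted assumptions of Corollary \ref{cor:mono nvol sh}.

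First I would dispose of the two geometric assumptions. Since a torsion-free $G_2$-manifold is Ricci-flat, in particular $\mathrm{Ric} \geq 0$ on $X$, so Lemma \ref{lem:cond assump}(3) applies (with $f = 1$): there is $0 < r'_p \leq r_p$ such that on $[0, r'_p]$ one has both $n \int_{B_\tau(p)} \vol_g \geq \tau \frac{\partial}{\partial \tau} \int_{B_\tau(p)} \vol_g$ (this is the first bulleted assumption) and $\omega_7 \tau^7 \geq \int_{B_\tau(p)} \vol_g$. The latter inequality is precisely $\theta(\tau) \geq \int_{B_\tau(p)} f \vol_g$ for our choice of $\theta$ and $f = 1$, giving the second bulleted assumption. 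The fourth assumption, $-\Delta_\n f \geq 0$, is trivial since $f = 1$.

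The one remaining point — and the only one where the value $\kappa = 13/7$ enters — is the third bulleted assumption: $-(v_g(\beta) - 1)\kappa + \big(\tr(G_\beta^{-1}) v_g(\beta) - n\big) \geq 0$ with $n = 7$. If $v_g(\beta) = 1$ at a point (equivalently $\beta = 0$ there), the left side is $\tr(G_\beta^{-1}) - 7 = 0$, so the inequality holds. Otherwise $v_g(\beta) > 1$, and the inequality is equivalent to
\[
\frac{\tr(G_\beta^{-1}) v_g(\beta) - 7}{v_g(\beta) - 1} \geq \kappa = \frac{13}{7}.
\]
This is exactly the second pointwise estimate of Lemma \ref{lem:estimate dDT}, which applies because the curvature $E_\n$ of a $G_2$-dDT connection satisfies $-E_\n^3/6 + E_\n \wedge *\varphi = 0$ pointwise (Definition \ref{def:G2dDT}), so at each point $\beta = E_\n$ is of the algebraic form covered by that lemma. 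Hence the third assumption holds everywhere on $X$.

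With all four assumptions verified, Corollary \ref{cor:mono nvol sh} yields directly that
\[
\rho \mapsto \frac{e^{a\rho^2}}{\rho^{13/7}} \int_{B_\rho(p)} (v(\n) - 1)\vol_g + 2a\Theta(\rho)
\]
is non-decreasing on $(0, r'_p]$, which is the assertion. I expect no serious obstacle: the proof is essentially a bookkeeping exercise in matching the general formula's hypotheses to the $G_2$ setting, the two genuine inputs being the Ricci-flatness of $G_2$-manifolds (for the volume-comparison assumptions) and the sharp algebraic bound of Lemma \ref{lem:estimate dDT} (for the $\kappa = 13/7$ assumption); the mild point to be careful about is the case distinction $v(\n) = 1$ versus $v(\n) > 1$ when translating the third assumption into the ratio form of Lemma \ref{lem:estimate dDT}.
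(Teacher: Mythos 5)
Your proposal is correct and follows the paper's own route: the paper obtains this proposition precisely by combining Proposition \ref{prop:dDTismin} and Theorem \ref{thm:min imply cons} (to get the conservation law), Lemma \ref{lem:cond assump} via Ricci-flatness of torsion-free $G_2$-manifolds, and the second estimate of Lemma \ref{lem:estimate dDT} to verify the $\kappa=13/7$ hypothesis of Corollary \ref{cor:mono nvol sh} with $f=1$ and $\theta(\tau)=\omega_7\tau^7$. Your handling of the case $v(\n)=1$ is a correct, minor elaboration of the same argument.
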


When $X$ is $\rl^7$ with the standard $G_2$-structure inducing the standard flat metric, 
we can take $a=0$ and $r_p'=\infty$. 
Hence Proposition \ref{prop:mono nvol G2dDT} gives 
the monotonicity formula for the normalized volume 
for $G_2$-dDT connections on $\rl^7$. 
As an application, we can prove the following vanishing theorem 
stronger than Corollary \ref{cor:vanish min}. 
The proof is the same as in Corollary \ref{cor:vanish}.

\begin{corollary} \label{cor:vanish G2dDT}
Let $(L, h) \to \rl^7$ be a (necessarily trivial) smooth complex Hermitian line bundle 
over $\rl^7$ with the standard $G_2$-structure inducing the standard flat metric $g_0$. 
If $\n$ is a $G_2$-dDT connection satisfying 
$$
\int_{B_r(p)} (v(\n)-1) \vol_{g_0} = o(r^{13/7}) 
\qquad \mbox{as} \qquad r \to \infty 
$$
for some $p \in \rl^7$, 
then $\n$ is flat. 

In particular, if $\n$ is a $G_2$-dDT connection and 
$
\int_{\rl^7} (v(\n)-1) \vol_{g_0} < \infty,  
$
then $\n$ is flat. 
\end{corollary}

\begin{remark}
A similar statement as in Lemma \ref{lem: simplified_dDT} also holds 
in the ${\rm Spin}(7)$ case. 
See \cite[Corollary A.6 and (A.26)]{kawai2021deformationSpin(7)}.
However, the same computation as in Lemma \ref{lem:estimate dDT} 
only yields that 
${\rm tr} (G_\n^{-1}) \geq 0$ and
$({\rm tr} (G_\n^{-1}) v(\n)-8)/(v(\n)-1) \geq 0$
for a ${\rm Spin}(7)$-dDT connection $\n$. 
Thus we do not obtain better estimates in the ${\rm Spin}(7)$ case 
by the same computation. 
\end{remark}

\appendix
\section{Correspondence by the real Fourier--Mukai transform}
\label{sec:FM}

In this appendix, we give the ``mirror" correspondence between geometric objects 
by the real Fourier--Mukai transform. 

Let $B\subset\mathbb{R}^p$ be an open set with coordinates $(x^{1},\cdots,x^{p})$ and 
$f=(f^{p+1},\cdots,f^{p+q}):B\to T^{q}$ be a smooth map with values in $T^{q}$. 
We use coordinates $(y^{p+1},\cdots,y^{p+q})$ for $T^{q}$. 
Throughout this appendix, we use indices $i, j, k, \ell$ to run over $1, \cdots, p$, 
whereas indices $a, b, c$ range over $p+1, \cdots, p+q$. 
The graph of $f$ is denoted by 
\[S:=\{\,(x,f(x))\mid x\in B\,\}, \]
which is a $p$-dimensional submanifold in $X:=B\times T^{q}$. 

Then we can consider the real Fourier--Mukai transform of $S$, 
which was first introduced by \cite{LYZ2000}. See also \cite[Section 2]{kawai2021FM}. 
The idea is as follows. 
Set 
$$
T^q=\rl^q/2 \pi \Z^q, \qquad 
(T^q)^* =(\rl^q)^*/2 \pi (\Z^q)^*, \qquad 
X^*=B \times (T^q)^*, 
$$
where $(\rl^q)^*$ is the dual space of $\rl^q$ 
and 
$(\Z^q)^* = \{ \alpha \in (\rl^q)^* \mid \la \alpha, v\ra \in \Z 
\mbox{ for any } v \in \Z^q \}$. 
Then we identify  
$T^q$ with $H^1((T^q)^*, \rl)/ 2 \pi H^1((T^q)^*, \Z)$.  
Furthermore, this space can also be identified with the space 
$\cM_{\rm flat}$ 
of 
flat Hermitian connections of the trivial complex line bundle 
$(T^q)^* \times \cx \to (T^q)^*$ modulo unitary gauge transformations. 
See for example the proof of \cite[Corollary 4.6 (2)]{kawai2021mirror}. 
Explicitly, this identification is given by 
\begin{align}
T^q \to \cM_{\rm flat}, \qquad
z:=[z^{p+1}, \cdots, z^{p+q}] 
\mapsto \n^z:=\left[ d + \i z^a dy^a \right], 
\end{align}
where $d$ is the standard flat connection of $(T^q)^* \times \cx \to (T^q)^*$ 
and we also use $(y^{p+1}, \cdots, y^{p+q})$ for coordinates on $(T^q)^*$. 

For a map $f:B \to T^q$ and $x \in B$, 
$f(x) \in T^q$ so it defines $\n^{f(x)} \in \cM_{\rm flat}$. 
Then we see that 
the family of $\{ \nabla^{f(x)} \}_{x \in B}$ 
gives a Hermitian connection 
\[\nabla:=d+\sqrt{-1} f^a dy^a \]
of the trivial complex line bundle over $X^{*} \cong X$. 
This $\nabla$ is called the real Fourier--Mukai transform of $S$. 
Its curvature 2-form $F_\nabla$ is given by 
\begin{equation} \label{eq:FM curv}
F_\n = \i \frac{\partial f^{a}}{\partial x^{i}} dx^i \wedge dy^a. 
\end{equation}
Note that $\n$ is determined up to the action of the group of gauge transformations, 
but $F_\n$ is independent of it. 
Use the notation in Section \ref{sec:cons not} and set 
\begin{align}
\FF:=& -\i F_\n, \qquad
\partial_{i}:=\frac{\partial}{\partial x^i}, \qquad 
\partial_a:=\frac{\partial}{\partial y^a}, \qquad 
f^a_i:=\frac{\partial f^a}{\partial x^i}, \qquad
f^a_{ij}:=\frac{\partial^2 f^a}{\partial x^i \partial x^j}, \\
v_i:=& (\id_{TX} + \FFs)(\partial_i) 
= \partial_i + f^a_i \partial_a, \qquad
\eta_a:=
(\id_{TX} + \FFs)(\partial_a) 
= \partial_a - f^a_j \partial_j 
\end{align}
for $1 \leq i,j \leq p$ and $p+1 \leq a \leq p+q$. 
Note that 
$\{ v_i \}_{i=1}^p$ spans $TS$ and 
$\{ \eta_a \}_{a=p+1}^{p+q}$ spans the orthogonal complement $T^\perp S$, 
where $X$ is endowed with the standard flat metric $\la \cdot, \cdot \ra$. 
The induced metric $g=(g_{ij})$ on $S$ is given by 
\begin{align} \label{eq:FM metric}
g_{i j} = \la v_i, v_j \ra = \delta_{ij} + f^a_i f^a_j. 
\end{align}
Set $g^{-1}=(g^{ij})$. 
Define $G_\n$ as in \eqref{eq:nabla Gnabla}. 
As in \eqref{eq:del nabla}, 
we also define a differential operator $\delta_\n: \Omega^k \to \Omega^{k-1}$ by 
$$
\delta_\n \alpha 
:= \delta_{E_\n} \alpha
= 
- i(G_\n^{-1} (\partial_i)) D_{\partial_i} \alpha 
- i(G_\n^{-1} (\partial_a)) D_{\partial_a} \alpha. 
$$

\begin{lemma}\label{lem:FM min}
The graph $f$ is minimal if and only if $\delta_\n \FF=0$. 
\end{lemma}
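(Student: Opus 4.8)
The plan is to set up the standard equivalence "$f$ is a minimal graph $\iff$ the mean curvature vector $\mathbf H$ of $S$ vanishes" and then translate $\mathbf H = 0$ into the vanishing of $\delta_\n \FF$ by direct computation in the frames $\{v_i\}$, $\{\eta_a\}$. The key observation is that $\delta_\n \FF$ is a $1$-form on $X^*\cong X$, so it suffices to pair it against the coordinate vector fields $\partial_i$, $\partial_a$; by the definitions in the excerpt, $\FF = \sum f^a_i\, dx^i\wedge dy^a$, hence $D_{\partial_j}\FF = \sum f^a_{ij}\, dx^i\wedge dy^a$ and $D_{\partial_b}\FF = 0$ (the $f^a$ depend only on $x$), so
\[
\delta_\n \FF = -\,i\!\left(G_\n^{-1}(\partial_i)\right) D_{\partial_i}\FF
= -\sum_{i,j,a} f^a_{ij}\, i\!\left(G_\n^{-1}(\partial_i)\right)\!\left(dx^j\wedge dy^a\right).
\]
Evaluating this $1$-form on a test vector reduces everything to contractions of $G_\n^{-1}$, which I would compute from $G_\n = {}^t(\id + \FFs)\circ(\id + \FFs)$ together with the explicit expressions $v_i = (\id+\FFs)(\partial_i)$, $\eta_a = (\id+\FFs)(\partial_a)$.

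First I would record the relevant linear algebra. Since $\{v_i\}\cup\{\eta_a\}$ is an orthogonal (not orthonormal) basis with $\langle v_i,v_j\rangle = g_{ij}$ and $\langle \eta_a,\eta_b\rangle$ equal to the induced metric on $T^\perp S$, and since $\id+\FFs$ carries $\partial_i\mapsto v_i$, $\partial_a\mapsto \eta_a$, one gets $G_\n(\partial_i) = \sum_j g_{ij}\,\partial_j$ on the "tangential" block and a similar normal-block identity, so $G_\n^{-1}(\partial_i) = \sum_j g^{ij}\partial_j$, where $g^{ij}$ is the inverse of the first fundamental form. Plugging this in,
\[
\delta_\n \FF = -\sum_{i,j,a} g^{jk} f^a_{ij}\, i(\partial_k)\!\left(dx^i\wedge dy^a\right)
= -\sum_{j,k,a} g^{jk} f^a_{jk}\; dy^a ,
\]
using that $dx^i(\partial_k) = \delta^i_k$ and $dy^a(\partial_k) = 0$ (and collecting the cross term carefully). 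So $\delta_\n \FF = 0$ is exactly $\sum_{j,k} g^{jk} f^a_{jk} = 0$ for every $a$.

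It then remains to recognize $\sum g^{jk} f^a_{jk}$ as (a nonzero multiple of) the $a$-th component of the mean curvature vector of the graph $S$. I would do this via the classical graph formula: for $S = \mathrm{graph}(f)$ with induced metric $g_{ij} = \delta_{ij} + f^a_i f^a_j$, the mean curvature vector is
\[
\mathbf H = \sum_{a}\Big(\sum_{j,k} g^{jk}\,\frac{\partial^2 f^a}{\partial x^j\partial x^k}\Big)\,\eta_a^{\perp\text{-proj}},
\]
equivalently $g^{jk}(\nabla^2_{jk} f^a) = 0$ for all $a$ characterizes minimality, where the second derivatives of the ambient flat coordinates vanish so $\nabla^2 f^a = \mathrm{Hess}\,f^a$. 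Since the $\eta_a$ are linearly independent, $\mathbf H = 0 \iff \sum_{j,k} g^{jk} f^a_{jk} = 0$ for all $a$, which is precisely $\delta_\n \FF = 0$. Combining the two displayed equivalences finishes the proof.

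\textbf{Anticipated main obstacle.} The routine-but-delicate part is the bookkeeping for $G_\n^{-1}$ acting on the coordinate frame: $\{v_i\}\cup\{\eta_a\}$ is orthogonal but not orthonormal, so one must be careful that $G_\n^{-1}(\partial_i)$ really equals $g^{jk}\partial_k$ and not something mixing the normal directions, and one must keep track of the contraction $i(G_\n^{-1}(\partial_i))(dx^j\wedge dy^a)$ including the term where the contracted slot is the $dx^j$ factor. A secondary point is making sure the identification $X^*\cong X$ and the passage from $F_\n$ (a $2$-form on $X$) to $\FF$ and $\delta_\n$ are used consistently; but these are exactly the conventions fixed in Section~\ref{sec:cons not} and in \eqref{eq:FM curv}, so no new ideas are needed. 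The conceptual content — minimal graph $\leftrightarrow$ $g^{jk}f^a_{jk}=0$ $\leftrightarrow$ $\delta_\n\FF=0$ — is straightforward once the algebra is organized.
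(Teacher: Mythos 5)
Your proposal is correct and follows essentially the same route as the paper: compute $\delta_\n \FF = -g^{ij} f^a_{ij}\, dy^a$ using the block-diagonality of $G_\n^{-1}$ (whose tangential block is the inverse first fundamental form), and identify $g^{ij} f^a_{ij}=0$ for all $a$ with minimality of the graph. The only difference is cosmetic: you quote the classical mean-curvature formula for graphs, while the paper derives it in two lines by computing $D_{\partial_i}(\iota_*\partial_j)=f^a_{ij}\partial_a$ and pairing with the normal frame $\eta_a$.
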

Thus minimal (graphical) submanifolds correspond to minimal connections via 
the real Fourier--Mukai transform. 
In this sense, minimal connections are considered as 
``mirrors" of minimal submanifolds.

\begin{proof}
Define $\iota:B \to X= B \times T^q$ by 
$$
\iota (x) = (x, f(x)). 
$$
Then the graph $f$ is minimal if and only if 
the mean curvature 
$$
H:= g^{ij} \left( D_{\partial_i}^{\iota^*TX} (\iota_* (\partial_j)) \right)^\perp 
$$
vanishes, where $D^{\iota^*TX}$ is the induced connection on 
the pullback $\iota^*TX$ from the Levi-Civita connection $D$ 
of $\la \cdot, \cdot \ra$ by $\iota$, 
$T^\perp S$ is the normal bundle of $S$ 
and $\perp:\iota^*TX=TS \oplus T^\perp S \to T^\perp S$ is the projection. 
Since 
$$
D_{\partial_i}^{\iota^*TX} (\iota_* (\partial_j))
= 
D_{\partial_i}^{\iota^*TX} \left(\partial_j + f^a_j \partial_a \right)
= 
f^a_{i j} \partial_a, 
$$
we have
$$
\left\la D_{\partial_i}^{\iota^*TX} (\iota_* (\partial_j)), \eta_a \right \ra 
= f^a_{i j}
$$
for $p+1 \leq a \leq p+q$. Since 
$\{ \eta_a \}_{a=p+1}^{p+q}$ spans $T^\perp S$, it follows that 
$H=0$ if and only if 
\begin{align} \label{eq:FM H=0}
g^{ij} f^a_{i j} = 0 \qquad \mbox{for any } p+1 \leq a \leq p+q. 
\end{align}

Next, we compute $\delta_\n \FF$. 
Since 
$$
\FFs= f^a_i (dx^i \otimes \partial_a -dy^a \otimes \partial_i), 
$$
we have 
$$
G_\n=\id_{TX}-(\FFs)^2
=
(\delta_{i j} + f^a_i f^a_j) dx^i \otimes \partial_j
+
(\delta_{a b} + f^a_i f^b_i) dy^a \otimes \partial_b. 
$$
Then it follows that 
$G_\n^{-1}$ is a linear combination of $dx^i \otimes \partial_j$ and 
$dy^a \otimes \partial_b$. 
Using this, we compute 
\begin{align}
\delta_\n \FF
= 
- i(G_\n^{-1}(\partial_i)) D_{\partial_i} (f^a_j dx^j \wedge dy^a)
=
- f^a_{ij} dx^j(G_\n^{-1} (\partial_i)) dy^a, 
\end{align}
where we use the fact that $f^a$ is a function of $(x^1,\cdots, x^p)$, 
so $D_{\partial_b} (f^a_j dx^j \wedge dy^a) =0$. 
Since $g_{ij}=\la v_i, v_j \ra= \la G_\n (\partial_i), \partial_j \ra$, 
we have 
\begin{align} \label{eq:FM Gn-1}
dx^j(G_\n^{-1} (\partial_i)) = \la G_\n^{-1} (\partial_i), \partial_j \ra = g^{ij}, 
\end{align}
and hence 
\begin{align} \label{eq:FM delE}
\delta_\n \FF = - g^{ij} f^a_{ij} dy^a. 
\end{align}
Then by \eqref{eq:FM H=0} and \eqref{eq:FM delE}, the proof is completed. 
\end{proof}

\begin{lemma}
The Christoffel symbol $\Gamma^k_{i j}$ of $g$ is given by 
\begin{align} \label{eq:FM chris}
\Gamma^k_{i j} = g^{k \ell}f^a_{ij} f^a_\ell. 
\end{align}

Let $D^g$ the Levi-Civita connection of the induced metric $g$. 
For any 1-form $\alpha \in \Omega^1(B)$ on $B$, we have 
\begin{align} \label{eq:FM conn}
D^g_{\partial_j} \alpha = D_{\partial_j} \alpha 
- \left\la (G_\n^{-1})^* \alpha, df^a \right \ra df^a_j. 
\end{align}
\end{lemma}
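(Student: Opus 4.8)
The plan is to prove both identities by a direct computation in the coordinates $(x^1,\dots,x^p)$ on $B$, starting from the explicit expression $g_{ij}=\delta_{ij}+f^a_i f^a_j$ in \eqref{eq:FM metric}. First I would treat the Christoffel symbols. Differentiating \eqref{eq:FM metric} gives $\partial_i g_{jk}=f^a_{ij}f^a_k+f^a_j f^a_{ik}$, and substituting into $\Gamma^k_{ij}=\tfrac12 g^{k\ell}(\partial_i g_{j\ell}+\partial_j g_{i\ell}-\partial_\ell g_{ij})$, a short computation using the symmetry $f^a_{ij}=f^a_{ji}$ shows that all the terms in which the second-order derivative carries the contracted index $\ell$ cancel, leaving $\Gamma^k_{ij}=g^{k\ell}f^a_{ij}f^a_\ell$, which is \eqref{eq:FM chris}.

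Next I would deduce \eqref{eq:FM conn}. Since $D$ is the Levi-Civita connection of the flat metric $\langle\cdot,\cdot\rangle$ on $X=B\times T^q$, one has $D_{\partial_j}dx^i=0$, so for $\alpha=\alpha_i\,dx^i\in\Omega^1(B)$ the difference $D^g_{\partial_j}\alpha-D_{\partial_j}\alpha$ equals $-\Gamma^k_{ji}\alpha_k\,dx^i=-g^{k\ell}f^a_{ji}f^a_\ell\alpha_k\,dx^i$ by the first step. On the other side, since $G_\n$ is symmetric with respect to $\langle\cdot,\cdot\rangle$ (because $\FFs$ is skew), one has $\langle(G_\n^{-1})^*\alpha,df^a\rangle=df^a\bigl(G_\n^{-1}(\alpha^\sharp)\bigr)$; as $f^a$ depends only on $x$ we have $df^a=f^a_m\,dx^m$, and \eqref{eq:FM Gn-1} in the form $dx^m(G_\n^{-1}(\partial_k))=g^{km}$ gives $\langle(G_\n^{-1})^*\alpha,df^a\rangle=g^{km}f^a_m\alpha_k$. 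Multiplying by $df^a_j=f^a_{jm}\,dx^m$ and summing over $a$ reproduces the expression for $D^g_{\partial_j}\alpha-D_{\partial_j}\alpha$ found above, which proves \eqref{eq:FM conn}.

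The argument is elementary throughout; the only points that require care are the bookkeeping of the index contractions, keeping track of which connection ($D$ or $D^g$) acts on the coordinate $1$-forms, and the use of the symmetry $f^a_{ij}=f^a_{ji}$. I do not anticipate any genuine obstacle.
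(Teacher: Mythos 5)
Your proposal is correct and follows essentially the same route as the paper: differentiate $g_{ij}=\delta_{ij}+f^a_if^a_j$ and substitute into the Christoffel formula to get the cancellation yielding \eqref{eq:FM chris}, then compute $D^g_{\partial_j}\alpha-D_{\partial_j}\alpha=-\alpha_k\Gamma^k_{ji}dx^i$ and identify it with $-\la (G_\n^{-1})^*\alpha, df^a\ra df^a_j$ via \eqref{eq:FM Gn-1}. The only cosmetic difference is that you rewrite $\la (G_\n^{-1})^*\alpha, df^a\ra$ as $df^a(G_\n^{-1}(\alpha^\sharp))$ using the symmetry of $G_\n$, while the paper uses $g^{k\ell}=\la (G_\n^{-1})^*dx^k, dx^\ell\ra$ directly; these are the same identity.
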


\begin{proof}
Recall that 
$$
\Gamma^k_{i j}=
\frac{1}{2} g^{k \ell} 
\left( \frac{\partial g_{j \ell}}{\partial x^i} + \frac{\partial g_{i \ell}}{\partial x^j} 
- \frac{\partial g_{i j}}{\partial x^\ell} \right). 
$$
By \eqref{eq:FM metric}, we have 
$$
\frac{\partial g_{i j}}{\partial x^k}=
\frac{\partial}{\partial x^k} (f^a_i f^a_j)
=
f^a_{i k} f^a_j + f^a_i f^a_{j k}. 
$$
Thus 
\begin{align}
\frac{\partial g_{j \ell}}{\partial x^i} + \frac{\partial g_{i \ell}}{\partial x^j} 
- \frac{\partial g_{i j}}{\partial x^\ell}
=
f^a_{i j} f^a_\ell + f^a_j f^a_{i \ell}
+ f^a_{i j} f^a_\ell + f^a_i f^a_{j \ell}
- f^a_{i \ell} f^a_j - f^a_i f^a_{j \ell}
= 
2 f^a_{i j} f^a_\ell. 
\end{align}
Hence we obtain \eqref{eq:FM chris}.

Set $\alpha=\alpha_k dx^k$. 
Then by \eqref{eq:FM chris}, we compute 
\begin{align}
D^g_{\partial_j} \alpha
= D^g_{\partial_j} (\alpha_k dx^k) 
=
\frac{\partial \alpha_k}{\partial x^j} dx^k 
- \alpha_k \Gamma^k_{ji} dx^i 
=
D_{\partial_j} \alpha
- 
\alpha_k g^{k \ell} f_{ij}^a f_\ell^a dx^i. 
\end{align}
By \eqref{eq:FM Gn-1}, we have $g^{k \ell} = \la (G_\n^{-1})^* dx^k, dx^\ell \ra$. 
Then it follows that 
$$
\alpha_k g^{k \ell} f_{ij}^a f_\ell^a dx^i
= 
\left\la (G_\n^{-1})^* \alpha, df^a \right \ra df^a_j
$$
and the proof is completed. 
\end{proof}

Let $d^{*_g}$ be the codifferential with respect to $g$. Then we have the following. 
\begin{proposition} \label{prop:FM codiff}
For any 1-form $\alpha \in \Omega^1(B)$ on $B$, we have 
\begin{align}
d^{*_g} \alpha 
= \delta_\nabla \alpha 
+ i \left((G_\n^{-1} \circ \FFs) \left((\delta_\n E_\n)^\sharp \right) \right) \alpha. 
\end{align}
In particular, if the graph $f$ is minimal, we have 
$$
d^{*_g} \alpha = \delta_\nabla \alpha. 
$$
\end{proposition}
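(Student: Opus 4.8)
The plan is to obtain the identity by $g$-tracing the comparison formula \eqref{eq:FM conn} between the Levi--Civita connection $D^g$ of the induced metric $g$ on $S$ and the flat connection $D$ on $X=B\times T^q$. Throughout, $\alpha=\alpha_k\,dx^k$ is the pullback to $X$ of a $1$-form on $B$, hence independent of the torus coordinates $y^a$; this is exactly what makes both the flat term and the correction term in \eqref{eq:FM conn} computable. I write $\delta_\n=\delta^\nabla$ for the operator in \eqref{eq:del nabla}.

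First I would compute $\delta_\n\alpha$ by hand. Since $\alpha$ is constant in the $y^a$ directions, $D_{\partial_a}\alpha=0$, so only the first block of $\delta_\n$ survives. From the formula for $G_\n$ recorded in the proof of \cref{lem:FM min}, $G_\n$ is block diagonal and acts on $\mathrm{span}\{\partial_i\}$ by the matrix $(g_{ij})$ of \eqref{eq:FM metric}, so $G_\n^{-1}(\partial_i)=g^{ij}\partial_j$; together with the flatness of $D$ this gives $\delta_\n\alpha=-g^{ij}(D_{\partial_i}\alpha)(\partial_j)=-g^{ij}\partial_i\alpha_j$. Next I would take the $g$-trace of \eqref{eq:FM conn}. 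With the sign convention $d^{*_g}\alpha=-g^{ij}(D^g_{\partial_i}\alpha)(\partial_j)$ (the same one used for $d^*$ throughout the paper) and $(df^a_i)(\partial_j)=f^a_{ij}$, formula \eqref{eq:FM conn} yields
\begin{align*}
d^{*_g}\alpha
&=-g^{ij}(D_{\partial_i}\alpha)(\partial_j)+g^{ij}\bigl\langle(G_\n^{-1})^*\alpha,\,df^a\bigr\rangle\,f^a_{ij}\\
&=\delta_\n\alpha+g^{ij}g^{k\ell}\alpha_k\,f^a_\ell\,f^a_{ij},
\end{align*}
where in the last line I used \eqref{eq:FM Gn-1} in the form $g^{k\ell}=\langle(G_\n^{-1})^*dx^k,dx^\ell\rangle$ to expand $\langle(G_\n^{-1})^*\alpha,df^a\rangle=g^{k\ell}\alpha_k f^a_\ell$.

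It then remains to recognize the correction term $g^{ij}g^{k\ell}\alpha_k f^a_\ell f^a_{ij}$ as $i\bigl((G_\n^{-1}\circ\FFs)((\delta_\n\FF)^\sharp)\bigr)\alpha$. By \eqref{eq:FM delE}, $\delta_\n\FF=-g^{ij}f^a_{ij}\,dy^a$, so $(\delta_\n\FF)^\sharp=-g^{ij}f^a_{ij}\,\partial_a$ (the $y^a$-frame being orthonormal for the flat metric). Applying $\FFs$ via $\FFs(\partial_a)=-f^a_k\partial_k$ and then $G_\n^{-1}$ via $G_\n^{-1}(\partial_k)=g^{k\ell}\partial_\ell$ gives $(G_\n^{-1}\circ\FFs)\bigl((\delta_\n\FF)^\sharp\bigr)=g^{ij}g^{k\ell}f^a_{ij}f^a_k\,\partial_\ell$, and pairing with $\alpha$ reproduces exactly the term above. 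This proves the displayed identity, and the ``in particular'' follows at once: if the graph $f$ is minimal then $\delta_\n\FF=0$ by \cref{lem:FM min}, so the correction term vanishes and $d^{*_g}\alpha=\delta_\n\alpha$.

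I do not expect a genuine obstacle; the whole argument is a local computation in the adapted frame $(x^i,y^a)$, and the only care needed is clerical --- keeping the label $a$ distinct from the summation indices $i,j,k,\ell$, using the flat metric (not $g$) when applying $\sharp$ and $\FFs$ but $g^{-1}$ when applying $G_\n^{-1}$, and fixing the normalization of $d^{*_g}$ consistently with \eqref{eq:FM delE}. With those conventions pinned down, every ingredient is already available in \eqref{eq:FM metric}, \eqref{eq:FM conn}, \eqref{eq:FM Gn-1} and \eqref{eq:FM delE}.
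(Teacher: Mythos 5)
Your proposal is correct and follows essentially the same route as the paper's proof: both take the $g$-trace of \eqref{eq:FM conn}, use \eqref{eq:FM Gn-1} to identify $G_\n^{-1}(\partial_i)=g^{ij}\partial_j$ so that the flat part reproduces $\delta_\n\alpha$, and then use \eqref{eq:FM delE} to recognize the correction term as $i\left((G_\n^{-1}\circ\FFs)\left((\delta_\n\FF)^\sharp\right)\right)\alpha$. The only difference is cosmetic: you verify that identification componentwise in the adapted coordinates, whereas the paper rewrites $\la \delta_\n \FF, dy^a\ra\, df^a = -\left(\FFs\left((\delta_\n\FF)^\sharp\right)\right)^\flat$ invariantly before pairing with $(G_\n^{-1})^*\alpha$.
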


In other words, we have $d^{*_g}=\delta'_{\FF}:\Omega^1(B) \to \Omega^0(B)$, 
where $\delta'_\beta$ for a 2-form $\beta$ is defined in Lemma \ref{lem:ivp gen}. 
See also Proposition \ref{prop:div set}. 
In this sense, $\delta'_\beta: \Omega^1 \to \Omega^0$ can be considered as the 
``mirror" of the codifferential $\Omega^1 \to \Omega^0$.

\begin{proof}
By \eqref{eq:FM conn}, \eqref{eq:FM Gn-1} and \eqref{eq:FM delE}, we have 
\begin{align}
d^{*_g} \alpha
=
- g^{ij} i(\partial_i) D_{\partial_j}^g \alpha 
=
\delta_\n \alpha 
+ 
g^{ij} \left\la (G_\n^{-1})^* \alpha, df^a \right \ra f^a_{ij} 
=
\delta_\n \alpha 
- 
\la \delta_\n \FF, dy^a \ra 
\left\la (G_\n^{-1})^* \alpha, df^a \right \ra. 
\end{align}
We also compute 
\begin{align}
\la \delta_\n \FF, dy^a \ra df^a
=
\la \delta_\n \FF, f^a_i dy^a \ra dx^i 
=& 
\la \delta_\n \FF, i(\partial_i) \FF \ra dx^i \\
=& 
\FF (\partial_i, (\delta_\n \FF)^\sharp) dx^i
=
- \left(\FFs \left((\delta_\n E_\n)^\sharp \right) \right)^\flat. 
\end{align}
These equations imply the statement. 
\end{proof}

More generally, we can also consider the real Fourier--Mukai transform of cycles, 
i.e., pairs of submanifolds and connections on them as in \cite{kawai2021FM},  
but we could not find nice geometric objects on the cycle side 
corresponding to minimal connections. 
In this case, there is a contribution from the connection side 
to the value of $G_\n$, 
and the computation becomes much more complicated unlike the proof of Lemma \ref{lem:FM min}. 
Even if the calculation goes well somehow, it is another matter whether we can find a geometric structure independent of the torus fibration structure.



\bibliography{references}


\end{document}